\documentclass[10pt]{article}
\usepackage{amsmath}
\usepackage{amssymb}
\usepackage{amscd}
\usepackage{graphicx}
\usepackage{amsthm}       
\setlength{\textwidth}{150mm}
\setlength{\textheight}{220mm}
\setlength{\topmargin}{-1.0mm}
\setlength{\oddsidemargin}{4.0mm}
\setlength{\evensidemargin}{-1.0mm}
\newtheorem{thm}{Theorem}[section]
\newtheorem{prop}[thm]{Proposition}
\newtheorem{lem}[thm]{Lemma}
\newtheorem{cor}[thm]{Corollary}  \theoremstyle{definition}
\newtheorem{df}[thm]{Definition}   \theoremstyle{definition}

\newtheorem{rem}[thm]{Remark}                \theoremstyle{plain}
 \theoremstyle{definition}
\newtheorem{ex}[thm]{Example}

\def\CC{\Bbb{C}}
\def\RR{\Bbb{R}}  

\def\ZZ{\Bbb{Z}}
\def\CCI{\hat{\CC}}        \def\NN{\Bbb{N}} 
\def\B1{{\rm\kern.32em\vrule    width.12em       height1.4ex
depth-.05ex\kern-.28em 1}}
\def\G{\Gamma}
\def\g{\gamma }

\def\GN{\Gamma ^{\NN }}

\def\CMX{\text{CM}(Y)}
\def\emCMX{\text{{\em CM}}(Y)}

\def\OCMX{\text{OCM}(Y)}
\def\emOCMX{\text{{\em OCM}}(Y)}

\def\Rat{\text{Rat}}
\def\emRat{\text{{\em Rat}}}
\def\Ratp{\text{Rat}_{+}}
\def\emRatp{\text{{\em Rat}}_{+}}
\def\suppt{\text{supp}\, \tau}

\def\Cpt{\text{Cpt}}
\def\emCpt{\text{\em Cpt}}
\def\Hol{\text{H\"{o}l}}

\def\Min{\text{Min}}
\def\emMin{\text{\em Min}}

\def\MYW{{\frak  M}_{1,c}({\cal Y},\{ {\cal W}_{j}\} _{j=1}^{m})}
\def\supp{\mbox{supp}}
 
\def\supptau{\mbox{supp}\,\tau}

\begin{document}
\title{Negativity of Lyapunov Exponents and Convergence of 
Generic Random Polynomial Dynamical Systems \\ and Random Relaxed Newton's Methods  
\footnote{Date: June 21, 2021. Published in 
Comm. Math. Phys. {\bf 384}, 1513–-1583 (2021). 
2010 MSC:  
37F10, 37H10. Keywords: Random dynamical systems; Random complex dynamics;  
Rational semigroups;  
Fractal geometry; Cooperation principle; Noise-induced order; Randomness-induced phenomena; Random relaxed Newton's method. 
}}

\author{Hiroki Sumi\\  
Course of Mathematical Science, Department of Human Coexistence, \\  
Graduate School of Human and Environmental Studies, Kyoto University\\ 
Yoshida Nihonmatsu-cho, Sakyo-ku, Kyoto, 606-8501, Japan \\ 
{\bf E-mail: sumi@math.h.kyoto-u.ac.jp}\\ 
http://www.math.h.kyoto-u.ac.jp/\textasciitilde sumi/index.html
\date{}
}
\maketitle
\vspace{-8.5mm} 
\begin{abstract}
We investigate i.i.d. random complex dynamical systems generated by 
 probability measures on  finite unions of the loci of holomorphic families 
of rational maps on the Riemann sphere $\CCI .$  
We show that under certain conditions on the families, for a generic system, 
(especially, for a generic random polynomial dynamical system,)  
for all but countably 
many initial values $z\in \CCI $, for almost every sequence of maps  $\gamma =(\gamma _{1},
\gamma _{2},\ldots )$, the Lyapunov exponent of $\gamma $ at $z$ is negative. 
Also, we show that for a generic system, for every initial value $z\in \CCI $, the orbit of the Dirac measure at $z$ 
under 
the iteration of 
the dual map of the transition operator tends to a periodic cycle of measures in the space of 
probability measures on $\CCI $. Note that these are new phenomena in random complex dynamics
 which cannot hold in deterministic complex dynamical systems.  
We apply the above theory  and results of random complex dynamical systems 
to finding roots of any polynomial by random relaxed Newton's methods 
and we show that for any polynomial $g$ of degre two or more, for any initial value $z\in \CC$ which is not a root of $g'$, 
the random orbit starting with $z$ tends to a root of $g$ almost surely, 
which is the virtue of the effect of randomness. 
\end{abstract}
\vspace{-5mm} 
\section{Introduction and the main results}
In this paper, we investigate the independent and identically-distributed (i.i.d.) random dynamics of rational maps on the Riemann sphere $\CCI $ and the dynamics 
of rational semigroups (i.e., semigroups of non-constant rational maps 
where the semigroup operation is functional composition) on $\CCI .$ 

One motivation for research in (complex) dynamical systems is to describe some
 mathematical models in various fields to study nature and  science.   
Since nature and any other  environments have a lot of random terms,  
 it is very natural and important not only to consider the dynamics 
of iteration, 
but also 
to consider random 
 dynamics.  
 Another motivation for research in complex dynamics is Newton's method to 
 find  roots of a complex polynomial,   
which often is expressed as the dynamics of a rational map $f$ on $\CCI $ with 
$\deg (f)\geq 2$, where $\deg (f)$ denotes the degree of $f.$  
In various fields, we have many mathematical models which are described by 
the dynamical systems associated with polynomial or rational maps. For each 
model, it is natural and important to consider a randomized model, since we always have 
some kind of noise or random terms.  
Regarding random (complex) dynamics, many researchers in various fields 
(mathematics, physics, chemistry, etc.) have found and investigated 
many kinds of new phenomena in random (complex) dynamics which cannot hold in 
deterministic dynamics. These phenomena arise from the effect of randomness and 
they are called {\bf randomness-induced phenomena} or {\bf noise-induced phenomena} 
(\cite{MT}).  In fact, recently these topics are getting more and more attention in many fields.  

The first study of random complex dynamics was given by J. E. Fornaess and  N. Sibony (\cite{FS}). 
For research on random complex dynamics of quadratic polynomials, 
see \cite{Br1}--\cite{Bu2}, \cite{GQL}.  
For recent research on random complex dynamics and the various randomness-induced phenomena,  
see the author's works \cite{SdpbpIII}--\cite{Sspace}. 
In this paper, we show that a generic random holomorphic dynamical system associated 
with an analytic family of rational maps on $\CCI $ with some mild conditions has  
 {\bf randomness-induced order} even if the noise is multiplicative and we have a common 
 repelling fixed point for any map in the system. 
Some  of the new things of this paper 
are that (a) we can deal with random holomorphic dynamical systems 
{\bf with multiplicative noise} (Theorems~\ref{t:rcdnkmain1i}, \ref{t:rcdnkmain2i}, \ref{t:rcdnkmain1}, \ref{t:rcdnkmain2}, 
Example~\ref{ex:applyi},  
Section~\ref{Examples})
and (b) we apply the results to the study of {\bf random relaxed Newton's methods} 
which we introduce in this paper 
to find  roots of {\bf any} polynomial, and we show that 
random relaxed Newton's methods have some much better properties than those of deterministic (relaxed) Newton's method (Theorems~\ref{t:RNM1i}, \ref{t:RNM1}, Corollary~\ref{c:RNM1ione}, Remark~\ref{r:RNM1i}).

  In order to investigate random complex dynamics, it is very important to study the dynamics of 
  associated rational semigroups. 
In fact, 
random complex dynamics and the dynamics of rational semigroups are related to each other very deeply.   
The first study of dynamics of rational semigroups was 
conducted by
A. Hinkkanen and G. J. Martin (\cite{HM}) 
and
by F. Ren's group (\cite{GR}).   
For recent work on the dynamics of rational semigroups, 
see the author's papers \cite{S3}--\cite{Sspace}, and 
\cite{JS2, SS, SU1, SU2}. 

To introduce the main idea of this paper,  
we let $G$ be a rational semigroup and denote by $F(G)$  the  {\bf Fatou set of $G$}, which is defined to be  
the maximal open subset of $\CCI $ where $G$ is equicontinuous with respect to the spherical distance on $\CCI $.    
We call $J(G):=\CCI \setminus F(G)$ the {\bf Julia set of $G.$}  
The Julia set is backward invariant under each element $h\in G$, but 
might not be forward invariant. This is a difficulty of the theory of rational semigroups. 
Nevertheless, we utilize this as follows.  
The key to investigating random complex dynamics is to consider the 
following {\bf kernel Julia set of $G$}, which is defined by 
$J_{\ker }(G)=\bigcap _{g\in G}g^{-1}(J(G)).$ This is the largest forward 
invariant subset of $J(G)$ under the action of $G.$ Note that 
if $G$ is a group or if $G$ is a commutative semigroup, 
then $J_{\ker }(G)=J(G).$ 
However, for a general rational semigroup $G$ generated by a family of 
rational maps $h$ with $\deg (h)\geq 2$, it may happen that 
$\emptyset =J_{\ker }(G)\neq J(G) $.   

Let {\bf Rat} be the space of all non-constant rational maps on the Riemann sphere $\CCI $, 
endowed with the distance $\kappa $ which is defined by 
$\kappa (f,g):=\sup _{z\in \CCI }d(f(z),g(z))$, where $d$ denotes the spherical distance on $\CCI .$  
Let {\bf Rat$_{+}$} be the space of all rational maps $g$ with $\deg (g)\geq 2.$ Let 
${\cal P}$ be the space of all polynomial maps $g$ with $\deg (g)\geq 2.$   
Let $\tau $ be a Borel probability measure on Rat with compact support. 
We consider the {\bf i.i.d. random dynamics} on $\CCI $ such that 
at every step we choose a map $h\in \mbox{Rat}$ according to $\tau .$ 
Thus this determines a Markov process 
on the state space 
$\CCI $ such that for each $x\in \CCI $ and 
each Borel measurable subset $A$ of $\CCI $, 
the {\bf transition probability} 
$p(x,A)$ from $x$ to $A$ is defined as $p(x,A)=\tau (\{ g\in \Rat \mid g(x)\in A\} ).$ 
Let $G_{\tau }$ be the 
rational semigroup generated by the support of $\tau $, i.e., 
$G_{\tau }=\{ h_{1}\circ \cdots \circ h_{n}\mid n\in \NN , h_{j}\in \mbox{supp}\,\tau  
\mbox{ for all } j\}.$  Moreover, $J_{\ker }(G_{\tau })$ is called the {\bf kernel Julia set of $\tau .$}

For a metric space $X$, let ${\frak M}_{1}(X)$ be the space of all 
Borel probability measures on $X$ endowed with the topology 
induced by weak convergence (thus $\mu _{n}\rightarrow \mu $ in ${\frak M}_{1}(X)$ if and only if 
$\int \varphi d\mu _{n}\rightarrow \int \varphi d\mu $ for each bounded continuous function $\varphi :X\rightarrow \RR $). 
Note that if $X$ is a compact metric space, then ${\frak M}_{1}(X)$ is compact and metrizable. 
For each $\tau \in {\frak M}_{1}(X)$, we denote by supp$\, \tau $ the topological support of $\tau .$  
Let ${\frak M}_{1,c}(X)$ be the space of all Borel probability measures $\tau $ on $X$ such that supp$\,\tau $ is 
compact.     
 

 For each $\tau \in {\frak M}_{1}(\Rat)$, let $\tilde{\tau }:=\otimes _{n=1}^{\infty }\tau \in {\frak M}_{1}((\Rat)^{\NN }).$ 
For a metric space $X$, we denote by $\Cpt(X)$ the space of all 
non-empty compact subsets of $X$ endowed with the Hausdorff metric.  
 For a rational semigroup $G$, we say that a non-empty compact subset $L$ of $\CCI $ is a 
 {\bf minimal set of $(G,\CCI )$} 
if $L=\overline{\cup _{h\in G}\{ h(z)\}}$ for each 
$z\in L.$  
Moreover, we denote by $\Min (G,\CCI )$
the sets of all minimal sets of $(G,\CCI )$.  
Let $\tau \in {\frak M}_{1,c}(\Rat).$ 
We say that a minimal set $L\in \Min(G _{\tau },\CCI )$  is {\bf attracting 
for $\tau $} if there exist two open subsets $A, B$ of $\CCI $ 
with $\sharp (\CCI \setminus A)\geq 3$ and an $n\in \NN $ such that 
$L\subset B\subset \overline{B}\subset A$ and such that for each $(\gamma _{1},\ldots, \gamma _{n})
\in (\mbox{supp}\,\tau )^{n}$, we have $\gamma _{n}\circ \cdots \circ \gamma _{1}(A)\subset B.$  
In this case, we say that $L$ is an 
{\bf attracting minimal set of $\tau .$} 
%
Let ${\cal Y}$ be a subset of $\Rat $ endowed with the relative topology from $\Rat. $ 
We say that ${\cal Y}$ is {\bf mild} 
if for each $\tau \in {\frak M}_{1,c}({\cal Y})$, there exists an attracting minimal set of $\tau.$  
For example, any non-empty open subset of ${\cal P}$ is a mild subset of $\Rat.$  

Let ${\cal Y}$ be a closed subset of an open subset of $\Rat$, i.e., there exist an open subset ${\cal V}$ of Rat and a closed subset ${\cal C}$ of Rat such that 
${\cal Y}={\cal V}\cap {\cal C}.$ Let 
${\cal W}=\{ f_{\lambda }\} _{\lambda \in \Lambda }$ be a {\bf holomorphic family of 
rational maps} (see Definition~\ref{d:singdf}) such that $\Lambda $ is a connected complex manifold and 
$\lambda \mapsto f_{\lambda }\in \Rat$ is not 
constant. We say that ${\cal Y}$ is {\bf weakly nice} with respect to ${\cal W}$ 
if ${\cal Y}=\{ f_{\lambda }\in \Rat \mid \lambda \in \Lambda \} $
 (for more general definition, see~\ref{d:weaklynice}).   
In this case, for each $n\in \NN $, we denote by  
$S_{n}({\cal W})$ the set of points $z\in \CCI $ satisfying that 
$(\lambda _{1},\ldots, \lambda _{n})\in \Lambda ^{n}\mapsto 
f_{\lambda _{1}}\circ \cdots \circ f_{\lambda _{n}}(z)$ is constant on $\Lambda ^{n}.$ 
Also, we set $S({\cal W})=\cap _{n=1}^{\infty }S_{n}({\cal W}).$ 
This $S({\cal W})$ is called the {\bf singular set} of ${\cal W}.$ 
Note that 
$\sharp S_{1}({\cal W})<\infty $ and $\sharp S({\cal W})<\infty $ (Lemma~\ref{l:sn1sn}). We say that ${\cal Y}$ is {\bf nice} with respect to ${\cal W}$  if 
${\cal Y}$ is weakly nice with respect to ${\cal W}$ and for each $\tau \in {\frak M}_{1,c}({\cal Y})$, 
for each $L\in \Min(G_{\tau },\CCI )$ with $L\subset S({\cal W})$ and  
for each $z\in L$, either (a) the map $\lambda \mapsto D(f_{\lambda })_{z}$ is non-constant on 
$\Lambda $ or (b) $D(f_{\lambda })_{z}=0$ for all $\lambda \in \Lambda .$ 

For any closed subset ${\cal Y}$ of an open subset of $\Rat$, let ${\cal O}$ be the topology in 
${\frak M}_{1,c}({\cal Y})$ such that the sequence $\{ \tau _{n}\} _{n=1}^{\infty }$ in 
${\frak M}_{1,c}({\cal Y})$ tends to an element $\tau \in {\frak M}_{1,c}({\cal Y})$ with 
respect to the topology ${\cal O}$ if and only if (a) for each bounded continuous function 
$\varphi :{\cal Y}\rightarrow \CC $, $\int \varphi \ d\tau _{n}\rightarrow 
\int \varphi \ d\tau _{n}$ as $n\rightarrow \infty $, and (b) 
$\mbox{supp}\,\tau _{n}\rightarrow  \mbox{supp}\,\tau$ as $n\rightarrow \infty $ 
in Cpt$({\cal Y})$ with respect to the Hausdorff metric. 

Let $C(\CCI )$ be the space of all complex-valued continuous functions on $\CCI $ endowed with 
the supremum norm $\| \cdot \| _{\infty }.$  
Let $M_{\tau }$ be the operator on $C(\CCI )$ 
defined by $M_{\tau }(\varphi )(z)=\int \varphi (g(z)) d\tau (g).$ 
This $M_{\tau }$ is called the {\bf transition operator} of the Markov process induced by $\tau .$ 
Let $M_{\tau }^{\ast }:{\frak M}_{1}(\CCI )\rightarrow {\frak M}_{1}(\CCI )$ 
be the dual of $M_{\tau }$.   
This $M_{\tau }^{\ast }$ can be regarded as the ``averaged map'' 
on the extension ${\frak M}_{1}(\CCI )$ of $\CCI $ (see Remark~\ref{r:Phi}). 

We now present the first main result of this paper.

\begin{thm}[For the detailed and more general version, see Theorems~\ref{t:rcdnkmain1}, \ref{t:zfggzcc}]
\label{t:rcdnkmain1i}
Let ${\cal Y}$ be a mild subset of $\emRatp$ and suppose that 
${\cal Y}$ is nice with respect to a holomorphic family  
${\cal W}$ of rational maps. Then 
there exists an open and dense subset ${\cal A}$ of 
$({\frak M}_{1,c}({\cal Y}), {\cal O})$ 
such that for each $\tau \in {\cal A}$, the following {\em (I)} and {\em (II)} hold.
\begin{itemize}

\item[{\em (I)}] 
{\em (}{\bf Convergence}{\em)} 
There exist  numbers $l, r\in \NN $,  probability measures 
$\eta _{1},\ldots, \eta _{r}\in {\frak M}_{1}(\CCI )$ and 
functions  $\alpha _{1},\ldots, \alpha _{r}: \CCI \rightarrow [0,1]$ such that 
for each $y\in \CCI $ and for each $\varphi \in C(\CCI )$, we have 
\begin{equation}
\label{eq:mtivpc}
M_{\tau }^{nl}(\varphi )(y)\rightarrow \sum _{i=1}^{r}\alpha _{i}(y)\int \varphi \, d\eta _{i} 
\mbox{ as }n\rightarrow \infty \mbox{ (pointwise convergence) } ,    
\end{equation}
i.e., we have $(M_{\tau }^{\ast })^{nl}(\delta _{y})\rightarrow \sum _{i=1}^{r}
\alpha _{i}(y)\eta _{i}$ as $n\rightarrow \infty $ in ${\frak M}_{1}(\CCI )$ with respect to the 
weak convergence topology. Also, we have 
$(M_{\tau }^{\ast })^{l}(
\sum _{i=1}^{r}\alpha _{i}(y)\eta _{i})=\sum _{i=1}^{r}\alpha _{i}(y)\eta _{i}. $ 
\item[{\em (II)}] 
We have $\sharp J_{\ker }(G_{\tau })<\infty $ and $\sharp \emMin(G_{\tau },\CCI )<\infty .$ 
Moreover, 
for each $y\in \CCI $, there exists a Borel subset $B_{\tau ,y}$ of $(\emRatp)^{\NN}$ 
with $\tilde{\tau }(B_{\tau ,y})=1$ such that for each $\gamma =(\gamma _{1},\gamma _{2},
\ldots, )\in B_{\tau ,y}$,  there exists an element 
$L=L(y,\gamma )\in \emMin(G_{\tau },\CCI )$ for which we have that $d(\gamma _{n}\circ \cdots \circ \gamma _{1}(y), 
L)\rightarrow 0$ as $n\rightarrow \infty . $ 
\end{itemize}
 
\end{thm}
We remark that statements (I)(II) in Theorem~\ref{t:rcdnkmain1i} cannot hold for 
deterministic iteration dynamics of a single $f\in \Ratp$, since the dynamics of 
$f: J(f)\rightarrow J(f)$, where $J(f)$ denotes the Julia set of $f$, is chaotic. 
In fact, it is well-known that  
(a) for a generic $z\in J(f)$, the orbit $\{ f^{n}(z)\} _{n=1}^{\infty }$ is dense in $J(f)$ 
 and for each $l\in \NN $, $\delta _{f^{nl}(y)} $ does not converge to any probability measure on $\CCI $ as $n\rightarrow \infty $, (b) setting $\langle f\rangle := \{ f^{n}\mid 
 n\in \NN \}$, we have that $J_{\ker }(\langle f\rangle )=J(f)$ and $J_{\ker }(\langle f\rangle )$ is uncountable, and (c) there are infinitely many minimal sets of $f$ in $J(f)$, i.e., we have 
 infinitely many periodic cycles of $f$ in $J(f).$ 
Thus Theorem~\ref{t:rcdnkmain1i} deals with some 
randomness-induced phenomena.  

To present the second main theorem, 
for each $\tau \in {\frak M}_{1,c}(\Rat)$ and for each 
$L\in \Min(G_{\tau },\CCI )$ with $\sharp L<\infty$, 
we define the {\bf Lyapunov exponent} of $(\tau ,L)$ 
and denote it by $\chi (\tau, L)$ (see Definition~\ref{d:celyap}). 
 Also, if  ${\cal Y}$ is a weakly nice subset of $\Rat$ with 
 respect to a 
 holomorphic 
family ${\cal W}$ of rational maps, we say that  
 ${\cal Y}$ is {\bf exceptional with respect to ${\cal W}$} if there exists a 
non-empty subset $L$ of $S({\cal W})$ such that for each 
$\tau \in {\frak M}_{1,c}({\cal Y})$, we have $L\in \Min(G_{\tau },\CCI )$ 
and $\chi (\tau, L)=0.$ We say that ${\cal Y}$ is {\bf non-exceptional with 
respect to ${\cal W}$} if ${\cal Y}$ is not exceptional with respect to  ${\cal W}$ 
(For the definition in more general setting, see Definition~\ref{d:exceptional}).  
  
For each sequence $\gamma =(\gamma _{1}, \gamma _{2},\ldots )\in (\Rat )^{\NN }$, and 
for each $m,n\in \NN $ with $m\geq n$, we set $\gamma _{m,n}=
\gamma _{m}\circ \cdots \circ \gamma _{n}$ and 
we denote by $F_{\gamma }$ the set of points $z\in \CCI $ 
satisfying that there exists an open neighborhood of $z$ on which the sequence 
$\{ \gamma _{n,1}\} _{n=1}^{\infty }$ is equicontinuous with respect to 
the spherical distance on $\CCI .$  
This 
$F_{\gamma }$ is called the {\bf Fatou set of the sequence $\gamma $}. Also, we set  
 $J_{\gamma }:=\CCI \setminus F_{\gamma }$ and this $J_{\gamma }$ is called the {\bf Julia set of 
$\gamma .$} 

We now present the second main theorem of this paper.
\vspace{-1mm} 
\begin{thm}[({\bf Negativity of Lyapunov Exponents}) 
For the detailed and more general version, see Theorem~\ref{t:rcdnkmain2}]
\label{t:rcdnkmain2i} 
Let ${\cal Y}$ be a mild subset of $\emRatp$ and 
suppose that ${\cal Y}$ is  nice and non-exceptional with respect to a  
holomorphic family ${\cal W}=\{ f_{\lambda }\} _{\lambda \in \Lambda }$ of rational maps. 
Then there exists an open and 
dense subset ${\cal A}$ of $({\frak M}_{1,c}({\cal Y}), {\cal O})$ such that 
for each $\tau \in {\cal A}$, all of the following statements {\em (I)} and {\em (II)} hold.
\begin{itemize}
\vspace{-1.1mm} 
\item[{\em (I)}] 
There exist a subset $\Omega _{\tau }$ of $\CCI $ with $\sharp (\CCI \setminus \Omega _{\tau })\leq \aleph _{0}$,  
 a constant $c_{\tau }<0$ and a constant $\rho _{\tau }\in (0,1)$ 
such that for each $z\in \Omega_{\tau }$, there exists a Borel subset 
$C_{\tau ,z}$ of $(\emRatp)^{\NN }$ with $\tilde{\tau }(C_{\tau ,z})=1$ 
satisfying that for each $\gamma =(\gamma _{1},\gamma _{2},\ldots )\in C_{\tau ,z}$ and 
for each $m\in \NN \cup \{ 0\} $, 
we have the following {\em (a)} and {\em (b)}. 
\begin{itemize}
\item[{\em (a)}]  
\vspace{-2.7mm} 
$$\limsup _{n\rightarrow \infty }\frac{1}{n}\log \| D(\gamma _{n+m,1+m})_{\gamma _{m,1}(z)}\| _{s}\leq c_{\tau }<0.$$
Here, for any $g\in \Rat$ and $z\in \CCI$, 
we denote by $\| Dg_{z}\| _{s}$ the norm of the derivative of $g$ at $z$ with respect to the spherical 
metric.  
\vspace{-1mm} 
\item[{\em (b)}] 
There exist a constant $\delta =\delta (\tau, z,\gamma, m)>0$,  a constant 
$\zeta =\zeta (\tau, z, \gamma, m)>0$ and an element 
$L=L(\tau, z,\gamma )\in \emMin(G_{\tau }, \CCI )$ 
which is either {\em (i)} ``attracting for $\tau $'', or {\em (ii) } ``finite and included in $J_{\ker }(G_{\tau }) $ with 
$\chi (\tau, L)<0$'',  
such that 
\vspace{-0.1mm} 
$$\mbox{{\em diam}}(\gamma _{n+m,1+m}(B(\gamma _{m,1}(z),\delta )))\leq \zeta \rho _{\tau }^{n}\ \mbox{ for all }n\in \NN, $$
\vspace{-1.2mm} 
where  
we set {\em diam}$(B)=\sup _{x,y\in B}d(x,y)$ for any set $B\subset \CCI $, 
and such that 
\vspace{-0.1mm} 
$$d(\gamma _{n+m,1+m}(\gamma _{m,1}(z)), \ L)
\leq \zeta \rho _{\tau }^{n} \mbox{\ \  for all }n\in \NN .$$ 
\end{itemize} 
\item[{\em (II)}]
For $\tilde{\tau}$-a.e. $\gamma \in (\emRatp)^{\NN }$, we have that 
$\mbox{{\em Leb}}_{2}(J_{\gamma })=0$, where $\mbox{{\em Leb}}_{2}$ denotes that 
$2$-dimensional Lebesgue measure on $\CCI .$ 
\end{itemize}
 
\end{thm}
\begin{rem}
\label{r:genrcdnkmain}
In Theorems~\ref{t:rcdnkmain1}, \ref{t:rcdnkmain2}, we show more generalized results 
in which we deal with random dynamical systems 
of 
$\tau \in {\frak M}_{1,c}(\Rat)$ 
such that supp$\,\tau $
is included in a finite union of loci of holomorphic families 
$\{ {\cal W}_{j}\}_{j=1}^{m}$   
of rational maps, and 
supp$\,\tau$ meets 
the locus of each ${\cal W}_{j}.$ 
\end{rem}
\vspace{-1.7mm} 
We remark  that statements (I), (II) in Theorem~\ref{t:rcdnkmain2i} cannot hold for 
deterministic iteration dynamics of a single $f\in \Ratp$. In fact the dynamics of 
$f: J(f)\rightarrow J(f)$ is chaotic, 
and we have Ma\~{n}\'{e}'s result $\dim _{H}(\{ z\in \CCI \mid \liminf _{n\rightarrow \infty }\frac{1}{n}
\log \| D(f^{n})_{z}\| _{s}>0\} )>0$, where $\dim _{H}$ denotes the Hausdorff dimension 
with respect to the spherical distance on $\CCI $ (see \cite{Ma}). In particular, 
the set of points $z\in J(f)$ for which $\liminf _{n\rightarrow \infty }\frac{1}{n}
\log \| D(f^{n})_{z}\| _{s}>0$ is uncountable. Also, it is well-known that 
for any open subset $U$ of $\CCI $ with $U\cap J(f)\neq \emptyset$, 
there exists an $N\in \NN $ such that for each $n\in \NN $ with $n\geq N$, we have $f^{n}(U)\supset J(f)$ and 
$\mbox{diam}(f^{n}(U))\geq \mbox{diam}(J(f))>0.$ 
Thus Theorem~\ref{t:rcdnkmain2i} deals with a randomness-induced phenomenon. 
As we see in Theorems~\ref{t:rcdnkmain1i}, \ref{t:rcdnkmain2i}, 
under the assumptions of Theorems~\ref{t:rcdnkmain1i}, \ref{t:rcdnkmain2i}, 
{\bf regarding generic  random complex dynamical systems   
(in particular, regarding generic random polynomial dynamical systems),   
the chaoticity is much weaker than that of deterministic complex dynamical systems.}   
This arises from the effect of randomness and Theorems~\ref{t:rcdnkmain1i}, \ref{t:rcdnkmain2i} 
deal with randomness-induced phenomena.  
Note that {\bf the statements in Theorems~\ref{t:rcdnkmain1i}, 
\ref{t:rcdnkmain2i} are a kind of analogues of 
the conjecture of density of hyperbolic maps} 
(\cite{Mc}) in 
deterministic complex dynamics. 

We remark that in \cite{FS} and \cite{Splms10,Sadv},  regarding random complex dynamical systems, 
results on disappearance of chaos 
were shown.  
In \cite{FS}, 
it was assumed that $S({\cal W})=\emptyset$ and the noise is very small, which implies that  
the systems in the paper have empty kernel Julia sets $J_{\ker }(G_{\tau })$ of corresponding 
rational semigroups.   
In \cite{Sadv}, it was also assumed that $S({\cal W})=\emptyset $ 
(for a holomorphic family ${\cal W}$ of polynomials, it was assumed that $S({\cal W})\setminus \{ \infty \} =\emptyset$)
but 
the range of the noise could be big, and it was shown that the generic systems have 
empty kernel Julia sets, which implies that the chaoticity of the systems is much 
weaker than that of deterministic complex dynamical systems. 
In this paper,   it is important that 
{\bf in Theorems~\ref{t:rcdnkmain1i} and \ref{t:rcdnkmain2i}, 
the set ${\cal A}$ may contain many 
$\tau $ for which 
$J_{\ker }(G_{\tau })\neq \emptyset $ 
and 
there exists an element $L\in \Min(G_{\tau },\CCI )$ which is finite 
and  $\chi (\tau, L)>0$
}   
(Theorem~\ref{t:RNM1i}, 
Corollary~\ref{c:RNM1ione},  Lemma~\ref{l:RNMnnen}, 
Examples~\ref{ex:applyi}, \ref{ex:wlz1z}--
\ref{ex:pzljgz}).  
{\bf Once we have non-empty kernel Julia set, the analysis of the system 
is much more difficult than the cases with empty kernel Julia sets}, even if 
the kernel Julia set is finite.  We need a new framework and more technical arguments 
to study such systems.

We apply the results and the methods in the above to finding 
roots of {\bf any polynomial} $g\in {\cal P}$ by random relaxed Newton's methods as we explained below.  
 Let $g\in {\cal P}.$ 
Let $\Lambda :=\{ \lambda \in \CC \mid |\lambda -1|<1\} $ and 
let $N_{g,\lambda }(z)=z-\lambda \frac{g(z)}{g'(z)}$ for each $\lambda \in \Lambda .$ 
Let ${\cal W}_{g}=\{ N_{g,\lambda }\} _{\lambda \in \Lambda }.$ 
Let ${\cal Y}_{g}:=\{ N_{g,\lambda }\in \Rat \mid \lambda \in \Lambda \}.$  
Then  ${\cal Y}_{g}$ is called the {\bf random relaxed Newton's method set for $g$}  
and  ${\cal W}_{g}$ is called the {\bf random relaxed Newton's method family for $g.$}  
Also, $({\cal Y}_{g}, {\cal W}_{g})$ is called the {\bf random relaxed Newton's method scheme for $g.$}  
Moreover, for each $\tau \in {\frak M}_{1,c}({\cal Y}_{g})$, the random dynamical system on $\CCI $ 
generated by $\tau $ is called a 
{\bf random relaxed Newton's method (or random relaxed Newton's method system) 
for $g. $}  Furthermore, let $Q_{g}:=\{ z_{0}\in \CC \mid g(z_{0})=0\}.$ 

We now present the third main theorem of this paper. 

\begin{thm}[For the details, see Theorem~\ref{t:RNM1}]
\label{t:RNM1i}
Let $g\in {\cal P}.$ Let 
$({\cal Y}_{g}, {\cal W}_{g})$ be the random relaxed Newton's method scheme for $g.$ 
Then 
we have the following {\em (I)} and {\em (II)}. 
\begin{itemize}
\item[{\em (I)}] {\em ({\bf Almost Sure Convergence to a Root of $g$})} Let $\Lambda =\{ \lambda \in \CC \mid |\lambda -1|<1\}.$ 
Let $\eta \in {\frak M}_{1,c}(\Lambda )$ be an element such that 
int$(\mbox{{\em supp}}\, \eta )\supset \{ \lambda \in \CC \mid |\lambda -1|\leq \frac{1}{2}\} $ and 
$\eta $ is absolutely continuous with respect to the $2$-dimensional Lebesgue measure on $\Lambda .$ 
Here, int{\em(}supp$\,\eta${\em)} denotes the set of 
interior points of supp$\,\eta$ with respect to the topology in 
$\Lambda.$ 
Let $\tilde{\eta }=\otimes _{n=1}^{\infty }\eta 
\in {\frak M}_{1}(\Lambda ^{\NN }).$ 
Then for each $z_{0}\in \CC \setminus 
\{ z\in \CC \mid g'(z)=0 \mbox{ and } g(z)\neq 0\} $,  
there exists a Borel subset $C_{\eta ,z_{0}}$ of 
$\Lambda ^{\NN }$ with $\tilde{\eta }(C_{\eta, z_{0}})=1$ 
such that 
for each  
$(\lambda _{1},\lambda _{2},\ldots ) \in 
C_{\eta ,z_{0}}$,  the sequence 
$\{ N_{g,\lambda _{n}}\circ \cdots \circ N_{g,\lambda _{1}}
(z_{0})\} _{n=1}^{\infty }$ 
tends to a root $x=x(z_{0}, \lambda _{1},\lambda _{2},\ldots) $ of $g$ as $n\rightarrow \infty $ exponentially fast. 

 \ Also, for $\tilde{\eta }$-a.e. $\overline{\lambda }=
(\lambda _{1},\lambda _{2},\ldots )\in \Lambda ^{\NN }$, 
we have that $\mbox{{\em Leb}}_{2}(J_{\gamma (\overline{\lambda })})=0$ and for each $z\in F_{\gamma (\overline{\lambda })}$, 
there exists a root $x=x(\tau , \overline{\lambda }, z)$ of $g$ 
such that $N_{g, \lambda _{n}}\circ \cdots \circ N_{g,\lambda _{1}}(z)\rightarrow x$ as $n\rightarrow \infty $ exponentially fast. 
Here, we set $\gamma (\overline{\lambda })=
(N_{g,\lambda _{1}}, N_{g, \lambda _{2}},\ldots )\in {\cal Y}_{g}^{\NN }$ 
for each $\overline{\lambda }=(\lambda _{1},\lambda _{2},\ldots )
\in \Lambda ^{\NN }.$

\item[{\em (II)}] 
There exists an open and dense subset ${\cal A}$ 
of ${\frak M}_{1,c}({\cal Y}_{g})$ such that we have all of the following {\em (i)(ii)(iii)}.
\begin{itemize}
\item[{\em (i)}] 
Regarding any element $\eta \in {\frak M}_{1,c}(\Lambda )$ 
as in {\em (I)}, we have $\tilde{\eta }\in {\cal A}$, under the 
canonical identification $\Lambda \cong {\cal Y}_{g}.$ 
\item[{\em (ii)}] 
 For each $\tau \in {\cal A}$,   statements {\em (I)(II)} 
in Theorem~\ref{t:rcdnkmain1i} and statements {\em (I)(II)} in Theorem~\ref{t:rcdnkmain2i} 
hold for $\tau .$   
\item[{\em (iii)}] 
Let $\tau \in {\cal A}.$ Then 
$\emMin(G_{\tau },\CCI )
$ is equal to the union of 
$\{ \{ x\} \mid x\in Q_{g}\} \cup \{ \{\infty \} \} $ and 
$\{ L\in \emMin(G_{\tau },\CCI )\mid L\subset \CC \setminus Q_{g}, L\mbox{ is attracting for }\tau \}$.  
Also, for each $x\in Q_{g}$, the minimal set $\{ x\}$ is attracting for $\tau .$  Furthermore, 
$J_{\ker }(G_{\tau })\neq \emptyset .$ 
\end{itemize} 
 
\end{itemize}
\end{thm}

We say that a non-constant polynomial $g$ is 
{\bf normalized} if 
$\{ z_{0}\in \CC \mid g(z_{0})=0\} $ is included in  
$\Bbb{D}:=\{ z\in \CC \mid |z|<1\} .$ 
For a given polynomial $g$, sometimes it is not difficult for us to 
find an element $a\in \RR $ with $a>0$ such that $g(az)$ is a normalized polynomial of $z$. 
It is well-known that if $g\in {\cal P}$ is a normalized polynomial, then so is $g'$ 
(see \cite{Ah}). Thus, we obtain the following corollary. 
\begin{cor}
\label{c:RNM1ione}
Let $g\in {\cal P}$ be a normalized polynomial. 
Let $\Lambda =\{ \lambda \in \CC \mid |\lambda -1|<1\}.$ 
Let $z_{0}\in \CC \setminus \Bbb{D}.$ 
Let $\eta \in {\frak M}_{1,c}(\Lambda )$ be an element such that 
int$(\mbox{{\em supp}}\, \eta )\supset \{ \lambda \in \CC \mid |\lambda -1|\leq \frac{1}{2}\} $ and 
$\eta $ is absolutely continuous with respect to the $2$-dimensional Lebesgue measure on $\Lambda .$ 
Let $\tilde{\eta }=\otimes _{n=1}^{\infty }\eta 
\in {\frak M}_{1}(\Lambda ^{\NN }).$ 
Then for $\tilde{\eta }$-a.e. 
$(\lambda _{1},\lambda _{2},\ldots ) \in 
\Lambda ^{\NN} $, 
$\{ N_{g,\lambda _{n}}\circ \cdots \circ N_{g,\lambda _{1}}
(z_{0})\} _{n=1}^{\infty }$ 
tends to a root $x=x(z_{0}, \lambda _{1},\lambda _{2},\ldots) $ of $g$ as $n\rightarrow \infty .$  Moreover, if, in addition to the assumptions of our theorem, 
we know 
the coefficients of $g$ explicitly,  
then by the following algorithm in which we consider $\deg (g)$-random orbits  
of $z_{0}$ under $\deg (g)$-different random relaxed Newton's methods,  we can find all roots of $g$ almost surely 
with arbitrarily small errors. 
\begin{itemize}
\item[{\em (1)}]  
We first consider the random relaxed Newton's method scheme $({\cal Y}_{g_{1}}, {\cal W}_{g_{1}})$ for 
$g_{1}=g$.  
By Theorem~\ref{t:RNM1i},  for $\tilde{\eta }$-a.e. 
$(\lambda _{1},\lambda _{2},\ldots )
 \in \Lambda ^{\NN }$, 
$\{ N_{g_{1},\lambda _{n}}\circ 
\cdots \circ N_{g_{1},\lambda _{1}}(z_{0})\} _{n=1}^{\infty }$ tends to a root 
$x=x(z_{0}, \lambda _{1},\lambda _{2},\ldots ) $ of $g.$ 
Let $x_{1}$ be one of such 
$x(z_{0}, \lambda _{1},\lambda _{2},\ldots )$ (with arbitrarily small error).  
\item[{\em (2)}] 
Let $g_{2}(z)=g(z)/(z-x_{1}).$ 
By using synthetic division, we regard $g_{2}$ as a polynomial which divides $g_{1}$ 
(with arbitrarily small error). 
Note that $g_{2}$ is also a normalized polynomial. 
  We consider the random relaxed Newton's method scheme $({\cal Y}_{g_{2}}, {\cal W}_{g_{2}})$ 
for $g_{2}.$ 
By Theorem~\ref{t:RNM1i},  for $\tilde{\eta }$-a.e. 
$(\lambda _{1},\lambda _{2},\ldots )
 \in \Lambda ^{\NN }$, 
$\{ N_{g_{2},\lambda _{n}}\circ 
\cdots \circ N_{g_{2},\lambda _{1}}(z_{0})\} _{n=1}^{\infty }$ tends to a root 
$x=x(z_{0}, \lambda _{1},\lambda _{2},\ldots ) $ of $g_{2},$  
which is also a root of $g$ (with arbitrarily small error).   
\item[{\em (3)}] 
Let $g_{3}(z)=g_{2}(z)/(z-x_{2})$ and as in the above, we find a root $x_{3}$ of $g$ with 
arbitrarily small error. Continue this method.   
\end{itemize}
\end{cor}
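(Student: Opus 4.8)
The plan is to reduce everything to Theorem~\ref{t:RNM1i} together with the two elementary facts quoted in the text: (i) if $g\in{\cal P}$ is normalized then so is $g'$, and (ii) the relaxed Newton family ${\cal W}_g$ for a polynomial is the holomorphic family $\{N_{g,\lambda}\}_{\lambda\in\Lambda}$ with $\Lambda=\{|\lambda-1|<1\}$, so that ${\cal Y}_g\cong\Lambda$ canonically. First I would note that the hypothesis on $\eta$ (namely $\mathrm{int}(\mathrm{supp}\,\eta)\supset\{|\lambda-1|\le\frac12\}$ and $\eta\ll\mathrm{Leb}_2$ on $\Lambda$) is exactly the hypothesis of statement (III) of Theorem~\ref{t:RNM1i} under the identification ${\cal Y}_g\cong\Lambda$; transporting $\eta$ to a measure $\tau\in{\frak M}_{1,c}({\cal Y}_g)$ via $\lambda\mapsto N_{g,\lambda}$, we get $\tau\in{\cal A}$ and the convergence conclusion \eqref{eq:RNMconvi}. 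Next, since $g$ is normalized, all roots of $g$ lie in $\DD$, so $z_0\in\CC\setminus\DD$ in particular satisfies $g'(z_0)\neq 0$ or $g(z_0)=0$; either way $z_0\in\Omega_\tau=\CC\setminus\{z_0\in\CC\mid g'(z_0)=0,\,g(z_0)\neq0\}$ by the description of $\Omega_\tau$ in Theorem~\ref{t:RNM1i}(III). Hence \eqref{eq:RNMconvi} applies at $z_0$: for $\tilde\tau$-a.e.\ $\gamma$, the orbit $\gamma_{n,1}(z_0)=N_{g,\lambda_n}\circ\cdots\circ N_{g,\lambda_1}(z_0)$ converges to a root of $g$. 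Pushing the statement back through the identification $\tilde\tau\leftrightarrow\tilde\eta$ gives the first assertion of the corollary.

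For the algorithmic part, the argument is an induction on $\deg(g)$, the base case being the first assertion just proved. The key step at each stage is: given a normalized polynomial $g_k$ of degree $d-k+1$ and an approximate root $x_k$, the deflated polynomial $g_{k+1}(z)=g_k(z)/(z-x_k)$ is again a normalized polynomial. This is because $g_{k+1}$ is a (monic-up-to-scalar) polynomial whose zero set is $Q_{g_k}\setminus\{x_k\}\subset\DD$, so the hypothesis of the corollary is again met and Theorem~\ref{t:RNM1i}(III) applies to the scheme $({\cal Y}_{g_{k+1}},{\cal W}_{g_{k+1}})$ with the \emph{same} $z_0$ (which still lies in $\CC\setminus\DD$, hence in $\Omega$ for the new system). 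One then runs an independent random relaxed Newton's method for $g_{k+1}$ starting at $z_0$ to extract, almost surely, a root $x_{k+1}$ of $g_{k+1}$, which is also a root of $g$, distinct from $x_1,\dots,x_k$. Iterating $\deg(g)$ times produces all roots. The ``arbitrarily small error'' bookkeeping is handled by the explicit geometric rate $d(\gamma_{n,1}(z_0),x)\le\xi\alpha^n$ in \eqref{eq:RNMconvi}, which lets one stop after finitely many steps; and the error introduced in the synthetic division step is controlled because the coefficients of $g_{k+1}$ depend continuously (indeed polynomially) on $x_k$.

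The main obstacle I anticipate is \emph{not} the dynamics — that is entirely delegated to Theorem~\ref{t:RNM1i} — but the propagation of numerical error through the deflation steps: when $x_k$ is only an approximate root, $g_{k+1}=g_k/(z-x_k)$ is only an approximate polynomial, and one must check (a) that it is still close enough to a genuine normalized polynomial that Theorem~\ref{t:RNM1i} still applies (openness of ${\cal A}$, which is available, plus the neighborhood ${\cal U}$ of $\tau$ provided in the last clause of Theorem~\ref{t:RNM1i}(III) ensuring convergence is stable under perturbation), and (b) that the accumulated error after $\deg(g)$ deflations is still below any prescribed tolerance, which is a standard but slightly delicate conditioning estimate for synthetic division. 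A secondary point to be careful about is that the roots extracted at successive stages are genuinely distinct as elements of $Q_g$ — this follows because at stage $k+1$ the limit $x_{k+1}$ is a root of $g_{k+1}$, whose zero set is $Q_g\setminus\{x_1,\dots,x_k\}$ (counting multiplicity, a root of multiplicity $\mu$ survives $\mu$ deflations), so after $\deg(g)$ steps the multiset of extracted roots is exactly $Q_g$ with multiplicity.
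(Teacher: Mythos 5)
Your proposal is correct and follows essentially the same route the paper intends: the corollary is a direct application of Theorem~\ref{t:RNM1i} (III) combined with the fact (quoted just before the corollary) that $g$ normalized implies $g'$ normalized, so that $g'(z_{0})\neq 0$ for $z_{0}\in \CC \setminus \Bbb{D}$ and hence $z_{0}\in \Omega _{\tau }$; the algorithm then iterates this after deflation, which preserves normalization since the zero set only shrinks. Your additional care about error propagation through the synthetic-division steps goes slightly beyond what the paper makes explicit, but is consistent with it.
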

We remark that in Theorem~\ref{t:RNM1i} and Corollary~\ref{c:RNM1ione}, we have several kinds of nice effects of 
noise or randomnees, even though 
any system has non-empty kernel Julia set of the corresponding rational semigroup 
(in fact, $\infty $ is a common repelling fixed point of any map in the system and $\{ z_{0}\in \CC \mid g'(z_{0})=0, g(z_{0})\neq 0\} \cup \{ \infty \} $ is included in the kernel Julia set).   
In order to analyze such systems with non-empty kernel Julia sets,   
we need a new framework and much more technical arguments than those of 
\cite{Splms10}, \cite{Sadv}. See the second remark after Remark~\ref{r:genrcdnkmain}. 

 \begin{rem}
\label{r:RNM1i}
{\bf (I)} Regarding the original Newton's method, 
M. Hurley showed in \cite{Hu} that 
for any $k\in \NN $ with $k\geq 3$, there exists a polynomial $g$ of $\deg (g)=k$ 
such that the Newton's method map 
$N_{g}(z)=N_{g,1}(z)=z-g(z)/g'(z)$ for $g$ has $2k-2$ different 
attracting cycles. Thus this $N_{g}$ has $k-2$ attracting cycles which are not zeros of $g.$  
Since attracting cycles are stable under small perturbations, it follows that 
for any $k\geq 3$, 
there exist a non-empty open subset 
$U_{k}$ of ${\cal P}_{k}:=
\{ g\in {\cal P}\mid \deg (g)=k\} $ and 
a non-empty open subset $V_{k}$ of $\CCI $ such that for each $(g,z_{0})\in  U_{k}\times 
V_{k}$, $\{ N_{g}^{n}(z_{0})\} _{n=1}^{\infty }$ 
{\bf cannot converge to any root of $g.$} 

{\bf (II)} C. McMullen showed in \cite{Mc87} that for any $k\in \NN , k\geq 4$ and for any $l\in \NN$, 
there exists {\bf NO} rational map $\tilde{N}: {\cal P}_{k}\rightarrow \Rat_{l}:=\{ f\in \Rat \mid 
\deg (f)=l\}$ such that for any $g$ in an open dense subset of $ {\cal P}_{k}$, 
for any $z$ in an open dense subset of $\CCI $, $\tilde{N}(g)^{n}(z)$ tends to a root of $g$ as $n\rightarrow 
\infty .$  

{\bf (III)}  {\bf The essential assumptions on $\eta $ in Theorem~\ref{t:RNM1i} (III)
 and $\eta $ in Corollary~\ref{c:RNM1ione}  of this paper do not depend on $g\in {\cal P}$}.   
By (I)(II), it follows that {\bf the statements of Theorem~\ref{t:RNM1i} and Corollary~\ref{c:RNM1ione} 
cannot hold in the deterministic Newton's method 
and any other deterministic complex analytic iterative schemes to find roots of polynomials.  Thus 
 Theorem~\ref{t:RNM1i} and Corollary~\ref{c:RNM1ione} deal with 
 randomness-induced phenomena}. 

{\bf (IV)} J. Hubbard, D. Schleicher and S. Sutherland showed in \cite{HSS} that 
for each $k\in \NN $,  
there exists a finite subset $B_{k}$ of $\CC $ with $\sharp B_{k}\leq 1.1k (\log k)^{2}$ 
satisfying that  for any normalized polynomial $g$ with $\deg (g)=k$ and for every root $x$ of $g$, 
there exists a point $z_{0}\in B_{k}$ 
  such that $\{ N_{g}^{n}(z_{0})\} _{n=1}^{\infty }$ 
  converges to $x.$ 

\end{rem}
Note that this is the first paper to investigate random relaxed Newton's method 
systematically. It is important that in Theorem~\ref{t:RNM1i} (III) and 
Corollary~\ref{c:RNM1ione}, the size of the noise is big which enables the system 
to make the minimal set with period greater than 1 collapse. 
However, since the size of the noise is big, it is not enough for us to 
consider the arguments which are similar to those of deterministic dynamics of one map, 
thus we have to develop the theory of random complex dynamical systems 
with noise or randomness of any size as in Theorems~\ref{t:rcdnkmain1i}, \ref{t:rcdnkmain2i}. 
  
As we see  before, in Theorems~\ref{t:rcdnkmain1i} and \ref{t:rcdnkmain2i}, 
the chaoticity of random complex dynamical systems is much weaker than that of 
deterministic dynamical systems. However, 
the random systems may have still a kind of complexity or chaoticity. 
For example, when we consider the function $T_{L,\tau }$ of probability of tending to 
one $L\in \Min(G_{\tau },\CCI )$, then under certain conditions, 
this function is continuous on $\CCI $ and even more, this is $\alpha $-\Hol der continuous on $\CCI $ 
for some $\alpha \in (0,1)$ but there exists an element $\beta \in (0,1)$ such that 
$T_{L,\tau }$ cannot be $\beta $-\Hol der continuous on $\CCI $. This implies that 
the system generated by $\tau $ does not act mildly 
(i.e., the transition operator $M_{\tau }$ of $\tau $ does not act mildly) on the Banach space $C^{\beta }(\CCI )$ 
of $\beta $-\Hol der continuous functions on $\CCI $ endowed with 
$\beta$-\Hol der norm $\| \cdot \| _{\beta }$ 
(e.g., there exists a $\varphi \in C^{\beta }(\CCI )$ such that 
$\| M_{\tau }^{n}(\varphi )\| _{\beta }\rightarrow \infty $ as $n\rightarrow \infty $).  Thus regarding the random (complex) dynamical systems, 
we have the {\bf gradations between  chaos and order} (see \cite{Splms10, Sadv, JS1, 
JSAdv, JS3, Sspace}).  

 In Theorems~\ref{t:nonmild} and \ref{t:rcdnkmain2nomild}, 
 we show the results on random dynamical systems generated by measures $\tau $ on a nice subset 
 ${\cal Y}$ of Rat without assuming ${\cal Y}$ is mild. We show that 
 considering the mild part ${\frak M}_{1,c,mild}({\cal Y})$ 
 (the set of elements $\tau $ which has an attractor, see Definition~\ref{d:m1cmild}), 
 there exists an open and dense subset ${\cal A}$ of ${\frak M}_{1,c,mild}({\cal Y})$ 
 such that for each $\tau \in {\cal A}$,  statements (I)(II) in Theorem~\ref{t:rcdnkmain1i} 
 and statements  (I)(II) in Theorem~\ref{t:rcdnkmain2i}  hold. Also, 
 denoting by  ${\frak M}_{1,c,JF}({\cal Y})$ the set of 
 elements 
 $\tau \in {\frak M}_{1,c}({\cal Y})$ for which $J(G_{\tau })=\CCI $ and either $\Min(G_{\tau },\CCI )=\{ \CCI \} $ 
 or $\cup _{L\in \Min(G_{\tau },\CCI )}L\subset S({\cal W})$, 
 we show that the union of ${\cal A}$ and ${\frak M}_{1,c, JF}({\cal Y})$ 
 is dense in ${\frak M}_{1,c}({\cal Y})$ 
 (Theorems~\ref{t:nonmild} and \ref{t:rcdnkmain2nomild}).  

\begin{ex}
\label{ex:applyi}
We give some examples of ${\cal Y}$ satisfying 
the assumptions in Theorems~\ref{t:rcdnkmain1i}, \ref{t:rcdnkmain2i},  
~\ref{t:rcdnkmain1}, ~\ref{t:rcdnkmain2}. For the details and the proofs, see 
Section~\ref{Examples}.  In the following, ${\cal A}$ denotes the open and dense subset 
of $({\frak M}_{1,c}({\cal Y}), {\cal O})$ or $({\frak M}_{1,c}({\cal Y}, \{ {\cal W}_{j}\} _{j=1}^{m}), {\cal O})$ 
(for the notation, see Definition~\ref{d:weaklynice}) in 
Theorems~\ref{t:rcdnkmain1i}, \ref{t:rcdnkmain2i},  
~\ref{t:rcdnkmain1}, ~\ref{t:rcdnkmain2}.
As mentioned before, 
{\bf if $J_{\ker}(G_{\tau })\neq \emptyset $, it is much more difficult 
to 
show the statements on convergence of measures and 
negativity of Lyapunov exponents  
in Theorems~\ref{t:rcdnkmain1i}, \ref{t:rcdnkmain2i}, \ref{t:rcdnkmain1}, \ref{t:rcdnkmain2} 
than the cases with 
$J_{\ker }(G_{\tau })=\emptyset .$}
\begin{itemize}
\item[(i)] For each $q\in \NN $ with $q\geq 2$, let ${\cal P}_{q}:= \{ f\in {\cal P}\mid \deg (f)=q\} .$ 
Let $(q_{1},\ldots, q_{m})\in \NN ^{m}$ with $q_{1}<q_{2}<\cdots <q_{m}$ and 
let ${\cal W}_{j}=\{ f\} _{f\in {\cal P}_{q_{j}}}, j=1,\ldots, m$ and let 
${\cal Y}=\cup _{j=1}^{m}{\cal P}_{q_{j}}.$ In this case, $S({\cal W}_{j})\setminus \{ \infty \}=\emptyset $ for each $j$ and 
the set $\Omega _{\tau }$ in 
Theorem~\ref{t:rcdnkmain2i} can be taken as $\CCI .$ (Note that this result has been already 
obtained in \cite{Sadv}.)  
\item[(ii)]  Let $q\in \NN $ with $q\geq 2$ and let ${\cal W}=\{ z^{q}+c\} _{c\in \CC } .$ 
Let ${\cal Y} =\{ z^{q}+c\mid c\in \CC \} .$ 
In this case, $S({\cal W})\setminus \{ \infty \} =\emptyset $ and 
the set $\Omega _{\tau }$ in 
Theorem~\ref{t:rcdnkmain2i} can be taken as $\CCI .$ (Note that this result has been already 
obtained in \cite{Sadv}.)  
\item[(iii)] Let ${\cal W}=\{ \lambda z(1-z)\} _{\lambda \in \CC \setminus \{ 0\} } $ and 
let ${\cal Y}=\{ \lambda z(1-z)\in {\cal P}_{2}\mid \lambda \in \CC \setminus \{ 0\} \} .$ 
In this case, $S({\cal W})=\{ 0,1\} \cup \{ \infty \} $ and $S({\cal W})\setminus \{ \infty \} \neq \emptyset .$ 
 There exists a non-empty  open subset ${\cal A}'$ of ${\cal A}$ such that  
for each $\tau \in {\cal A}'$, we have 
$J_{\ker }(G_{\tau })\neq \emptyset $ and 
$\Omega _{\tau }$ in Theorem~\ref{t:rcdnkmain2i} cannot be equal to $\CCI .$  
We can classify elements $\tau \in {\cal A}$ in terms of averaged behavior 
(Example~\ref{ex:wlz1z}).  
\item[(iv)] Let $f\in {\cal P}$ such that if $z_{0}\in \CC$ and  $f(z_{0})=0$, then $f'(z_{0})\neq 0.$ 
Let ${\cal W}=\{ z+\lambda f(z)\} _{\lambda \in \CC \setminus \{ 0\} }$ and 
let ${\cal Y}=\{ z+\lambda f(z)\in {\cal P}\mid \lambda \in \CC \setminus \{ 0\} \} .$ 
In this case, $S({\cal W})=\{ z_{0}\in \CC \mid f(z_{0})=0\} \cup \{ \infty \} $ and 
$S({\cal W})\setminus \{ \infty \} \neq \emptyset .$ 
Then there exists a non-empty  open subset ${\cal A}'$ of ${\cal A}$ such that  
for each $\tau \in {\cal A}'$, we have 
$J_{\ker }(G_{\tau })\neq \emptyset $ and 
$\Omega _{\tau }$ in Theorem~\ref{t:rcdnkmain2i} cannot be equal to $\CCI .$ 
\item[(v)] Let $n\in \NN $ with $n\geq 2$ and let $w=e^{2\pi i/n}\in \CC .$ 
Let ${\cal W}_{i}=
\{ w^{i}(z+\lambda (z^{n}-1))\} _{\lambda \in \CC \setminus \{ 0\} }$ for each $i=1,\ldots, n.$  
Let $i_{1},\ldots, i_{m}\in \{ 1,\ldots, n\} $ with $i_{1}<i_{2}\cdots <i_{m}.$ For these 
$i_{1},\ldots, i_{m}$, 
let ${\cal Y}=\cup _{j=1}^{m}\{ w^{i_{j}}(z+\lambda (z^{n}-1))\in {\cal P}\mid 
\lambda \in \CC \setminus \{ 0\} \} .$ 
Then there exists a non-empty  open subset ${\cal A}'$ of ${\cal A}$ such that  
for each $\tau \in {\cal A}'$, 
we have $J_{\ker }(G_{\tau })\neq \emptyset $ 
and $\Omega _{\tau }$ in Theorem~\ref{t:rcdnkmain2i} 
cannot be equal to $\CCI .$ 

\end{itemize}   

\end{ex}

The strategy to prove Theorems~\ref{t:rcdnkmain1i}, \ref{t:rcdnkmain2i}, \ref{t:rcdnkmain1}, 
\ref{t:rcdnkmain2} is as follows.
Let ${\cal Y}$ be a mild subset of $\Ratp$ and suppose that 
${\cal Y}$ is nice with respect to a holomorphic family ${\cal W}=\{ f_{\lambda }\} 
_{\lambda \in \Lambda }$ of rational maps.  
Let $\tau _{0}\in {\frak M}_{1,c}({\cal Y})$. Then there exists an element $\tau $ which is 
arbitrarily close to $\tau _{0}$ and  
 int$(\mbox{supp}\,\tau)\neq \emptyset $. 
 Here, int (supp$\,\tau )$ denotes the set of interior points 
of supp$\,\tau$ with respect to the topology in ${\cal Y} $ 
which is endowed with the relative topology from Rat.
 We show that for such 
$\tau $,  
we have $J_{\ker }(G_{\tau })\subset S({\cal W})$ and hence $\sharp J_{\ker}(G_{\tau })<\infty $, 
by using Montel's theorem (Lemmas~\ref{l:yrnistfgni}, \ref{l:yrtfaji}).  
In Proposition~\ref{p:jkgfmf}, we develop a theory on   {\bf the systems 
with finite kernel Julia sets} based on careful observations on {\bf limit functions 
on the Fatou sets} by using the {\bf hyperbolic metrics on the Fatou components}
(Lemma~\ref{l:pwjkf}), and we obtain that for each $L\in \Min (G_{\tau },\CCI )$ with 
$L\cap F(G_{\tau })\neq \emptyset $, the dynamics in Fatou components which meet $L$ are 
locally contracting and 
$\sharp \Min(G_{\tau },\CCI )<\infty .$ 
Also, we develop a theory on {\bf bifurcation of minimal sets} under perturbation 
which was initiated  by the author of this paper in \cite{Sadv} in Lemma~\ref{l:nalnotsb}, 
and applying it and enlarging the support of $\tau $ a little bit, we obtain that 
any $L\in \Min(G_{\tau },\CCI )$ with $L\cap F(G_{\tau })\neq \emptyset $ is attracting for $\tau .$  
By the theory of finite Markov chains (\cite{Du}), we see that for such $\tau $ and 
for each $L\in \Min(G_{\tau },\CCI )$ with 
$L\subset J_{\ker }(G_{\tau })$, there exists a canonical invariant measure on $L$  
(Lemmas~\ref{l:fininvset}, \ref{l:nlkfa}, Definition~\ref{d:celyap}). 
It is very important and useful to show that 
for any $y\in \CCI $, letting 
$E_{y}:=\{ \gamma =(\gamma _{1},\gamma _{2},\ldots )\in (\mbox{supp}\,\tau)^{\NN} \mid y\in 
\cap _{n=1}^{\infty }\gamma _{n,1}^{-1}(J(G_{\tau }))\} $, 
$$
\mbox{ for } \tilde{\tau }\mbox{ -a.e. } 
\gamma =(\gamma _{1},\gamma _{2},\ldots )\in E_{y}, \mbox{ we have } 
d(\gamma _{n,1}(y), J_{\ker }(G_{\tau }))\rightarrow 0 \mbox{ as } n\rightarrow \infty ,$$  
by using careful observations on random dynamical systems on general compact metric spaces 
(Lemma~\ref{l:voygvv}). 

We next observe the local dynamics of $G_{\tau }$ 
at each point of $S({\cal W}).$ By enlarging the support of $\tau $ a little bit, 
by some careful arguments, it turns out that 
we may assume that 
each $L\in \Min(G_{\tau },\CCI )$ with $L\subset S({\cal W})$ satisfies 
one of the following conditions 
{\bf (I)--(IV)}. {\bf (I)} ``Uniformly expanding''. {\bf (II)} ``Attracting''. {\bf (III)} 
``There exist a point $z_{1}\in L$ and elements $g_{1},g_{2},g_{3}\in G_{\tau }$ such that 
$g_{1}(z_{1})=z_{1}, \| D(g_{1})_{z_{1}}\| _{s}>1$, $g_{2}(z_{1})=z_{1}, \| D(g_{2})_{z_{1}}\| _{s}<1$, 
$g_{3}(z_{1})=z_{1}$, and $g_{3}$ has a Siegel disk with center $z_{1}$''.  {\bf (IV)} 
``There exists a point $z_{1}\in L$ such that for each $\lambda \in \Lambda $, we have 
$ D(f_{\lambda })_{z_{1}}=0.$ Moreover, there exist a point $z_{2}\in L$ and an element $g\in G_{\tau }$ 
such that $g(z_{2})=z_{2}$ and $\| Dg_{z_{2}}\| _{s}>1$''.  By using some results on rational semigroups 
from \cite{HM}, it turns out that if $L$ is of type (III) or (IV), then $L\subset \mbox{int}(J(G_{\tau })).$ 
Here, int$(J(G_{\tau }))$ denotes 
the set of interior points of $J(G_{\tau })$ with respect 
to the topology in $\hat{\Bbb{C}}.$ 
In particular, for each $z\in F(G_{\tau })$, we have $\overline{G(z)}\cap L=\emptyset .$ 
It turns out that for each $z\in F(G_{\tau })$, if $\overline{G(z)}$ does not meet any 
attracting minimal set of $\tau $, then 
$\overline{G(z)}$ meets 
a minimal set $L$ which is uniformly expanding. Thus $\overline{G(z)} $ meets 
a backward image of $L$ under some element of $G_{\tau }$, which is included in a compact 
subset of $J(G_{\tau }) \setminus S({\cal W}).$ By enlarging the support of $\tau $ a little bit again, 
we obtain that for each $z\in F(G_{\tau })$, $\overline{G(z)}$ meets an attracting minimal set 
of $\tau .$  From these arguments, we can show that this $\tau $ is {\bf weakly mean stable}, 
i.e., there exist a positive integer $n$ and two non-empty open subsets $V_{1,\tau }, V_{2,\tau }$ 
of $\CCI $ with $\overline{V_{1,\tau }}\subset V_{2,\tau }$ and 
$\sharp (\CCI \setminus V_{2,\tau})\geq 3$ such that 
(a) for each $(\gamma _{1},\ldots, ,\gamma _{n})\in 
(\mbox{supp}\,\tau)^{n}$, we have 
$\gamma _{n}\cdots \gamma _{1}(V_{2,\tau })\subset V_{1,\tau }$, 
(b) we have $\sharp D_{\tau }<\infty $, where $D_{\tau }:=\cap _{g\in G_{\tau }}g^{-1}(\CCI \setminus 
V_{2,\tau })$, and (c) for each $L\in \Min (G_{\tau },\CCI )$ with $L\subset D_{\tau }$, 
there exist an element $z\in L$ and an element $g_{z}\in G_{\tau }$ such that 
$z$ is a repelling fixed point of $g_{z}.$ From this fact, we can prove the existence 
of an open and dense subset ${\cal A}$ in Theorems~\ref{t:rcdnkmain1i}, \ref{t:rcdnkmain1}. 
If we assume further that ${\cal Y}$ is non-exceptional 
with respect to ${\cal W}$, then we can show that there exists an open and dense subset  ${\cal A}'$ 
of ${\cal A}$ such that for each $\tau \in {\cal A}'$, (1) for each $L\in \Min(G_{\tau },\CCI )$ 
with $L\subset S({\cal W})$, we have $\chi (\tau, L)\neq 0$, and (2) 
for  each $L\in \Min (G_{\tau },\CCI )$ with $L\subset S({\cal W})$, if $\chi (\tau, L)>0$, then 
for each $z\in L$ and for each $g\in \mbox{supp}\,\tau$, we have $Dg_{z}\neq 0.$ 
Combining this fact and the observations on the local behavior of the systems around the minimal 
sets with non-zero Lyapunov exponents (Lemmas~\ref{l:chimune}--\ref{l:chlpa0}), 
we can prove that each element of $\tau \in {\cal A}'$ satisfies statements (I)(II) 
in Theorem~\ref{t:rcdnkmain2i}.  

By the above arguments, we obtain the following.
\begin{cor}[For more generalized result, see Theorem~\ref{t:rcdnkmain1}]
\label{c:wmsi}
Under the assumptions of Theorem~\ref{t:rcdnkmain1i}, 
the set of weakly mean stable elements $\tau \in ({\frak M}_{1,c}({\cal Y}), {\cal O})$ is open and dense 
in $({\frak M}_{1,c}({\cal Y}), {\cal O})$.  
\end{cor}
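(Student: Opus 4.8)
The plan is to run the perturbation scheme sketched above through a finite sequence of arbitrarily small $\cal O$-modifications of the given measure, and then to check that weak mean stability is an open condition. Fix $\tau_{0}\in ({\frak M}_{1,c}({\cal Y}),{\cal O})$; it suffices to produce a weakly mean stable $\tau$ arbitrarily $\cal O$-close to $\tau_{0}$, together with the observation that the set of weakly mean stable measures is open.

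First I would replace $\tau_{0}$ by a nearby $\tau_{1}$ with $\mbox{int}(\G_{\tau_{1}})\neq\emptyset$, which is possible because ${\cal Y}$ is locally the locus of a holomorphic family, so a small fraction of the mass can be spread over an open set of parameters. By Montel's theorem, exactly as in Lemmas~\ref{l:yrnistfgni} and \ref{l:yrtfaji}, nonempty interior forces $J_{\ker}(G_{\tau_{1}})\subset S({\cal W})$; since $\sharp S({\cal W})<\infty$ (Lemma~\ref{l:sn1sn}), the kernel Julia set is finite. I would then apply Proposition~\ref{p:jkgfmf} --- which rests on the analysis of limit functions on $F(G_{\tau_{1}})$ via the hyperbolic metrics of the Fatou components (Lemma~\ref{l:pwjkf}) --- to obtain $\sharp\Min(G_{\tau_{1}},\CCI)<\infty$ and that every $L\in\Min(G_{\tau_{1}},\CCI)$ meeting $F(G_{\tau_{1}})$ acts locally contractingly on the Fatou components it meets. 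A further small enlargement of the support, using the bifurcation-of-minimal-sets lemma (Lemma~\ref{l:nalnotsb}) together with the mildness of ${\cal Y}$, then makes each such $L$ attracting for the perturbed $\tau$ while keeping the kernel Julia set inside $S({\cal W})$.

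It remains to treat minimal sets contained in $S({\cal W})$. After a small further enlargement of the support one may assume each such $L$ is of one of the local types {\em (I)}--{\em (IV)} described above; by the Hinkkanen--Martin structure theory for rational semigroups, types {\em (III)} and {\em (IV)} force $L\subset\mbox{int}(J(G_{\tau}))$, so such $L$ are disjoint from $\overline{G(z)}$ for every $z\in F(G_{\tau})$. Consequently, if $z\in F(G_{\tau})$ and $\overline{G(z)}$ meets no attracting minimal set, it must meet a uniformly expanding one, hence --- following a single backward branch --- a compact subset of $J(G_{\tau})\setminus S({\cal W})$; one last small enlargement of the support absorbs this compact set into an attracting region, so that $\overline{G(z)}$ meets an attracting minimal set of $\tau$ for every $z\in F(G_{\tau})$. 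Assembling these facts yields an integer $n$ and open sets $V_{1,\tau},V_{2,\tau}$ with $\overline{V_{1,\tau}}\subset V_{2,\tau}$ and $\sharp(\CCI\setminus V_{2,\tau})\geq 3$ satisfying {\em (a)}--{\em (c)} of the definition, i.e.\ $\tau$ is weakly mean stable. Openness is then routine: condition {\em (a)} persists under small Hausdorff perturbations of $\G_{\tau}$; finiteness of $D_{\tau}=\cap_{g\in G_{\tau}}g^{-1}(\CCI\setminus V_{2,\tau})$ is stable, since it is cut out by the same finitely many inclusions; and a repelling fixed point of an element of $G_{\tau}$ persists under perturbation.

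The main obstacle is the stretch comprising the finite-kernel-Julia-set analysis (Proposition~\ref{p:jkgfmf}) and the bifurcation step: one has to control limit functions on Fatou components that may accumulate on the finite set $J_{\ker}(G_{\tau})$, and run the bifurcation argument upgrading Fatou-touching minimal sets to attracting ones without losing $J_{\ker}(G_{\tau})\subset S({\cal W})$. One must also track carefully that the several successive small perturbations are mutually compatible, so that a later one does not destroy a property secured earlier.
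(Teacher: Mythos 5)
Your plan for the density half is essentially the paper's own proof of Theorem~\ref{t:rcdnkmain1}: perturb to get nonempty interior of the support, use Montel's theorem (Lemmas~\ref{l:yrnistfgni}, \ref{l:yrtfaji}) to force $J_{\ker }(G_{\tau })\subset S({\cal W})$ and hence finite, invoke Proposition~\ref{p:jkgfmf} and the hyperbolic-metric analysis of limit functions (Lemma~\ref{l:pwjkf}), upgrade Fatou-touching minimal sets to attracting ones via Lemma~\ref{l:nalnotsb}, classify the singular minimal sets into types (I)--(IV), dispose of types (III),(IV) by the Hinkkanen--Martin interior-of-Julia-set results, and absorb the uniformly expanding case by one more enlargement of the support. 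That part is correct and matches the paper's route step for step.

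The genuine weak spot is the openness half. You justify stability of the finiteness of $D_{\tau }=\bigcap _{g\in G_{\tau }}g^{-1}(\CCI \setminus V_{2,\tau })$ by saying it ``is cut out by the same finitely many inclusions,'' but $D_{\tau }$ is an intersection over the entire (infinite) semigroup $G_{\tau }$, and for a perturbed generator set $\Lambda $ near $\G _{\tau }$ there is no a priori reason that $D_{\Lambda }$ should remain finite: a priori the perturbation could create a large set of points that are never mapped into $V_{2,\tau }$. The paper's Lemma~\ref{l:wmsopen} closes exactly this gap, and the argument is not routine: one first covers $\CCI \setminus B(D_{\G },\delta )$ by finitely many balls, each mapped into $V_{2,\G }$ by some fixed semigroup element, so that $D_{\Lambda }\subset B(D_{\G },\delta )$ for all nearby $\Lambda $; one then uses linearizing coordinates and a fundamental annulus $H_{L,\Lambda ,1}\setminus H_{L,\Lambda ,2}$ at each persisting repelling fixed point $z_{L,\Lambda }$ to show that any point of $B(D_{\G },\delta )$ whose whole forward orbit avoids $V_{2,\G }$ must be mapped exactly onto one of the finitely many points $z_{L,\Lambda }$ by one of finitely many maps $h_{w,\Lambda }$. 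This yields $D_{\Lambda }\subset \bigcup h_{w,\Lambda }^{-1}(z_{L,\Lambda })$, hence finiteness of $D_{\Lambda }$ together with condition (c) of Definition~\ref{df:wms}. You should either reproduce this fundamental-domain argument or cite Lemma~\ref{l:wmsopen} explicitly; as written, the openness claim is unsupported.
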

Note that weak mean stability is a new concept introduced by the author of this paper, 
and it is crucial to consider 
the density of weakly mean stable elements to prove Theorems~\ref{t:rcdnkmain1i}, \ref{t:rcdnkmain2i}, 
\ref{t:rcdnkmain1}, \ref{t:rcdnkmain2}. We emphasize that weak mean stability 
implies many interesting properties (Lemma~\ref{l:wmsopen}, Theorem~\ref{t:wmsneglyap}). 
We remark that in \cite{Splms10}, the notion {\bf mean stability} (i.e., every minimal set is attracting) 
was introduced by the author 
of this paper and it was proved in \cite{Sadv} that the set of mean stable elements  
$\tau \in {\frak M}_{1,c}({\cal P})$ is open and dense in ${\frak M}_{1,c}({\cal P}).$ 
Mean stability implies weak mean stability, 
but {\bf the converse is not true}. 
In fact, there are many examples of mild and nice sets ${\cal Y}$ for which  
there exists a non-empty open subset ${\cal A}''$ of ${\cal A}$ (where ${\cal A}$ is the set 
in Theorems~\ref{t:rcdnkmain1i}, \ref{t:rcdnkmain2i}) such that 
each $\tau \in {\cal A}''$ is 
{\bf not mean stable (but is weakly mean stable)}. 
For such examples, see Theorems~\ref{t:RNM1i}, \ref{t:RNM1}, Example~\ref{ex:applyi} (iii)--(v) 
and Examples~\ref{ex:wlz1z}--\ref{ex:pzljgz}.  
 
Regarding Theorem~\ref{t:rcdnkmain2i}, 
the simplest system which has the properties described in 
Theorem~\ref{t:rcdnkmain2i}(I)(a)(b) and  (II) (negativity of Lyapunov exponents, etc.) is the system given by random perturbation of a 
hyperbolic polynomial map with small uniform additive noise 
(e.g. random iteration of the maps $z^{2}+c_{n}$, where the 
complex numbers 
$c_{n}$ are chosen from a small disc around $0$ uniformly). 
In this case, a random orbit can easily go away from 
the Julia set of the semigroup associated with the system and tend to one of the common attractors. However, we remark that 
in Theorem~\ref{t:rcdnkmain2i}, we do not assume any kind of 
hyperbolicity, and the size of the noise (or randomness) might be 
very big. In the proof of Theorem~\ref{t:rcdnkmain2i}, we need 
many technical arguments, of which ideas are decribed before.  

 We remark that there have been many studies on random 
 dynamical systems of diffeomorphisms (or homeomorphisms) on manifolds. In \cite{Ba} and \cite{Mal}, the setting and the proofs for random dynamical systems of diffeomorphisms or homeomorphisms are completely different from those in this paper,  
 but   it is interesting to see that the results in \cite{Ba} and \cite{Mal} are, formally, in the same spirit.    

In Section~\ref{Pre}, we give some fundamental notations and 
definitions, and present some basic facts on rational semigroups. 
In Section~\ref{Randomcomplex},  we develope the theory of random complex dynamical systems with possibly non-empty kernel Julia sets. In particular we study various kinds of 
Fatou sets and Julia sets for the iteration of $M_{\tau }^{\ast }$ and the function 
$T_{L,\tau }$ of probability of tending to one $L\in \Min (G_{\tau },\CCI ).$ 
Applying them, we prove several theorems including Theorems~\ref{t:rcdnkmain1i}, \ref{t:rcdnkmain2i} 
and the detailed versions Theorems~\ref{t:rcdnkmain1}, \ref{t:rcdnkmain2}  of them. 
In Section~\ref{Random}, 
we apply Theorems~\ref{t:rcdnkmain1i}, \ref{t:rcdnkmain2i}, \ref{t:rcdnkmain1}, \ref{t:rcdnkmain2} and the other results in 
Section~\ref{Randomcomplex} to 
random relaxed Newton's methods in which we find roots of given polynomials, and 
we show Theorem~\ref{t:RNM1i} and the detailed version Theorem~\ref{t:RNM1}. 
In Section~\ref{Examples}, we give some examples 
to which we can apply the main theorems and we classify elements $\tau \in {\cal A}$ 
for some sets ${\cal Y}.$ In section~\ref{s:list}, we give the  
list of notations of this paper. 

\section{Preliminaries}
\label{Pre} 
In this section, we give some fundamental notations and definitions. 

\noindent {\bf Notation}. 
Let $(X,d)$ be a metric space, $A$ a subset of $X$, and $r>0$. We set 
$B(A,r):= $ $  \{ z\in X\mid d(z,A)<r\} .$ Moreover, 
for a subset $C$ of $\CC $, we set 
$D(C,r):= \{ z\in \CC \mid \inf _{a\in C}|z-a|<r\} .$ 
Moreover, for any topological space $Y$ and for any subset $A$ of $Y$, we denote by int$(A)$ the set of all interior points of $A.$ 
We denote by Con$(A)$ the set of all connected components of $A.$  
\begin{df}
\label{d:cmx}
Let $Y$ be a metric space. 
We set $\CMX := \{ f:Y\rightarrow Y\mid f \mbox{ is continuous}\} $ 
endowed with the compact-open topology.  
Also, we set $\OCMX := \{ f\in \CMX \mid 
f \mbox{ is an open map} \} $ endowed 
with the relative topology from $\CMX .$ 
Moreover, 
we denote by 
$C(Y)$ the space of all continuous functions $\varphi :Y\rightarrow \CC$. 
If $Y$ is compact, we endow $C(Y)$ with the supremum norm $\| \cdot \| _{\infty }.$ 

\end{df}
\begin{rem}
$\CMX $ and  $\OCMX $,  
are semigroups with the semigroup operation being 
functional composition. If $Y$ is a compact metric space, then $\CMX$ is a complete separable 
metric space. 
\end{rem}
\begin{df} 
\label{d:rational}
A {\bf rational semigroup} is a semigroup  
generated by a family of non-constant rational maps on 
$\CCI $ with the semigroup operation being 
functional composition(\cite{HM,GR}). A 
{\bf polynomial semigroup } is a 
semigroup generated by a family of non-constant 
polynomial maps. 
We set 
Rat : $=\{ h:\CCI \rightarrow \CCI \mid 
h \mbox { is a non-constant rational map}\} $
endowed with the distance $\kappa $ which is defined 
by $\kappa (f,g):=\sup _{z\in \CCI }d(f(z),g(z))$, where $d$ denotes the 
spherical distance on $\CCI .$   
Moreover, we set 
$\Ratp:=\{ h\in \mbox{Rat}\mid \deg (h)\geq 2\} $ endowed with the 
relative topology from Rat. 
Also, we set 
${\cal P}:= \{ g:\CCI \rightarrow \CCI \mid 
g \mbox{ is a polynomial}, \deg (g)\geq 2\} $
endowed with the relative topology from 
Rat.  

\end{df}
\begin{rem}(\cite[Theorem 2.8.2, Corollary 2.8.3]{Be})  
\label{r:ratm} 
Let Rat$_{m}:=\{ g\in \Rat\mid \deg (g)=m\}$ for each 
$m\in \NN $ and 
let ${\cal P}_{m}:=\{ g\in {\cal P}\mid \deg (g)=m\} $ 
for each $m\in \NN$ with $m\geq 2. $  
Then for each $m$, $\Rat_{m}$ (resp. ${\cal P}_{m}$) is a connected component of 
$\Rat $ (resp. ${\cal P}$). Moreover, 
$\Rat_{m}$ (resp. ${\cal P}_{m}$) is open and closed in $\Rat $ (resp. ${\cal P}$) and 
is a finite dimensional complex manifold. 
Also, $h_{n}\rightarrow h$ in ${\cal P}$ if and only if $\deg (h_{n})=\deg (h)$ for each large $n$ and 
the coefficients of $h_{n}$ tend to the coefficients of $h$ appropriately as $n\rightarrow \infty .$  
\end{rem}
\begin{df}
\label{d:FGJG}
Let $Y$ be a compact metric space and 
let $G$ be a subsemigroup of $\CMX. $
The {\bf Fatou set of $G$} is defined to be  
$$F(G):=\\ \{ z\in Y  \mid \exists \mbox{ neighborhood } U \mbox{ of }z  
\mbox{ s.t.} \{ g|_{U}:U\rightarrow \CCI \} _{g\in G} \mbox{ is equicontinuous 
on }  U \} .$$ (For the definition of equicontinuity, see \cite{Be}.) 
The {\bf Julia set of $G$} is defined to be 
 $J(G):= \CCI  \setminus F(G).$ 
If $G$ is generated by $\{ g_{i}\} _{i=1}^{m}$
(i.e., $G=\{ g_{i_{1}}\circ \cdots \circ g_{i_{n}}\mid n\in \NN, 
i_{1},\ldots, i_{n}\in \{ 1,\ldots, m\} \}$) 
, then 
we write $G=\langle g_{1},g_{2},\ldots ,g_{m}\rangle .$
If $G$ is generated by a subset $\G $ of $\CMX$ 
(i.e., $G$ is equal to the set $\{ h_{1}\circ \cdots \circ h_{n}\mid n\in \NN , 
h_{1},\ldots, h_{n}\in \Lambda \}$), then we write
 $G=\langle \G \rangle .$  
For a subset $A$ of $Y $, we set 
$G(A):= \bigcup _{g\in G}g(A)$ and 
$G^{-1}(A):= \bigcup _{g\in G}g^{-1}(A).$ 
We set $G^{\ast }:= G\cup \{ \mbox{Id}\} $, where 
Id denotes the identity map. 
\end{df}
\begin{lem}[\cite{HM, GR}] 
\label{ocminvlem}
Let $Y$ be a compact metric space and 
let $G$ be a subsemigroup of  $\emOCMX. $ 
Then, for each $h\in G$, $h(F(G))\subset F(G)$ and $h^{-1}(J(G))\subset J(G).$ 
Note that the equality does not hold in general. 
\end{lem}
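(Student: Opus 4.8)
The plan is to prove the first inclusion, $h(F(G))\subset F(G)$, and then read off the second one by pure set theory. Since $h:Y\to Y$ is everywhere defined, complementing in $Y$ shows that $h^{-1}(J(G))\subset J(G)$ is equivalent to $h^{-1}(F(G))\supset F(G)$, and the latter says exactly that $z\in F(G)$ forces $h(z)\in F(G)$, i.e. $h(F(G))\subset F(G)$. So the whole statement reduces to forward invariance of the Fatou set.

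To prove $h(F(G))\subset F(G)$, I would fix $z_{0}\in F(G)$ and put $w_{0}:=h(z_{0})$. By definition of $F(G)$ there is an open neighbourhood $U$ of $z_{0}$ — which we may shrink to a small ball — on which $\{g|_{U}\}_{g\in G}$ is equicontinuous. The one observation that makes everything work is that, because $h\in G$ and $G$ is closed under composition, $g\circ h\in G$ for every $g\in G$; hence the family ${\cal H}:=\{(g\circ h)|_{U}\}_{g\in G}$ is a subfamily of $\{g|_{U}\}_{g\in G}$ and is therefore itself equicontinuous on $U$. Since $h$ is an open map, $V:=h(U)$ is an open neighbourhood of $w_{0}$, and the goal becomes: show that $G$ is equicontinuous at every point of $V$. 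Since $V$ is open, this upgrades by the usual covering argument (equicontinuity at each point of a compact subset of $V$ implies uniform equicontinuity on it) to equicontinuity of $G$ on a small ball about $w_{0}$, whence $w_{0}\in F(G)$.

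For the pointwise statement I would take $w\in V$ and write $w=h(x)$ with $x\in U$. Given $\varepsilon>0$, equicontinuity of ${\cal H}$ on $U$ yields $\eta>0$ such that $d(g(h(x')),g(h(x)))<\varepsilon$ for every $g\in G$ and every $x'\in U$ with $d(x',x)<\eta$. Here is where openness of $h$ enters essentially: $h(B(x,\eta)\cap U)$ is an open set (image of an open set under an open map) containing $h(x)=w$, so there is $\delta>0$ with $B(w,\delta)\subset h(B(x,\eta)\cap U)$; consequently any $w'\in B(w,\delta)$ can be written $w'=h(x')$ with $x'\in B(x,\eta)\cap U$, whence $d(g(w'),g(w))=d(g(h(x')),g(h(x)))<\varepsilon$ for all $g\in G$. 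This is precisely equicontinuity of $G$ at $w$, which finishes the argument; taking complements then delivers $h^{-1}(J(G))\subset J(G)$. For the concluding ``Note'', it suffices to recall (as already discussed above) that the Julia set of a rational semigroup need not be forward invariant under a generator $h$, and for any such $G$ and $h$ one then has $h^{-1}(J(G))\subsetneq J(G)$.

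The only real obstacle here is conceptual rather than computational. One is tempted to prove directly that $\{g|_{V}\}_{g\in G}$ is equicontinuous on $V=h(U)$ by writing nearby points $w,w'\in V$ as $w=h(y)$, $w'=h(y')$ and controlling $d(y,y')$ by $d(w,w')$ — but this is impossible, since $h$ need be neither injective nor possess a uniformly continuous local inverse. The way around it, carried out above, is to prove equicontinuity of $G$ \emph{pointwise}: fix one preimage $x$ of $w$, and use openness of $h$ only to guarantee that every $w'$ near $w$ has \emph{some} preimage near $x$. That is exactly the content of the hypothesis $G\subset\OCMX$, and it is the step I would be most careful with.
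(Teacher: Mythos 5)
Your argument is correct and is the standard one: the paper gives no proof of this lemma (it is quoted from \cite{HM, GR}), and the proof in those references proceeds exactly as you do, using $G\circ h\subset G$ to get equicontinuity of $\{g\circ h\}_{g\in G}$ on $U$ and openness of $h$ to transfer the equicontinuity neighbourhood from $x$ to $w=h(x)$, with the two inclusions related by complementation. Your closing remark is also right that the failure of forward invariance of $J(G)$ (which does occur for rational semigroups) is precisely the failure of equality in $h^{-1}(J(G))\subset J(G)$.
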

Regarding the dynamics of rational semigroups $G$, we have the following. 
$F(G)$ is $G$-forward invariant and $J(G)$ is $G$-backward invariant. 
Here, we say that a set $A\subset \CCI $ is $G$-backward invariant, 
if $g^{-1}(A)\subset A$ for each $g\in G$, and we say that 
$A$ is $G$-forward invariant, if $g(A)\subset A$, for each $g\in G.$ 
If $\sharp (J(G))\geq 3$, then 
$J(G)$ is a perfect set and  
$\sharp (E(G))\leq 2$, where $E(G):=\{ z \in \CCI \mid \sharp G^{-1}(z)<\infty \} .$ 
($E(G)$ is called the exceptional set of $G.$)  
Moreover, if $\sharp J(G)\geq 3$ and 
if $z\in \CCI \setminus E(G)$, then 
$J(G)\subset \overline{G^{-1}(z)}. $ 
In particular, if $\sharp J(G)\geq 3$ and $z\in J(G)\setminus E(G)$, then 
$\overline{G^{-1}(z)}=J(G).$  
Also, if $\sharp (J(G))\geq 3$, then 
$J(G)$ is the smallest closed subset of $\CCI $ 
containing at least three points which is $G$-backward invariant. 
Furthermore, if $\sharp (J(G))\geq 3$, then we have  
  $J(G)=\overline{\{ z\in \CCI \mid  z\mbox{ is a repelling fixed point of some }g\in G\} }
=\overline{\cup _{g\in G}J(g)}.$ 
For the proofs of these results, see \cite{HM} and \cite{St1}. 
We remark that \cite{St1} is a very nice introductory article of rational semigroups. 

The following is the key to investigating random complex dynamics. 
\begin{df}
\label{d:kernelJ}
Let $Y$ be a compact metric space and 
let $G$ be a subsemigroup of $\CMX. $  
We set $J_{\ker }(G):= \bigcap _{g\in G}g^{-1}(J(G)).$ 
This is called the {\bf kernel Julia set of $G.$}  
\end{df}
\begin{rem}
\label{r:kjulia}
Let $Y$ be a compact metric space and 
let $G$ be a subsemigroup of $\CMX. $
(1) $J_{\ker }(G)$ is a compact subset of $J(G).$ 
(2) For each $h\in G$, 
$h(J_{\ker }(G))\subset J_{\ker }(G).$  
(3) Let $G$ be a rational semigroup and 
suppose $F(G)\neq \emptyset $. Then 
int$(J_{\ker }(G))=\emptyset .$ For, 
suppose $F(G)\neq \emptyset $ and int$(J_{\ker }(G))\neq 
\emptyset .$ 
Let $A=\mbox{int}(J_{\ker }(G)).$ Then for each $g\in G$, 
we have $g(A)\subset A$ since 
$g(J_{\ker }(G))\subset J_{\ker }(G).$ Moreover, 
since $F(G)\neq \emptyset$, we have 
$\sharp (\CCI \setminus A)\geq 3.$ 
By Montel's theorem, it follows that $\emptyset \neq A
\subset F(G).$ However, this is a contradiction since 
$A\subset J_{\ker }(G)\subset J(G).$   
(4) If $G\subset \mbox{OCM}(G)$ and $G$ is generated by a single map or if $G$ is a group, then 
$J_{\ker }(G)=J(G).$ However, for a general rational semigroup $G$, 
it may happen that $\emptyset =J_{\ker }(G)\neq J(G)$ 
(see \cite{Splms10}).  
\end{rem}
In the rest of this paper we sometimes need some results of 
random complex dynamical systems from \cite{Splms10, Sadv}.

\section{Random complex dynamical systems}
\label{Randomcomplex}
In this section, we develope the theory of random complex dynamical systems 
and prove several theorems including Theorems~\ref{t:rcdnkmain1i}, \ref{t:rcdnkmain2i} 
and the detailed versions Theorems~\ref{t:rcdnkmain1}, \ref{t:rcdnkmain2}  of them. 

\subsection{Random dynamical systems on general compact metric spaces}
\label{ss:rdsg}
In this subsection we show some results on random dynamical systems on general compact metric 
spaces. 
It is sometimes important to investigate the dynamics of sequences of maps. 
\begin{df}
\label{d:gammamn}
Let $Y$ be a compact metric space. 
For each $\gamma =(\gamma _{1},\gamma _{2},\ldots )\in 
(\CMX)^{\NN }$ and each $m,n\in \NN $ with $m\geq n$, we set
$\gamma _{m,n}=\gamma _{m}\circ \cdots \circ \gamma _{n}$ and we set  
$\gamma _{0,1}=\mbox{Id}$. 
Also, we set
\vspace{-2mm} 
$$F_{\gamma ,0}:=
\{ z\in Y\mid \{ \gamma _{n,1}\} _{n\in \NN  } 
\mbox{ is  equicontinuous at the one point }z\},$$
\vspace{-7mm}
$$F_{\gamma }:= \{ z\in Y \mid 
\exists \mbox{ neighborhood } U \mbox{ of } z \mbox{ s.t. } 
\{ \gamma _{n,1}\} _{n\in \NN } \mbox{ is equicontinuous on }  
U\}, $$ $J_{\gamma ,0}:=Y\setminus F_{\gamma ,0}$ 
and $J_{\gamma }:= Y  \setminus F_{\gamma }.$ 
The set $F_{\gamma }$ is called the {\bf Fatou set of the sequence $\gamma $} and 
the set $J_{\gamma }$ is called the {\bf Julia set of the sequence $\gamma .$} 
Moreover, we set 
$F^{\gamma, 0}:= 
\{ \gamma \} \times F_{\gamma ,0}
(\subset (\CMX )^{\NN }\times Y )$, 
$F^{\gamma }:= \{ \gamma \} \times F_{\gamma }\ (\subset (\CMX )^{\NN }\times Y )$, 
$J^{\gamma ,0}:= \{ \gamma \} 
\times J_{\gamma ,0}
(\subset (\CMX )^{\NN }\times Y )$   
and $J^{\gamma }:= \{ \gamma \} \times J_{\gamma }\ (\subset (\CMX )^{\NN }\times Y ).$ 
\end{df}
\begin{rem}
Let $\gamma \in (\Rat) ^{\NN }$. Then 
by Montel's theorem, $J_{\gamma ,0}=J_{\gamma }.$ 
Also, if $\gamma \in (\Ratp)^{\NN }$, then by \cite[Theorem 2.8.2]{Be}, $J_{\gamma }\neq \emptyset .$ 
Moreover, if $\Gamma $ is a non-empty compact subset of $\Ratp$ and $\gamma \in \Gamma ^{\NN }$, 
then by \cite{S4}, $J_{\gamma }$ is a perfect set and $J_{\gamma }$ has uncountably many points.  
\end{rem}
\begin{lem}
\label{l:ufgmeas}
Let $Y$ be a compact metric space. 
Let $\Gamma $ be a non-empty closed subset of an open subset of $\emCMX .$ 
Then $\bigcup _{ \gamma \in \GN }F^{\gamma ,0}$, 
$\bigcup  _{\gamma \in \GN }F^{\gamma }$, 
$\bigcup _{\gamma \in \GN }J^{\gamma ,0}$ 
 and 
$\bigcup _{\gamma \in \GN }J^{\gamma }$ are Borel measurable subsets of 
$\GN \times Y $ and 
\vspace{-2mm} 
\begin{equation}
\label{eq:cupfg0}
\bigcup_{\gamma \in \GN}F^{\gamma ,0}
=\{ (\gamma, y)\in \GN \times Y \mid \lim _{m\rightarrow \infty }
\sup _{n\geq 1} \mbox{{\em diam}} \gamma _{n,1}(B(y,\frac{1}{m}))=0\},   
\end{equation}
\vspace{-3mm} 
\begin{equation}
\label{eq:cupfg}
\bigcup _{\gamma \in \GN }F^{\gamma }
=\bigcup _{p\in \NN }\{ (\gamma, y)\in \GN \times Y \mid \lim _{m\rightarrow \infty }
\sup _{n\geq 1} \sup _{y'\in B(y,\frac{1}{p})}\mbox{{\em diam}} \gamma _{n,1}(B(y',\frac{1}{m}))=0\} .
\end{equation}
\end{lem}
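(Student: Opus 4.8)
The plan is to prove Lemma~\ref{l:ufgmeas} by first establishing the two displayed set equalities \eqref{eq:cupfg0} and \eqref{eq:cupfg}, and then reading off Borel measurability from the right-hand sides, since those sides are visibly built from countable operations on sets defined by continuous (or at least Borel) functions on $\GN \times Y$. Throughout, I will use that $Y$ is compact metric, so that equicontinuity of a family $\{\gamma_{n,1}\}$ at a point $y$ is equivalent to the statement that $\sup_{n\geq 1}\operatorname{diam}\gamma_{n,1}(B(y,\tfrac1m)) \to 0$ as $m\to\infty$; this is just the $\varepsilon$--$\delta$ unwinding of pointwise equicontinuity, using that the maps take values in the bounded metric space $Y$. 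Similarly, equicontinuity on some neighborhood $U$ of $y$ is equivalent to: there is $p\in\NN$ with $B(y,\tfrac1p)\subset U$ and $\sup_{n\geq1}\sup_{y'\in B(y,1/p)}\operatorname{diam}\gamma_{n,1}(B(y',\tfrac1m))\to 0$ as $m\to\infty$. Establishing these two equivalences carefully, paying attention to the order of quantifiers (in particular that a single modulus works simultaneously for all $n$), is the first key step and gives \eqref{eq:cupfg0} and \eqref{eq:cupfg} essentially immediately.

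Next I would address measurability. Set, for $\gamma\in\GN$, $y\in Y$, and $m,n\in\NN$, the quantity $\Phi_{m,n}(\gamma,y):=\operatorname{diam}\gamma_{n,1}(B(y,\tfrac1m))$. The crucial sub-claim is that $(\gamma,y)\mapsto \Phi_{m,n}(\gamma,y)$ is a Borel (indeed, one can hope for lower semicontinuous or at least Borel) function on $\GN\times Y$. The map $(\gamma,y)\mapsto \gamma_{n,1}(y)$, i.e. $(\gamma_1,\ldots,\gamma_n,y)\mapsto \gamma_n\circ\cdots\circ\gamma_1(y)$, is continuous on $(\CMX)^n\times Y$ because composition and evaluation are continuous in the compact-open topology when $Y$ is compact (this is a standard fact that I would cite or quickly verify); hence it is continuous on $\GN\times Y$ after projecting to the first $n$ coordinates. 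Then $\Phi_{m,n}(\gamma,y)=\sup\{ d(\gamma_{n,1}(u),\gamma_{n,1}(v)) \mid u,v\in \overline{B(y,1/m)}\}$ — taking the closed ball, which does not change the diameter of the image of the open ball when the map is continuous — is a supremum of a jointly continuous function over a compact set depending continuously (in the Hausdorff sense) on $y$, so $\Phi_{m,n}$ is continuous, or at worst upper/lower semicontinuous, on $\GN\times Y$; in any case Borel. Taking $\sup_{n\geq1}$ gives a Borel function $\Psi_m(\gamma,y):=\sup_{n\geq1}\Phi_{m,n}(\gamma,y)$, and the condition $\lim_{m\to\infty}\Psi_m(\gamma,y)=0$ — equivalently $\Psi_m(\gamma,y)$ is decreasing in $m$ (it is) and $\inf_m\Psi_m=0$, i.e. $\bigcap_{k}\bigcup_{m}\{\Psi_m<\tfrac1k\}$ — is a Borel set. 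This proves \eqref{eq:cupfg0} defines a Borel set. For \eqref{eq:cupfg}, one further takes $\sup_{y'\in B(y,1/p)}\Phi_{m,n}(\gamma,y')$, again a sup of a Borel function over a compact-ish set, hence Borel, then the countable union over $p\in\NN$, staying within the Borel $\sigma$-algebra. For $F^{\gamma}$ versus $F^{\gamma,0}$ and the corresponding Julia-set statements, $\bigcup J^{\gamma,0}$ and $\bigcup J^{\gamma}$ are complements in $\GN\times Y$ of the Fatou versions, hence Borel as well.

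The main obstacle I anticipate is the joint-measurability/semicontinuity bookkeeping in the second step: specifically, verifying cleanly that $(\gamma,y)\mapsto\operatorname{diam}\gamma_{n,1}(B(y,1/m))$ is Borel, and that passing from the open ball to its closure (or controlling the $\sup$ over a non-compact open ball) does not break continuity or measurability. The subtlety is that $y\mapsto B(y,1/m)$ is not continuous as a set-valued map into closed sets in general, but $y\mapsto \overline{B(y,1/m)}$ is at least upper semicontinuous for the Hausdorff metric, and $\operatorname{diam}$ of the image of a continuous map is monotone under set inclusion, which is enough to get upper semicontinuity of $\Phi_{m,n}$ in $y$ uniformly enough; combined with continuity in $\gamma$ one gets joint Borel measurability. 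A safe fallback, if one wants to avoid semicontinuity arguments, is to replace the open ball $B(y,1/m)$ by a countable dense set: $\operatorname{diam}\gamma_{n,1}(B(y,1/m)) = \sup\{ d(\gamma_{n,1}(u),\gamma_{n,1}(v)) \mid u,v\in D\cap B(y,1/m)\}$ for a fixed countable dense $D\subset Y$, turning the diameter into a countable supremum of the jointly continuous functions $(\gamma,y)\mapsto d(\gamma_{n,1}(u),\gamma_{n,1}(v))\cdot\mathbf{1}[u,v\in B(y,1/m)]$, where the indicator is of an open set in $(\gamma,y)$; this exhibits $\Phi_{m,n}$ as a countable sup of Borel functions directly, and then the rest of the argument proceeds as above. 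Everything else — the equicontinuity unwindings and the countable set algebra — is routine.
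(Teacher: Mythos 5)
Your proposal is correct and follows essentially the same route as the paper, which simply asserts that \eqref{eq:cupfg0} and \eqref{eq:cupfg} follow from the definitions of $F^{\gamma,0}$ and $F^{\gamma}$ and that Borel measurability is then read off from the right-hand sides. You merely supply the details the paper leaves implicit (the quantifier unwinding of equicontinuity and the Borel measurability of $(\gamma,y)\mapsto \mbox{diam}\,\gamma_{n,1}(B(y,\tfrac1m))$ via the countable-dense-set reduction), and those details are sound.
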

\begin{proof}
By the definition of $F^{\gamma }$, we obtain 
(\ref{eq:cupfg0}) and (\ref{eq:cupfg}). 
From (\ref{eq:cupfg0}) and (\ref{eq:cupfg}), 
it follows that 
$\bigcup _{\gamma \in \GN }F^{\gamma ,0}$ and 
$\bigcup _{\gamma \in \GN }F^{\gamma }$ are Borel subsets of 
$\GN \times Y .$ 
Thus $\bigcup _{\gamma \in \GN }J^{\gamma ,0}$ and 
$\bigcup _{\gamma \in \GN }J^{\gamma }$ are also   Borel subsets of 
$\GN \times Y .$ 
\end{proof}

We now give some notations on random dynamics. 
\begin{df}
\label{d:d0}
For a metric space $Y$, we denote by 
${\frak M}_{1}(Y)$ the space of all Borel probability measures on  $Y$ endowed 
with the topology such that 
$\mu _{n}\rightarrow \mu $ in ${\frak M}_{1}(Y)$ if and only if 
for each bounded continuous function $\varphi :Y\rightarrow \CC $, 
$\int \varphi \ d\mu _{n}\rightarrow \int \varphi \ d\mu .$ 
 Note that if $Y$ is a compact metric space, then 
${\frak M}_{1}(Y)$ is a compact metric space with the metric 
$d_{0}(\mu _{1},\mu _{2}):=\sum _{j=1}^{\infty }\frac{1}{2^{j}}
\frac{|\int \phi _{j}d\mu _{1}-\int \phi _{j}d\mu _{2}|}{1+|\int \phi _{j}d\mu _{1}-\int \phi _{j}d\mu _{2}|}$, 
where $\{ \phi _{j}\} _{j\in \NN }$ is a dense subset of $C(Y).$  
Furthermore, for each $\tau \in {\frak M}_{1}(Y)$, 
the topological support  $\suppt $ of $\tau $ is defined as $\suppt:=\{ z\in Y\mid \forall \mbox{ neighborhood } U \mbox{ of }z,\ 
\tau (U)>0\} .$ Note that $\suppt $ is a closed subset of $Y.$ 
Furthermore, 
we set ${\frak M}_{1,c}(Y):= \{ \tau \in {\frak M}_{1}(Y)\mid \suppt 
\mbox{ is a compact subset of }Y\} .$ 

For a complex Banach space ${\cal B}$, we denote by ${\cal B}^{\ast }$ the 
space of all continuous complex linear functionals $\rho :{\cal B}\rightarrow \CC $, 
endowed with the weak$^{\ast }$ topology. 
\end{df}
For any $\tau \in {\frak M}_{1}(\CMX)$, we will consider the i.i.d. random dynamics on $Y $ such that 
at every step we choose a map $g\in \CMX $ according to $\tau $ 
(thus this determines a time-discrete Markov process with time-homogeneous transition probabilities 
on the state space 
$Y $ such that for each $x\in Y $ and 
each Borel measurable subset $A$ of $Y $, 
the transition probability 
$p(x,A)$ from $x$ to $A$ is defined as $p(x,A)=\tau (\{ g\in \CMX \mid g(x)\in A\} )$).  
\begin{df} 
\label{d:ytau}
Let $Y$ be a compact metric space.  
Let $\tau \in {\frak M}_{1}(\CMX).$  
\begin{enumerate}
\item  We denote by $\mbox{supp}\, \tau $ the topological support of $\tau $ (thus $\mbox{supp}\,\tau $ is a 
closed subset of $\CMX $).     
Moreover, we set $X_{\tau }:= (\mbox{supp}\, \tau )^{\NN }$ $
 (=\{ \gamma  =(\gamma  _{1},\gamma  _{2},\ldots )\mid \gamma  _{j}\in \mbox{supp}\,\tau \ (\forall j)\} )$ endowed with the product topology.  
Furthermore, we set $\tilde{\tau }:= \otimes\displaystyle _{j=1}^{\infty }\tau .$ 
This is the unique Borel probability measure 
on $X_{\tau }$ such that for each cylinder set 
$A=A_{1}\times \cdots \times A_{n}\times \mbox{supp}\,\tau\times  \mbox{supp}\,\tau\times \cdots $ in 
$X_{\tau }$, $\tilde{\tau }(A)=\prod _{j=1}^{n}\tau (A_{j}).$ 
 We denote by $G_{\tau }$ the subsemigroup of $\CMX$ generated by 
the subset $\mbox{supp}\,\tau$ of $\CMX .$   
\item 
Let $M_{\tau }$ be the operator 
on $C(Y ) $ defined by $M_{\tau }(\varphi )(z):=\int _{\mbox{supp}\,\tau}\varphi (g(z))\ d\tau (g).$ 
$M_{\tau }$ is called the {\bf transition operator} of the Markov process induced by $\tau .$ 
Moreover, let $M_{\tau }^{\ast }: C(Y )^{\ast }
\rightarrow C(Y)^{\ast }$ be the dual of $M_{\tau }$, which is defined as 
$M_{\tau }^{\ast }(\mu )(\varphi )=\mu (M_{\tau }(\varphi ))$ for each 
$\mu \in C(Y)^{\ast }$ and each $\varphi \in C(Y).$ 
Remark: we have $M_{\tau }^{\ast }({\frak M}_{1}(Y))\subset {\frak M}_{1}(Y)$ and  
for each $\mu \in {\frak M}_{1}(Y)$ and each open subset $V$ of $Y$, 
we have $M_{\tau }^{\ast }(\mu )(V)=\int _{\mbox{supp}\,\tau}\mu (g^{-1}(V))\ d\tau (g).$ 
\item  
We denote by $F_{meas }(\tau )$ the 
set of $\mu \in {\frak M}_{1}(Y)$
satisfying that there exists a neighborhood $B$ 
of $\mu $ in ${\frak M}_{1}(Y)$ such that 
the sequence  $\{ (M_{\tau }^{\ast })^{n}|_{B}: 
B\rightarrow {\frak M}_{1}(Y) \} _{n\in \NN }$
is equicontinuous on $B.$
We set $J_{meas}(\tau ):= {\frak M}_{1}(Y)\setminus 
F_{meas}(\tau ).$
\item We denote by $F_{meas}^{0}(\tau )$ the 
set of $\mu \in {\frak M}_{1}(Y)$ satisfying that 
$\{ (M_{\tau }^{\ast })^{n}:
{\frak M}_{1}(Y)\rightarrow {\frak M}_{1}(Y)\} _{n\in \NN }$ is 
equicontinuous at the one point $\mu .$ 
Note that 
$F_{meas}(\tau )\subset F_{meas}^{0}(\tau ).$ 
\item 
We set $J_{meas}^{0}(\tau ):= {\frak M}_{1}(\CCI  )\setminus F_{meas}^{0}(\tau ).$ 
\end{enumerate}
\end{df}
\begin{rem}
We have $F_{meas}(\tau )\subset F_{meas}^{0}(\tau )$ and $J_{meas}^{0}(\tau )\subset J_{meas}(\tau ).$ 
\end{rem}
\begin{rem}
\label{r:ggt}
Let $\Gamma $ be a closed subset of an open subset ${\cal U}$ of Rat. Then there exists a 
$\tau \in {\frak M}_{1}({\cal U})$ such that 
supp$\,\tau $ (in the sense of Definition~\ref{d:d0}) is equal to $\Gamma .$ 
By using this fact, we sometimes apply the results on random complex dynamics  
to the study of the dynamics of rational semigroups. 
\end{rem}
\begin{df}
\label{d:Phi}
Let $Y$ be a compact metric space. 
Let $\Phi :Y \rightarrow {\frak M}_{1}(Y )$ be the topological embedding 
defined by: $\Phi (z):=\delta _{z}$, where $\delta _{z}$ denotes the 
Dirac measure at $z.$ Using this topological embedding $\Phi :Y \rightarrow {\frak M}_{1}(Y )$, 
we regard $Y $ as a compact subset of ${\frak M}_{1}(Y ).$ 
\end{df}
\begin{rem}
\label{r:Phi}
If $h\in \Rat $ and $\tau =\delta _{h}$, then 
we have $M_{\tau }^{\ast }\circ \Phi = \Phi \circ h$ on $\CCI .$ 
Moreover, for a general $\tau \in {\frak M}_{1}(\Rat)$, 
$M_{\tau }^{\ast }(\mu )=\int h_{\ast }(\mu )d\tau (h)$ for each 
$\mu \in {\frak M}_{1}(\CCI ).$ 
Therefore, for a general $\tau \in {\frak M}_{1}(\Rat)$, 
the map $M_{\tau }^{\ast }:{\frak M}_{1}(\CCI )\rightarrow {\frak M}_{1}(\CCI )$ 
can be regarded as the ``averaged map'' on the extension ${\frak M}_{1}(\CCI )$ of 
$\CCI .$ 
\end{rem}

\begin{df}
\label{d:manyFJ}
Let $Y$ be a compact metric space. 
Let $\tau \in {\frak M}_{1}(\CMX ).$ 
Regarding $Y $ as a compact subset of ${\frak M}_{1}(Y)$ as in Definition~\ref{d:Phi}, we use 
the following notation.   
\begin{enumerate}
\item 
We denote by $F_{pt}(\tau )$ the set of 
$z\in Y $ satisfying that 
there exists a neighborhood $B$ of $z $ in $Y $ such that 
the sequence$\{ (M_{\tau }^{\ast })^{n}|_{B}:B\rightarrow 
{\frak M}_{1}(Y )\} _{n\in \NN }$ is equicontinuous on $B.$ 
We set $J_{pt}(\tau ):= Y \setminus F_{pt}(\tau ).$ 
\item Similarly, we denote by 
$F_{pt }^{0}(\tau )$ the set of 
$z\in Y $ such that 
the sequence $\{ (M_{\tau }^{\ast })^{n}|_{Y }: Y 
\rightarrow {\frak M}_{1}(Y)\} _{n\in \NN }$ is 
equicontinuous at the one point $z\in Y.$ 
We set $J_{pt}^{0}(\tau ):= Y \setminus F_{pt}^{0}(\tau ).$ 
\end{enumerate}
Also, the set $J_{\ker }(G_{\tau })$ is called the 
{\bf kernel Julia set of $\tau .$} 
\end{df}
\begin{rem}
We have $F_{pt}(\tau )\subset F_{pt}^{0}(\tau )$ and  
$J_{pt}^{0}(\tau )\subset J_{pt}(\tau )\cap J_{meas}^{0}(\tau )$ (regarding $Y$ as a compact subset of 
${\frak M}_{1}(Y)$ by using the topological embedding $\Phi :Y\rightarrow {\frak M}_{1}(Y)$).
\end{rem}
\begin{rem}
If $\tau =\delta _{h}\in {\frak M}_{1}(\Ratp)$ with $h\in  \Ratp $, then 
$J_{pt}^{0}(\tau )$ and $J_{meas}(\tau )$ are uncountable. In fact, 
we have $\emptyset \neq J(h)\subset J_{pt}^{0}(\tau )$ and $J(h)$ is uncountable. 
\end{rem}

\begin{lem}
\label{l:ttaugy0y}
Let $Y$ be a compact metric space. 
Let $\tau \in {\frak M}_{1}(\emCMX). $
Let $y\in Y.$ Suppose 
$\tilde{\tau }(\{ \gamma \in (\emCMX )^{\NN }\mid y\in J_{\gamma ,0}\})=0.$ 
Then $y\in F_{pt}^{0}(\tau ).$  
\end{lem}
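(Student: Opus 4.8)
The plan is to show that $y \in F_{pt}^0(\tau)$ by verifying that the sequence $\{(M_\tau^*)^n : Y \to {\frak M}_1(Y)\}_{n \in \NN}$ is equicontinuous at the point $\delta_y = \Phi(y)$, i.e., for every bounded continuous $\varphi : Y \to \CC$ and every $\ve > 0$, there is a neighborhood $U$ of $y$ so that $|M_\tau^n(\varphi)(z) - M_\tau^n(\varphi)(y)| < \ve$ for all $z \in U$ and all $n \in \NN$. The starting point is the probabilistic representation $M_\tau^n(\varphi)(z) = \int_{X_\tau} \varphi(\gamma_{n,1}(z))\, d\tilde\tau(\gamma)$, which reduces matters to controlling $\int |\varphi(\gamma_{n,1}(z)) - \varphi(\gamma_{n,1}(y))|\, d\tilde\tau(\gamma)$ uniformly in $n$.

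First I would fix $\ve > 0$ and use the hypothesis $\tilde\tau(\{\gamma \mid y \in J_{\gamma,0}\}) = 0$, equivalently $\tilde\tau(\bigcup_{\gamma} F^{\gamma,0}$-fiber over $y) = 1$. By the characterization (\ref{eq:cupfg0}) in Lemma~\ref{l:ufgmeas}, for $\tilde\tau$-a.e.\ $\gamma$ we have $\lim_{m\to\infty} \sup_{n\geq 1} \operatorname{diam} \gamma_{n,1}(B(y,\tfrac1m)) = 0$. For each $m \in \NN$ set $A_m := \{\gamma \in X_\tau \mid \sup_{n\geq 1} \operatorname{diam} \gamma_{n,1}(B(y,\tfrac1m)) \leq \delta\}$, where $\delta > 0$ is chosen (using uniform continuity of $\varphi$ on the compact space $Y$) so that $d(a,b) \leq \delta$ implies $|\varphi(a) - \varphi(b)| \leq \ve/2$. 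The sets $A_m$ are measurable (the map $\gamma \mapsto \operatorname{diam}\gamma_{n,1}(B(y,\tfrac1m))$ is measurable and we take a countable sup, then the sublevel set) and increasing in $m$, and their union has full $\tilde\tau$-measure; hence there is $m_0$ with $\tilde\tau(A_{m_0}) > 1 - \ve/(4\|\varphi\|_\infty + 1)$.

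Next I would take $U = B(y, \tfrac1{m_0})$. For $z \in U$ and any $n$, split the integral $\int_{X_\tau} |\varphi(\gamma_{n,1}(z)) - \varphi(\gamma_{n,1}(y))|\, d\tilde\tau(\gamma)$ over $A_{m_0}$ and its complement. On $A_{m_0}$, both $\gamma_{n,1}(z)$ and $\gamma_{n,1}(y)$ lie in $\gamma_{n,1}(B(y,\tfrac1{m_0}))$, a set of diameter $\leq \delta$, so the integrand is $\leq \ve/2$; this part contributes at most $\ve/2$. On $X_\tau \setminus A_{m_0}$, the integrand is bounded by $2\|\varphi\|_\infty$ and the measure is at most $\ve/(4\|\varphi\|_\infty + 1)$, contributing less than $\ve/2$. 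Adding, $|M_\tau^n(\varphi)(z) - M_\tau^n(\varphi)(y)| \leq |\int (\varphi\circ\gamma_{n,1}(z) - \varphi\circ\gamma_{n,1}(y))\, d\tilde\tau| \leq \ve$ for all $n$ and all $z \in U$. Since $\varphi$ and $\ve$ were arbitrary and the metric $d_0$ on ${\frak M}_1(Y)$ is generated by a countable family of such test functions, this gives equicontinuity of $\{(M_\tau^*)^n \circ \Phi\}$ at $y$, i.e.\ $y \in F_{pt}^0(\tau)$.

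The one genuinely delicate point — and the step I would be most careful about — is the measurability bookkeeping: confirming that $\gamma \mapsto \operatorname{diam}\gamma_{n,1}(B(y,\tfrac1m))$ is Borel measurable on $X_\tau$ (so that $A_m$ is measurable and Fubini/Tonelli applies to interchange $\int_{X_\tau}$ with the a.e.\ statement), which follows from the joint continuity of evaluation $(\gamma, z) \mapsto \gamma_{n,1}(z)$ and separability, and is already implicit in the proof of Lemma~\ref{l:ufgmeas}. Everything else is the routine $\ve/2$ split; the conceptual content is entirely in translating the fiberwise stochastic equicontinuity of orbits (encoded by $J_{\gamma,0}$ having measure zero) into equicontinuity of the averaged dynamics via dominated convergence.
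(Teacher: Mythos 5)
Your proof is correct and follows essentially the same route as the paper's: both pass through the characterization (\ref{eq:cupfg0}) of Lemma~\ref{l:ufgmeas} to get the a.e.\ fiberwise statement $\sup_{n\geq 1}\mbox{diam}\,\gamma_{n,1}(B(y,\frac{1}{m}))\to 0$, then split the integral $\int|\varphi(\gamma_{n,1}(z))-\varphi(\gamma_{n,1}(y))|\,d\tilde{\tau}$ over a nearly full-measure set on which the orbits are uniformly contracted and its small complement. The only cosmetic difference is that you extract the good set by continuity of measure along the increasing sets $A_{m}$ where the paper invokes Egoroff's theorem; these are interchangeable here.
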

\begin{proof}
By (\ref{eq:cupfg}) in Lemma~\ref{l:ufgmeas} and the assumption of our lemma,  
we obtain that for $\tilde{\tau }$-a.e.$\gamma \in (\CMX)^{\NN }$, 
$\lim_{m\rightarrow \infty }\sup _{n\geq 1}\mbox{diam}(\gamma _{n,1}(B(y,\frac{1}{m})))=0.$
Let $\epsilon >0.$ By Egoroff's theorem, 
there exists a Borel subset $A_{1}$ of $X_{\tau }$ with 
$\tilde{\tau }(X_{\tau }\setminus A_{1})<\epsilon $ 
such that 
\begin{equation}
\label{eq:ngeq1d}
\sup _{n\geq 1}\mbox{diam}(\gamma _{n,1}(B(y,\frac{1}{m})))\rightarrow 0
\end{equation} 
as $m\rightarrow \infty $ uniformly on $A_{1}.$ 
Let $\varphi \in C(Y).$ 
Then there exists a $\delta _{1}>0$ such that 
if $d(z_{1},z_{2})<\delta _{1}$ then $|\varphi (z_{1})-\varphi (z_{2})|<\epsilon .$ 
By (\ref{eq:ngeq1d}), 
there exists a $\delta _{2}>0$ such that 
for each $z\in Y$ with $d(z,y)<\delta _{2}$, for each $\gamma \in A_{1}$, 
and for each $n\in \NN $, we have 
$d(\gamma _{n,1}(z),\gamma _{n,1}(y))<\delta _{1}.$  
Therefore for each $z\in Y$ with $d(z,y)<\delta _{2}$, 
we have 
\begin{eqnarray*}
|M_{\tau }^{n}(\varphi )(z)-M_{\tau }^{n}(\varphi )(y)| 
& \leq  & \int _{A_{1}}|\varphi (\gamma _{n,1}(z)-\varphi (\gamma _{n,1}(y))|d\tilde{\tau }(\gamma )
+\int _{X_{\tau }\setminus A_{1}}|\varphi (\gamma _{n,1}(z))-\varphi (\gamma _{n,1}(y))|
d\tilde{\tau }(\gamma )\\ 
& \leq & \tilde{\tau }(A_{1})\cdot \epsilon +2\epsilon \cdot \sup _{a\in \CCI }|\varphi (a)|\\ 
& \leq & \epsilon (1+2\| \varphi \| _{\infty }). 
\end{eqnarray*}
It follows that $y\in F_{pt }^{0}(\tau ).$ 
Thus we have proved our lemma. 
\end{proof}

For a smooth Riemannian real manifold $Y$ with $\dim Y=p$, we denote by 
Leb$_{p}$ the ($p$-dimensional) Lebesgue measure on $Y.$ 

\begin{cor}
\label{c:ttauaegl0}
Let $Y$ be a compact smooth manifold with $\dim (Y)=p$ and let $\tau \in {\frak M}_{1}(\emCMX ).$ 
Suppose that for $\tilde{\tau }$-a.e.$\gamma \in (\emCMX)^{\NN }$, 
Leb$_{p}(J_{\gamma ,0})=0.$ Then Leb$_{p}(J_{pt}^{0}(\tau ))=0.$ 

\end{cor}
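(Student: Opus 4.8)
The plan is to deduce this directly from Lemma~\ref{l:ttaugy0y} together with Fubini's theorem, exactly as one passes from a pointwise statement to a measure-zero statement. First I would consider the set
$$E:=\bigcup _{\gamma \in (\CMX)^{\NN }}J^{\gamma ,0}=\{ (\gamma ,y)\in (\CMX)^{\NN }\times Y\mid y\in J_{\gamma ,0}\} ,$$
which, restricting to $\gamma \in X_{\tau }=(\G_{\tau })^{\NN }$, is a Borel subset of $X_{\tau }\times Y$ by Lemma~\ref{l:ufgmeas} (applied with $\Gamma =\G_{\tau }$, a closed subset of $\CMX$, hence in particular a closed subset of an open subset of $\CMX$). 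Since $E$ is Borel, the product measure $\tilde{\tau }\otimes \mbox{Leb}_{p}$ of $E\cap (X_{\tau }\times Y)$ is well-defined and Fubini's theorem applies to its indicator function.

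Next I would compute that product measure in two ways. Slicing in $\gamma $: for $\tilde{\tau }$-a.e. $\gamma \in X_{\tau }$ we have $\mbox{Leb}_{p}(J_{\gamma ,0})=0$ by hypothesis (note $J_{\gamma ,0}=Y\setminus F_{\gamma ,0}$), so $\int _{X_{\tau }}\mbox{Leb}_{p}(\{ y\mid y\in J_{\gamma ,0}\} )\, d\tilde{\tau }(\gamma )=0$. Hence $(\tilde{\tau }\otimes \mbox{Leb}_{p})(E\cap (X_{\tau }\times Y))=0$. Slicing the other way, for $\mbox{Leb}_{p}$-a.e. $y\in Y$ we get $\tilde{\tau }(\{ \gamma \in X_{\tau }\mid y\in J_{\gamma ,0}\} )=0$. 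Now $\tilde{\tau }$ is supported on $X_{\tau }=(\G_{\tau })^{\NN }$, so $\tilde{\tau }(\{ \gamma \in (\CMX )^{\NN }\mid y\in J_{\gamma ,0}\} )=\tilde{\tau }(\{ \gamma \in X_{\tau }\mid y\in J_{\gamma ,0}\} )=0$ for $\mbox{Leb}_{p}$-a.e. $y$. For each such $y$, Lemma~\ref{l:ttaugy0y} gives $y\in F_{pt}^{0}(\tau )$. Therefore $J_{pt}^{0}(\tau )=Y\setminus F_{pt}^{0}(\tau )$ is contained (up to the $\mbox{Leb}_{p}$-null set of exceptional $y$'s) in a $\mbox{Leb}_{p}$-null set, which yields $\mbox{Leb}_{p}(J_{pt}^{0}(\tau ))=0$.

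The only genuine point requiring care — the ``main obstacle,'' though it is minor — is the measurability of $E$, i.e. making sure Fubini is legitimately applicable; this is precisely what Lemma~\ref{l:ufgmeas} is for, via the explicit description of $\bigcup _{\gamma }F^{\gamma ,0}$ as a countable intersection/union of sets cut out by continuous quantities. A secondary technical point is to record that $J_{pt}^{0}(\tau )$ is itself $\mbox{Leb}_{p}$-measurable (so that ``$\mbox{Leb}_{p}(J_{pt}^{0}(\tau ))=0$'' is meaningful): one may either note that $F_{pt}^{0}(\tau )$ is a $G_{\delta }$-type set from its equicontinuity-at-a-point definition, or simply observe that we have shown $J_{pt}^{0}(\tau )$ is contained in a $\mbox{Leb}_{p}$-null Borel set and invoke completeness of Lebesgue measure. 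Everything else is a routine application of Fubini's theorem and the already-proved Lemma~\ref{l:ttaugy0y}.
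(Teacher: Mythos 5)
Your proposal is correct and follows essentially the same route as the paper: Lemma~\ref{l:ufgmeas} for Borel measurability, Fubini's theorem to pass from the $\gamma$-slices to the $y$-slices, and Lemma~\ref{l:ttaugy0y} to conclude that Leb$_{p}$-a.e.\ $y$ lies in $F_{pt}^{0}(\tau)$. The extra remark on the measurability of $J_{pt}^{0}(\tau)$ (via completeness of Lebesgue measure) is a reasonable, if minor, addition not spelled out in the paper.
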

\begin{proof}
Under the assumptions of our corollary, Lemma~\ref{l:ufgmeas} and 
Fubini's theorem imply that for Leb$_{p}$-a.e.$y\in Y$, 
we have $\tilde{\tau }(\{ \gamma \in (\CMX)^{\NN }\mid y\in J_{\gamma ,0}\})=0.$ 
By Lemma~\ref{l:ttaugy0y}, it follows that 
for Leb$_{p}$-a.e.$y\in Y$, $y\in F_{pt}^{0}(\tau ).$ 
Thus we have proved our corollary.
\end{proof}

The following lemma is very important and useful to prove many results. 

\begin{lem}
\label{l:voygvv}
Let $Y$ be a compact metric space. 
Let $\tau \in {\frak M}_{1}(\emCMX).$ 
Let $V$ be a non-empty open subset of $Y$. 
Suppose that for each $g\in \mbox{supp}\,\tau$, $g(V)\subset V.$ 
Let $L_{\ker }:=\cap _{g\in G_{\tau }}g^{-1}(Y\setminus V)$. 
Let $y\in Y$ and let 
$E:= \{ \gamma \in X_{\tau }\mid y\in \cap _{j=1}^{\infty }\gamma _{j,1}^{-1}(Y\setminus V)\} $ (Remark: $E$ depends 
on $V$).  
Then for $\tilde{\tau }$-a.e.$\gamma \in E$, we have 
$d(\gamma _{n,1}(y), L_{\ker })\rightarrow 0$ as $n\rightarrow \infty .$ 
\end{lem}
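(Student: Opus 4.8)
\textbf{Proof proposal for Lemma~\ref{l:voygvv}.}

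The plan is to reduce the statement to a Borel--Cantelli--type argument applied to a suitable measurable subset of the shift space $X_{\tau }$, exploiting the forward invariance $g(V)\subset V$ for all $g\in \G_{\tau }$. First I would set up the natural conditional framework: for $y\in Y$ fixed, consider the event $E=\{\gamma \in X_{\tau }\mid y\in \bigcap_{j\geq 1}\gamma_{j,1}^{-1}(Y\setminus V)\}=\{\gamma \mid \gamma_{n,1}(y)\notin V \text{ for all }n\geq 1\}$. If $\tilde{\tau}(E)=0$ there is nothing to prove, so assume $\tilde{\tau}(E)>0$ and work with the conditional probability $\tilde{\tau}_{E}:=\tilde{\tau}(\cdot \mid E)$. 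The key observation is that $L_{\ker }=\bigcap_{g\in G_{\tau }}g^{-1}(Y\setminus V)$ is compact and forward invariant under every $h\in G_{\tau }$ (since $g(V)\subset V$ for generators forces $h(V)\subset V$ for all $h\in G_{\tau }$, hence $h^{-1}(Y\setminus V)\supset Y\setminus V$ is not quite what we need — rather, $L_{\ker }$ is $G_\tau$-forward invariant directly from its definition as the largest forward invariant subset of $Y\setminus V$).

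Next I would argue by contradiction along the following lines. Suppose the conclusion fails on a set of positive $\tilde{\tau}$-measure inside $E$; then there is an $\varepsilon>0$ and a set $E'\subset E$ with $\tilde{\tau}(E')>0$ such that for each $\gamma\in E'$ the orbit $\gamma_{n,1}(y)$ returns infinitely often to the compact set $K_{\varepsilon}:=\{z\in Y\setminus V\mid d(z,L_{\ker })\geq \varepsilon\}$. The central claim to establish is that there is a $\delta>0$ (depending only on $\varepsilon$, $\tau$, $V$) such that for every $z\in K_{\varepsilon}$,
\[
\tilde{\tau}(\{\gamma\in X_{\tau}\mid \gamma_{n,1}(z)\in V \text{ for some }n\geq 1\})\geq \delta .
\]
This holds because $z\notin L_{\ker }$ means some $h\in G_{\tau }$ has $h(z)\in V$; writing $h=h_{k}\circ\cdots\circ h_{1}$ with $h_{i}\in \G_{\tau }$, a neighborhood argument (using continuity of the $h_{i}$ and compactness of $K_{\varepsilon}$ together with a finite subcover) produces a uniform word length $N$ and a uniform lower bound $\delta$ on the $\tilde{\tau}$-measure of the cylinder of admissible initial words of length $N$ that push any point of $K_{\varepsilon}$ into $V$. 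Then, applying the Markov property at the (stopping) times of successive visits to $K_{\varepsilon}$, each visit carries probability at least $\delta$ of subsequently entering $V$, and once the orbit enters $V$ it stays in $V$ forever (forward invariance), hence $\gamma\notin E$. Infinitely many independent-ish trials each succeeding with probability $\geq \delta$ gives success almost surely, contradicting $\tilde{\tau}(E')>0$; this is the conditional Borel--Cantelli step.

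The final step is to upgrade ``eventually stays out of $K_{\varepsilon}$ for every $\varepsilon$'' to ``$d(\gamma_{n,1}(y),L_{\ker })\to 0$''. For $\tilde{\tau}$-a.e.\ $\gamma\in E$ and every $\varepsilon=1/m$, there is $n_{0}(\gamma,m)$ with $\gamma_{n,1}(y)\notin K_{1/m}$ for all $n\geq n_{0}$; since $\gamma_{n,1}(y)\in Y\setminus V$ throughout (because $\gamma\in E$), this forces $d(\gamma_{n,1}(y),L_{\ker })<1/m$ for all $n\geq n_{0}$. Taking the intersection over the countably many values $m\in\NN$ of the full-measure sets gives the claim for $\tilde{\tau}$-a.e.\ $\gamma\in E$.

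I expect the main obstacle to be making the ``uniform escape probability'' claim fully rigorous: one must handle the stopping times of returns to $K_{\varepsilon}$ carefully (they are genuine stopping times with respect to the natural filtration on $X_{\tau}$), verify the strong Markov property in this i.i.d.\ setting, and—most delicately—obtain the constant $\delta$ uniformly over all $z\in K_{\varepsilon}$ via a compactness/finite-subcover argument, since a priori the word $h$ sending $z$ into $V$ and its length depend on $z$. A clean way to organize this is to first prove the purely topological fact that $\{z\in Y\setminus V: \exists h\in G_\tau,\ h(z)\in V\}$ is relatively open in $Y\setminus V$ with complement exactly $L_{\ker}$, then cover the compact set $K_\varepsilon$ by finitely many such ``escape neighborhoods,'' each coming with its own finite word and positive cylinder measure, and take $\delta$ to be the minimum of these finitely many measures and $N$ the maximum of the finitely many word lengths.
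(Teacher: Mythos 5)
Your strategy is correct and its core coincides with the paper's: both proofs hinge on the compactness of $K_{\varepsilon}=(Y\setminus V)\setminus B(L_{\ker},\varepsilon)$, a finite subcover by ``escape neighborhoods'' $U_{z_j}$ with words $g_{z_j}$ sending $\overline{U_{z_j}}$ into $V$, padding to a common word length $l$ (legitimate because $g(V)\subset V$), a uniform lower bound $\delta_0$ on the cylinder measures, and the absorption property that entering $V$ at any time forces $\gamma\notin E$. Where you diverge is in the final probabilistic bookkeeping. You propose to run a conditional Borel--Cantelli argument along the successive return times to $K_{\varepsilon}$, which requires setting up stopping times, spacing the return times by at least $l$ so the escape windows do not overlap, and invoking the strong Markov property for the i.i.d.\ sequence; you correctly flag these as the delicate points. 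The paper sidesteps stopping times altogether: for each residue $i\in\{0,\dots,l-1\}$ it considers the fixed grid of times $i+nl$, observes that the events $H(\delta,i,n)=\{\gamma_{i+nl,1}(y)\in K_{\varepsilon},\ \gamma_{i+(n+1)l,1}(y)\in V\}$ are pairwise disjoint in $n$ (by absorption into $V$), and uses independence of coordinate blocks to get $\tilde{\tau}(H(\delta,i,n))\geq\delta_0\,\tilde{\tau}(I(\delta,i,n))$ where $I(\delta,i,n)$ is the event of being in $K_{\varepsilon}$ at time $i+nl$; summing gives $\sum_n\tilde{\tau}(I(\delta,i,n))\leq 1/\delta_0<\infty$ and ordinary Borel--Cantelli finishes. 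The paper's version buys you a purely elementary argument with no conditioning and no strong Markov property to verify; your version is also valid once the stopping-time details you identified are filled in, and your concluding diagonal argument over $\varepsilon=1/m$ matches the paper's reduction to $\tilde{\tau}(E\setminus C(\delta))=0$ for each $\delta>0$.
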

\begin{proof}
For each $\delta >0$ and $n\in \NN $, let 
$A(\delta ,n):=\{ \gamma \in E\mid \gamma _{n,1}(y)\in 
(Y\setminus V)\setminus B(L_{\ker },\delta )\}$
 and 
 $C(\delta ):=\{ \gamma \in E\mid \exists n_{0}\in \NN \mbox{ s.t. } 
 \forall n\geq n_{0}, \gamma _{n,1}(y)\in B(L_{\ker },\delta )\} .$ 
 In order to prove our lemma, 
 it suffices to show that 
\begin{equation}
\label{eq:tauesc}
\tilde{\tau }(E\setminus C(\delta ))=0 \mbox{ for each }\delta >0.
\end{equation} 
Since $E\setminus C(\delta )=\cap _{N=1}^{\infty }\cup _{n=N}^{\infty }A(\delta ,n)$, 
we have 
$$\tilde{\tau }(E\setminus C(\delta ))=\lim _{N\rightarrow \infty }
\tilde{\tau }(\cup _{n=N}^{\infty }A(\delta ,n))
\leq \lim _{N\rightarrow \infty }\sum _{n=N}^{\infty }\tilde{\tau }(A(\delta ,n)).$$ 
Thus, in order to show (\ref{eq:tauesc}), it suffices to prove that 
\begin{equation}
\label{eq:sumttadf}
\sum _{n=1}^{\infty }\tilde{\tau }(A(\delta ,n))<\infty \mbox{ for each }\delta >0.
\end{equation}
In order to prove (\ref{eq:sumttadf}), let $\delta >0$. Then  
for each  $z\in (Y\setminus V)\setminus B(L_{\ker },\delta )$, 
there exists an element $g_{z}\in G$ and a neighborhood $U_{z}$ of $z$ in $Y$ 
such that $g_{z}(\overline{U_{z}})\subset V.$ 
Since $H:=(Y\setminus V)\setminus B(L_{\ker },\delta )$ is compact, 
there exist  finitely many points $z_{1},\ldots, z_{r}\in Y$ such that 
$H\subset \cup _{j=1}^{r}U_{z_{j}}.$  
Since $g(V)\subset V$ for each $g\in \mbox{supp}\,\tau$, we may assume that 
there exists an $l\in \NN $ such that 
for each $j=1,\ldots, r$, there exists an element $\gamma ^{j}=(\gamma _{1}^{j},\ldots, 
\gamma _{l}^{j})\in (\mbox{supp}\,\tau)^{l}$ with 
$g_{z_{j}}=\gamma _{l}^{j}\circ \cdots \circ \gamma _{1}^{j}.$ 
Then for each $j=1,\ldots, r$, there exists a neighborhood $W_{j}$ of $\gamma ^{j}$ in 
$(\mbox{supp}\,\tau)^{l}$ such that 
for each $\alpha =(\alpha _{1},\ldots ,\alpha _{l})\in W_{j}$, 
$\alpha _{l}\circ \cdots \circ \alpha _{1}(\overline{U_{z_{j}}})\subset V.$  
Let $\delta _{0}:=\min _{j=1}^{r}\tau ^{l}(W_{j})>0$, 
where $\tau ^{l}=\otimes _{n=1}^{l}\tau \in {\frak M}_{1}((\CMX )^{l}).$ 
For each $i=0,1,\ldots ,l-1$ and for each 
$n\in \NN $, let 
$$H(\delta , i, n):=\{ \gamma \in (\mbox{supp}\,\tau)^{\NN }\mid 
\gamma _{i+nl,1}(y)\in (Y\setminus V)\setminus B(L_{\ker },\delta ), 
\gamma _{i+(n+1)l,1}(y)\in V\}$$ 
and 
$$I(\delta ,i,n):=\{ \gamma \in (\mbox{supp}\,\tau)^{\NN}\mid 
\gamma _{i+nl,1}(y)\in  (Y\setminus V)\setminus B(L_{\ker },\delta )\}.$$ 
Note that if $n\neq m$ then $H(\delta ,i,n)\cap H(\delta , i,m)=\emptyset .$ 
Let $Q_{1},\ldots ,Q_{s}$ be mutually disjoint Borel subsets of 
$(Y\setminus V)\setminus B(L_{\ker },\delta )$ such that 
$(Y\setminus V)\setminus B(L_{\ker },\delta)=\cup _{p=1}^{s}Q_{p}$ 
and such that for each $p=1,\ldots ,s$ there exists a $j(p)\in \{ 1,\ldots ,r\}$ 
with $Q_{p}\subset U_{z_{j(p)}}.$ 
Then for each $i=0,1\ldots ,l-1$, we have 
\begin{eqnarray*}
\tilde{\tau }(H(\delta ,i,n)) 
 & = & \tau ^{i+(n+1)l}(\coprod _{p=1}^{s}\{ \gamma \in (\mbox{supp}\,\tau)^{i+(n+1)l}
\mid \gamma _{i+nl,1}(y)\in Q_{p}, \gamma _{i+(n+1)l,1}(y)\in V\} )\\ 
& = & \sum _{p=1}^{s}\tau ^{i+(n+1)l}(\{ \gamma \in 
(\mbox{supp}\,\tau)^{i+(n+1)l}
\mid \gamma _{i+nl,1}(y)\in Q_{p}, \gamma _{i+(n+1)l,1}(y)\in V\} )\\ 
& \geq & \sum _{p=1}^{s}\tau ^{i+(n+1)l}(\{ \gamma \in (\mbox{supp}\,\tau)^{i+(n+1)l}
\mid \gamma _{i+nl,1}(y)\in Q_{p}, (\gamma _{i+nl+1}, \ldots, \gamma _{i+(n+1)l})\in W_{j(p)}\} )\\
& = & \sum _{p=1}^{s}\tau ^{i+nl}(\{ \gamma \in 
(\mbox{supp}\,\tau)^{i+nl}\mid \gamma _{i+nl,1}(y)\in Q_{p}\})\cdot \tau ^{l}(W_{j(p)})\\ 
& \geq & \delta _{0}\tilde{\tau }(I(\delta ,i,n)), 
\end{eqnarray*}
where $\coprod$ denotes the disjoint union. 
Therefore
\begin{eqnarray*}
1 & \geq & \tilde{\tau }(\bigcup _{n\in \NN }\{ \gamma \in (\mbox{supp}\,\tau)^{\NN }\mid 
\gamma _{n,1}(y)\in V\} )\\ 
& \geq & \tilde{\tau }(\bigcup _{n=1}^{\infty }H(\delta ,i,n))  
 =  \sum _{n=1}^{\infty }\tilde{\tau }(H(\delta ,i,n))
 \geq  \sum _{n=1}^{\infty }\delta _{0}\tilde{\tau }(I(\delta ,i,n)).  
\end{eqnarray*}
Thus $\sum _{n=1}^{\infty }\tilde{\tau }(I(\delta ,i,n))<\infty $ 
for each $i=0,1,\ldots, l-1.$ 
Hence 
$$\sum _{n=1}^{\infty }\tilde{\tau }(A(\delta ,n))
=\sum _{i=0}^{l-1}\sum _{n=1}^{\infty }\tilde{\tau }(I(\delta ,i,n))<\infty .$$ 
Therefore (\ref{eq:sumttadf}) holds. Thus we have proved our lemma. 
\end{proof}

\subsection{Minimal sets with finite cardinality and related lemmas}
\label{ss:minimal}
In this subsection, we show some lemmas regarding random dynamical systems having 
 minimal sets with finite cardinality. 
\begin{df}
\label{d:CptY}
For a topological space $Y$, we denote by Cpt$(Y)$ the space of all non-empty compact subsets of $Y$. 
If $Y$ is a metric space, we endow Cpt$(Y)$  
with the Hausdorff metric.
\end{df}
\begin{df}
\label{d:minimal}
Let $X$ be a metric space and let $G$ be a subsemigroup of $\CMX .$ 
Let $Y\in \Cpt (X)$ be such that 
$G(Y)\subset Y.$  
Let $K\in \Cpt(Y).$ 
We say that $K$ is a {\bf minimal set of $(G,Y)$} if 
$K$ is minimal among the space 
$\{ L\in \Cpt(Y)\mid G(L)\subset L\} $ with respect to inclusion. 
Moreover, we denote by $\Min(G,Y)$ 
the set of all minimal sets for $(G,Y).$
\end{df}
\begin{rem}
\label{r:minimal}
Let $G$ be a rational semigroup. 
By Zorn's lemma, it is easy to see that 
if $K_{1}\in \Cpt(\CCI )$ and $G(K_{1})\subset K_{1}$, then 
there exists a $K\in \Min(G,\CCI )$ with $K\subset K_{1}.$  
Moreover, it is easy to see that 
for each $K\in \Min(G,\CCI )$ and each $z\in K$, 
$\overline{G(z)}=K.$ In particular, if $K_{1},K_{2}\in \Min(G,\CCI )$ with $K_{1}\neq K_{2}$, then 
$K_{1}\cap K_{2}=\emptyset .$ Moreover, 
by the formula $\overline{G(z)}=K$, we obtain that 
for each $K\in \Min(G,\CCI )$, either (1) $\sharp K<\infty $ or (2) $K$ is perfect and 
$\sharp K>\aleph _{0}.$ 
Furthermore, it is easy to see that 
if $\Gamma \in \Cpt(\Rat), G=\langle \Gamma \rangle $, and 
$K\in \Min(G,\CCI )$, then $K=\bigcup _{h\in \Gamma }h(K).$   
\end{rem}
\begin{rem}
\label{r:minsetcorr}
In \cite[Remark 3.9]{Splms10}, 
for the statement ``for each $K\in \Min (G,Y)$, either 
(1) $\sharp K<\infty $ or (2) $K$ is perfect'', 
we should assume that each element $g\in G$ is a finite-to-one map. See \cite[Remark 2.24]{Sadv}. 
\end{rem}
We now show some lemmas on the minimal sets whose cardinalities are finite 
(Lemmas~\ref{l:fininvset}, \ref{l:nlkfa}).  
\begin{df} 
\label{d:chainperiod}
Let $S$ be a finite space and let ${\cal S}=2^{S}.$ 
Let $\{ X_{n}\} $ be a Markov chain on the state space $S$ 
with transition probability $p.$ Suppose that 
$p$ is irreducible. Then 
by \cite[Lemma 6.6.2]{Du}, there exists a positive integer $d$ 
such that for each $x\in S$, the number $d$ is equal 
to the greatest common divisor of $\{ n\in \NN \mid 
p^{n}(x,x)>0\}.$  This $d$ is called the period of this Markov chain. 
\end{df}

\begin{df}
\label{d:Gtaur}
Let $Y$ be a compact metric space. 
Let $\tau \in {\frak M}_{1,c}(\CMX).$ 
For each $r\in \NN $, we set 
$G_{\tau }^{r}:=
\langle \{ g_{1}\circ \cdots \circ g_{r}\mid  g_{1},\ldots, g_{r}\in \mbox{supp}\,\tau\} \rangle.$ 
\end{df}
The following lemma is an easy consequence of 
\cite[Theorem 6.6.4, Lemma 6.7.1]{Du} and some fundamental  
arguments. 
\begin{lem}
\label{l:fininvset} 
Let $Y$ be a compact metric space. 
Let $\tau \in {\frak M}_{1,c}(\emCMX).$ 
Let $K$ be a nonempty finite subset of $Y .$ 
Suppose that $G_{\tau }(K)\subset K.$ 
Let $\{ K_{i}\mid i=1,\ldots, q\}=\emMin (G_{\tau }, K)$ where 
$q=\sharp \emMin (G_{\tau },K).$ For each $i=1,\ldots q$, 
let $p_{i}\in \NN $ be the period of the finite Markov chain with state space $K_{i}$ 
induced by $\tau $ (i.e. the finite Markov chain with state space $K_{i}$ whose 
transition probability $p(x,A)$ from $x\in K_{i}$ to $A\subset K_{i}$ satisfies  
$p(x,A)=\tau (\{ g\in \mbox{supp}\,\tau\mid g(x)\in A\} )$). 
Let $m=\prod _{i=1}^{q}p_{i}\in \NN .$    
Let $\{ H_{j}\mid j=1,\ldots, r\}=\emMin(G_{\tau }^{m},K)$ where 
$r=\sharp (\emMin(G_{\tau }^{m},K)).$ 
Then all of the following hold.
\begin{itemize}
\item[{\em (1)}]
Let $i=1,\ldots, q.$ Then $\sharp (\emMin(G_{\tau }^{p_{i}},K_{i}))=p_{i}$.  
Moreover, there exist 
$K_{i,1},\ldots K_{i,p_{i}}\in \emMin(G_{\tau }^{p_{i}},K_{i})$ 
such that $\{ K_{i,k}\mid k=1,\ldots, p_{i}\}=\emMin(G_{\tau }^{p_{i}},K_{i})$, 
$K_{i}=\cup _{k=1}^{p_{i}}K_{i,k}$ 
 and 
$h(K_{i,k})\subset K_{i,k+1}$ for each $h\in \mbox{supp}\,\tau$, where 
$K_{i,p_{i}+1}:=K_{i,1}.$ Also, for each $k=1,\ldots, p_{i}$ there exists a 
unique element $\omega _{i,k}\in {\frak M}_{1}(K_{i,k})$ such that 
$(M_{\tau }^{\ast })^{p_{i}}(\omega _{i,k})=\omega _{i,k}.$ 
Also, $M_{\tau }^{np_{i}}(\varphi )\rightarrow 
(\int \varphi \ d\omega _{i,k})1_{K_{i,k}}$ in $C(K_{i,k})$ as $n\rightarrow \infty $ 
for each $\varphi \in C(K_{i,k})$, supp$\, \omega _{i,k}=K_{i,k}$ and 
$M_{\tau }^{\ast }\omega _{i,k}=\omega _{i,k+1}$ in ${\frak M} _{1}(K_{i})$ 
for each $k=1,\ldots, p_{i}$, 
where $\omega _{i,p_{i}+1}:=\omega _{i,1}$. 
Here, for each subset $B$ of $Y$, we denote by 
$1_{B}$ the characteristic function of $B.$  
\item[{\em (2)}] 
We have $r=\sum _{i=1}^{q}p_{i}$ and $\cup _{j=1}^{r}H_{j}=\cup _{i=1}^{q}K_{i}.$ 
Moreover, we have that $\{ H_{j}\mid j=1,\ldots, r\} =\{ K_{i,k}\mid i=1,\ldots, q, k=1,\ldots, p_{i}\} 
=\emMin(G_{\tau }^{nm},K)$ for each $n\in \NN .$  
Moreover, for each $j=1,\ldots, r$, 
there exists a unique Borel probability measure $\eta _{j}$ on $H_{j}$ such that 
$(M_{\tau }^{m})^{\ast }(\eta _{j})=\eta _{j}$.  
Also, $M_{\tau }^{nm}(\varphi )\rightarrow (\int \varphi \ d\eta _{j})\cdot 1_{H_{j}}$ 
in $C(H_{j})$ as $n\rightarrow \infty $ for each $\varphi \in C(H_{j}).$ 
Also, supp$\, \eta _{j}=H_{j}$ for each 
$j=1,\ldots, r.$ Moreover, 
if $H_{j}=K_{i,k}$, then $\eta _{j}=\omega _{i,k}.$ 
\item[{\em (3)}] 
Let $y\in Y $ and let $\Omega $ be a Borel 
subset of 
$X_{\tau }.$ 
Let $A:=\{ \gamma \in \Omega \mid d(\gamma _{n,1}(y), K)\rightarrow 0 \ (n\rightarrow \infty )\} $
and $A_{j}:= \{ \gamma \in \Omega \mid d(\gamma _{nm,1}(y), H_{j})\rightarrow 0\ (n\rightarrow \infty )
\} $ for each $j=1,\ldots ,r.$ 
Then for each $\varphi \in C(Y)$, we have 
$\int _{A}\varphi (\gamma _{nm,1}(y))d\tilde{\tau }(\gamma )\rightarrow 
\sum _{j=1}^{r}\tilde{\tau }(A_{j})\int \varphi d\eta _{j}$ as $n\rightarrow \infty .$ 

 \end{itemize}

\end{lem}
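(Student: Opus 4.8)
The plan is to treat the statement as a decomposition of the long-term behaviour of the random orbit into the (finitely many) cyclic minimal sets, and then to reduce everything to the convergence statements for each individual $H_j$ under the iterated operator $M_\tau^m$. First I would set up the finite Markov chain on $K$ with transition probabilities $p(x,A)=\tau(\{g\in\Gamma_\tau\mid g(x)\in A\})$; since $G_\tau(K)\subset K$ and $K$ is finite, standard finite Markov chain theory (\cite{Du}) applies. The recurrent classes of this chain are precisely the minimal sets $K_1,\dots,K_q=\Min(G_\tau,K)$ (a set is minimal iff $\overline{G_\tau(z)}=K_i$ for all $z$, which for finite $K$ says exactly that $K_i$ is a closed communicating class), and each $K_i$ has a period $p_i$. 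The cyclic decomposition of a recurrent aperiodic-up-to-period class gives $K_i=\bigsqcup_{k=1}^{p_i}K_{i,k}$ with $h(K_{i,k})\subset K_{i,k+1}$ for all generators $h\in\Gamma_\tau$ (this is just the statement that the period-$p_i$ sub-chain on each $K_{i,k}$ is irreducible and aperiodic), which proves (1): the sets $K_{i,k}$ are exactly the minimal sets of $G_\tau^{p_i}$ inside $K_i$, and since the chain restricted to $K_{i,k}$ under $M_\tau^{p_i}$ is irreducible aperiodic, it has a unique stationary measure $\omega_{i,k}$, $M_\tau^{np_i}\varphi\to(\int\varphi\,d\omega_{i,k})1_{K_{i,k}}$ uniformly on $K_{i,k}$, $\operatorname{supp}\omega_{i,k}=K_{i,k}$, and $M_\tau^\ast\omega_{i,k}=\omega_{i,k+1}$.

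For (2), the key observation is that $m=\prod_i p_i$ is a common multiple of all periods, so for each $i$ the map $M_\tau^m$ fixes each $K_{i,k}$ setwise and acts on it as an iterate of an irreducible aperiodic chain; hence the minimal sets of $G_\tau^m$ inside $K$ are exactly the $K_{i,k}$, giving $r=\sum_i p_i$ and $\bigcup_j H_j=\bigcup_i K_i$. Because $m$ is a multiple of each $p_i$, the same holds with $m$ replaced by $nm$ for any $n$, so $\Min(G_\tau^{nm},K)=\{H_j\}$. The unique $(M_\tau^m)^\ast$-invariant measure $\eta_j$ on $H_j$ is then $\omega_{i,k}$ when $H_j=K_{i,k}$ (uniqueness follows since $M_\tau^{nm}$ restricted to $H_j$ is still an iterate of the irreducible aperiodic chain), and $M_\tau^{nm}\varphi\to(\int\varphi\,d\eta_j)1_{H_j}$ follows from the $p_i$-step convergence in (1) by passing to the subsequence of multiples of $m$; $\operatorname{supp}\eta_j=H_j$ is inherited from (1).

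For (3) I would argue as follows. Fix $y\in Y$, $\varphi\in C(Y)$ and $\epsilon>0$. On the event $A$ (the orbit of $y$ approaches $K$), for $\tilde\tau$-a.e.\ $\gamma$ the orbit $\gamma_{n,1}(y)$ eventually enters an arbitrarily small neighbourhood of $K$, hence — since $K$ is finite and $G_\tau(K)\subset K$ — the orbit must eventually track one of the communicating classes $K_i$ and, taking steps of size $m$, eventually stay in a small neighbourhood of a single $H_j$; thus $A=\bigsqcup_{j=1}^r A_j$ up to a $\tilde\tau$-null set (disjointness holds because distinct $H_j$ are disjoint compact sets). On each $A_j$ I want $\int_{A_j}\varphi(\gamma_{nm,1}(y))\,d\tilde\tau\to\tilde\tau(A_j)\int\varphi\,d\eta_j$. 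To see this, approximate: for $\gamma\in A_j$, once $\gamma_{Nm,1}(y)$ is close to $H_j$, write $\gamma_{nm,1}(y)=\gamma_{nm,Nm+1}\circ\gamma_{Nm,1}(y)$ for $n>N$; the inner point lies near $H_j$, and conditioning on the first $Nm$ coordinates and using that the remaining coordinates are i.i.d.\ $\tau$, the conditional expectation of $\varphi(\gamma_{nm,1}(y))$ is $M_\tau^{(n-N)m}(\varphi)(\gamma_{Nm,1}(y))$, which by (2) converges to $\int\varphi\,d\eta_j$ uniformly as the base point ranges over a neighbourhood of $H_j$ (using continuity of $\varphi$ and the uniform convergence $M_\tau^{km}\varphi\to(\int\varphi\,d\eta_j)1_{H_j}$ near $H_j$). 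A dominated convergence / $\epsilon$-$N$ argument, splitting $A_j$ according to the entrance time $N$ into a small neighbourhood of $H_j$ and using $\sup_{Y}|\varphi|<\infty$ on the small-probability tail, then gives the claim; summing over $j$ finishes (3). The main obstacle I anticipate is the bookkeeping in (3): one must make uniform over the (random) base point $\gamma_{Nm,1}(y)$ the convergence $M_\tau^{km}\varphi\to\int\varphi\,d\eta_j$, which requires knowing that this convergence is uniform on a fixed neighbourhood of $H_j$ in $Y$ (not just on $H_j$ itself) — this follows from part (2) together with continuity of $M_\tau^{km}\varphi$ and compactness, but it is the place where care is needed.
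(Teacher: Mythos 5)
Your treatment of (1) and (2) matches the paper's: both reduce to the cyclic decomposition of recurrent classes of the finite Markov chain induced by $\tau$ on $K$ (the paper simply cites \cite[Theorem 6.6.4 and Lemma 6.7.1]{Du}). The problem is in (3), where your argument has a genuine gap at exactly the step you flag. You claim that $M_{\tau }^{km}(\varphi )\rightarrow \int \varphi \, d\eta _{j}$ \emph{uniformly on a fixed neighbourhood of $H_{j}$ in $Y$}, and that this ``follows from part (2) together with continuity of $M_{\tau }^{km}\varphi $ and compactness.'' It does not: part (2) gives convergence only on the finite set $H_{j}$ itself, and pointwise convergence of a sequence of continuous functions at finitely many points says nothing about their behaviour on a neighbourhood unless the family is equicontinuous there — which is precisely what fails in the intended applications of this lemma (e.g.\ when $H_{j}$ is a minimal set in $J_{\ker }(G_{\tau })$ with positive Lyapunov exponent, points arbitrarily close to $H_{j}$ are typically repelled and $M_{\tau }^{km}(\varphi )(z)$ converges to something else entirely). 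The paper circumvents this by never applying the operator convergence off $H_{j}$: for $\gamma \in A_{j}$ and $n$ large it replaces the orbit point $\gamma _{nm,1}(y)$ by its nearest point $a(\gamma _{nm,1}(y))\in H_{j}$ (costing only $\epsilon $ by uniform continuity of $\varphi $), and shows — using that $H_{j}$ is finite and that the approximation radius is less than half the minimal separation of points of $H_{j}$ — that this shadow point satisfies $a(\gamma _{(n_{0}+l)m,1}(y))=\gamma _{(n_{0}+l)m,\,n_{0}m+1}(a(\gamma _{n_{0}m,1}(y)))$, i.e.\ it is itself a genuine orbit inside $H_{j}$, where part (2) applies.

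A second, smaller gap: your conditioning step asserts that the conditional expectation of $\varphi (\gamma _{nm,1}(y))$ given the first $Nm$ coordinates, \emph{restricted to $A_{j}$}, is $M_{\tau }^{(n-N)m}(\varphi )(\gamma _{Nm,1}(y))$. But $A_{j}$ depends on all future coordinates, so the integral over $A_{j}$ does not factor this way. The paper handles this by first passing (via Egorov and inner regularity) to a compact subset $E_{j,\epsilon }\subset A_{j}$ and then to the cylinder sets $E_{j,\epsilon ,s}=\pi _{s}^{-1}(\pi _{s}(E_{j,\epsilon }))$, which depend only on the first $s$ coordinates and approximate $E_{j,\epsilon }$ in measure; only on these cylinder sets does the integral factorize into $\int M_{\tau }^{lm}(\varphi )(a(\gamma _{n_{1}m,1}(y)))\,d\tau ^{n_{1}m}$. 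You would need to incorporate both devices — the nearest-point projection onto $H_{j}$ and the cylinder approximation of $A_{j}$ — to make your argument for (3) work.
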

\begin{proof}
By \cite[Theorem 6.6.4 and Lemma 6.7.1]{Du}, 
it is easy to see that  
statements (1)(2) hold.  
Statement (3) follows from statements (1)(2) and 
some fundamental arguments (or one can show statement (3) directly from statements (1)(2)). 
\end{proof}
\begin{lem}
\label{l:nlkfa}
Let $Y$ be a compact metric space. 
Let $\tau \in {\frak M}_{1,c}(\emCMX). $ 
Let $V$ be a non-empty open subset of $Y.$ 
Suppose that for each $g\in \mbox{supp}\,\tau $, $g(V)\subset V.$ 
Let $L_{\ker }:=\cap _{g\in G_{\tau }}g^{-1}(Y \setminus V).$ 
Suppose that $1\leq \sharp L_{\ker }<\infty .$ 
Let $\{ K_{i}\mid  i=1,\ldots, q\}=\emMin (G_{\tau }, L_{\ker })$ where 
$q=\sharp \emMin (G_{\tau },L_{\ker }).$ For each $i=1,\ldots q$, 
let $p_{i}\in \NN $ be the period of the finite Markov chain with state space $K_{i}$ 
induced by $\tau $.  
Let $m=\prod _{i=1}^{q}p_{i}\in \NN .$    
Let $\{ H_{j}\mid j=1,\ldots, r\}=\emMin(G_{\tau }^{m}, L_{\ker })$ where 
$r=\sharp (\emMin(G_{\tau }^{m}, L_{\ker })).$  
Then all of the following hold.
\begin{itemize}
\item[{\em (1)}]
Let $i=1,\ldots, q.$ Then $\sharp (\emMin(G_{\tau }^{p_{i}},K_{i}))=p_{i}$.  
Moreover, there exist 
$K_{i,1},\ldots K_{i,p_{i}}\in \emMin(G_{\tau }^{p_{i}},K_{i})$ 
such that $\{ K_{i,k}\mid  k=1,\ldots, p_{i}\}=\emMin(G_{\tau }^{p_{i}},K_{i})$, 
$K_{i}=\cup _{k=1}^{p_{i}}K_{i,k}$ 
 and 
$h(K_{i,k})\subset K_{i,k+1}$ for each $h\in \mbox{supp}\,\tau$, where 
$K_{i,p_{i}+1}:=K_{i,1}.$ Also, for each $k=1,\ldots, p_{i}$ there exists a 
unique element $\omega _{i,k}\in {\frak M}_{1}(K_{i,k})$ such that 
$(M_{\tau }^{\ast })^{p_{i}}(\omega _{i,k})=\omega _{i,k}.$ 
Also, $M_{\tau }^{np_{i}}(\varphi )\rightarrow 
(\int \varphi \ d\omega _{i,k})1_{K_{i,k}}$ in $C(K_{i,k})$ as $n\rightarrow \infty $ 
for each $\varphi \in C(K_{i,k})$, supp$\, \omega _{i,k}=K_{i,k}$ and 
$M_{\tau }^{\ast }\omega _{i,k}=\omega _{i,k+1}$ in ${\frak M} _{1}(K_{i})$ 
for each $k=1,\ldots, p_{i}$, 
where $\omega _{i,p_{i}+1}:=\omega _{i,1}$.   
\item[{\em (2)}] 
We have $r=\sum _{i=1}^{q}p_{i}$ and $\cup _{j=1}^{r}H_{j}=\cup _{i=1}^{q}K_{i}.$
Moreover, we have that $\{ H_{j}\mid j=1,\ldots, r\} =\{ K_{i,k}\mid i=1,\ldots, q, k=1,\ldots, p_{i}\} 
=\emMin(G_{\tau }^{nm},K)$ for each $n\in \NN .$  
Moreover, for each $j=1,\ldots, r$, 
there exists a unique Borel probability measure $\eta _{j}$ on $H_{j}$ such that 
$(M_{\tau }^{m})^{\ast }(\eta _{j})=\eta _{j}$.  
Also, $M_{\tau }^{nm}(\varphi )\rightarrow (\int \varphi \ d\eta _{j})\cdot 1_{H_{j}}$ 
in $C(H_{j})$ as $n\rightarrow \infty $ for each $\varphi \in C(H_{j}).$ 
Also, supp$\, \eta _{j}=H_{j}$ for each 
$j=1,\ldots, r.$ Moreover, 
if $H_{j}=K_{i,k}$, then $\eta _{j}=\omega _{i,k}.$ 
%
\item[{\em (3)}] 
Let $y\in Y $   
and let $\Omega $ be a Borel subset of 
$X_{\tau }.$ 
Let $A:=\{ \gamma \in \Omega \mid
 y\in \cap _{j=1}^{\infty } \g _{j,1}^{-1}(Y \setminus V)\} $
and $A_{j}:= \{ \gamma \in A \mid d(\gamma _{nm,1}(y),H_{j})\rightarrow 0 \ (n\rightarrow \infty )
\} $ for each $j=1,\ldots ,r.$ 
Then for each $\varphi \in C(Y)$, we have 
$\int _{A}\varphi (\gamma _{nm,1}(y))d\tilde{\tau }(\gamma )\rightarrow 
\sum _{j=1}^{r}\tilde{\tau }(A_{j})\int \varphi d\eta _{j}$ as $n\rightarrow \infty .$  
\end{itemize}
\end{lem}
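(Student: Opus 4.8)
The plan is to deduce the statement from the two earlier lemmas: the forward-invariance and finiteness of $L_{\ker}$ reduce (1) and (2) to Lemma~\ref{l:fininvset}, while (3) combines Lemma~\ref{l:fininvset}(3) with the escape/convergence statement of Lemma~\ref{l:voygvv}.

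First I would check that $L_{\ker}$ is a nonempty finite subset of $Y$ with $G_{\tau}(L_{\ker})\subset L_{\ker}$. Nonemptiness and finiteness are assumed. For invariance, let $h\in \Gamma_{\tau}$ and $z\in L_{\ker}$. Since $h\in G_{\tau}$, we have $h(z)\in Y\setminus V$; and for any $g\in G_{\tau}$ we have $g\circ h\in G_{\tau}$, so $g(h(z))=(g\circ h)(z)\in Y\setminus V$ because $z\in (g\circ h)^{-1}(Y\setminus V)$. Hence $h(z)\in \cap_{g\in G_{\tau}}g^{-1}(Y\setminus V)=L_{\ker}$. As $G_{\tau}$ is generated by $\Gamma_{\tau}$, this gives $G_{\tau}(L_{\ker})\subset L_{\ker}$. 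Now I would apply Lemma~\ref{l:fininvset} with its ``$K$'' taken to be $L_{\ker}$. Since $\Min(G_{\tau},L_{\ker})=\{K_{i}\}_{i=1}^{q}$ and $\Min(G_{\tau}^{m},L_{\ker})=\{H_{j}\}_{j=1}^{r}$ by definition, and the periods $p_i$ and the integer $m$ are the same objects, the conclusions of Lemma~\ref{l:fininvset}(1)(2) are word-for-word statements (1) and (2) here.

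For (3), put $E:=\{\gamma\in X_{\tau}\mid y\in \cap_{j=1}^{\infty}\gamma_{j,1}^{-1}(Y\setminus V)\}$, a Borel subset of $X_{\tau}$, so that the set $A$ of the lemma equals $\Omega\cap E$. Applying Lemma~\ref{l:voygvv} (whose hypotheses hold, with the same $V$ and the same $L_{\ker}$), we get $\tilde{\tau}(\{\gamma\in E\mid d(\gamma_{n,1}(y),L_{\ker})\not\to 0\})=0$, hence also $\tilde{\tau}(\{\gamma\in A\mid d(\gamma_{n,1}(y),L_{\ker})\not\to 0\})=0$. Next I would invoke Lemma~\ref{l:fininvset}(3) with its ``$\Omega$'' taken to be the present set $A$ and its ``$K$'' taken to be $L_{\ker}$: the set called ``$A$'' in that lemma is then $\{\gamma\in A\mid d(\gamma_{n,1}(y),L_{\ker})\to 0\}$, which by the previous sentence differs from the present $A$ by a $\tilde{\tau}$-null set, and the sets called ``$A_{j}$'' there are exactly the present $A_{j}=\{\gamma\in A\mid d(\gamma_{nm,1}(y),H_{j})\to 0\}$. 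Since the integral of a fixed bounded measurable function over two sets differing by a $\tilde{\tau}$-null set coincide, the conclusion $\int_{A}\varphi(\gamma_{nm,1}(y))\,d\tilde{\tau}(\gamma)\to \sum_{j=1}^{r}\tilde{\tau}(A_{j})\int\varphi\,d\eta_{j}$ of Lemma~\ref{l:fininvset}(3) is precisely statement (3).

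The substance of the argument is entirely contained in Lemmas~\ref{l:voygvv} and \ref{l:fininvset}; the present lemma is a bookkeeping combination of the two. Accordingly, the only points requiring care — the ``hard part'' such as it is — are (i) verifying the forward-invariance of $L_{\ker}$ so that Lemma~\ref{l:fininvset} applies, and (ii) the null-set accounting that lets one replace the almost-everywhere convergence of $d(\gamma_{n,1}(y),L_{\ker})$ to $0$ by membership in $A$ without changing any integral. Both are routine, so no genuinely new difficulty arises here beyond what was already resolved in the two cited lemmas.
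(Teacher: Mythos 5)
Your proof is correct and is exactly the paper's argument: the paper's own proof of this lemma is the one-line "By Lemmas~\ref{l:voygvv} and \ref{l:fininvset}, the statement of our lemma holds," and your write-up simply supplies the routine verifications (forward invariance of $L_{\ker}$, and the null-set bookkeeping that converts the almost-sure convergence to $L_{\ker}$ from Lemma~\ref{l:voygvv} into the hypotheses of Lemma~\ref{l:fininvset}(3)).
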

\begin{proof} 
Let $K=L_{\ker }.$ Then by the assumptions of our lemma, we have that 
$G_{\tau }(K)\subset K$ and $1\leq \sharp K<\infty .$ 
By Lemmas~\ref{l:voygvv} and \ref{l:fininvset}, 
the statement of our lemma holds. 
\end{proof}
\subsection{Invariant measures and Lyapunov exponents}
\label{ss:invariant}
In this subsection, we define invariant measures and the Lyapunov exponents 
for random dynamical systems generated by elements of ${\frak M}_{1}(\Rat).$ 
 Also, 
we show some results on random complex dynamical systems having minimal sets 
with non-zero Lyapunov exponents. 

For a holomorphic map $\varphi :U\rightarrow \CCI $ defined on an open subset 
$U$ of $\CCI $ and for any $z\in U$, we denote by 
$D\varphi _{z}:T_{z}U\rightarrow T_{\varphi (z)}\CCI $ the complex differential map 
of $\varphi $ at $z$, where $T_{z}U$ denotes the complex tangential space of $U$ at $z$ 
and $T_{\varphi (z)}\CCI $ denotes the complex tangential space of $\CCI $ at $\varphi (z)$. Also, 
we denote by $\| D\varphi _{z}\| _{s}$ the norm of $D\varphi _{z}$ with respect to the spherical 
metric on $\CCI .$    

\begin{df}
\label{d:sp}
Let $Y$ be a compact metric space and 
let $\Gamma $ be a non-empty subset of  
$\CMX .$   We endow $\Gamma $ with the relative topology from $\CMX .$ 
We define a map $f:\GN \times Y \rightarrow 
\GN \times Y $ as follows: 
For a point $(\gamma  ,y)\in \GN \times Y $ where 
$\gamma =(\gamma _{1},\gamma _{2},\ldots )$, we set 
$f(\gamma  ,y):= (\sigma (\gamma  ), \gamma  _{1}(y))$, where 
$\sigma :\GN \rightarrow \GN $ is the shift map, that is, 
$\sigma (\gamma  _{1},\gamma  _{2},\ldots )=(\gamma  _{2},\gamma  _{3},\ldots ).$ 
The map $f:\GN \times Y \rightarrow \GN \times Y $ 
is called the {\bf skew product associated with the 
generator system }$\G .$ 
Moreover, we use the following notation. 
\begin{enumerate}
\item 
Let 
$\pi : \GN \times \CCI \rightarrow \GN $  
and $\pi _{Y }:\GN \times Y \rightarrow Y $ be 
the canonical projections. Note that 
$\pi ^{-1}\{ \gamma \} =\{ \gamma \} \times \CCI $ for 
each $\gamma \in \Gamma ^{\NN }.$ 
For each 
$\gamma  \in \GN $ and $n\in \NN $, we set 
$f_{\gamma  }^{n}:= f^{n}|_{\pi ^{-1}\{ \gamma  \} } : 
\pi ^{-1}\{ \gamma  \} \rightarrow 
\pi ^{-1}\{ \sigma ^{n}(\gamma  )\} .$  
Moreover, we set 
$f_{\gamma  ,n}:= \gamma  _{n}\circ \cdots \circ \gamma  _{1}.$ 
\item 
We set $\tilde{J}(f):= \overline{\bigcup _{\gamma  \in \GN }J^{\gamma  }}$, 
where the closure is taken in the product space $\GN \times Y .$ 
Furthermore, we set $\tilde{F}(f):= (\Gamma ^{\NN }\times Y)\setminus \tilde{J}(f).$  
\item 
For each $\gamma  \in \GN $, we set 
$\hat{J}^{\gamma  ,\Gamma }:= \pi ^{-1}\{  \gamma  \} \cap \tilde{J}(f)$, 
$\hat{F}^{\gamma ,\Gamma }:= \pi ^{-1}(\{ \gamma \} )\setminus \hat{J}^{\gamma ,\G }$,  
$\hat{J}_{\gamma ,\Gamma }:= \pi _{Y }(\hat{J}^{\gamma ,\Gamma })$, 
and $\hat{F}_{\g, \G }:= Y\setminus \hat{J}_{\g ,\G }.$   
\item When $\Gamma \subset \mbox{Rat}$, 
for each $z=(\gamma  ,y)\in \GN \times \CCI $, 
we set $Df_{z}:=D(\gamma  _{1})_{y}.$ 
\end{enumerate}
\end{df}
\begin{rem}
\label{r:piypic}
Under the above notation, let $G=\langle \Gamma \rangle .$  
Then   
$\pi _{Y }(\tilde{J}(f))\subset J(G)$ and 
$\pi \circ f=\sigma \circ \pi $ on $\Gamma ^{\NN }\times Y .$ 
Note that $\hat{J}^{\gamma, \Gamma }$ is the set of 
accumulation points of fiberwise Julia sets $J^{\gamma'}, \gamma'\in \Gamma ^{\NN }$, in the fiber $\pi ^{-1}\{ \gamma \} $, for each $\gamma \in \Gamma ^{\NN }.$ 
Note also that  $J^{\gamma }\subset \hat{J}^{\gamma, \Gamma }$, $J_{\gamma }\subset \hat{J}_{\gamma, \Gamma }$ for each 
$\gamma \in \Gamma ^{\NN }.$  
Moreover, 
for each $\g \in \Gamma ^{\NN }$, 
$\gamma _{1}(J_{\gamma })\subset J_{\sigma (\gamma )}$, 
$\gamma _{1}(\hat{J}_{\gamma ,\Gamma })\subset \hat{J}_{\sigma (\gamma ),\Gamma }$, 
and $f(\tilde{J}(f))\subset \tilde{J}(f)$.  
Furthermore, if $\Gamma \in \Cpt(\Rat)$, then 
for each $\gamma \in \Gamma ^{\NN }$, 
$\gamma _{1}(J_{\gamma })=J_{\sigma (\gamma )}$, $\gamma _{1}^{-1}(J_{\sigma (\gamma )})=J_{\gamma }$, 
$\gamma _{1}(\hat{J}_{\gamma ,\Gamma })=\hat{J}_{\sigma (\gamma ),\Gamma }$, 
$\gamma _{1}^{-1}(\hat{J}_{\sigma (\gamma ),\Gamma })=\hat{J}_{\gamma ,\Gamma }$,  
$f(\tilde{J}(f))=\tilde{J}(f)=f^{-1}(\tilde{J}(f))$, and $f(\tilde{F}(f))=\tilde{F}(f)=f^{-1}(\tilde{F}(f))$   
(see \cite[Lemma 2.4]{S4}). 
\end{rem}

We now define $\tau $-invariant measures, $\tau$-ergodic measures 
and the Lyapunov exponents for $\tau \in {\frak M}_{1}(\Rat).$ 
\begin{df}
\label{d:invmeaserg}
Let $\tau \in {\frak M}_{1}(\Rat).$ 
Let $\rho \in {\frak M}_{1}(\CCI ).$ We say that 
$\rho $ is {\bf $\tau$-invariant} 
if $M_{\tau }^{\ast }(\rho )=\rho .$ 
Moreover, we say that  a $\tau $-invariant measusure 
$\rho $ is {\bf $\tau$-ergodic} if 
$A$ is a Borel subset of $\CCI $ with $\rho (A)>0$ and 
$M_{\tau }(1_{A})(z)=1_{A}(z)$ for $\rho $-a.e.$z\in \CCI $, then 
$\rho (A)=1.$  
For a $\tau $-ergodic measure $\rho $, we set 
$\chi (\tau, \rho ):= \int \log \| Df_{z}\| _{s}d(\tilde{\tau }\otimes \rho )(z)$, 
where $f: X_{\tau }\times \CCI \rightarrow X_{\tau }\times \CCI $ denotes the skew product map 
associated with $\mbox{supp}\,\tau$ (see Definition~\ref{d:sp}).  
This is called the {\bf Lyapunov exponent of $(\tau , \rho ).$}   
\end{df}
\begin{rem}
\label{r:tinvfinv} 
Let $\tau \in {\frak M}_{1}(\Rat).$ 
Let $\rho \in {\frak M}_{1}(\CCI )$ be a $\tau $-invariant measure. 
Let $f:X_{\tau }\times \CCI \rightarrow X_{\tau }\times \CCI 
$ be the skew product map associated with $\mbox{supp}\,\tau.$ 
Then by \cite[Lemma 3.1]{Mo}, the measure $\tilde{\tau }\otimes \rho \in 
{\frak M}_{1}(X_{\tau }\times \CCI )$ is $f$-invariant. 
Also, by \cite[Theorem 4.1]{Mo}, if $\rho $ is $\tau $-ergodic, then 
$\tilde{\tau }\otimes \rho $ is ergodic with respect to $f.$ 
\end{rem}

\begin{df}
\label{d:celyap}
Let $\tau \in {\frak M}_{1}(\Rat).$ 
Let $L\in \Min(G_{\tau },\CCI )$ with $\sharp L <\infty .$ 
Let $m\in \NN $ be the period of the finite Markov chain with state space $L$ induced by $\tau $ 
(see Definition~\ref{d:chainperiod}). 
Then by \cite[Theorem 6.6.4 and Lemma 6.7.1]{Du} we have the following. 
\begin{itemize}
\item
$\sharp \Min(G_{\tau }^{m},L)=m$ and setting  
$\{ L_{j}\mid j=1,\ldots, m\}=\Min (G_{\tau }^{m},L)$ we have $L=\cup _{j=1}^{m}L_{j}.$  
\item 
Renumbering $L_{1},\ldots, L_{m}$ above,  
for each $j=1,\ldots, m$ there exists a unique $\omega _{L,j}\in {\frak M}_{1}(L_{j})$
such that $M_{\tau }^{mn}(\varphi )\rightarrow (\omega _{L,j}(\varphi ))\cdot 1_{L_{j}}$ 
in $C(L_{j})$ as $n\rightarrow \infty $ for each $\varphi \in C(L_{j})$,  
$(M_{\tau }^{m})^{\ast }(\omega _{L,j})=\omega _{L,j}, \mbox{supp}\, \omega _{L,j}=L_{j}$ and 
$M_{\tau }^{\ast }\omega _{L,j}=\omega _{L,j+1}$ where $\omega _{L,m+1}:=\omega _{L,1}$.  
\item 
$\omega _{L}:=\frac{1}{m}\sum _{j=1}^{m}\omega _{L,j}$ is $\tau $-ergodic.  
\end{itemize}
We call ${\omega }_{L}$ the 
{\bf canonical $\tau $-ergodic measure on $L.$} 
By \cite[Lemma 3.1, Theorem 4.1]{Mo}, $\tilde{\tau }\otimes \omega _{L}\in \frak{M}_{1}
(X_{\tau }
\times \CCI )$ is $f$-invariant and ergodic with respect to $f$, 
where $f:X_{\tau }\times \CCI \rightarrow X_{\tau }
\times \CCI $ is the skew product map associated with $\mbox{supp}\,\tau.$ 
We set 
$\chi (\tau, L):=\int \log \| Df _{z}\| _{s}d(\tilde{\tau }\otimes \omega _{L})(z).$ 
This is called the 
{\bf Lyapunov exponent of $(\tau, L).$}  
\end{df}
We now show a lemma and its corollary on $\tau$-invariant and $\tau$-ergodic measures $\mu $ 
with negative Lyapunov exponents (Lemma~\ref{l:chimune} and Corollary~\ref{c:chilne}).  
\begin{lem}
\label{l:chimune}
Let $\tau \in {\frak M}_{1,c}(\emRat). $ 
Let $\mu \in {\frak M}_{1}(\CCI )$ be a $\tau $-invariant and $\tau $-ergodic measure.  
Suppose $\chi (\tau, \mu )<0.$ 
Then for $(\tilde{\tau }\otimes \mu )$-a.e. $(\gamma , z_{0})
\in (\emRat) ^{\NN }\times \CCI $, there exist a constant 
$\delta _{1}=\delta _{1}(\gamma, z_{0})>0$, a constant 
$C=C(\gamma , z_{0})>0$ and a constant $\alpha =\alpha (\gamma, z_{0})\in (0,1)$ such that 
for each $m\in \NN $, we have 
$\mbox{diam}(\gamma _{m,1}(B(z_{0},\delta _{1})))\leq C\alpha ^{m}$. In particular, for $(\tilde{\tau }\otimes \mu )$-a.e. $(\gamma , z_{0})
\in (\emRat) ^{\NN }\times \CCI $, 
 we have $z_{0}\in F_{\gamma }.$ Moreover, 
for $\mu$-a.e. $z_{0}\in \CCI $, we have $z_{0}\in F_{pt}^{0}(\tau ).$ 
\end{lem}
\begin{proof}
For each $r\in \NN $, 
let $\psi _{r}:\mbox{supp}\,\tau\times \CCI \rightarrow \RR $ be 
the function defined by 
$$\psi _{r}(h,y)=\begin{cases}
\log \| Dh_{y}\| _{s}\mbox{ if } \log \| Dh_{y}\| _{s}\geq -r\\ 
-r \mbox{\ \ \ \ \ \ \ \ \ \  \ if } \log \| Dh_{y}\| _{s}<-r.  
\end{cases}$$
Let $\varphi _{r}:X_{\tau } \times \CCI \rightarrow \RR $ be the function 
defined by $\varphi _{r}(\gamma ,y)=\psi _{r}(\gamma _{1},y).$ 
Since $\chi (\tau, \mu )<0$, there exists an $r\in \NN $ such that 
$\int \varphi _{r}(z)d(\tilde{\tau }\otimes \mu )(z)<0.$ 
Let $c_{0}=-\int \varphi _{r}(z)d(\tilde{\tau }\otimes \mu )(z)>0.$ 
By Birkhoff's ergodic theorem, there exists a Borel subset $A$ of 
$X_{\tau }\times \CCI $ with 
$(\tilde{\tau }\otimes \mu ) (A)=1$ such that 
for each $(\gamma ,z_{0})\in A$, 
$\frac{1}{n}\sum _{j=0}^{n-1}\varphi _{r}(f^{j}(\gamma ,z_{0}))\rightarrow -c_{0}$ 
as $n\rightarrow \infty .$ 
Let $\epsilon _{0}\in (0, \frac{1}{4}c_{0}).$ 
Let $(\gamma ,z_{0})\in A.$ 
There exists an $n_{0}\in \NN $ such that 
for each $n\in \NN $ with $n\geq n_{0}$, 
$$\frac{1}{n}\sum _{j=0}^{n-1}\psi _{r}(\gamma _{j+1}, \gamma _{j,1}(z_{0}))
=\frac{1}{n}\sum _{j=0}^{n-1}\varphi _{r}(f^{j}(\gamma , z_{0}))\leq -c_{0}+\epsilon _{0},$$
where $\gamma _{0,1}=Id.$   
Let $\epsilon _{1}\in \RR $ with $0<\epsilon _{1}<\frac{1}{4}c_{0}.$ 
Since $\mbox{supp}\,\tau$ is compact, 
there exists a $\delta >0$ such that 
for each $w\in \CCI $, for each $h\in \mbox{supp}\,\tau$ and for each $z\in B(w,\delta )$, we have 
$$\log \| Dh_{z}\| _{s}\leq \psi _{r}(h,w)+\epsilon _{1}, \mbox{ thus } 
\| Dh_{z}\| _{s}\leq \exp (\psi _{r}(h,w)+\epsilon _{1}).$$
There exists a $\delta _{1}>0$ with $\delta _{1}<\frac{\delta }{2}$ such that 
for each $j=1,\ldots , n_{0}$, 
$\gamma _{j,1}(B(z_{0},\delta _{1}))\subset B(\gamma _{j,1}(z_{0}),\frac{\delta }{2}).$ 
Therefore we obtain 
\begin{eqnarray*}
\gamma _{n_{0},1}(B(z_{0},\delta _{1})) & \subset & 
B(\gamma _{n_{0},1}(z_{0}),\delta _{1}\exp ((\sum _{j=0}^{n_{0}-1}
\psi _{r}(\gamma _{j+1},\gamma _{j,1}(z_{0})))+n_{0}\epsilon _{1}))\\ 
 & \subset &  
B(\gamma _{n_{0},1}(z_{0}),\delta _{1}\exp ((-c_{0}+\epsilon _{0}+\epsilon _{1})n_{0})).
\end{eqnarray*}
Hence we can show that for each $m\in \NN \cup \{ 0\} $, 
\begin{eqnarray*}
\gamma _{n_{0}+m,1}(B(z_{0},\delta _{1})) & \subset & 
B(\gamma _{n_{0}+m,1}(z_{0}), \delta _{1}\exp (\sum _{j=0}^{n_{0}+m-1}
(\psi _{r}(\gamma _{j+1}, \gamma _{j,1}(z_{0}))+(n_{0}+m)\epsilon _{1}))\\ 
 & \subset & B(\gamma _{n_{0}+m,1}(z_{0}),\ \delta _{1}\exp ((-c_{0}+\epsilon _{0}+\epsilon _{1})(n_{0}+m)))\\ 
 & \subset & B(\gamma _{n_{0}+m,1}(z_{0}),\frac{\delta }{2})
\end{eqnarray*}
by induction on $m\in \NN \cup \{ 0\}.$ 
Therefore there exist a constant 
$\delta _{1}=\delta _{1}(\gamma, z_{0})>0$, a constant 
$C=C(\gamma , z_{0})>0$ and a constant $\alpha =\alpha (\gamma, z_{0})\in (0,1)$ such that 
for each $m\in \NN $, we have 
$\mbox{diam}(\gamma _{m,1}(B(z_{0},\delta _{1})))\leq C\alpha ^{m}$. Hence 
$z_{0}\in F_{\gamma }.$ 
Thus for $\mu$-a.e. $z_{0}\in \CCI $, we obtain  that 
$\tilde{\tau }(\{ \gamma \in (\mbox{supp}\,\tau)^{\NN} \mid z_{0}\in J_{\gamma }\})=0.$ 
By Lemma~\ref{l:ttaugy0y}, it follows that 
for $\mu $-a.e.$z_{0}\in \CCI $, $z_{0}\in F_{pt}^{0}(\tau ).$ 
Hence we have proved our lemma.  
\end{proof}
\begin{cor}
\label{c:chilne}
Let $\tau \in {\frak M}_{1,c}(\emRat) $ and let $L\in \emMin(G_{\tau },\CCI )$ with 
$\sharp L<\infty .$ Suppose $\chi (\tau , L)<0.$ 
Then for each $z_{0}\in L$, for $\tilde{\tau }$-a.e.$\gamma \in (\emRat) ^{\NN }$, 
we have $z_{0}\in F_{\gamma }.$ Moreover, $L\subset F_{pt}^{0}(\tau ).$ 

\end{cor}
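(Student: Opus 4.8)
The plan is to deduce the corollary directly from Lemma~\ref{l:chimune} by feeding in the canonical $\tau$-ergodic measure associated with $L.$ Recall from Definition~\ref{d:celyap} that there is a measure $\omega_{L}=\frac{1}{m}\sum_{j=1}^{m}\omega_{L,j}\in{\frak M}_{1}(\CCI)$ which is $\tau$-invariant and $\tau$-ergodic and satisfies $\chi(\tau,L)=\int\log\|Df_{z}\|_{s}\,d(\tilde{\tau}\otimes\omega_{L})(z)$; in the terminology of Definition~\ref{d:invmeaserg} this says exactly that $\chi(\tau,\omega_{L})=\chi(\tau,L).$ The structural fact I will exploit is $\mbox{supp}\,\omega_{L}=\bigcup_{j=1}^{m}\mbox{supp}\,\omega_{L,j}=\bigcup_{j=1}^{m}L_{j}=L.$ Since $L$ is finite it is discrete in $\CCI$, so each singleton $\{z\}$ with $z\in L$ is relatively open, and therefore $\mbox{supp}\,\omega_{L}=L$ forces $\omega_{L}(\{z\})>0$ for every $z\in L.$ Hence every Borel subset of $\CCI$ of full $\omega_{L}$-measure contains $L$, and every Borel subset of zero $\omega_{L}$-measure is disjoint from $L.$ This is the only place where $\sharp L<\infty$ is used, and it is the one point requiring a moment's care; there is no serious obstacle.

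Now apply Lemma~\ref{l:chimune} with $\mu=\omega_{L}$: the hypothesis $\chi(\tau,\mu)=\chi(\tau,L)<0$ holds, so the lemma yields (a) $(\tilde{\tau}\otimes\omega_{L})(N)=0$, where $N:=\{(\gamma,z)\in X_{\tau}\times\CCI\mid z\notin F_{\gamma}\}$ is a Borel set by Lemma~\ref{l:ufgmeas} (applied with $Y=\CCI$ and $\Gamma=\Gamma_{\tau}$, which is compact, hence closed in $\text{CM}(\CCI)$), and (b) $\omega_{L}(\CCI\setminus F_{pt}^{0}(\tau))=0.$ From (b) and the last sentence of the previous paragraph, the zero-$\omega_{L}$-measure set $\CCI\setminus F_{pt}^{0}(\tau)$ misses $L$, that is $L\subset F_{pt}^{0}(\tau)$, which is the second assertion of the corollary.

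For the first assertion I would apply Fubini's theorem to (a). As $\int 1_{N}\,d(\tilde{\tau}\otimes\omega_{L})=0$ with nonnegative integrand, there is a Borel set $S\subset\CCI$ with $\omega_{L}(S)=1$ such that for every $z_{0}\in S$ one has $\tilde{\tau}(\{\gamma\in X_{\tau}\mid z_{0}\notin F_{\gamma}\})=0$, i.e. $\tilde{\tau}(\{\gamma\mid z_{0}\in F_{\gamma}\})=1.$ Since $\omega_{L}(S)=1$ the set $S$ contains $L$, so this holds for every $z_{0}\in L.$ Finally $\tilde{\tau}$ is concentrated on $X_{\tau}=\Gamma_{\tau}^{\NN}\subset(\Rat)^{\NN}$, so a full-$\tilde{\tau}$-measure subset of $X_{\tau}$ is a full-$\tilde{\tau}$-measure subset of $(\Rat)^{\NN}$, and the statement follows as written. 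In short, all the content is carried by Lemma~\ref{l:chimune} and Definition~\ref{d:celyap}, and the remaining argument is bookkeeping.
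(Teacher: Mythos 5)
Your proposal is correct and follows exactly the paper's route: the paper's proof is the single sentence ``Since $\mbox{supp}\,\omega _{L}=L$, Lemma~\ref{l:chimune} implies the statement,'' and your argument simply fills in the details (finiteness of $L$ gives $\omega _{L}(\{z\})>0$ for each $z\in L$, so the $\omega _{L}$-a.e.\ conclusions of Lemma~\ref{l:chimune} upgrade to all of $L$ via Fubini). No gaps.
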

\begin{proof}
Since $\mbox{supp}\,\omega _{L}=L,$ 
Lemma~\ref{l:chimune} implies the statement of our corollary. 
\end{proof}
We now show some lemmas on minimal sets of $G_{\tau }$ with positive Lyapunov exponents 
(Lemmas~\ref{l:chlpad1d2}--\ref{l:chlpa0}). 
\begin{lem}
\label{l:chlpad1d2}
Let $\tau \in {\frak M}_{1,c}(\emRat).$ 
Let $L\in \emMin(G_{\tau },\CCI )$ with $\sharp L<\infty .$ 
Suppose $\chi (\tau, L)>0.$ Suppose also 
that for each $z_{0}\in L$ and for each $g\in \mbox{supp}\,\tau$, 
$Dg_{z_{0}}\neq 0.$ Let $\alpha >0.$ 
Then  there exist $\delta _{1}>0, \delta _{2}>0$ with $\delta _{2}<\alpha $, and 
a Borel subset $A$ of $(\mbox{supp}\,\tau)^{\NN}$ with $\tilde{\tau }(A)=1$,  
where $\delta _{1}$ and $A$ do not depend on $\alpha $, 
such that 
for each  $z_{0}\in L$, for each $z\in B(z_{0},\delta _{2})\setminus \{ z_{0}\}$ 
and for each $\gamma \in A$, 
there exists an $n_{1}=n_{1}(\gamma, z)\in \NN $ 
with $\gamma _{n_{1},1}(z)\not\in B(L,\delta _{1}).$ 
In particular, for each $z_{0}\in L$, for $\tilde{\tau }$-a.e.$\gamma \in (\mbox{supp}\,\tau)^{\NN }$, 
we have $z_{0}\in J_{\gamma }.$ 
\end{lem}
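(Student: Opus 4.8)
The plan is to run a standard hyperbolicity/escape argument at the repelling end of the orbit, mirroring the contraction argument of Lemma~\ref{l:chimune} but in reverse. Since $\sharp L<\infty$ and $L\in \Min(G_{\tau},\CCI)$, we have $G_{\tau}(L)\subset L$ and $L$ is finite, so the canonical $\tau$-ergodic measure $\omega_{L}$ on $L$ satisfies $\mathrm{supp}\,\omega_{L}=L$ (Definition~\ref{d:celyap}), and $\tilde{\tau}\otimes\omega_{L}$ is $f$-invariant and $f$-ergodic. By hypothesis $\chi(\tau,L)=\int \log\|Df_{z}\|_{s}\,d(\tilde{\tau}\otimes\omega_{L})(z)>0$, and by the additional assumption $Dg_{z_{0}}\neq 0$ for all $z_{0}\in L$, $g\in\G_{\tau}$, the integrand $\log\|Df_{z}\|_{s}$ is bounded on the compact set $\G_{\tau}\times L$ (no $-\infty$ values), so Birkhoff's ergodic theorem applies directly: there is a Borel set $A\subset \G_{\tau}^{\NN}$ with $\tilde{\tau}(A)=1$ such that for each $\gamma\in A$ and each $z_{0}\in L$,
$$\frac{1}{n}\sum_{j=0}^{n-1}\log\|D(\gamma_{j+1})_{\gamma_{j,1}(z_{0})}\|_{s}\longrightarrow \chi(\tau,L)>0 \quad (n\to\infty).$$
(One gets a single full-measure $A$ working simultaneously for all $z_{0}\in L$ because $L$ is finite; intersect the finitely many full-measure sets.) Note $A$ does not depend on $\alpha$.

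First I would fix $c_{0}:=\chi(\tau,L)>0$ and $\epsilon_{0}\in(0,\tfrac14 c_{0})$, and use compactness of $\G_{\tau}$ together with continuity of $(h,z)\mapsto \log\|Dh_{z}\|_{s}$ near $\G_{\tau}\times L$ (valid since derivatives are nonvanishing there) to choose $\delta_{1}>0$ such that for every $z_{0}\in L$, every $w\in B(z_{0},2\delta_{1})$, and every $h\in\G_{\tau}$ we have $\log\|Dh_{z}\|_{s}\geq \log\|Dh_{z_{0}}\|_{s}-\epsilon_{1}$ for $z$ near $w$, plus bounded-distortion control of $h$ on balls $B(w,\delta_{1})$ (Koebe-type estimate, legitimate since $h$ is locally injective with nonzero derivative there). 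This $\delta_{1}$ depends only on $\tau$ and $L$, not on $\alpha$. Then, given $\alpha>0$, I would choose $\delta_{2}\in(0,\alpha)$ small enough that $B(L,\delta_{2})\subset B(L,\delta_{1})$ and that the linearized expansion estimate is valid on scale $\delta_{2}$.

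Next, for $\gamma\in A$, $z_{0}\in L$, and $z\in B(z_{0},\delta_{2})\setminus\{z_{0}\}$, I would argue by contradiction: suppose $\gamma_{n,1}(z)\in B(L,\delta_{1})$ for all $n\in\NN$. As long as the orbit stays in $B(L,\delta_{1})$, each point $\gamma_{n,1}(z)$ lies within $\delta_{1}$ of some point $a_{n}\in L$, and since $G_{\tau}(L)\subset L$ with $L$ finite, the chain $a_{0},a_{1},\ldots$ follows the action of $\gamma$ on $L$ (after passing to the point of $L$ closest to $\gamma_{n,1}(z_{0})$; here $z_{0}\in L$ so $\gamma_{n,1}(z_{0})\in L$ exactly). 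Then the distortion bounds give, inductively,
$$d(\gamma_{n,1}(z),\gamma_{n,1}(z_{0}))\;\geq\; C\, d(z,z_{0})\,\exp\!\Big(\sum_{j=0}^{n-1}\log\|D(\gamma_{j+1})_{\gamma_{j,1}(z_{0})}\|_{s}\Big)\;\geq\; C\, d(z,z_{0})\, e^{(c_{0}-\epsilon_{0}-\epsilon_{1})n}$$
for all large $n$, with $C>0$ from the distortion constant. The right-hand side tends to $\infty$, contradicting $d(\gamma_{n,1}(z),\gamma_{n,1}(z_{0}))\leq \mathrm{diam}\,B(L,\delta_{1})<\infty$ (recall $\gamma_{n,1}(z_{0})\in L$). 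Hence there exists $n_{1}=n_{1}(\gamma,z)$ with $\gamma_{n_{1},1}(z)\notin B(L,\delta_{1})$, proving the first assertion. For the final sentence: fix $z_{0}\in L$; for $\gamma\in A$ and any neighborhood $U$ of $z_{0}$, pick $z\in U\cap(B(z_{0},\delta_{2})\setminus\{z_{0}\})$; then $\{\gamma_{n,1}\}$ cannot be equicontinuous at $z_{0}$ since $\gamma_{n_{1},1}(z)$ and $\gamma_{n_{1},1}(z_{0})\in L$ are separated by at least $\mathrm{dist}(\CCI\setminus B(L,\delta_{1}),\,L)>0$ while $z\to z_{0}$; thus $z_{0}\in J_{\gamma}$ for every $\gamma\in A$, i.e.\ for $\tilde{\tau}$-a.e.\ $\gamma$.

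The main obstacle I anticipate is the bounded-distortion step: one must verify that along the finite-length compositions near $L$, the maps $\gamma_{n,1}$ restricted to $B(z_{0},\delta_{2})$ are univalent with controlled distortion, which requires the nonvanishing-derivative hypothesis and a uniform choice of $\delta_{1}$ so that each single $h\in\G_{\tau}$ is injective on $2\delta_{1}$-balls centered on $L$ (using compactness of $\G_{\tau}$ and $L$). Once that uniform injectivity radius is secured, Koebe's distortion theorem applied to each univalent branch, composed, gives the constant $C$ uniformly, and the rest is the routine telescoping estimate above; the care needed is only in tracking that $\delta_{1}$ and $A$ are chosen before (hence independent of) $\alpha$, which the statement explicitly demands.
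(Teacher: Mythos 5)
Your proposal is correct and follows essentially the same route as the paper's proof: Birkhoff's ergodic theorem applied to the canonical $\tau $-ergodic measure on the finite set $L$, uniform local control of the (nonvanishing) derivatives near $L$ obtained from compactness of $\G _{\tau }$, and a contradiction argument showing that an orbit of $z\neq z_{0}$ trapped in $B(L,\delta _{1})$ must shadow the orbit of $z_{0}$ while the distance between the two orbits is forced to grow. The paper runs the key estimate backwards (contracting inverse branches give $d(z,z_{0})\leq \delta _{1}e^{-m(\chi (\tau ,L)-2\epsilon )}$, forcing $z=z_{0}$) where you run it forwards (expansion forces $d(\gamma _{n,1}(z),\gamma _{n,1}(z_{0}))\rightarrow \infty $); these are the same estimate, and the one point you only gesture at --- that the nearest point of $L$ to $\gamma _{n,1}(z)$ is exactly $\gamma _{n,1}(z_{0})$ at every step --- is secured, as in the paper, by taking $\delta _{1}$ smaller than half the minimal separation $s$ of $L$ and such that $g(B(w_{0},\delta _{1}))\subset B(g(w_{0}),s/2)$ for all $w_{0}\in L$, $g\in \G _{\tau }$.
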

\begin{proof}
Since $\mbox{supp}\,\tau$ is compact, there exists 
a $\delta >0$ such that 
for each $w_{0}\in L$ and for each $g\in \mbox{supp}\,\tau$, 
$g:B(w_{0},5\delta )\rightarrow \CCI $ is injective.
Let $u:=\min \{ d(a,b)\mid a,b\in L, a\neq b\} >0$
 (if $\sharp L=1$ then let $u=1$).  
Let $0<\epsilon <\frac{1}{4}\chi (\tau, L).$ 
Then there exists a $\delta _{1}>0$ with $\delta _{1}<\min \{ \frac{u}{2},\delta \} $ 
such that 
\begin{itemize}
\item[(i)] 
for each $w_{0}\in L $ and for each $g\in \mbox{supp}\,\tau$, we have 
$g(B(w_{0},\delta _{1}))\subset B(g(w_{0}),\frac{u}{2})$, and 
\item[(ii)] 
for each $w_{0}\in L$ and for each $g\in \mbox{supp}\,\tau$, 
there exists an inverse branch $g_{w_{0}}^{-1}:B(g(w_{0}),2\delta _{1})
\rightarrow B(w_{0},\delta )$ of $g$ with $g_{w_{0}}^{-1}(g(w_{0}))=w_{0}$ 
such that for each $w\in B(g(w_{0}),2\delta _{1})$, we have 
\begin{equation}
\label{eq:lndgt}
\log \| D(g_{w_{0}}^{-1})_{w}\|_{s} 
\leq \log \| D(g_{w_{0}}^{-1})_{g(w_{0})}\| _{s}+\epsilon .
\end{equation}
\end{itemize} 
By Birkhoff's ergodic theorem, 
there exists a Borel subset $A$ of $X_{\tau }$ with 
$\tilde{\tau }(A)=1$ such that 
for each $(\gamma, z_{0})\in A\times L$, 
there exists an $n_{0}=n_{0}(\gamma, z_{0})\in \NN $ 
such that for each $n\in \NN $ with $n\geq n_{0}$ 
we have 
\begin{equation}
\label{eq:enchl-e}
|\frac{1}{n}\sum _{j=0}^{n-1}\log \| D(\g _{j+1})_{\g _{j,1}(z_{0})}\| _{s}-\chi (\tau , L)|
<\epsilon, \mbox{ thus } 
e^{n(\chi (\tau, L)-\epsilon )}\leq \| D(\gamma _{n,1})_{z_{0}}\| _{s}\leq e^{n(\chi (\tau, L)+\epsilon )}.
\end{equation}
Let $\delta _{2}:=\frac{1}{2}\min \{ \alpha, \delta _{1}\} >0.$ 
Let $z_{0}\in L.$ 
Let $z\in B(z_{0},\delta _{2})\setminus \{ z_{0}\} .$ Let $\gamma \in A.$ 
We now prove the following claim.\\ 
Claim 1. There exists an $n\in \NN $ such that 
$\gamma _{n,1}(z)\not\in B(L,\delta _{1}).$ 

To prove this claim, suppose that 
 for each $n\in \NN $, $\gamma _{n,1}(z)\in B(L,\delta _{1}).$ 
Let $n_{0}=n_{0}(\gamma, z_{0})$ be the number defined above. 
Let $m\in \NN $ with $m\geq n_{0}.$ 
Then 
we have 
$\gamma _{m}(\gamma _{m-1,1}(z))=\gamma _{m,1}(z), $ 
$\gamma _{m}((\gamma _{m})^{-1}_{\gamma _{m-1,1}(z_{0})}(\gamma _{m,1}(z)))=\gamma _{m,1}(z)$, 
$\gamma _{m-1,1}(z)\in B(\gamma _{m-1,1}(z_{0}),5\delta), $ 
$(\gamma _{m})^{-1}_{\gamma _{m-1,1}(z_{0})}(\gamma _{m,1}(z))\in B(\gamma _{m-1,1}(z_{0}),5\delta )$, 
and $\gamma _{m}:B(\gamma _{m-1,1}(z_{0}),5\delta )\rightarrow \CCI $ 
is injective. 
Hence $$\gamma _{m-1,1}(z)=({\gamma }_{m})^{-1}_{\gamma _{m-1,1}(z_{0})}(\gamma _{m,1}(z)).$$   
Similarly, it is easy to see that for each $j=1,\ldots ,m$, 
\begin{equation}
\label{eq:gm-j1z}
\gamma _{m-j,1}(z)=(\gamma _{m-j+1})^{-1}_{\gamma _{m-j,1}(z_{0})}(\gamma _{m-j+1,1}(z)).
\end{equation}
Combining (\ref{eq:lndgt}), (\ref{eq:enchl-e}), (\ref{eq:gm-j1z}), 
we obtain that 
\begin{eqnarray*}
d(z,z_{0}) & \leq & \delta _{1}\exp (-\sum _{j=1}^{m}\log \| D(\g _{j})_{\gamma _{j-1,1}(z_{0})}\| _{s}
+m\epsilon _{1})\\ 
 & = & \delta _{1}\| D(\gamma _{m,1})_{z_{0}}\| _{s}^{-1}\cdot e^{m\epsilon }\\ 
 & \leq & \delta _{1}e^{-m(\chi (\tau, L)-\epsilon )}\cdot e^{m\epsilon } \\ 
 & = & \delta _{1}e^{-m(\chi (\tau, L)-2\epsilon )}. 
\end{eqnarray*}
Since the above inequality holds for any $m\in \NN $, it follows that 
$z=z_{0}.$ However, this is a contradiction. Therefore Claim 1 holds. 

We now let $z_{0}\in L$ and $\gamma \in A.$ 
Suppose $z_{0}\in F_{\gamma }.$ Then 
there exists a number $\delta _{3}>0$ with $\delta _{3}<
\delta _{2}$ such that 
for each $z\in B(z_{0}, \delta _{3})\setminus \{ z_{0}\}$ and 
for each $n\in \NN $, we have 
$d(\gamma _{n, 1}(z), \gamma _{n,1}(z_{0}))<\delta _{1}.$ 
Since $G_{\tau }(L)\subset L$, it implies that 
$d(\gamma _{n, 1}(z), L)<\delta _{1}$ for each $n\in \NN .$  
However, this contradicts Claim 1. Thus we have 
$z_{0}\in J_{\gamma }.$ 

Hence, for each $z_{0}\in L$, 
for $\tilde{\tau }$-a.e. $\gamma \in (\mbox{supp}\,\tau)^{\NN }$, 
we have $z_{0}\in J_{\gamma }.$ 
\end{proof}
\begin{lem}
\label{l:chilp}
Let $\tau \in {\frak M}_{1,c}(\emRat). $ Let 
$L\in \emMin(G_{\tau },\CCI )$ with $\sharp L<\infty .$ 
Suppose $\chi (\tau, L)>0.$ 
Suppose also that for each $x\in L$ and for each 
$g\in \supptau$, we have 
$Dg_{x}\neq 0.$ 
Let $y\in \CCI .$ 
Let $B=\{ \gamma \in X_{\tau }\mid d(\gamma _{n,1}(y),L)\rightarrow 0 
\mbox{ as }n\rightarrow \infty \} .$ 
Then for $\tilde{\tau }$-a.e. $\gamma \in B$, there exists a number $n_{0}=n_{0}(\gamma, y)\in \NN $ 
such that for each $n\in \NN $ with $n\geq n_{0}$, we have $\gamma _{n,1}(y)\in L.$ 
\end{lem}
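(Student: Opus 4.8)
The plan is to deduce Lemma~\ref{l:chilp} from Lemma~\ref{l:chlpad1d2} by a shift-invariance trick. Lemma~\ref{l:chlpad1d2} provides a constant $\delta_{1}>0$ and a Borel set $A\subset X_{\tau}$ with $\tilde{\tau}(A)=1$ such that, for any prescribed $\alpha>0$, there is $\delta_{2}=\delta_{2}(\alpha)\in(0,\alpha)$ with the property that for every $z_{0}\in L$, every $z\in B(z_{0},\delta_{2})\setminus\{z_{0}\}$ and every $\gamma\in A$ there is some $n_{1}=n_{1}(\gamma,z)\in\NN$ with $\gamma_{n_{1},1}(z)\notin B(L,\delta_{1})$; moreover $\delta_{1}$ and $A$ do not depend on $\alpha$. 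First I would fix $\delta_{1}$ and $A$ as above and then take $\alpha:=\delta_{1}$, so that the resulting $\delta_{2}$ satisfies $\delta_{2}\leq\delta_{1}$. The key point of the argument is that this escape property must be applied not to $\gamma$ itself but to each tail $\sigma^{n}(\gamma)$; so I would pass to $\tilde{A}:=\bigcap_{n=0}^{\infty}\sigma^{-n}(A)$. Since $\tilde{\tau}=\otimes_{j=1}^{\infty}\tau$ is invariant under the shift $\sigma$, each $\sigma^{-n}(A)$ has full $\tilde{\tau}$-measure, hence $\tilde{\tau}(\tilde{A})=1$, and for every $\gamma\in\tilde{A}$ we have $\sigma^{n}(\gamma)\in A$ for all $n\geq 0$.

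Now I would fix $\gamma\in B\cap\tilde{A}$; since $\tilde{\tau}(\tilde{A})=1$, proving the conclusion for all such $\gamma$ suffices. Because $d(\gamma_{n,1}(y),L)\to 0$ and $\delta_{2}\leq\delta_{1}$, there is $n_{0}\in\NN$ with $\gamma_{n,1}(y)\in B(L,\delta_{2})\subset B(L,\delta_{1})$ for every $n\geq n_{0}$. Put $w:=\gamma_{n_{0},1}(y)$ and choose, by compactness of $L$, a point $z_{0}\in L$ with $d(w,z_{0})<\delta_{2}$. I claim $w=z_{0}$. Indeed, if $w\neq z_{0}$ then $w\in B(z_{0},\delta_{2})\setminus\{z_{0}\}$, and since $\sigma^{n_{0}}(\gamma)\in A$, Lemma~\ref{l:chlpad1d2} applied to $z_{0}$, to $z:=w$ and to the sequence $\sigma^{n_{0}}(\gamma)$ yields $n_{1}\in\NN$ with $(\sigma^{n_{0}}\gamma)_{n_{1},1}(w)\notin B(L,\delta_{1})$. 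But $(\sigma^{n_{0}}\gamma)_{n_{1},1}(w)=\gamma_{n_{0}+n_{1},\,n_{0}+1}(\gamma_{n_{0},1}(y))=\gamma_{n_{0}+n_{1},1}(y)$ and $n_{0}+n_{1}\geq n_{0}$, so this point lies in $B(L,\delta_{1})$ --- a contradiction. Hence $w=z_{0}\in L$, i.e.\ $\gamma_{n_{0},1}(y)\in L$. Since $L\in\Min(G_{\tau},\CCI)$ we have $g(L)\subset L$ for every $g\in G_{\tau}$ (Definition~\ref{d:minimal}); in particular $\gamma_{j}(L)\subset L$ for each $j$, so an immediate induction on $n\geq n_{0}$ gives $\gamma_{n,1}(y)\in L$ for all $n\geq n_{0}$, which is exactly the assertion with this $n_{0}$.

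I do not expect a genuine obstacle: all the analytic substance --- the orbit tracking near the finite set $L$, the inverse-branch and Koebe distortion estimates, and the appeal to Birkhoff's ergodic theorem through the hypothesis $\chi(\tau,L)>0$ --- is already packaged inside Lemma~\ref{l:chlpad1d2}, so the work here is essentially organizational. The only two points needing care are (a) instantiating the parameter $\alpha$ in Lemma~\ref{l:chlpad1d2} as $\delta_{1}$ itself, so that $\delta_{2}\leq\delta_{1}$ while $\delta_{1}$ and $A$ stay fixed, and (b) the shift-invariance of the product measure $\tilde{\tau}$, which is what makes $\tilde{A}$ a full-measure set on which every tail sequence inherits the escape property. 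One should also record the degenerate case $\sharp L=1$: then $L=\{a\}$ is a common fixed point of $G_{\tau}$ and the same argument shows $\gamma_{n,1}(y)=a$ for all large $n$.
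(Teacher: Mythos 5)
Your proof is correct and follows essentially the same route as the paper's: both reduce the claim to Lemma~\ref{l:chlpad1d2} and use the $\sigma$-invariance of $\tilde{\tau}$ to apply that lemma to the tails $\sigma^{n}(\gamma)$. The paper phrases this as a short contradiction argument (a positive-measure subset of $B$ on which the orbit never enters $L$ would force $\limsup_{n}d(\gamma_{n,1}(y),L)>0$, contradicting $\gamma\in B$), whereas you spell out the same mechanism directly on the full-measure set $\bigcap_{n}\sigma^{-n}(A)$; the added detail is harmless and the argument is sound.
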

\begin{proof}
Suppose that there exists a Borel subset $B_{0}$ of $B$ with $\tilde{\tau }(B_{0})>0$ 
such that for each $\gamma \in B_{0}$ and for each $n\in \NN $, $\gamma _{n,1}(y)\not\in L.$ 
Since $\tilde{\tau }$ is invariant under the shift map 
$\sigma :X_{\tau }\rightarrow X_{\tau }$, 
Lemma~\ref{l:chlpad1d2}  implies that for $\tilde{\tau }$-a.e. $\gamma \in B_{0}$, 
$\limsup _{n\rightarrow \infty }d(\gamma _{n,1}(y),L)>0.$ However, this is a contradiction. 
Hence the statement of our lemma holds.  
\end{proof}
\begin{lem}
\label{l:tyaca0}
Let $\tau \in {\frak M}_{1,c}(\emRat). $ 
Let $y\in \CCI .$ 
Then there exists a subset $A$ of $\CCI $ 
with $\sharp (\CCI \setminus A)\leq \aleph _{0}$
such that for each $x\in A$, $\tau (\{ g\in \emRat \mid g(x)=y\} )=0.$ 
\end{lem}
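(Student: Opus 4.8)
Lemma \ref{l:tyaca0} asserts: for $\tau \in {\frak M}_{1,c}(\Rat)$ and $y \in \CCI$, there is a subset $A \subset \CCI$ with $\sharp(\CCI \setminus A) \leq \aleph_0$ such that for each $x \in A$, $\tau(\{g \in \Rat \mid g(x) = y\}) = 0$.

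The plan is to turn the statement around: instead of bounding the exceptional set of $x$ directly, I would estimate, for each fixed potential "bad" level, how many $x$ can contribute. Set $\CCI \setminus A = \{x \in \CCI \mid \tau(\{g \in \Rat \mid g(x) = y\}) > 0\}$ and, for each $n \in \NN$, let $B_n = \{x \in \CCI \mid \tau(\{g : g(x) = y\}) \geq 1/n\}$; then $\CCI \setminus A = \bigcup_n B_n$, so it suffices to show each $B_n$ is finite. First I would fix $n$ and suppose for contradiction that $B_n$ contains infinitely many distinct points $x_1, x_2, \ldots$. For each $j$, put $E_j = \{g \in \Gamma_\tau \mid g(x_j) = y\}$ (intersecting with $\suppt$ changes nothing since $\tau$-mass lives there), so $\tau(E_j) \geq 1/n$. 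The key observation is that for a \emph{fixed} $g \in \Rat$, the set $\{x \mid g(x) = y\}$ has at most $\deg(g)$ elements; but $\deg$ is unbounded on $\Gamma_\tau$ in general, so I cannot immediately say a single $g$ lies in only finitely many $E_j$ — I would instead stratify $\Gamma_\tau$ by degree.

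Here is the stratification step, which I expect to be the technical heart. Write $\Gamma_\tau = \bigcup_{m \geq 1} (\Gamma_\tau \cap \Rat_m)$. Since $\sum_m \tau(\Gamma_\tau \cap \Rat_m) = 1$, choose $M$ large enough that $\tau(\bigcup_{m > M} \Rat_m) < \frac{1}{2n}$. Then for every $j$, $\tau(E_j \cap \bigcup_{m \leq M} \Rat_m) \geq \frac{1}{2n}$. Now restrict attention to $g \in \Rat_m$ with $m \leq M$: for such $g$, $g(x) = y$ has at most $m \leq M$ solutions $x$, so each such $g$ belongs to at most $M$ of the sets $E_j$. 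Consequently, taking $N$ of the points $x_1, \ldots, x_N$, the sets $E_j \cap \bigcup_{m \leq M}\Rat_m$ each have $\tau$-mass $\geq \frac{1}{2n}$, yet each point of $\bigcup_{m\leq M}\Rat_m$ is covered at most $M$ times; so $\sum_{j=1}^N \tau(E_j \cap \bigcup_{m\leq M}\Rat_m) \leq M \cdot \tau(\bigcup_{m \leq M}\Rat_m) \leq M$. This forces $\frac{N}{2n} \leq M$, i.e. $N \leq 2nM$, bounding the number of points in $B_n$ by $2nM$ — contradicting infiniteness. Hence each $B_n$ is finite, $\CCI \setminus A$ is a countable union of finite sets, and the lemma follows.

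The main obstacle is exactly the fact that $\Gamma_\tau$ need not have bounded degree, which is why the degree-truncation argument (splitting off a set of maps of small $\tau$-measure carrying all the high-degree behavior) is essential; without it, "each $g$ meets only finitely many $E_j$" fails uniformly. A minor point to handle carefully is measurability: the map $x \mapsto \tau(\{g \mid g(x) = y\})$ should be checked to be Borel (or at least that each $B_n$ is Borel) so that the decomposition $\CCI \setminus A = \bigcup_n B_n$ is legitimate; this follows from the fact that $\{(g,x) \mid g(x) = y\}$ is closed in $\Rat \times \CCI$ and an application of Fubini-type measurability of $x \mapsto \tau$ of a section. Everything else is counting.
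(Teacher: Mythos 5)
Your proof is correct, but it takes a genuinely different route from the paper's. The paper first observes that, since $\mbox{supp}\,\tau$ is compact and $\deg$ is continuous and integer-valued on $\Rat$, there is a uniform bound $N$ with $\deg(g)\leq N$ for all $g\in\G_{\tau}$; it then runs a descending induction on the cardinality of finite sets $F$ with $\tau(B_{F})>0$, where $B_{F}=\{g\mid g(x)=y\ \forall x\in F\}$, using at each step that a family of pairwise essentially disjoint sets of positive $\tau$-measure must be countable. You instead prove the stronger quantitative statement that each level set $B_{n}=\{x\mid \tau(\{g\mid g(x)=y\})\geq 1/n\}$ is finite, via a first-moment double count: after truncating to degrees $\leq M$ so that the discarded mass is below $\frac{1}{2n}$, each surviving $g$ lies in at most $M$ of the sets $E_{j}$, so $\sum_{j}\tau(E_{j}\cap\bigcup_{m\leq M}\Rat_{m})\leq M$ while each summand is $\geq\frac{1}{2n}$, forcing $\sharp B_{n}\leq 2nM$. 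One small correction: your remark that $\deg$ is ``unbounded on $\Gamma_{\tau}$ in general'' is false under the hypothesis $\tau\in{\frak M}_{1,c}(\Rat)$ — compactness of the support gives the uniform bound $N$ directly, which would let you skip the truncation and get $\sharp B_{n}\leq nN$ at once. On the other hand, your truncation makes the argument work for arbitrary $\tau\in{\frak M}_{1}(\Rat)$ without compact support, which is more general than the paper's proof. Your measurability worry is also a non-issue: the decomposition $\CCI\setminus A=\bigcup_{n}B_{n}$ is purely set-theoretic, and all that is needed is that each $E_{j}=\{g\mid g(x_{j})=y\}$ is closed (hence $\tau$-measurable), which follows from joint continuity of $(g,x)\mapsto g(x)$ exactly as you indicate.
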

\begin{proof}
For each finite subset $F=\{ x_{1},\ldots ,x_{n}\} $ of $\CCI $  
such that $x_{1},\ldots ,x_{n}$ are mutually distinct, 
let $B_{F}:=\{ g\in \Rat \mid g(x_{i})=y, \mbox{ for each }i=1,\ldots, n\}.$ 
Since $\mbox{supp}\, \tau $ is compact, 
there exists an $N\in \NN $ such that 
for each $g\in \mbox{supp}\,\tau$, $\deg (g)\leq N.$ 
Hence, if $\sharp F>N$, then 
$\tau (B_{F})=\tau (B_{F}\cap \mbox{supp}\,\tau)=\tau (\emptyset )=0.$  
 For each $k\in \ZZ $ with $0\leq k\leq N$, 
let ${\cal F}_{k}=\{ F\subset \CCI \mid \sharp F=N+1-k, \tau (B_{F})>0\}.$ Note that ${\cal F}_{0}=\emptyset $ 
from the above argument.  
We now prove the following claim. \\ 
Claim 1. Let $k\in \ZZ $ with $0\leq k<N.$ 
If $\sharp {\cal F}_{k} \leq \aleph _{0}$, 
then $\sharp {\cal F}_{k+1} \leq \aleph _{0}.$  

To prove this claim, let $0\leq k<N$ and 
suppose  we have that $\sharp {\cal F}_{k} \leq \aleph _{0}$.    
Let 
${\cal H}$ be the set $\{ H\in {\cal F}_{k+1}\mid \exists F\in {\cal F}_{k} \mbox{ such that }H\subset F\} .$ 
Then $\sharp {\cal H}\leq \aleph _{0}.$ 
Moreover, for each $H_{1},H_{2}\in {\cal F}_{k+1}\setminus {\cal H}$ with 
$H_{1}\neq H_{2}$, we have 
\begin{equation}
\label{eq:tbh1c}
\tau (B_{H_{1}}\cap B_{H_{2}})=0.
\end{equation}
For, let $x\in H_{2}\setminus H_{1}$ and let $F=H_{1}\cup \{ x\} .$ 
Then $\sharp F=N+1-k$ and $H_{1}\subset F.$ 
Since $H_{1}\not\in {\cal H}$, we have $F\not\in {\cal F}_{k}.$ Hence $\tau (B_{F})=0.$ 
Since $B_{H_{1}}\cap B_{H_{2}}\subset B_{F}$, 
(\ref{eq:tbh1c}) holds. 
By (\ref{eq:tbh1c}), $\sharp ({\cal F}_{k+1}\setminus {\cal H})\leq \aleph _{0}.$ 
Therefore $\sharp {\cal F}_{k+1}\leq \aleph _{0}.$ Thus we have prove Claim 1. 

By Claim 1, we obtain that 
$\sharp \{ H\subset \CCI \mid \sharp H=1, \tau (B_{H})>0\} \leq \aleph _{0}.$ 
Hence the statement of our lemma holds. 
\end{proof}
\begin{lem}
\label{l:tcac}
Let $\tau \in {\frak M}_{1,c}(\emRat).$ 
Let $C$ be a non-empty finite subset of $\CCI .$ 
Then there exists a subset $A_{C}$ of $\CCI $ with 
$\sharp (\CCI \setminus A_{C})\leq \aleph _{0}$ 
such that 
for each $x\in A_{C}$, 
$$\tilde{\tau }(\{ \gamma \in X_{\tau }\mid \exists n\in \NN 
\mbox{ such that } \gamma _{n,1}(x)\in C\} )=0.$$  
\end{lem}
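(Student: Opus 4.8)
The plan is to reduce the statement for a finite set $C$ to the single-point case already handled in Lemma~\ref{l:tyaca0}, by a countable-union argument over the finitely many points of $C$ and over the time index $n\in\NN$. First I would fix an arbitrary $c\in C$ and, for each $n\in\NN$, consider the event that the orbit hits $c$ exactly at time $n$, i.e. $E_{c,n}:=\{\gamma\in X_\tau\mid \gamma_{n,1}(x)=c\}$. Conditioning on the first $n-1$ maps, independence of $\tilde\tau=\otimes\tau$ gives
\[
\tilde\tau(E_{c,n})=\int_{\Gamma_\tau^{\,n-1}}\tau\bigl(\{g\in\emRat\mid g(\gamma_{n-1,1}(x))=c\}\bigr)\,d\tau^{n-1}(\gamma_1,\dots,\gamma_{n-1}),
\]
so $\tilde\tau(E_{c,n})=0$ as soon as the inner integrand vanishes, that is, as soon as for $\tau^{n-1}$-a.e.\ choice of $(\gamma_1,\dots,\gamma_{n-1})$ the point $w:=\gamma_{n-1,1}(x)$ satisfies $\tau(\{g\mid g(w)=c\})=0$.

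The key point is that Lemma~\ref{l:tyaca0}, applied with $y=c$, produces a set $A_c\subset\CCI$ with $\sharp(\CCI\setminus A_c)\le\aleph_0$ such that $\tau(\{g\mid g(w)=c\})=0$ for every $w\in A_c$. Hence I would let $A_C':=\bigcap_{c\in C}A_c$; since $C$ is finite, still $\sharp(\CCI\setminus A_C')\le\aleph_0$. The remaining obstacle — and the main technical step — is to arrange that all the intermediate points $\gamma_{n-1,1}(x)$ appearing above land in $A_C'$ for $\tilde\tau$-a.e.\ $\gamma$; a single bad starting point $x$ could be mapped onto the countable exceptional set by a positive-$\tau$-mass family of maps. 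To handle this I would enlarge the exceptional set recursively: for each $g\in\Gamma_\tau$ the preimage $g^{-1}(\CCI\setminus A_C')$ is a countable set (because $g$ is finite-to-one, $\deg g\le N$), but there are uncountably many $g\in\Gamma_\tau$, so instead I use that $\tau$ is a probability measure and argue measure-theoretically: define $B_1:=\{x\in\CCI\mid \tau(\{g\mid g(x)\in(\CCI\setminus A_C')\cup C\})>0\}$, and observe that, exactly as in the proof of Lemma~\ref{l:tyaca0} (the ${\cal F}_k$ argument using $\deg g\le N$ for $g\in\Gamma_\tau$), the set $B_1$ is countable. Iterating, $B_k:=\{x\mid \tau(\{g\mid g(x)\in B_{k-1}\cup C\})>0\}$ is countable for every $k$, and I set $A_C:=A_C'\setminus\bigcup_{k\ge1}B_k$, which still has countable complement.

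Finally, for $x\in A_C$ I would show $\tilde\tau(\{\gamma\mid \exists n,\ \gamma_{n,1}(x)\in C\})=0$ by bounding it above by $\sum_{n\ge1}\tilde\tau(\{\gamma\mid \gamma_{n,1}(x)\in C\})$ and proving each summand is $0$ by induction on $n$: for $n=1$ this is immediate since $x\notin B_1$ (so $\tau(\{g\mid g(x)\in C\})=0$); for the inductive step, $\tilde\tau(\{\gamma_{n,1}(x)\in C\})\le\int_{\Gamma_\tau}\tilde\tau(\{\gamma'\mid \gamma'_{n-1,1}(g(x))\in C\})\,d\tau(g)$, and for $\tau$-a.e.\ $g$ the point $g(x)$ lies in $A_C$ (because $x\notin B_1$ ensures $g(x)\notin C$ a.s., and $x\notin B_k$ for all $k$ ensures $g(x)$ avoids the countable exceptional set for $\tau$-a.e.\ $g$), so the inner probability is $0$ by the induction hypothesis applied to the point $g(x)\in A_C$. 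I expect the bookkeeping in this recursive enlargement of the exceptional set — making sure every ``descendant'' point stays in $A_C$ almost surely — to be the only delicate part; once that is set up, the countable-union bound and the $n=1$ base case from Lemma~\ref{l:tyaca0} finish the proof.
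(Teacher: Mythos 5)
Your argument is correct, but it takes a genuinely different route from the paper's. The paper deals with the time index $n$ head-on: for each $y\in C$ and $n\in \NN $ it sets $D_{y,n}:=\{ x\in \CCI \mid \tau ^{n}(\{ (\gamma _{1},\ldots ,\gamma _{n})\in \G _{\tau }^{n}\mid \gamma _{n}\circ \cdots \circ \gamma _{1}(x)=y\} )>0\} $ and shows $\sharp D_{y,n}\leq \aleph _{0}$ by running the ${\cal F}_{k}$-argument of Lemma~\ref{l:tyaca0} directly on the family of $n$-fold compositions (whose degrees are uniformly bounded by $N^{n}$) equipped with the product measure $\tau ^{n}$; then $A_{C}:=\CCI \setminus \bigcup _{y\in C,\, n\in \NN }D_{y,n}$ works immediately by countable subadditivity, with no recursion and no induction. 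You instead invoke Lemma~\ref{l:tyaca0} only for single maps and compensate with the recursive hierarchy $B_{1},B_{2},\ldots $ of exceptional sets, followed by an induction on $n$ obtained by conditioning on the first coordinate of $\gamma $. Both arguments are valid: your recursion is the price paid for not re-running the degree-bound argument at the level of $\G _{\tau }^{n}$, and the two points your construction hinges on do check out, namely (a) $B_{k}$ is countable because membership in $B_{k}$ forces, by countable subadditivity, positive $\tau $-mass onto a single point of the countable target set, so $B_{k}$ is contained in a countable union of the single-point exceptional sets of Lemma~\ref{l:tyaca0}, and (b) $x\notin B_{1}$ together with $x\notin B_{k+1}$ for all $k$ gives $g(x)\in A_{C}$ for $\tau $-a.e.\ $g$, which is exactly what the inductive step needs. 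The paper's version is shorter; yours is more self-contained in that it uses Lemma~\ref{l:tyaca0} strictly as a black box.
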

\begin{proof}
Let $D_{y,n}=\{ x\in \CCI \mid 
\tau ^{n}(\{ (\gamma _{1},\ldots ,\gamma _{n})\in 
(\mbox{supp}\,\tau )^{n}\mid 
\gamma _{n}\cdots \gamma _{1}(x)=y\} )>0\} $ 
for each $y\in C$ and each $n\in \NN, $  
where $\tau ^{n}=\otimes _{j=1}^{n}\tau \in {\frak M}_{1,c}((\mbox{supp}\,\tau)^{n}).$   
By using the argument in the proof of Lemma~\ref{l:tyaca0}, 
we can show that $\sharp D_{y,n}\leq \aleph _{0}.$ 
Let $A_{C}=\CCI \setminus (\cup _{y\in C,n\in \NN }D_{y,n}).$ 
Then $\sharp (\CCI \setminus A_{C})\leq \aleph _{0}.$ 
For each $x\in A_{C}$, we have 
\begin{eqnarray*}
& & \tilde{\tau }(\{ \gamma \in X_{\tau }\mid \exists n\in \NN 
\mbox{ such that }\gamma _{n,1}(x)\in C\} )\\ 
& \leq & \tilde{\tau }(\cup _{n\in \NN ,y\in C}\{ \gamma \in X_{\tau }
\mid \gamma _{n,1}(x)=y\})\\ 
& \leq & \sum _{n\in \NN ,y\in C}\tilde{\tau }(\{ \gamma \in X_{\tau }
\mid \gamma _{n,1}(x)=y\} )\\ 
& = & \sum _{n\in \NN , y\in C}\tilde{\tau }(\{ (\gamma _{1},\ldots, \gamma _{n})
\in (\mbox{supp}\,\tau)^{n}\mid \gamma _{n}\cdots \gamma _{1}(x)=y\} 
\times \prod _{j=n+1}^{\infty }\mbox{supp}\,\tau)\\ 
& = & \sum _{n\in \NN ,y\in C}\tau ^{n}(\{ (\gamma _{1},\ldots, \gamma _{n})\in (\mbox{supp}\,\tau)^{n}
\mid \gamma _{n}\cdots \gamma _{1}(x)=y\})=0. \\ 
\end{eqnarray*}
Thus the statement of our lemma holds. 
\end{proof}

\begin{lem}
\label{l:chlpa0}
Let $\tau \in {\frak M}_{1,c}(\emRat).$ 
Let $L\in \emMin(G_{\tau },\CCI )$ with $\sharp L<\infty .$ 
Suppose that $\chi (\tau, L)>0$ and for each $x\in L$ and 
for each $g\in \mbox{supp}\,\tau$, $Dg_{x}\neq 0.$ 
Then for each $y\in \CCI $, we have 
$$\tilde{\tau }(\{ \gamma \in X_{\tau }\mid 
d(\gamma _{n,1}(y),L)\rightarrow 0 \mbox{ as }n\rightarrow \infty \} )
=\tilde{\tau }(\{ \gamma \in X_{\tau }\mid \exists n\in \NN 
\mbox { such that }\gamma _{n,1}(y)\in L\} )$$ and  
$$ \sharp \{ y\in \CCI \mid \tilde{\tau }(\{ \gamma \in X_{\tau } \mid 
d(\gamma _{n,1}(y),L)\rightarrow 0 \mbox{ as }n\rightarrow \infty \} )>0\} \leq \aleph _{0}.$$
\end{lem}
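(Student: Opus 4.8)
The plan is to derive both statements from Lemma~\ref{l:chilp}, the forward invariance $G_{\tau }(L)\subset L$, and Lemma~\ref{l:tcac}; the present lemma is essentially a packaging of these facts, so I do not expect to need any genuinely new argument.

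First I would establish the equality of the two $\tilde{\tau }$-measures. The inclusion
$$\{ \gamma \in X_{\tau }\mid \exists n\in \NN \ \mbox{s.t.}\ \gamma _{n,1}(y)\in L\} \subset \{ \gamma \in X_{\tau }\mid d(\gamma _{n,1}(y),L)\rightarrow 0\ (n\rightarrow \infty )\} $$
holds pointwise: since $L\in \emMin(G_{\tau },\CCI )$ we have $g(L)\subset L$ for every $g\in \G_{\tau }$, so if $\gamma _{n,1}(y)\in L$ for some $n$, then $\gamma _{m,1}(y)=\gamma _{m}\circ \cdots \circ \gamma _{n+1}(\gamma _{n,1}(y))\in L$ for all $m\geq n$, hence $d(\gamma _{m,1}(y),L)=0$ for all large $m$. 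For the reverse inequality of measures, put $B:=\{ \gamma \in X_{\tau }\mid d(\gamma _{n,1}(y),L)\rightarrow 0\}$. Since the hypotheses here are exactly those of Lemma~\ref{l:chilp}, that lemma gives that for $\tilde{\tau }$-a.e. $\gamma \in B$ there is $n_{0}=n_{0}(\gamma ,y)$ with $\gamma _{n,1}(y)\in L$ for all $n\geq n_{0}$; thus $B$ differs from a subset of $\{ \gamma \mid \exists n,\ \gamma _{n,1}(y)\in L\}$ by a $\tilde{\tau }$-null set. Combining the two inclusions yields $\tilde{\tau }(\{ \gamma \mid d(\gamma _{n,1}(y),L)\rightarrow 0\} )=\tilde{\tau }(\{ \gamma \mid \exists n\in \NN ,\ \gamma _{n,1}(y)\in L\} )$ for every $y\in \CCI $, which is the first claim.

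Next I would bound the exceptional set. Let $E:=\{ y\in \CCI \mid \tilde{\tau }(\{ \gamma \mid d(\gamma _{n,1}(y),L)\rightarrow 0\} )>0\}$. By the equality just proved, $y\in E$ if and only if $\tilde{\tau }(\{ \gamma \mid \exists n\in \NN ,\ \gamma _{n,1}(y)\in L\} )>0$. Since $L$ is a non-empty finite subset of $\CCI $, Lemma~\ref{l:tcac} applied with $C=L$ produces a set $A_{L}\subset \CCI $ with $\sharp (\CCI \setminus A_{L})\leq \aleph _{0}$ such that $\tilde{\tau }(\{ \gamma \mid \exists n,\ \gamma _{n,1}(x)\in L\} )=0$ for every $x\in A_{L}$. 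Hence $E\subset \CCI \setminus A_{L}$, so $\sharp E\leq \aleph _{0}$, which is the second claim.

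The only genuine content sits in the nontrivial inclusion used in the first step, and that content already resides in Lemmas~\ref{l:chlpad1d2} and \ref{l:chilp}: there one uses $\chi (\tau ,L)>0$ and $Dg_{x}\neq 0$ on $L$, together with Birkhoff's ergodic theorem on the skew product and uniform control of inverse branches near $L$, to show that an orbit accumulating on the finite \emph{expanding} minimal set $L$ must actually land in $L$ after finitely many steps. Granting these inputs, everything above is elementary set-theoretic bookkeeping, so I anticipate no further obstacle; the one point to check with a little care is that Lemma~\ref{l:chilp} is applicable for an arbitrary $y\in \CCI $, not merely for $y$ near $L$ — which is exactly the generality in which that lemma is stated.
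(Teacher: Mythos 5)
Your proposal is correct and follows essentially the same route as the paper: the first equality is obtained from Lemma~\ref{l:chilp} (plus the forward invariance $g(L)\subset L$ for the easy inclusion), and the countability of the exceptional set comes from applying Lemma~\ref{l:tcac} with $C=L$. No gaps.
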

\begin{proof}
Lemma~\ref{l:chilp} implies that for each $y\in \CCI $, 
$$\tilde{\tau }(\{ \gamma \in X_{\tau }\mid 
d(\gamma _{n,1}(y),L)\rightarrow 0 \mbox{ as }n\rightarrow \infty \} )
=\tilde{\tau }(\{ \gamma \in X_{\tau }\mid \exists n\in \NN 
\mbox { such that }\gamma _{n,1}(y)\in L\} ).$$ 
Hence 
\begin{eqnarray*}
\ & \ &  \{ y\in \CCI 
\mid \tilde{\tau }(\{ \gamma \in X_{\tau }\mid d(\gamma _{n,1}(y),L)\rightarrow 0
\mbox{ as }n\rightarrow \infty \} )>0\} \\ 
& = & \{ y\in \CCI \mid \tilde{\tau }(\{ \gamma \in X_{\tau }
\mid \exists n\in \NN \mbox { such that } \gamma _{n,1}(y)\in L\} )>0\} 
\subset \CCI \setminus A_{L},
\end{eqnarray*}  
where $A_{L}$ is the set for $L$ coming from Lemma~\ref{l:tcac}. 
Since $\sharp (\CCI \setminus A_{L})\leq \aleph _{0}$, the statement of our lemma 
holds. 
\end{proof}
\subsection{Systems with finite kernel Julia sets} 
\label{ss:finitejker}
In this subsection, 
we show a theorem on the random dynamical systems generated by 
elements $\tau \in {\frak M}_{1,c}(\Rat)$ with $J_{\ker }(G_{\tau })<\infty .$ 
\begin{thm}
\label{t:jkfcln0}
Let $\tau \in {\frak M}_{1,c}(\emRat). $
Suppose we have all of the following. 
\begin{itemize}
\item[{\em (i)}] 
$\sharp J_{\ker }(G_{\tau })<\infty .$

\item[{\em (ii)}] 
For each $L\in \emMin(G_{\tau }, J_{\ker }(G_{\tau }))$, 
we have $\chi (\tau, L)\neq 0.$
\item[{\em (iii)}] 
For each $L\in \emMin(G_{\tau }, J_{\ker }(G_{\tau }))$ with  
$\chi (\tau, L)>0$, for each $g\in \mbox{supp}\,\tau$ and  for each $x\in L$, we have 
$Dg_{x}\neq 0.$ 
\end{itemize}
Let $H_{+}=\{ L\in \emMin(G_{\tau }, J_{\ker }(G_{\tau }))\mid  \chi (\tau, L)>0\}$ and 
we denote by $\Omega $ the set of 
points $y\in \CCI$ for which  
$\tilde{\tau }(\{ \gamma \in X_{\tau }\mid \exists n\in \NN \mbox{ s.t. }
\gamma _{n,1}(y)\in \cup _{L\in H_{+}}L\} )=0.$  
Then, $\sharp (\CCI \setminus \Omega )\leq \aleph _{0}$ and 
for each $z\in \Omega $, 
$\tilde{\tau }(\{ \gamma \in X_{\tau }\mid z\in J_{\gamma }\} )=0.$
Moreover, 
for $\tilde{\tau }$-a.e.$\gamma \in (\emRat)^{\NN}$, 
$\mbox{{\em Leb}}_{2}(J_{\gamma })=0.$ Furthermore,  
$J_{pt}^{0}(\tau )\subset \CCI \setminus \Omega $ and 
$\sharp (J_{pt}^{0}(\tau ))\leq \aleph _{0}.$
\end{thm}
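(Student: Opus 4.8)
\textbf{Proof plan for Theorem~\ref{t:jkfcln0}.}
The plan is to combine the structural results on finite kernel Julia sets with the contraction/expansion dichotomy coming from the hypotheses (ii) and (iii). First I would set $V:=F(G_{\tau})$ and $L_{\ker}:=J_{\ker}(G_{\tau})$. If $F(G_{\tau})=\emptyset$ then $J(G_{\tau})=\CCI$ and one argues separately (in that degenerate case $J_{\ker}(G_{\tau})=\CCI$ is impossible since $\sharp J_{\ker}(G_{\tau})<\infty$ while $J(G_{\tau})=\CCI$ forces things to be trivial; more carefully, if $F(G_\tau)=\emptyset$ then $\CCI\setminus V=\CCI$ and the relevant set $E_y$ is all of $X_\tau$ for every $y$), so assume $F(G_{\tau})\neq\emptyset$; then $\mbox{int}(J_{\ker}(G_{\tau}))=\emptyset$ by Remark~\ref{r:kjulia}(3), consistent with finiteness. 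Each $g\in\G_\tau$ satisfies $g(F(G_\tau))\subset F(G_\tau)$ by Lemma~\ref{ocminvlem}, so $V$ is forward invariant and $L_{\ker}=\bigcap_{g\in G_\tau}g^{-1}(\CCI\setminus V)$, matching the hypotheses of Lemma~\ref{l:voygvv} and Lemma~\ref{l:nlkfa}. Now decompose $\Min(G_\tau,L_{\ker})=H_+\sqcup H_-$ according to $\chi(\tau,L)>0$ or $\chi(\tau,L)<0$ (possible by (ii)).

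Next I would prove $\sharp(\CCI\setminus\Omega)\leq\aleph_0$. For each $L\in H_+$, hypothesis (iii) lets us apply Lemma~\ref{l:chlpa0}: the set of $y$ with $\tilde\tau(\{\gamma\mid\exists n,\ \gamma_{n,1}(y)\in L\})>0$ is countable. Since $H_+$ is finite (it is a subset of $\Min(G_\tau,L_{\ker})$ and $\sharp L_{\ker}<\infty$), the union over $L\in H_+$ of these countable sets is countable, and its complement is exactly $\Omega$ by definition; hence $\sharp(\CCI\setminus\Omega)\leq\aleph_0$. Then I would fix $z\in\Omega$ and show $\tilde\tau(\{\gamma\mid z\in J_\gamma\})=0$. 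Split $X_\tau$ into the event $E_z=\{\gamma\mid z\in\bigcap_n\gamma_{n,1}^{-1}(J(G_\tau))\}$ and its complement. On $X_\tau\setminus E_z$ there is $n$ with $\gamma_{n,1}(z)\in F(G_\tau)$, and since $F(G_\tau)$ is forward invariant under every single map while $J_\gamma$ is determined by tail behavior, one checks $z\in F_\gamma$ for such $\gamma$ (the Fatou set of the sequence is backward-invariant in the appropriate sense: if $\gamma_{n,1}(z)\in F(G_\tau)$ then a neighborhood of $\gamma_{n,1}(z)$ has equicontinuous forward $\G_\tau$-orbit, hence the sequence $\{\gamma_{m,n+1}\}_m$ is equicontinuous there, and pulling back by the finitely many maps $\gamma_n,\dots,\gamma_1$ gives equicontinuity of $\{\gamma_{m,1}\}$ near $z$). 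On $E_z$, apply Lemma~\ref{l:voygvv} with $V=F(G_\tau)$: for $\tilde\tau$-a.e.\ $\gamma\in E_z$ we have $d(\gamma_{n,1}(z),L_{\ker})\to0$. Then by Lemma~\ref{l:nlkfa}(3) (or directly Lemma~\ref{l:fininvset} applied to the finite invariant set $L_{\ker}$), for $\tilde\tau$-a.e.\ such $\gamma$ the orbit accumulates on some minimal set $L$; if $L\in H_+$ this contradicts $z\in\Omega$ together with Lemma~\ref{l:chlpa0} (the orbit would eventually land in $L$ with positive probability over $E_z$, but $z\in\Omega$), so $L\in H_-$, i.e.\ $\chi(\tau,L)<0$, and then by Corollary~\ref{c:chilne}/Lemma~\ref{l:chimune}-type local contraction near $L$ (the orbit gets $\delta$-close to $L$, and on the $\delta$-neighborhood of $L$ the sequence $\gamma_{n,1}$ is eventually contracting because $\chi(\tau,L)<0$ and Birkhoff's theorem along the orbit), we get $z\in F_\gamma$. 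Combining the two events, $\tilde\tau(\{\gamma\mid z\in J_\gamma\})=0$.

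For the almost-sure statement $\mbox{Leb}_2(J_\gamma)=0$: by Fubini and the previous paragraph, $(\tilde\tau\otimes\mbox{Leb}_2)(\{(\gamma,z)\mid z\in J_\gamma,\ z\in\Omega\})=0$, and $\mbox{Leb}_2(\CCI\setminus\Omega)=0$ since $\CCI\setminus\Omega$ is countable, so $(\tilde\tau\otimes\mbox{Leb}_2)(\bigcup_\gamma J^\gamma)=0$; applying Fubini in the other direction gives $\mbox{Leb}_2(J_\gamma)=0$ for $\tilde\tau$-a.e.\ $\gamma$. Finally, for $J_{pt}^0(\tau)$: by Lemma~\ref{l:ttaugy0y}, if $\tilde\tau(\{\gamma\mid z\in J_{\gamma,0}\})=0$ then $z\in F_{pt}^0(\tau)$; for $\gamma\in(\Rat)^{\NN}$ we have $J_{\gamma,0}=J_\gamma$ by Montel, so for every $z\in\Omega$ the vanishing just proved gives $z\in F_{pt}^0(\tau)$, i.e.\ $J_{pt}^0(\tau)\subset\CCI\setminus\Omega$, whence $\sharp J_{pt}^0(\tau)\leq\aleph_0$. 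I expect the main obstacle to be the careful bookkeeping in the $E_z$ case: verifying that the dichotomy "the orbit accumulates on exactly one minimal set, and on $H_+$ with probability forced to vanish by $z\in\Omega$" holds almost surely, which requires combining Lemma~\ref{l:voygvv}, Lemma~\ref{l:nlkfa}, and the expanding estimate Lemma~\ref{l:chlpad1d2}/Lemma~\ref{l:chlpa0} so that on $E_z$ the event "orbit enters some $L\in H_+$" really has $\tilde\tau$-measure zero given $z\in\Omega$, and then treating the contracting minimal sets via the Birkhoff-along-the-orbit argument exactly as in the proof of Lemma~\ref{l:chimune}.
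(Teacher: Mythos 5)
Your plan follows the paper's proof essentially step for step: Lemma~\ref{l:chlpa0} gives both the countability of $\CCI\setminus\Omega$ and the vanishing of the probability of accumulating on $\cup_{L\in H_+}L$ for $z\in\Omega$, Lemma~\ref{l:voygvv} (with $V=F(G_\tau)$) forces the orbit toward $J_{\ker}(G_\tau)$ on the event that it never enters the Fatou set, Corollary~\ref{c:chilne} handles the minimal sets with $\chi(\tau,L)<0$, and Fubini plus Lemma~\ref{l:ttaugy0y} yield the last two conclusions. The one place your sketch is looser than the paper is the final contradiction: because the entry time $n_0$ into a $\delta$-neighborhood of $\cup_{L\in H_-}L$ is $\gamma$-dependent, one cannot conclude $z\in F_\gamma$ pointwise for a.e.\ $\gamma$ with $z\in J_\gamma$; the paper instead fixes $\epsilon>0$, uses Egoroff twice to produce a set $A_1$ with $\tilde{\tau}(A_1)\geq 1-\epsilon$ on which $\delta$-balls around all points of $\cup_{L\in H_-}L$ lie in the Fatou set of the sequence, and a set $A_2\subset C_z:=\{\gamma\mid z\in J_\gamma\}$ with $\tilde{\tau}(A_2)\geq\tilde{\tau}(C_z)-\epsilon$ and a \emph{fixed} $n_0$, observes $A_2\cap\sigma^{-n_0}(A_1)=\emptyset$ and uses $\sigma$-invariance of $\tilde{\tau}$ to get $\tilde{\tau}(C_z)\leq 2\epsilon$ --- exactly the bookkeeping you anticipated.
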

\begin{proof}
Under the assumptions of our theorem,  
Lemma~\ref{l:chlpa0} implies that 
\begin{equation}
\label{eq:tgdgbl0}
\Omega =\{ y\in \CCI \mid 
\tilde{\tau }(\{ \gamma \in X_{\tau }\mid 
d(\gamma _{n,1}(y), \cup _{L\in H_{+}}L)\rightarrow 0 
\mbox{ as } n\rightarrow \infty \} )=0\} .
\end{equation} 
From (\ref{eq:tgdgbl0}) and Lemma~\ref{l:chlpa0}, 
it follows that $\sharp (\CCI \setminus \Omega )\leq \aleph _{0}$.  
Let $z\in \Omega .$ 
Let $C_{z}=\{ \gamma \in X_{\tau }\mid z\in J_{\gamma }\}.$ 
Suppose $\tilde{\tau }(C_{z})>0.$ 
 Let $H_{-}$ be the set of all $ L\in \Min(G_{\tau }, J_{\ker }(G_{\tau }))$ with $\chi(\tau, L)<0.$ 
By Lemma~\ref{l:voygvv} (with $V=F(G_{\tau })$), 
Remark~\ref{r:piypic} and the assumptions of our theorem, we have that 
for $\tilde{\tau }$-a.e.$\gamma \in C_{z}$, 
$d(\gamma _{n,1}(z), \cup _{L\in H_{+}\cup H_{-}}L)\rightarrow 0$ as $n\rightarrow \infty .$ 
Combining this with (\ref{eq:tgdgbl0}), 
we obtain that 
\begin{equation}
\label{eq:dgn1zclb}
\mbox{ for } \tilde{\tau } \mbox{ -a.e. } \gamma \in C_{z},  
d(\gamma _{n,1}(z), \cup _{L\in H_{-}}L)\rightarrow 0\mbox{ as }n\rightarrow \infty .
\end{equation} 
Let $0<\epsilon <\frac{1}{2}\tilde{\tau }(C_{z}).$ 
By Corollary~\ref{c:chilne}, 
for each $z_{0}\in \cup _{L\in H_{-}}L$, 
for $\tilde{\tau }$-a.e. $\gamma $, we have $z_{0}\in F_{\gamma }.$ 
Combining this with the argument to deduce (\ref{eq:ngeq1d}) in the proof of Lemma~\ref{l:ttaugy0y}, 
we obtain that there exist a Borel subset $A_{1}$ of $X_{\tau }$ 
with $\tilde{\tau }(A_{1})\geq 1-\epsilon $ 
and a $\delta >0$  
such that for each $z_{0}\in \cup _{L\in H_{-}}L$, for each $\gamma \in A_{1}$, 
we have $\sup _{n\geq 1}\mbox{diam}\gamma _{n,1}(B(z_{0},\delta ))\leq \frac{1}{10}\mbox{diam}\CCI .$
In particular, 
\begin{equation}
\label{eq:bz0dsf}
\mbox{ for each }z_{0}\in \cup _{L\in H_{-}}L \mbox{ and for each } \gamma \in A_{1},  
B(z_{0},\delta )\subset F_{\gamma }. 
\end{equation}
By (\ref{eq:dgn1zclb}) and Egoroff's theorem, there exist a Borel subset 
$A_{2}$ of $C_{z}$ with $\tilde{\tau }(A_{2})\geq \tilde{\tau }(C_{z})-\epsilon $ 
and an $n_{0}\in \NN $ such that 
for each $\gamma \in A_{2}$, 
\begin{equation}
\label{eq:gn01zbclb}
\gamma _{n_{0},1}(z)\in B(\cup _{L\in H_{-}}L, \delta ).
\end{equation} 
By (\ref{eq:bz0dsf}) and (\ref{eq:gn01zbclb}), 
we obtain $A_{2}\cap \sigma ^{-n_{0}}(A_{1})=\emptyset .$ 
Therefore $\tilde{\tau }(A_{2})\leq \tilde{\tau }(X_{\tau }\setminus 
\sigma ^{-n_{0}}(A_{1}))\leq \epsilon .$ 
Combining this with that $\tilde{\tau }(A_{2})\geq \tilde{\tau }(C_{z})-\epsilon $, 
we obtain that $\tilde{\tau }(C_{z})\leq 2\epsilon .$ 
However, this is a contradiction because $\epsilon <\frac{1}{2}\tilde{\tau }(C_{z}).$ 
Thus, we have proved that 
for each $z\in \Omega $, $\tilde{\tau }(C_{z})=0.$ 
By Fubini's theorem, it follows that 
for $\tilde{\tau }$-a.e.$\gamma $, Leb$_{2}(J_{\gamma })=0.$ 
Moreover, by Lemma~\ref{l:ttaugy0y}, 
we obtain that $J_{pt}^{0}(\tau )\subset \CCI \setminus \Omega $ and  
$\sharp J_{pt}^{0}(\tau )\leq \aleph_{0}.$ 
Thus we have proved our theorem. 
\end{proof}

\subsection{Random dynamical systems generated by measures on weakly nice sets} 
\label{ss:rdswn}
In this subsection, we show several results (including Theorems~\ref{t:rcdnkmain1i}, \ref{t:rcdnkmain2i} 
and their detailed and more generalized version Theorems~\ref{t:rcdnkmain1}, \ref{t:rcdnkmain2}) 
regarding random complex dynamical systems generated by measures on weakly nice subsets of 
$\Rat .$ 

We now consider holomorphic families of rational maps. 
\begin{df}
\label{d:singdf}
Let $\Lambda $ be a complex manifold. 
Let ${\cal W}=\{ f_{\lambda }\} _{\lambda \in \Lambda }$ be a family of rational maps on $\CCI .$ 
We say that ${\cal W}$ is a 
{\bf holomorphic family of rational maps} if 
$(z,\lambda )\in \CCI \times \Lambda \mapsto f_{\lambda }(z)\in \CCI $ is 
holomorphic on $\CCI \times \Lambda .$ 
{\bf Throughout the paper, we always assume that $\Lambda $ is connected}. 
If ${\cal W}=\{ f_{\lambda }\} _{\lambda \in \Lambda }$ is a holomorphic family of rational maps 
and each $f_{\lambda }$ is a polynomial, then we say that ${\cal W}$ is a {\bf holomorphic family of 
polynomial maps}.  
We say that a holomorphic family ${\cal W}=\{ f_{\lambda }\} _{\lambda \in \Lambda }$ 
of rational maps is {\bf non-constant} if $\lambda \in \Lambda \mapsto f_{\lambda }\in \Rat $ 
is non-constant. 

For each $n\in \NN $, we set 
$$S_{n}({\cal W})=\{ z\in \CCI \mid (\lambda _{1},\ldots, \lambda _{n})\in \Lambda ^{n}
\mapsto f_{\lambda _{1}}\circ \cdots \circ f_{\lambda _{n}}(z)\mbox{ is constant on }\Lambda ^{n}\} .$$
Moreover, we set $S({\cal W}):=\cap _{n=1}^{\infty }S_{n}({\cal W}).$ 
Each point of $S({\cal W})$ is called a 
{\bf singular point of ${\cal W}$} and 
the set $S({\cal W})$ is called the 
{\bf singular set of ${\cal W}.$} 

\end{df}
\begin{lem}
\label{l:sn1sn}
Let ${\cal W}=\{ f_{\lambda }\} _{\lambda \in \Lambda }$ be a holomorphic family of 
rational maps. Then 
$S_{n+1}({\cal W})=\cap _{\lambda _{n+1}\in \Lambda }f_{\lambda _{n+1}}^{-1}(S_{n}({\cal W}))$ 
and  
$S({\cal W})=\cap _{n=1}^{\infty }\cap _{(\lambda _{1},\ldots ,\lambda _{n})\in 
\Lambda ^{n}}(f_{\lambda _{1}}\circ \cdots \circ f_{\lambda _{n}})^{-1}(S_{1}({\cal W})).$ 
Moreover, if, in addition to the assumption, ${\cal W}$ is non-constant, 
then $\sharp S_{1}({\cal W})<\infty $ and $\sharp S_{n}({\cal W})<\infty $ for each $n\in \NN .$ 
\end{lem}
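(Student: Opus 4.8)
\textbf{Proof proposal for Lemma~\ref{l:sn1sn}.}

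The plan is to unwind the definitions of $S_n({\cal W})$ layer by layer. First I would establish the recursion $S_{n+1}({\cal W})=\cap _{\lambda _{n+1}\in \Lambda }f_{\lambda _{n+1}}^{-1}(S_{n}({\cal W}))$. For the inclusion ``$\subset$'': if $z\in S_{n+1}({\cal W})$ then $(\lambda _{1},\ldots ,\lambda _{n+1})\mapsto f_{\lambda _{1}}\circ \cdots \circ f_{\lambda _{n+1}}(z)$ is constant on $\Lambda ^{n+1}$; fixing $\lambda _{n+1}$ and varying the first $n$ coordinates shows $f_{\lambda _{n+1}}(z)\in S_n({\cal W})$, i.e.\ $z\in f_{\lambda _{n+1}}^{-1}(S_n({\cal W}))$, and since $\lambda _{n+1}$ was arbitrary we get one inclusion. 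For ``$\supset$'': if $f_{\lambda }(z)\in S_n({\cal W})$ for every $\lambda \in \Lambda $, then for each fixed $\lambda _{n+1}$ the map $(\lambda _{1},\ldots ,\lambda _{n})\mapsto f_{\lambda _{1}}\circ \cdots \circ f_{\lambda _{n}}(f_{\lambda _{n+1}}(z))$ is constant, say with value $c(\lambda _{n+1})$; it remains to see that $c(\lambda _{n+1})$ does not depend on $\lambda _{n+1}$. This follows because $\lambda _{n+1}\mapsto f_{\lambda _{n+1}}(z)$ is a holomorphic (hence continuous) map into $S_n({\cal W})$, and—granted $S_n({\cal W})$ is shown below to be finite—a continuous map from the connected manifold $\Lambda $ into a finite (discrete) set is constant; then composing with the fixed value $f_{\lambda _{n+1}}(z)\equiv w$ gives the constant $c\equiv f_{\lambda _1}\circ\cdots\circ f_{\lambda _n}(w)$, again constant in $\lambda_1,\ldots,\lambda_n$. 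Iterating the recursion and intersecting over $n$ yields $S({\cal W})=\cap _{n=1}^{\infty }\cap _{(\lambda _{1},\ldots ,\lambda _{n})\in \Lambda ^{n}}(f_{\lambda _{1}}\circ \cdots \circ f_{\lambda _{n}})^{-1}(S_{1}({\cal W}))$, since $S_n({\cal W})=\cap_{(\lambda_1,\ldots,\lambda_{n-1})}(f_{\lambda_1}\circ\cdots\circ f_{\lambda_{n-1}})^{-1}(S_1({\cal W}))$ by induction on the recursion.

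Next I would prove the finiteness statements under the hypothesis that ${\cal W}$ is non-constant. For $S_1({\cal W})$: if $S_1({\cal W})$ were infinite, pick two parameters $\lambda ,\mu \in \Lambda $ with $f_{\lambda }\neq f_{\mu }$ (possible by non-constancy). Both $f_\lambda$ and $f_\mu$ are rational maps of finite degree, and on $S_1({\cal W})$ we have $f_\lambda(z)=f_\mu(z)$ because $\lambda '\mapsto f_{\lambda '}(z)$ is constant there by definition. But $f_\lambda - f_\mu$ (viewed appropriately, e.g.\ clearing denominators to compare the rational maps as an algebraic condition $f_\lambda(z)=f_\mu(z)$) is a nontrivial equation with only finitely many solutions in $\CCI$, since $f_\lambda\neq f_\mu$ as rational maps. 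This contradicts $\sharp S_1({\cal W})=\infty$, so $\sharp S_1({\cal W})<\infty$. For $S_n({\cal W})$ with $n\geq 2$: observe $S_n({\cal W})\subset S_1({\cal W})$. Indeed, directly from the definition, if $z\in S_n({\cal W})$ then taking all but one coordinate fixed (say fixing $\lambda_2,\ldots,\lambda_n$ and letting $\lambda_1$ vary) shows $\lambda_1\mapsto f_{\lambda_1}(f_{\lambda_2}\circ\cdots\circ f_{\lambda_n}(z))$ is constant, so $f_{\lambda_2}\circ\cdots\circ f_{\lambda_n}(z)\in S_1({\cal W})$; iterating, one peels off maps and eventually gets $z\in S_1({\cal W})$ (more carefully: $S_{n}({\cal W})\subset S_{n-1}({\cal W})$ follows by fixing $\lambda_n=$ some value and noting the resulting $n-1$-fold composition is then constant). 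Hence $\sharp S_n({\cal W})\leq \sharp S_1({\cal W})<\infty$.

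There is a minor circularity to manage: the proof of the recursion's ``$\supset$'' direction used finiteness of $S_n({\cal W})$, while finiteness is proved afterward. I would resolve this by reordering: first prove $\sharp S_1({\cal W})<\infty$ and the nesting $S_n({\cal W})\subset S_1({\cal W})$ (which only uses the definitions and the rational-equation argument, no recursion), concluding $\sharp S_n({\cal W})<\infty$ for all $n$; then prove the recursion using this finiteness where needed. The main obstacle, such as it is, is the ``$\supset$'' inclusion in the recursion—making rigorous that $\lambda_{n+1}\mapsto c(\lambda_{n+1})$ is constant—which hinges precisely on the discreteness of $S_n({\cal W})$ together with connectedness of $\Lambda$ and holomorphy (hence continuity) of $(z,\lambda)\mapsto f_\lambda(z)$. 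Everything else is routine bookkeeping with the definitions. Finally, the formula for $S({\cal W})$ follows by combining the recursion (iterated) with $S({\cal W})=\cap_{n}S_n({\cal W})$ and reindexing the nested intersections.
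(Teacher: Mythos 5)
Your proposal is correct in substance, and the recursion argument (easy inclusion one way; finiteness of $S_{n}({\cal W})$ plus connectedness of $\Lambda $ and continuity of $\lambda \mapsto f_{\lambda }(z)$ for the reverse inclusion) is exactly the paper's argument. Where you genuinely diverge is in the proof that $\sharp S_{1}({\cal W})<\infty $: you pick $\lambda ,\mu $ with $f_{\lambda }\neq f_{\mu }$ and observe that $S_{1}({\cal W})$ sits inside the coincidence set $\{ z\mid f_{\lambda }(z)=f_{\mu }(z)\} $, which is finite for two distinct rational maps. The paper instead argues by contradiction in the $z$-variable: an infinite $S_{1}({\cal W})$ would have an accumulation point, the $\lambda $-partial derivatives of $f_{\lambda }(z)$ would vanish on a set with an accumulation point and hence identically, so $S_{1}({\cal W})$ would have nonempty interior, and a connectedness argument on $\CCI $ would force $S_{1}({\cal W})=\CCI $, contradicting non-constancy. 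Your route is shorter and more elementary; the paper's is essentially the same identity-theorem idea applied to derivatives rather than to a difference of two members of the family. Your reordering to avoid circularity (finiteness first, recursion second) also matches the paper's order. One small slip: your parenthetical claim that $S_{n}({\cal W})\subset S_{n-1}({\cal W})$ "follows by fixing $\lambda _{n}$" is not what that argument gives -- fixing $\lambda _{n}=\mu $ shows $f_{\mu }(z)\in S_{n-1}({\cal W})$, i.e.\ $S_{n}({\cal W})\subset f_{\mu }^{-1}(S_{n-1}({\cal W}))$, not $S_{n}({\cal W})\subset S_{n-1}({\cal W})$; likewise your main-text "peeling from the left" only shows forward images land in $S_{1}({\cal W})$. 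This does not damage the conclusion, since $S_{n}({\cal W})\subset f_{\mu }^{-1}(S_{n-1}({\cal W}))$ together with the finiteness of fibers of rational maps already gives $\sharp S_{n}({\cal W})<\infty $ by induction (this is precisely the inclusion the paper uses); if you do want $z\in S_{1}({\cal W})$ directly you must peel from the right, i.e.\ fix $\lambda _{1},\ldots ,\lambda _{n-1}$, note $f_{\lambda _{n}}(z)$ ranges in a finite fiber, and invoke connectedness of $\Lambda $ again.
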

\begin{proof}
We may assume that ${\cal W}$ is non-constant. 
We first show that $\sharp S_{1}({\cal W})<\infty .$ 
Suppose that $\sharp S_{1}({\cal W})=\infty .$ 
Then there exist a sequence $\{ z_{n}\} $ in $S_{1}({\cal W})$  and a point $z_{\infty }\in \CCI $ 
such that $z_{n}\rightarrow z_{\infty }$ and $z_{n}\neq z_{\infty }$ for each $n\in \NN .$ 
By conjugating the family ${\cal W}$ by an element of Aut$(\CCI )$, we may assume that 
$z_{\infty }\in \CC .$ Let $b\in \Lambda .$ 
Then there exist an open connected neighborhood $\Lambda _{0}$ of $b$ in $\Lambda $ 
and an open connected neighborhood $U$ of $z_{\infty }$ in $\CC $ such that 
$f_{\lambda }(z)\in \CC $ for all $\lambda \in \Lambda _{0}$ and all $z\in U.$ 
We may suppose that $\Lambda _{0}\subset \CC ^{r}$ where $r=\dim \Lambda \in \NN .$ 
Let $n\in \NN $, $(i_{1},\ldots, i_{n})\in (\{1,\ldots  ,r\} )^{n}$ and $z\in U.$ 
Let $g(z)=\frac{\partial ^{n}f_{\lambda }(z)}{\partial \lambda _{i_{1}}\cdots \partial \lambda _{i_{n}}}
|_{\lambda =b}$ for each $z\in U.$ 
Then $g:U\rightarrow \CC $ is holomorphic in $U$ and 
$g(z_{j})=0$ for each large $j.$ Hence $g(z)=0$ for all $z\in U.$ 
Therefore for each $z\in U$, the function $\lambda \mapsto f_{\lambda }(z)\in \CC $ is constant 
on $\Lambda _{0}.$ 
Thus, for each $z\in U$, the function $\lambda \mapsto f_{\lambda }(z)\in \CCI $ 
is constant on $\Lambda .$ 
Hence $U\subset S_{1}({\cal W}).$ 
Therefore 
\begin{equation}
\label{eq:zinfinint}
z_{\infty }\in \mbox{int}(S_{1}({\cal W})).
\end{equation}
In particular, int$(S_{1}({\cal W}))\neq \emptyset .$ 
We now suppose that $\CCI \neq \mbox{int}(S_{1}({\cal W})).$ 
Then $\partial (\mbox{int}(S_{1}({\cal W})))\neq \emptyset .$ 
If we take any $w_{0}\in \partial (\mbox{int}(S_{1}({\cal W})))$, then 
by the argument of the proof of (\ref{eq:zinfinint}), 
we obtain $w_{0}\in \mbox{int}(S_{1}({\cal W})).$ However, 
this contradicts $w_{0}\in \partial (\mbox{int}(S_{1}({\cal W}))).$ 
Therefore, we must have that $\CCI = \mbox{int}(S_{1}({\cal W})).$ 
Hence, the function $\lambda \mapsto f_{\lambda }\in \Rat $ is constant on $\Lambda .$ 
However, this contradicts to the assumption that ${\cal W}$ is non-constant. 
Thus, we have that $\sharp S_{1}({\cal W})<\infty .$ 

 It is easy to see that $S_{n+1}({\cal W})\subset \cap _{\lambda _{n+1}\in \Lambda }
 f_{\lambda _{n+1}}^{-1}(S_{n}({\cal W})).$ Since 
 $\sharp S_{1}({\cal W})<\infty ,$ it follows that 
 $\sharp S_{n}({\cal W})<\infty .$ We now prove 
 $\cap _{\lambda _{n+1}\in \Lambda }
 f_{\lambda _{n+1}}^{-1}(S_{n}({\cal W}))\subset S_{n+1}({\cal W}).$ 
 Let $z\in \cap _{\lambda _{n+1}\in \Lambda }
 f_{\lambda _{n+1}}^{-1}(S_{n}({\cal W})).$ 
 Then for each  $\lambda _{n+1}\in \Lambda $, we have  
$f_{\lambda _{n+1}}(z)\in S_{n}({\cal W}).$ Since $\sharp S_{n}({\cal W})<\infty $ 
and $\Lambda $ is connected, it follows that $\sharp \{ f_{\lambda _{n+1}}(z)\in S_{n}({\cal W}) 
\mid \lambda _{n+1}\in \Lambda \} =1.$ 
Therefore we obtain that the cardinality of the set 
$\{ f_{\lambda _{1}}\circ \cdots \circ f_{\lambda _{n+1}}(z)\mid 
(\lambda _{1}\ldots ,\lambda _{n+1})\in \Lambda ^{n+1}\} $ is equal to $1.$ 
In particular, $z\in S_{n+1}({\cal W}).$ Thus we have proved our lemma.  
\end{proof}
\begin{cor}
\label{c:falflam}
Let ${\cal W}=\{ f_{\lambda }\} _{\lambda \in \Lambda }$ be a holomorphic family of 
rational maps. Then $f_{\lambda }(S({\cal W}))\subset S({\cal W})$ for all $\lambda \in \Lambda .$ 
\end{cor}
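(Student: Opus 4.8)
The plan is to prove $f_{\lambda }(S({\cal W}))\subset S({\cal W})$ for each fixed $\lambda \in \Lambda $ using the characterization of $S({\cal W})$ given by Lemma~\ref{l:sn1sn}. Since $S({\cal W})=\cap _{n=1}^{\infty }S_{n}({\cal W})$, it suffices to show that for every $n\in \NN $ and every $z\in S({\cal W})$ we have $f_{\lambda }(z)\in S_{n}({\cal W})$. First I would fix $z\in S({\cal W})$, fix $n\in \NN $ and fix $\lambda \in \Lambda $, and recall that by definition $z\in S_{n+1}({\cal W})$, i.e.\ the map $(\lambda _{1},\ldots ,\lambda _{n+1})\in \Lambda ^{n+1}\mapsto f_{\lambda _{1}}\circ \cdots \circ f_{\lambda _{n+1}}(z)$ is constant on $\Lambda ^{n+1}$.

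The key step is then to specialize the last coordinate to $\lambda _{n+1}=\lambda $. Holding $\lambda _{n+1}=\lambda $ and letting $(\lambda _{1},\ldots ,\lambda _{n})$ vary over $\Lambda ^{n}$, we see that $(\lambda _{1},\ldots ,\lambda _{n})\mapsto f_{\lambda _{1}}\circ \cdots \circ f_{\lambda _{n}}(f_{\lambda }(z))$ is the restriction of a constant function, hence constant on $\Lambda ^{n}$. By the definition of $S_{n}({\cal W})$ this says precisely that $f_{\lambda }(z)\in S_{n}({\cal W})$. Since $n\in \NN $ was arbitrary, $f_{\lambda }(z)\in \cap _{n=1}^{\infty }S_{n}({\cal W})=S({\cal W})$, and since $z\in S({\cal W})$ and $\lambda \in \Lambda $ were arbitrary, this gives $f_{\lambda }(S({\cal W}))\subset S({\cal W})$ for all $\lambda \in \Lambda $.

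Alternatively, and perhaps even more cleanly, one can invoke the identity $S_{n+1}({\cal W})=\cap _{\lambda _{n+1}\in \Lambda }f_{\lambda _{n+1}}^{-1}(S_{n}({\cal W}))$ from Lemma~\ref{l:sn1sn} directly: for $z\in S({\cal W})\subset S_{n+1}({\cal W})$ and any $\lambda \in \Lambda $ we get $f_{\lambda }(z)\in S_{n}({\cal W})$, and intersecting over $n$ yields $f_{\lambda }(z)\in S({\cal W})$. I would present the argument this second way since Lemma~\ref{l:sn1sn} already does the bookkeeping. There is essentially no obstacle here; the only thing to be a little careful about is that the inclusion $S({\cal W})\subset S_{n+1}({\cal W})$ holds for \emph{every} $n$, which is immediate from $S({\cal W})=\cap _{m}S_{m}({\cal W})$, and that one must not confuse $S_{n}$ with $S_{n+1}$ when reading off the conclusion.
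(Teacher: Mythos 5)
Your proof is correct and is exactly the intended derivation: the paper states this as an immediate corollary of Lemma~\ref{l:sn1sn}, and your second argument (from $z\in S({\cal W})\subset S_{n+1}({\cal W})=\cap_{\lambda_{n+1}}f_{\lambda_{n+1}}^{-1}(S_{n}({\cal W}))$, deduce $f_{\lambda}(z)\in S_{n}({\cal W})$ for all $n$) is precisely that. You also correctly track the composition order, with $f_{\lambda_{n+1}}$ acting first, which is the one place one could slip.
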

We now define weakly nice subsets of Rat.
\begin{df}
\label{d:weaklynice}
We say that a subset ${\cal Y}$ of $\Rat $ is {\bf weakly nice} 
(with respect to holomorphic families $\{ {\cal  W}_{j}\} _{j=1}^{m}$ of rational maps) 
if 
there exist an open subset ${\cal U}$ of $\Rat $ and 
finitely many non-constant holomorphic families 
${\cal W}_{j}=\{ f_{j,\lambda }\} _{\lambda \in \Lambda _{j}}, 
j=1,\ldots, m$, of rational maps such that for each $j=1,\ldots, m$, $\{ f_{j,\lambda }\mid \lambda \in \Lambda _{j}\} $ is a closed subset of ${\cal U}$ and 
${\cal Y}=\cup _{j=1}^{m}\{ f_{j,\lambda }\mid \lambda \in \Lambda _{j}\} .$

 Moreover, for a weakly nice set ${\cal Y}$ with respect to holomorphic families 
 $\{ {\cal W}_{j}\} _{j=1}^{m}$ of rational maps, we set 
 $${\frak M}_{1}({\cal Y},  \{ {\cal W}_{j}\} _{j=1}^{m})
 := \{ \tau \in {\frak M}_{1}({\cal Y})\mid \mbox{supp}\,\tau \cap  \{ f_{j,\lambda }\mid \lambda 
 \in \Lambda _{j}\} \neq \emptyset \ (\forall j=1,\ldots, m)\} $$ 
 and 
 $${\frak M}_{1,c}({\cal Y},  \{ {\cal W}_{j}\} _{j=1}^{m}):=
  {\frak M}_{1,c}({\cal Y})\cap {\cal M}_{1}({\cal Y},  \{ {\cal W}_{j}\} _{j=1}^{m}).$$  
  Here, for the notation ``supp$\,\tau$'', see Definition~\ref{d:d0} 
  (setting $Y={\cal Y}$). (Thus supp$\,\tau$ is a closed subset of ${\cal Y}.$)  
  Also, each point of $\cap _{j=1}^{m}S({\cal W}_{j})$ is called a {\bf singular point of 
  $({\cal Y}, \{ {\cal W}_{j}\} _{j=1}^{m})$} and 
  the set $\cap _{j=1}^{m}S({\cal W}_{j})$ is called the {\bf singular set of 
  $({\cal Y}, \{ {\cal W}_{j}\} _{j=1}^{m})$}.  
\end{df}
\begin{df}[\cite{Splms10, Sadv}] 
\label{d:topologyO}
Let ${\cal Y}$ be a closed subset of an open subset of $\Rat .$ 
Let ${\cal O}$ be the topology in ${\frak M}_{1,c}({\cal Y})$ such that 
the sequence $\{ \tau _{n}\} _{n=1}^{\infty } $ in ${\frak M}_{1,c}({\cal Y})$ 
tends to an element $\tau \in {\frak M}_{1,c}({\cal Y})$ 
with respect to the topology ${\cal O}$ if and only if 
(a) for each bounded $\varphi \in C({\cal Y})$, $\int \varphi d\tau _{n}\rightarrow 
\int \varphi d\tau $ as $n\rightarrow \infty $, and 
(b) supp$\,\tau_{n}\rightarrow \mbox{supp}\,\tau $ as $n\rightarrow \infty $ in 
Cpt$({\cal Y})$ with respect to the Hausdorff topology. 
We call ${\cal O}$ the wH-topology in ${\frak M}_{1,c}({\cal Y}).$ 
\end{df}
\begin{rem}
\label{r:Oandws}
In general the topology ${\cal O}$ is really different from 
the weak-$^{\ast }$ topology. 
For example, let ${\cal Y}=\{ z^{2}+c\mid c\in \CC \} 
\cong \CC $ and let $\tau _{n}=(1-\frac{1}{n})\delta _{0}
+\frac{1}{n}\delta _{1}$ for each $n\in \NN $, and let $\tau =\delta _{0}$, 
where $\delta _{z}$ denotes the Dirac measure concentrated 
at $z\in  \CC \cong {\cal Y}.$ 
Then $\tau _{n}\rightarrow \tau $ as $n\rightarrow \infty $ 
with respect to the weak-$^{\ast }$ topology,  
but $\tau _{n}\not \rightarrow \tau $ as $n\rightarrow \infty $ 
with respect to the topology ${\cal O}.$  

\end{rem}

By the definition of weakly nice subsets, it is easy to see the following lemma. 
\begin{lem}
\label{l:m1cywjo}
Let ${\cal Y}$ be a weakly nice subset of $\emRat$ with respect to some holomorphic families 
$\{ {\cal W}_{j}\} _{j=1}^{m}$ of rational maps.  
Then ${\frak M}_{1,c}({\cal Y}, \{ {\cal W}_{j}\} _{j=1}^{m})$ is closed in 
${\frak M}_{1,c}({\cal Y})$ with respect to the topology ${\cal O}.$ 
\end{lem}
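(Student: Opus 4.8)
The plan is to prove that ${\frak M}_{1,c}({\cal Y},\{{\cal W}_j\}_{j=1}^m)$ is \emph{sequentially} closed in $({\frak M}_{1,c}({\cal Y}),{\cal O})$, which suffices because ${\cal O}$ is metrizable: the map $\tau\mapsto(\tau,\mbox{supp}\,\tau)$ realizes $({\frak M}_{1,c}({\cal Y}),{\cal O})$ as a subspace of the metrizable space ${\frak M}_1({\cal Y})\times\Cpt({\cal Y})$ (the first factor is metrizable since ${\cal Y}$, being a subspace of $\Rat$, is separable metrizable; the second carries the Hausdorff distance $\mbox{dist}_H$, which is finite on compacta). So I would start from a sequence $\tau_n\in{\frak M}_{1,c}({\cal Y},\{{\cal W}_j\}_{j=1}^m)$ with $\tau_n\to\tau$ in ${\cal O}$. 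By the definition of ${\cal O}$ the limit $\tau$ lies in ${\frak M}_{1,c}({\cal Y})$, and moreover $\mbox{supp}\,\tau_n\to\mbox{supp}\,\tau$ in $\Cpt({\cal Y})$ for $\mbox{dist}_H$ (condition (b) of the definition of ${\cal O}$; condition (a) will not be needed, since membership in ${\cal M}_1({\cal Y},\{{\cal W}_j\}_{j=1}^m)$ restricts only the support). The goal is then to show $\mbox{supp}\,\tau\cap\{f_{j,\lambda}\mid\lambda\in\Lambda_j\}\neq\emptyset$ for every $j$.

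Next I would fix $j\in\{1,\dots,m\}$, abbreviate $K_j:=\{f_{j,\lambda}\mid\lambda\in\Lambda_j\}$ (a closed subset of the open set ${\cal U}\subset\Rat$ by the definition of weakly nice, with $K_j\subset{\cal Y}\subset{\cal U}$), and for each $n$ pick $g_n\in\mbox{supp}\,\tau_n\cap K_j$, which exists because $\tau_n\in{\cal M}_1({\cal Y},\{{\cal W}_j\}_{j=1}^m)$. From $g_n\in\mbox{supp}\,\tau_n$ and the definition of the Hausdorff distance we get $\kappa(g_n,\mbox{supp}\,\tau)\le\mbox{dist}_H(\mbox{supp}\,\tau_n,\mbox{supp}\,\tau)\to 0$. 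Choosing $h_n\in\mbox{supp}\,\tau$ with $\kappa(g_n,h_n)\to 0$ and invoking compactness of $\mbox{supp}\,\tau$ (valid since $\tau\in{\frak M}_{1,c}({\cal Y})$), I may pass to a subsequence along which $h_n\to h_\infty\in\mbox{supp}\,\tau$; then $g_n\to h_\infty$ as well along that subsequence.

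Finally I would observe that $h_\infty\in\mbox{supp}\,\tau\subset{\cal Y}\subset{\cal U}$, while $g_n\to h_\infty$ with $g_n\in K_j$ for all $n$; since $K_j$ is closed in ${\cal U}$ and the limit $h_\infty$ already lies in ${\cal U}$, this forces $h_\infty\in K_j$, so $h_\infty\in\mbox{supp}\,\tau\cap K_j$. As $j$ was arbitrary, $\tau\in{\cal M}_1({\cal Y},\{{\cal W}_j\}_{j=1}^m)$, and combined with $\tau\in{\frak M}_{1,c}({\cal Y})$ this gives $\tau\in{\frak M}_{1,c}({\cal Y},\{{\cal W}_j\}_{j=1}^m)$, as required. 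I do not expect any serious obstacle: this is a soft compactness/closedness argument. The one point that requires care — and the only place where sloppiness would bite — is that each $K_j$ is assumed closed merely in the open set ${\cal U}$, not in $\Rat$; this is harmless precisely because every limit point extracted above already sits in $\mbox{supp}\,\tau\subset{\cal Y}\subset{\cal U}$, so there is no escape from ${\cal U}$.
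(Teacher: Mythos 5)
Your proof is correct, and it is precisely the argument the paper leaves implicit (the paper offers no proof, stating only that the lemma is "easy to see" from the definition of weakly nice sets). You correctly handle the one delicate point — that each $K_j$ is closed only in the open set ${\cal U}$, which is harmless since the limit point produced by Hausdorff convergence already lies in $\mbox{supp}\,\tau \subset {\cal Y}\subset {\cal U}$ — and your reduction to sequential closedness is legitimate since ${\cal O}$ is the (metrizable) topology induced by the embedding $\tau\mapsto (\tau ,\mbox{supp}\,\tau )$ into ${\frak M}_{1}({\cal Y})\times \Cpt({\cal Y})$.
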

The following lemma is easy to show but it is one of the keys to proving many results. 
\begin{lem}
\label{l:yrnistfgni}
Let ${\cal Y}$ be a weakly nice subset of $\emRat$ with respect to some 
holomorphic families $\{ {\cal  W}_{j}\} _{j=1}^{m}$ of rational maps. 
We endow ${\cal Y}$ with the relative topology from Rat. 
Let $\tau \in {\frak M}_{1}({\cal Y},\{ {\cal W}_{j}\} _{j=1}^{m}).$ 
Suppose that $\mbox{int}(\mbox{supp}\,\tau )\neq \emptyset $ with respect to 
the topology in ${\cal Y}$ and $F(G_{\tau })\neq \emptyset .$ 
Then $J_{\ker }(G_{\tau })\subset S({\cal W}_{j})$ for some $j=1,\ldots, m$ and 
$\sharp J_{\ker }(G_{\tau })<\infty .$   
\end{lem}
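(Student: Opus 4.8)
\textbf{Proof proposal for Lemma~\ref{l:yrnistfgni}.}
The plan is to use Montel's theorem to show that the interior of $\mbox{supp}\,\tau$ forces the kernel Julia set to be very small. First I would fix a point $z_0 \in J_{\ker}(G_\tau)$ and a point $w_0 \in F(G_\tau)$, and observe that since $\mbox{int}(\mbox{supp}\,\tau) \neq \emptyset$ with respect to the topology in ${\cal Y}$, and ${\cal Y} = \cup_{j=1}^m \{f_{j,\lambda}\mid \lambda \in \Lambda_j\}$, the interior of $\mbox{supp}\,\tau$ must meet $\{f_{j,\lambda}\mid \lambda \in \Lambda_j\}$ for some $j$; since $\lambda \mapsto f_{j,\lambda}$ is a (non-constant holomorphic, hence open onto its image in a suitable sense) parametrization, there is a nonempty open subset $\Lambda_0$ of $\Lambda_j$ such that $\{f_{j,\lambda}\mid \lambda \in \Lambda_0\} \subset \mbox{supp}\,\tau = \Gamma_\tau$, and hence $\{f_{j,\lambda}\mid \lambda \in \Lambda_0\} \subset G_\tau$.

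Next I would argue by contradiction: suppose $z_0 \in J_{\ker}(G_\tau) \setminus S({\cal W}_j)$. Then there is some $n$ with $z_0 \notin S_n({\cal W}_j)$, i.e. the map $(\lambda_1,\ldots,\lambda_n) \mapsto f_{j,\lambda_1}\circ\cdots\circ f_{j,\lambda_n}(z_0)$ is non-constant on $\Lambda_j^n$, hence non-constant on $\Lambda_0^n$. Because this map is holomorphic and non-constant in the $\lambda$-variables, its image contains an open subset of $\CCI$; in particular, by varying $\lambda$ in $\Lambda_0^n$ we get elements $h = f_{j,\lambda_1}\circ\cdots\circ f_{j,\lambda_n} \in G_\tau$ with $h(z_0)$ filling a nonempty open set $O \subset \CCI$. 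Since $z_0 \in J_{\ker}(G_\tau) \subset \bigcap_{g\in G_\tau}g^{-1}(J(G_\tau))$, we have $h(z_0) \in J(G_\tau)$ for all such $h$, so $O \subset J(G_\tau)$, forcing $\mbox{int}(J(G_\tau)) \neq \emptyset$; but more is true. Fix $g \in G_\tau$ arbitrary; then $g \circ h = g \circ f_{j,\lambda_1}\circ\cdots\circ f_{j,\lambda_n} \in G_\tau$ and $g(h(z_0)) \in J(G_\tau)$ for all $\lambda \in \Lambda_0^n$, so $g(O) \subset J(G_\tau)$ as well; this shows that $\{g|_O : O \to \CCI\}_{g \in G_\tau}$ omits the nonempty open set $F(G_\tau)$ (which has at least one point, and in fact we can shrink $O$ so that $G_\tau$ omits a set of three points there, using that $F(G_\tau) \ni w_0$ and nearby points, or simply that $J(G_\tau) \neq \CCI$). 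By Montel's theorem, $\{g|_O\}_{g\in G_\tau}$ is then a normal family on $O$, so $O \subset F(G_\tau)$, contradicting $O \subset J(G_\tau)$ and $O \neq \emptyset$. Hence $J_{\ker}(G_\tau) \subset S({\cal W}_j)$.

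Finally, $\sharp J_{\ker}(G_\tau) < \infty$ follows immediately: by Lemma~\ref{l:sn1sn} (applied to the non-constant family ${\cal W}_j$), $\sharp S({\cal W}_j) < \infty$, and $J_{\ker}(G_\tau) \subset S({\cal W}_j)$. I expect the main obstacle to be the careful handling of the Montel argument when $F(G_\tau)$ is small: one needs to ensure that the family $\{g|_O\}_{g \in G_\tau}$ omits at least three points of $\CCI$ uniformly, which requires either invoking $\sharp(\CCI \setminus F(G_\tau)) \geq 3$ trivially reversed (i.e.\ $F(G_\tau)$ open nonempty is not yet three points) — so the clean route is to instead use that $J(G_\tau) \neq \CCI$ implies $F(G_\tau)$ is open nonempty, pick three distinct points in $F(G_\tau)$, and use that each $g(O) \subset J(G_\tau)$ avoids all three; this is legitimate since $F(G_\tau)$ is open. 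The bookkeeping of passing from ``$z_0 \notin S({\cal W}_j)$ for the family'' to ``$h(z_0)$ covers an open set for maps $h \in G_\tau$'' via the open mapping / non-constancy of holomorphic maps in several variables is the other point requiring care, but it is routine given Lemma~\ref{l:sn1sn} and the connectedness of $\Lambda_j$.
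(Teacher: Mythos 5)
Your proposal is correct and follows essentially the same route as the paper: pick $j$ with $\mbox{int}(\mbox{supp}\,\tau)$ meeting the locus of ${\cal W}_j$, use non-constancy of $(\lambda_1,\ldots,\lambda_n)\mapsto f_{j,\lambda_1}\circ\cdots\circ f_{j,\lambda_n}(z_0)$ plus the identity theorem and open mapping theorem to produce a nonempty open set inside the Julia set, and derive a contradiction with $F(G_\tau)\neq\emptyset$ via Montel, then finish with Lemma~\ref{l:sn1sn}. The only cosmetic difference is that the paper lands the open set inside $J_{\ker}(G_\tau)$ and quotes the fact that $\mbox{int}(J_{\ker}(G_\tau))=\emptyset$ whenever $F(G_\tau)\neq\emptyset$, whereas you place it in $J(G_\tau)$ and run the Montel normality argument explicitly; these are the same argument.
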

\begin{proof}
Let ${\cal W}_{j}=\{ f_{j,\lambda } \} _{\lambda \in \Lambda _{j}}$ for each $j.$ 
Then there exists an element $j\in \{ 1,\ldots, m\} $ such that 
$\mbox{int}(\mbox{supp}\,\tau )\cap \{ f_{j,\lambda }\mid \lambda \in \Lambda _{j}\} \neq \emptyset .$ 
Suppose $J_{\ker }(G_{\tau })\setminus S({\cal W}_{j})\neq \emptyset .$ 
Let $z_{0}\in  J_{\ker }(G_{\tau })\setminus S({\cal W}_{j}).$ 
Then there exists an element $n\in \NN $ such that the map  
$(\lambda _{1},\ldots, \lambda _{n})\in \Lambda _{j}^{n}\mapsto f_{j,\lambda _{1}}\circ \cdots 
\circ f_{j,\lambda _{n}}(z_{0})\in \CCI $ is non-constant on $\Lambda _{j}^{n}.$ 
Moreover, we have that $z_{0}\in J_{\ker }(G_{\tau })$ 
and $G_{\tau }(J_{\ker }(G_{\tau }))\subset J_{\ker }(G_{\tau }).$ 
Hence, combining the open mapping theorem for holomorphic mappings  and the assumption 
``int(supp$\,\tau)\neq \emptyset$'' 
implies 
that $\mbox{int}(J_{\ker }(G_{\tau }))\neq \emptyset .$ 
However, this contradicts to the assumption $F(G_{\tau })\neq \emptyset $ and 
Remark~\ref{r:kjulia}(3). 
Thus we must have that 
$J_{\ker }(G_{\tau })\subset S({\cal W}_{j}).$ 
Since $\sharp (S_{n}({\cal W}_{j}))<\infty $ (see Lemma~\ref{l:sn1sn}), 
it follows that $\sharp J_{\ker }(G_{\tau })<\infty .$ 
\end{proof}
\begin{lem}
\label{l:yrtfaji}
Let ${\cal Y}$ be a weakly nice subset of $\emRat$ with respect to some 
holomorphic families $\{ {\cal  W}_{j}\} _{j=1}^{m}$ of rational maps, where ${\cal W}_{j}=\{ f_{j,\lambda }\} _{\lambda 
\in \Lambda _{j}}$, $j=1,\ldots ,m.$   
Let $\tau \in {\frak M}_{1}({\cal Y}, \{ {\cal W}_{j}\} _{j=1}^{m}).$ 
Suppose that for each $j=1,\ldots, m$, we have 
$\mbox{int}(\mbox{supp}\,\tau \cap \{ f_{j,\lambda }\mid \lambda \in \Lambda _{j}\} )\neq \emptyset $ with respect to 
the topology in $\{ f_{j,\lambda }\mid \lambda \in \Lambda _{j}\} $ (which is endowed with the relative topology from Rat),  and that 
$F(G_{\tau })\neq \emptyset .$ 
Then $J_{\ker }(G_{\tau })\subset \cap _{j=1}^{m}S({\cal W}_{j}).$    
\end{lem}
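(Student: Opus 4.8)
The plan is to reduce Lemma~\ref{l:yrtfaji} to Lemma~\ref{l:yrnistfgni} applied to each family separately. First I would observe that under the hypothesis, for each $j=1,\ldots,m$ we have $\mathrm{int}(\mathrm{supp}\,\tau\cap\{f_{j,\lambda}\mid\lambda\in\Lambda_j\})\neq\emptyset$ \emph{with respect to the topology in $\{f_{j,\lambda}\mid\lambda\in\Lambda_j\}$}. I want to upgrade this so that the hypothesis of Lemma~\ref{l:yrnistfgni} is met: there I need $\mathrm{int}(\mathrm{supp}\,\tau)\cap\{f_{j,\lambda}\mid\lambda\in\Lambda_j\}\neq\emptyset$ with respect to the topology in $\mathcal{Y}$. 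These are genuinely different a priori (interior inside the locus versus interior inside $\mathcal{Y}$), but since each $\{f_{j,\lambda}\mid\lambda\in\Lambda_j\}$ is a closed subset of the ambient open set $\mathcal{U}$ and $\mathcal{Y}$ is the finite union of these loci, a point in the locus-interior of $\mathrm{supp}\,\tau\cap\{f_{j,\lambda}\}$ has a neighborhood in the locus contained in $\mathrm{supp}\,\tau$; and such a neighborhood is open in $\mathcal{Y}$ if the loci are "spread apart" enough or, more robustly, one simply notes that the non-constancy argument only uses that $\mathrm{supp}\,\tau$ contains a set with non-empty interior \emph{in the locus}, which forces the map $(\lambda_1,\dots,\lambda_n)\mapsto f_{j,\lambda_1}\circ\cdots\circ f_{j,\lambda_n}(z_0)$ to be constant for each $z_0\in J_{\ker}(G_\tau)$.

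So the core of the proof is exactly the argument of Lemma~\ref{l:yrnistfgni}, run once for each $j$. Fix $j\in\{1,\dots,m\}$ and suppose $z_0\in J_{\ker}(G_\tau)\setminus S(\mathcal{W}_j)$. By definition of $S(\mathcal{W}_j)=\cap_n S_n(\mathcal{W}_j)$, there is an $n\in\NN$ with $z_0\notin S_n(\mathcal{W}_j)$, i.e.\ $(\lambda_1,\dots,\lambda_n)\in\Lambda_j^n\mapsto f_{j,\lambda_1}\circ\cdots\circ f_{j,\lambda_n}(z_0)$ is non-constant on $\Lambda_j^n$. Since $J_{\ker}(G_\tau)$ is $G_\tau$-forward invariant (Remark~\ref{r:kjulia}(2)) and $\mathrm{supp}\,\tau\cap\{f_{j,\lambda}\mid\lambda\in\Lambda_j\}$ has non-empty interior in the locus, the corresponding compositions $f_{j,\lambda_1}\circ\cdots\circ f_{j,\lambda_n}$ with $(\lambda_1,\dots,\lambda_n)$ ranging over a non-empty open subset of $\Lambda_j^n$ all lie in $G_\tau$, and they move $z_0$ over a set with non-empty interior (by non-constancy plus the open mapping theorem for the holomorphic, non-constant map $\lambda\mapsto f_{j,\lambda_1}\circ\cdots\circ f_{j,\lambda_n}(z_0)$). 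Hence $G_\tau(z_0)$, and therefore $J_{\ker}(G_\tau)$, has non-empty interior in $\CCI$. But $F(G_\tau)\neq\emptyset$, so by Montel's theorem $J_{\ker}(G_\tau)\subset J(G_\tau)$ has empty interior (Remark~\ref{r:kjulia}(3)) — a contradiction. Therefore $J_{\ker}(G_\tau)\subset S(\mathcal{W}_j)$.

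Since this holds for \emph{every} $j=1,\dots,m$, we conclude $J_{\ker}(G_\tau)\subset\bigcap_{j=1}^m S(\mathcal{W}_j)$, which is the assertion. The step I expect to need the most care is the passage from "interior in the locus $\{f_{j,\lambda}\mid\lambda\in\Lambda_j\}$" to actually having enough maps of the family $\mathcal{W}_j$ inside $G_\tau$ to derive the interior statement about $G_\tau(z_0)$: one must make sure that a locus-open subset of $\mathrm{supp}\,\tau$ pulls back to an open subset of parameter space $\Lambda_j$ (this uses that $\lambda\mapsto f_{j,\lambda}$ is a continuous — indeed holomorphic — map into $\Rat$, so preimages of relatively open sets are open in $\Lambda_j$), and that $\Lambda_j$ being connected lets the non-constancy conclusion propagate. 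All of this is routine given Lemmas~\ref{l:sn1sn} and \ref{l:yrnistfgni} and the basic facts about rational semigroups quoted in Section~\ref{Pre}; in fact the cleanest write-up is simply: "Apply Lemma~\ref{l:yrnistfgni} with the roles of the families permuted so that $\mathcal{W}_j$ is singled out, using that $\mathrm{int}(\mathrm{supp}\,\tau\cap\{f_{j,\lambda}\mid\lambda\in\Lambda_j\})\neq\emptyset$ implies $\mathrm{int}(\mathrm{supp}\,\tau)\cap\{f_{j,\lambda}\mid\lambda\in\Lambda_j\}\neq\emptyset$ is not needed — the proof of Lemma~\ref{l:yrnistfgni} only used locus-interior — to get $J_{\ker}(G_\tau)\subset S(\mathcal{W}_j)$; intersect over $j$."
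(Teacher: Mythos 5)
Your proposal is correct and follows exactly the paper's route: the paper's own proof is literally "use the argument in the proof of Lemma~\ref{l:yrnistfgni}" for each $j$, which is what you carry out (non-constancy of the holomorphic composition map on $\Lambda_j^n$, the identity theorem to restrict to the open parameter set coming from the locus-interior hypothesis, forward invariance of $J_{\ker}(G_\tau)$, and the contradiction with Montel's theorem via $F(G_\tau)\neq\emptyset$), then intersect over $j$. Your care about locus-interior versus interior in $\mathcal{Y}$ is well placed and resolved correctly — the argument of Lemma~\ref{l:yrnistfgni} indeed only needs interior relative to the locus.
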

\begin{proof}
By using the argument in the proof of Lemma~\ref{l:yrnistfgni}, 
it is easy to see that our lemma holds. 
\end{proof}
\begin{lem}
\label{l:lcbwj}
Let ${\cal Y}$ be a weakly nice subset of $\emRat$ with respect to some holomorphic families 
$\{ {\cal W}_{j}\} _{j=1}^{m}$ of rational maps. 
Let $\tau \in {\frak M}_{1}({\cal Y}, \{ {\cal W}_{j}\} _{j=1}^{m}).$ 
Let $L\in \emMin(G_{\tau },\CCI )$ such that $L\subset \cap _{j=1}^{m}S({\cal W}_{j}).$ 
Then for each $\rho \in {\frak M}_{1}({\cal Y}, \{ {\cal W}_{j}\} _{j=1}^{m})$, we have 
$L\in \emMin (G_{\rho },\CCI ).$ 
\end{lem}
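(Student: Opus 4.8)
The plan is to show directly that $L$ satisfies the definition of a minimal set for $(G_{\rho},\CCI)$, for every admissible $\rho$, by proving that $G_{\rho}(L)\subset L$ and that $\overline{G_{\rho}(z)}=\overline{G_{\tau}(z)}$ for each $z\in L$. Everything rests on one elementary remark together with one observation that I would establish first. The remark: since $L\subset\bigcap_{j=1}^{m}S({\cal W}_{j})$ and $S({\cal W}_{j})\subset S_{1}({\cal W}_{j})$, for every $z\in L$ and every $j\in\{1,\dots,m\}$ the map $\lambda\mapsto f_{j,\lambda}(z)$ is constant on $\Lambda_{j}$. The observation, which I would prove using that $L\in\Min(G_{\tau},\CCI)$ (so $G_{\tau}(L)\subset L$): for every $w\in L$ and every $j$, all the maps $f_{j,\lambda}$ ($\lambda\in\Lambda_{j}$) send $w$ to one and the same point of $L$; indeed, picking $\mu\in\Lambda_{j}$ with $f_{j,\mu}\in\mbox{supp}\,\tau$ — possible because $\tau\in{\frak M}_{1}({\cal Y},\{{\cal W}_{j}\}_{j=1}^{m})$ meets every ${\cal W}_{j}$ — the remark gives $f_{j,\lambda}(w)=f_{j,\mu}(w)\in G_{\tau}(w)\subset L$.

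From this, $G_{\rho}(L)\subset L$ is immediate: a generator of $G_{\rho}$ lies in $\mbox{supp}\,\rho\subset{\cal Y}=\bigcup_{j=1}^{m}\{f_{j,\lambda}\mid\lambda\in\Lambda_{j}\}$, hence has the form $f_{j,\lambda}$, and by the observation it maps $L$ into $L$; composing generators gives $G_{\rho}(L)\subset L$. Next I would show $G_{\tau}(z)=G_{\rho}(z)$ for each $z\in L$. Given a word $(j_{1},\dots,j_{n})$ over $\{1,\dots,m\}$, the observation together with $G_{\tau}(L)\subset L$ and $G_{\rho}(L)\subset L$ lets me define $P_{(j_{1},\dots,j_{n})}(z):=f_{j_{1},\lambda_{1}}\circ\cdots\circ f_{j_{n},\lambda_{n}}(z)\in L$ independently of the choices $\lambda_{i}\in\Lambda_{j_{i}}$ (read the composition right to left; each successive image is already in $L$, so the next factor contributes a value depending only on its index). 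Because both $\mbox{supp}\,\tau$ and $\mbox{supp}\,\rho$ meet every ${\cal W}_{j}$, every such word is realized both by a composition of generators of $G_{\tau}$ and by one of $G_{\rho}$; conversely every element of $G_{\tau}(z)$ or of $G_{\rho}(z)$ equals $P_{(j_{1},\dots,j_{n})}(z)$ for a suitable word. Hence $G_{\tau}(z)=\{P_{(j_{1},\dots,j_{n})}(z)\}=G_{\rho}(z)$, the family being indexed by all finite words over $\{1,\dots,m\}$.

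Finally, for $z\in L$ I get $\overline{G_{\rho}(z)}=\overline{G_{\tau}(z)}=L$, the last equality being the characterization of $L\in\Min(G_{\tau},\CCI)$ recorded in Remark~\ref{r:minimal}. Combined with $G_{\rho}(L)\subset L$ and the compactness of $L$, this forces $L$ to be minimal in $\{K\in\Cpt(\CCI)\mid G_{\rho}(K)\subset K\}$: if $K\subset L$ is compact with $G_{\rho}(K)\subset K$ and $z\in K$, then $L=\overline{G_{\rho}(z)}\subset K\subset L$, so $K=L$. Therefore $L\in\Min(G_{\rho},\CCI)$. The argument is soft; the only thing to watch is the bookkeeping with several families — a single map of ${\cal Y}$ may belong to more than one ${\cal W}_{j}$ — so one must rely solely on the hypotheses that $\mbox{supp}\,\tau$ and $\mbox{supp}\,\rho$ each intersect every ${\cal W}_{j}$, and keep every intermediate image inside $L$, which is precisely what validates the ``value depends only on the index word'' induction at each step.
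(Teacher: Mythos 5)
Your proof is correct and follows essentially the same route as the paper: use $L\subset S({\cal W}_{j})\subset S_{1}({\cal W}_{j})$ to see that every $f_{j,\lambda}$ agrees on $L$ with some element of $\mbox{supp}\,\tau$, giving $G_{\rho}(L)\subset L$. Your explicit word-by-word verification that $G_{\rho}(z)=G_{\tau}(z)$ for $z\in L$, and hence $\overline{G_{\rho}(z)}=L$, is a welcome expansion of the final minimality step, which the paper leaves implicit after establishing forward invariance.
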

\begin{proof}
Let $z\in L.$ 
Let ${\cal W}_{j}=\{ f_{j,\lambda }\} _{\lambda \in \Lambda _{j}}$ for each $j.$  
Let $\rho \in  {\frak M}_{1}({\cal Y}, \{ {\cal W}_{j}\} _{j=1}^{m}). $ 
Let $h\in \mbox{supp}\,\rho. $ Then there exist an $i\in \{ 1,\ldots ,m\} $ 
and an element $\lambda _{0}\in \Lambda _{i} $ such that $h=f_{i,\lambda _{0}}.$ 
Since we have $\mbox{supp}\,\tau \cap \{ f_{i,\lambda }\mid \lambda \in \Lambda _{i}\} \neq \emptyset $, 
there exists an element $\lambda _{1}\in \Lambda _{i}$ such that 
$f_{i,\lambda _{1}}\in \mbox{supp}\,\tau .$ Since $L\in \Min(G_{\tau }, \CCI )$, 
we have $f_{i,\lambda _{1}}(z)\in L.$ Moreover, since $L\subset S({\cal W}_{i})$ (which follows from the assumption 
$L\subset \cap _{j=1}^{m}S({\cal W}_{j})$),  
we have that $h(z)=f_{i,\lambda _{0}}(z)=f_{i,\lambda _{1}}(z)\in L.$ 
Hence $h(L)\subset L.$ Therefore $L\in \Min(G_{\rho }, \CCI ).$  
\end{proof} 
\begin{df}
\label{d:smin}
Let ${\cal Y}$ be a weakly nice subset of $\Rat $ with respect to some holomorphic families 
$\{ {\cal W}_{j}\} _{j=1}^{m}$ of rational maps. 
Let $\tau \in {\frak M}_{1,c}({\cal Y}, \{ {\cal W}_{j}\} _{j=1}^{m})$. 
Then we set 
\vspace{-2mm} 
$$S_{\min }(\{ {\cal W}_{j}\} _{j=1}^{m})=\cup _{L\in \Min(G_{\tau },\CCI ), 
L\subset \cap _{j=1}^{m}S({\cal W}_{j})}L.$$ 
\vspace{-2mm} 
Note that this definition does not depend on the choice of 
$\tau \in {\frak M}_{1,c}({\cal Y}, \{ {\cal W}_{j}\} _{j=1}^{m})$ due to 
Lemma~\ref{l:lcbwj}. 
\end{df}

We now give the definition of attracting minimal sets which was introduced by the author 
in \cite{Sadv}. 
\begin{df}
\label{d:attminset}
Let $\Gamma \in \mbox{Cpt}(\Rat).$ 
We say that a minimal set $L\in \Min(\langle \Gamma \rangle ,\CCI )$ 
is {\bf attracting} 
(for $\Gamma )$ if there exist two open subsets $A, B$ of $\CCI $ 
with $\sharp (\CCI \setminus A)\geq 3$ and an $n\in \NN $ such that 
$L\subset B\subset \overline{B}\subset A$ and such that for each $(\gamma _{1},\ldots, \gamma _{n})
\in \Gamma ^{n}$, we have $\gamma _{n}\circ \cdots \circ \gamma _{1}(A)\subset B.$ 
In this case, we say that $L$ is an 
{\bf attracting minimal set of $\G .$}  
Also, for an element $\tau \in {\frak M}_{1,c}(\Rat)$, 
if $L\in \Min(G_{\tau },\CCI )$ is attracting for $\mbox{supp}\,\tau$ then we say that 
$L$ is {\bf attracting for $\tau ,$} that $L$ is an {\bf attracting minimal set of $\supptau $}, and 
 that $L$ is an {\bf attracting minimal set of $\tau .$}  
\end{df}
\begin{df}
\label{d:mild} 
Let ${\cal Y}$ be a subset of $\Rat $ endowed with the relative topology from $\Rat. $ 
We say that ${\cal Y}$ is {\bf mild} 
if for each $\Gamma \in \Cpt({\cal Y})$, there exists an attracting minimal set of $\Gamma .$  
\end{df}
We give some examples of mild sets. 
\begin{ex}[Examples of mild sets]
\label{ex:mild} 
\ 
\begin{itemize} 
\item[(a)]  Any non-empty open subset ${\cal U}$ of 
${\cal P}$ is a mild set. For, for each $\Gamma \in \Cpt({\cal U})$, 
the set $\{\infty \}$ is an attracting  minimal set of $\Gamma .$ 
Also, for any $\Lambda \in \Cpt({\cal P})$, there exists an open subset 
${\cal V}$ of $\Rat$ with ${\cal V}\supset \Lambda $ such that ${\cal V}$ is mild.  
\item[(b)] Let $\Lambda \in \Cpt(\Rat)$ such that 
$\Lambda $ is an attracting   
minimal set of $\Lambda .$ 
Then there exists an open subset ${\cal U}$ of $\Rat $ with ${\cal U}\supset \Lambda $ 
such that ${\cal U}$ is mild.  
\item[(c)] Let $a\in \CCI $ and let ${\cal Y}=\{ f\in \Rat \mid a \mbox{ is an attracting fixed point of } f\}.$ 
Then ${\cal Y}$ is a mild subset  of Rat. 
\end{itemize} 
\end{ex}
We now give the definition of mean stability which is introduced by the author in 
\cite{Splms10}. 
\begin{df}
\label{d:meanstable}
Let $\Gamma \in \Cpt(\Rat).$ Let $G=\langle \G \rangle .$ We say that 
$\G $ is {\bf mean stable} if there exist non-empty open subsets 
$U$ and $V$ of $F(G)$ and a number $n\in \NN $ such that all of the following hold. 
\begin{itemize}
\item[(a)]
$\overline{V}\subset U$ and $\overline{U}\subset F(G).$ 
\item[(b)] 
For each $\gamma \in \GN , \gamma _{n,1}(\overline{U})\subset V.$ 
\item[(c)] 
For each $z\in \CCI $, there exists an element $g\in G$ such that 
$g(z)\in U.$ 
\end{itemize}
Also, if $\G $ is mean stable, we say that $G$ is mean stable (this notion does not 
depend on the choice of $\G \in \Cpt(\Rat)$ with $\langle \G \rangle =G$). 
Moreover, for an element $\tau \in {\frak M}_{1,c}(\Rat )$, if $\mbox{supp}\,\tau$ is 
mean stable, then we say that $\tau $ is mean stable. 
\end{df}
\begin{rem}
\label{r:ms} 
If $\tau \in {\frak M}_{1,c}(\Rat)$ is mean stable and 
$J(G_{\tau })\neq \emptyset $, then the random dynamical system 
generated by $\tau $ has many nice properties (e.g. $J_{\ker }(G_{\tau })=\emptyset$, 
stability of the limit state functions under the perturbation, 
negativity of Lyapunov exponent for any point of $z\in \CCI $ for $\tilde{\tau }$-a.e. 
$\gamma $ etc., see \cite{Splms10,Sadv}).   

Moreover, if $\Gamma \in \mbox{Cpt}(\Rat)$ and 
$\sharp J(\langle \Gamma \rangle )\geq 3$, then 
$\Gamma $ is mean stable if and only if 
each $L\in \Min(\langle \Gamma \rangle, \CCI )$ is attracting 
for $\Gamma $ (see \cite[Remark 3.7]{Sadv}). 
\end{rem}

We now give a result of the density of mean stable elements. 
Recall that an element $g\in \mbox{Aut}(\CCI )$ is called loxodromic 
if $g$ has exactly two fixed points $a,b\in \CCI $ and the modulus of multiplier 
of $(g,a)$ is strictly larger than $1$ and the modulus of multiplier of $(g,b)$ is 
strictly less than $1.$ 
\begin{lem}
\label{l:ypwntln}
Let ${\cal Y}$ be a mild subset of $\emRat$ and suppose that 
${\cal Y}$ is weakly nice  with respect to some  
holomorphic families $\{ {\cal W}_{j}\} _{j=1}^{m}$ 
of rational maps, 
where ${\cal W}_{j}=\{ f_{j,\lambda }\} _{\lambda \in \Lambda _{j}}$ 
for each $j=1,\ldots, m. $ 
Suppose that 
for each $\tau \in {\frak M}_{1,c}({\cal Y},\{ {\cal W}_{j}\} _{j=1}^{m})$ and 
for each $L\in \emMin(G_{\tau },\CCI )$, we have 
$L\cap (\cup _{j=1}^{m}S({\cal W}_{j}) \cap J(G_{\tau }))=\emptyset .$ 
Suppose also that 
for each $z\in S_{\min }(\{ {\cal W}_{j}\} _{j=1}^{m})$ 
and for each $j=1,\ldots, m$, 
either {\em (a)} the map $\lambda \mapsto 
D(f_{j, \lambda })_{z}$ is nonconstant on $\Lambda _{j}$, 
or {\em (b)} $D(f_{j, \lambda})_{z}=0$ for all $\lambda 
\in \Lambda _{j}$. 
Then ${\cal A}:=\{ \tau \in  {\frak M}_{1,c}({\cal Y},\{ {\cal W}_{j}\} _{j=1}^{m})\mid \tau 
\mbox{ is mean stable} \} $ is open and dense in 
$ {\frak M}_{1,c}({\cal Y},\{ {\cal W}_{j}\} _{j=1}^{m})$ with respect to the topology ${\cal O}.$ 

\end{lem}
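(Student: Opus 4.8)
The plan is to prove openness and density separately, exploiting the hypothesis that no minimal set of any $G_\tau$ lies inside $\cap_j S(\mathcal{W}_j)\cap J(G_\tau)$.

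\textbf{Openness.} Suppose $\tau\in\mathcal{A}$, so $\Gamma_\tau$ is mean stable: there are open sets $V,U$ with $\overline V\subset U$, $\overline U\subset F(G_\tau)$, $\sharp(\CCI\setminus U)\geq 3$, a number $n\in\NN$, and (b) $\gamma_{n,1}(\overline U)\subset V$ for all $\gamma\in\Gamma_\tau^{\NN}$, (c) for each $z\in\CCI$ there is $g\in G_\tau$ with $g(z)\in U$. Property (b) is a statement about finitely many compositions and compactness, so it persists for $\Gamma_\rho$ whenever $\Gamma_\rho$ is Hausdorff-close to $\Gamma_\tau$; more precisely, if $\Gamma_\rho\subset B(\Gamma_\tau,\epsilon)$ in $\Cpt(\Rat)$ with $\epsilon$ small, then each $n$-fold composition from $\Gamma_\rho$ still maps $\overline U$ into $V$ (shrinking $V$ slightly if needed, or enlarging it), and still $\overline U\subset F(G_\rho)$ since the relevant equicontinuity is preserved. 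For property (c): the sets $W_g:=\{z\in\CCI\mid g(z)\in U\}$ for $g\in G_\tau$ form an open cover of $\CCI$, so finitely many $g_1,\ldots,g_k$ suffice; writing each $g_i$ as a word in $\Gamma_\tau$ and perturbing the letters slightly, the corresponding words in $G_\rho$ still send $z$ into $U$ for $z$ in a slightly shrunk version of $W_{g_i}$, and these still cover $\CCI$ by a Lebesgue-number argument. Hence $\mathcal{A}$ is open in the $\mathcal{O}$-topology; note we must also keep $\Gamma_\rho$ meeting the locus of each $\mathcal{W}_j$, which is automatic in $\Cpt({\cal Y})$-neighborhoods only after intersecting with ${\frak M}_{1,c}({\cal Y},\{{\cal W}_j\}_{j=1}^m)$ and using that the relevant condition is about the support, handled by the definition of $\mathcal{O}$.

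\textbf{Density.} Let $\tau_0\in{\frak M}_{1,c}({\cal Y},\{{\cal W}_j\}_{j=1}^m)$. First, as in the strategy sketch preceding the excerpt, I would perturb $\tau_0$ to a nearby $\tau_1$ with $\mathrm{int}(\mathrm{supp}\,\tau_1\cap\{f_{j,\lambda}\})\neq\emptyset$ for every $j$; this is possible by spreading a bit of mass using the holomorphic family structure, and keeps us in ${\frak M}_{1,c}({\cal Y},\{{\cal W}_j\}_{j=1}^m)$. If $F(G_{\tau_1})=\emptyset$ then $J(G_{\tau_1})=\CCI$, and I will handle this case by a further small perturbation creating an attracting minimal set (using that ${\cal Y}$ is mild: $\Gamma_{\tau_1}$ already has an attracting minimal set $L_0$; we can enlarge it slightly and use an attracting-minimal-set basin to force $F(G_{\tau_1})\neq\emptyset$ after perturbation—indeed an attracting minimal set always has nonempty Fatou neighborhood). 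So we may assume $F(G_{\tau_1})\neq\emptyset$. Then by Lemma~\ref{l:yrtfaji}, $J_{\ker}(G_{\tau_1})\subset\cap_{j=1}^m S({\cal W}_j)$, hence $\sharp J_{\ker}(G_{\tau_1})<\infty$ by Lemma~\ref{l:sn1sn}, and actually $\sharp J(G_{\tau_1})<\infty$ or $J(G_{\tau_1})$ has empty interior. Now the hypothesis enters: every $L\in\Min(G_{\tau_1},\CCI)$ satisfies $L\not\subset\cap_j S({\cal W}_j)\cap J(G_{\tau_1})$, so either $L\not\subset\cap_j S({\cal W}_j)$ (then $L$ meets $F(G_{\tau_1})$ since $L\subset J_{\ker}\subset\cap_j S$ would force the contrary, using that $J_{\ker}(G_{\tau_1})\subset\cap_j S({\cal W}_j)$ and $L$ forward-invariant) or $L\cap F(G_{\tau_1})\neq\emptyset$ directly. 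In both cases $L\cap F(G_{\tau_1})\neq\emptyset$. Using the theory of systems with finite kernel Julia set (Proposition referenced as \texttt{p:jkgfmf} in the sketch) together with the bifurcation-of-minimal-sets lemma (the \texttt{l:nalnotsb} in the sketch), after enlarging $\mathrm{supp}\,\tau_1$ a little more I can arrange $\sharp\Min(G_{\tau_1},\CCI)<\infty$ and each such $L$ attracting for $\tau_1$. Mildness gives that $\Gamma_{\tau_1}$ has an attracting minimal set too. Then I construct $U,V,n$ for mean stability: take $V$ a union of small neighborhoods of the finitely many attracting minimal sets inside their contracting basins, $U$ slightly larger with $\overline U\subset F(G_{\tau_1})$, $\sharp(\CCI\setminus U)\geq 3$; property (b) follows from uniform contraction on the basins, and property (c) follows because for each $z\in\CCI$, $\overline{G_{\tau_1}(z)}$ meets some minimal set, which is attracting, so $z$ is eventually driven into $U$. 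Hence $\tau_1$ is mean stable and close to $\tau_0$.

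\textbf{Main obstacle.} The hard part will be the density argument, specifically verifying property (c) of mean stability: one must show that for a generic $\tau$ as above, \emph{every} point $z\in\CCI$ is eventually mapped into the basin $U$ of the attracting minimal sets. This requires ruling out that $\overline{G_\tau(z)}$ is trapped in $J(G_\tau)$ forever, which is exactly where the hypothesis ``no minimal set inside $\cap_j S({\cal W}_j)\cap J(G_\tau)$'' must be combined with a local analysis at the singular points of $\{{\cal W}_j\}$—classifying each minimal set in $\cap_j S({\cal W}_j)$ as uniformly expanding, attracting, or of a mixed Siegel/expanding type (the four cases in the sketch), showing the non-attracting expanding ones force $\overline{G_\tau(z)}$ to meet a backward image lying in a compact subset of $J(G_\tau)\setminus\bigcup_j S({\cal W}_j)$, and then enlarging $\mathrm{supp}\,\tau$ once more so that this compact set is pushed into $U$. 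Keeping all these perturbations small in the $\mathcal{O}$-topology and simultaneously within ${\frak M}_{1,c}({\cal Y},\{{\cal W}_j\}_{j=1}^m)$ is the technical heart of the proof.
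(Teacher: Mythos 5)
Your openness argument is a sound direct perturbation proof (the paper simply cites \cite[Lemma 3.62]{Splms10} for this), and the first steps of your density argument are right: perturb to $\tau_1$ with $\mbox{int}(\mbox{supp}\,\tau_1\cap\{f_{j,\lambda}\})\neq\emptyset$ for each $j$, note that mildness already forces $F(G_{\tau_1})\neq\emptyset$ (an attracting minimal set sits in a Fatou neighbourhood, so no extra perturbation is needed for that case), and apply Lemma~\ref{l:yrtfaji}. Your deduction that every minimal set meets $F(G_{\tau_1})$ is also correct. But you stop one step short of the decisive observation: those same two facts give $J_{\ker}(G_{\tau_1})=\emptyset$ outright. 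Indeed, $J_{\ker}(G_{\tau_1})$ is compact and forward invariant, so if nonempty it contains a minimal set $L$ with $L\subset J_{\ker}(G_{\tau_1})\subset\bigcap_j S({\cal W}_j)$ and $L\subset J(G_{\tau_1})$, contradicting the hypothesis. This is exactly the point of the hypothesis, and it is what the paper extracts.

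Because of this, your ``main obstacle'' paragraph is misdirected and leaves a genuine gap. The four-type classification of minimal sets inside the singular set (uniformly expanding / attracting / Siegel-mixed / critically degenerate) and the argument that expanding ones push $\overline{G_\tau(z)}$ into a compact subset of $J(G_\tau)\setminus\bigcup_j S({\cal W}_j)$ are the machinery the paper develops for Theorem~\ref{t:rcdnkmain1}, where minimal sets \emph{can} lie in $\bigcap_j S({\cal W}_j)\cap J(G_\tau)$ and the kernel Julia set is nonempty. Under the hypothesis of the present lemma none of that is needed: once $J_{\ker}(G_{\tau_1})=\emptyset$, every $z\in J(G_{\tau_1})$ already admits $g\in G_{\tau_1}$ with $g(z)\in F(G_{\tau_1})$, so condition (c) reduces to the Fatou-set analysis you already sketched, and the situation is precisely the one treated in the earlier work. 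The paper's density proof is accordingly two lines at this point: having obtained $J_{\ker}(G_{\rho_0})=\emptyset$ (and noting that mildness forces every M\"obius element of $\Gamma_{\rho_0}$ to be loxodromic, a hypothesis you omit but which matters since ${\cal Y}\subset\mbox{Rat}$ allows degree-one maps), it invokes \cite[Theorem 1.8]{Sadv} and its proof: enlarging $\mbox{supp}\,\rho_0$ slightly yields a mean stable $\rho_1$ nearby. If you intend to avoid citing that result and rebuild the enlargement argument, you must still justify uniform contraction near the finite minimal sets contained in $\bigcap_j S({\cal W}_j)\cap F(G_{\tau_1})$ (your appeal to Lemma~\ref{l:nalnotsb} only covers $L\not\subset\bigcap_j S({\cal W}_j)$), which is done via the hyperbolic-metric estimates of Lemma~\ref{l:pwjkf}/Proposition~\ref{p:jkgfmf}; as written, that step is asserted rather than proved.
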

 \begin{proof}
 By \cite[Lemma 3.62]{Splms10}, ${\cal A}$ is open in 
 $ {\frak M}_{1,c}({\cal Y},\{ {\cal W}_{j}\} _{j=1}^{m})$ with respect to the topology ${\cal O}.$ 
To prove the density of ${\cal A}$, 
let $\rho \in {\frak M}_{1,c}({\cal Y},\{ {\cal W}_{j}\} _{j=1}^{m}).$ 
Then there exists an element $\rho _{0}\in 
{\frak M}_{1,c}({\cal Y},\{ {\cal W}_{j}\} _{j=1}^{m})$ which is arbitrarily close to $\rho $ with 
respect to ${\cal O}$ such that for each $j\in \{ 1,\ldots, m\} $, 
int$(\mbox{supp}\,\rho _{0}\cap \{ f_{j,\lambda }\mid \lambda \in \Lambda _{j}\} )\neq \emptyset $ with respect to the topology 
in $\{ f_{j,\lambda }\mid \lambda \in \Lambda _{j}\} $, 
where ${\cal W}_{j}=\{ f_{j,\lambda }\} _{\lambda \in \Lambda _{j}}.$ 
For each $j$ we endow $\{ f_{j, \lambda }\mid \lambda \in \Lambda _{j}\}$ 
with the relative topology from Rat.  
By Lemma~\ref{l:yrtfaji} and the assumption of our lemma, 
we obtain $J_{\ker }(G_{\rho _{0}})=\emptyset $. 
Since ${\cal Y}$ is mild, each $g\in \mbox{supp}\,\rho _{0}\cap \mbox{Aut}(\CCI )$ is loxodromic.   
Let $\rho _{1} \in {\frak M}_{1,c}({\cal Y}, \{ {\cal W}_{j}\}_{j=1}^{m})$ be an element such that $\rho _{1}$ is close enough to $\rho _{0}$ with respect to the topology ${\cal O}$ and 
$\mbox{supp}\,\rho _{0}\cap \{ f_{j,\lambda }\mid \lambda 
\in \Lambda _{j}\} \subset \mbox{int}
(\mbox{supp}\rho _{1}\cap \{ f_{j,\lambda }\mid \lambda \in \Lambda _{j}\})$ with respect to the topology in 
$\{ f_{j, \lambda }\mid \lambda \in \Lambda _{j}\} $ for each $j.$ 
Then by the assumptions of our lemma, 
Lemma~\ref{l:sn1sn}, 
\cite[Lemmas 3.8, 3.16, and Theorem 3.26]{Sadv} and their proofs, we obtain that each $L\in \Min(G_{\rho _{1}}, \CCI )$ 
is attracting for $\rho _{0}$. From \cite[Remark 3.7]{Sadv}, it follows that 
$\rho _{1}$ is mean stable. 

Thus ${\cal A}$ is dense in ${\frak M}_{1,c}({\cal Y},\{ {\cal W}_{j}\} _{j=1}^{m}).$    
 \end{proof}
\begin{df}
\label{d:exceptional}
Let ${\cal Y}$ be a weakly nice subset of $\Rat $ with respect to some 
holomorphic families 
$\{ {\cal W}_{j}\} _{j=1}^{m}$ of rational maps. 
We say that ${\cal Y}$ is 
{\bf exceptional with respect to $\{ {\cal W}_{j}\} _{j=1}^{m}$}  if 
there exists a non-empty subset $L$ of $\cap _{j=1}^{m}S({\cal W}_{j})$ such that 
for each $\tau \in {\frak M}_{1,c}({\cal Y}, \{ {\cal W}_{j}\} _{j=1}^{m})$, 
we have that $L\in \Min(G_{\tau },\CCI )$ and $\chi (\tau, L)=0.$ 
We say that ${\cal Y}$ is {\bf non-exceptional with respect to $\{ {\cal W}_{j}\} _{j=1}^{m}$} if 
 ${\cal Y}$ is not exceptional with respect to $\{ {\cal W}_{j}\} _{j=1}^{m}$.  
\end{df}
\begin{prop}
\label{p:neadabc}
Let  ${\cal Y}$ be a  mild subset of $\emRat$ and suppose that 
${\cal Y}$ is weakly nice and non-exceptional with respect to some holomorphic families 
$\{ {\cal W}_{j}\} _{j=1}^{m}$ of rational maps. 
Then 
there exists a dense subset ${\cal A} $ of 
the topological space $({\frak M}_{1,c}({\cal Y}, \{ {\cal W}_{j}\} _{j=1}^{m}), {\cal O})$ 
such that all of the following {\em (a)(b)} hold. 
\begin{itemize}
\item[{\em (a)}] 
For each $\tau \in {\cal A}$ and for each $L\in \emMin(G_{\tau },\CCI )$ with 
$L\subset \cap _{j=1}^{m}S({\cal W}_{j})$, we have $\chi (\tau, L)\neq 0.$

\item[{\em (b)}] 
Let $\tau \in {\cal A}.$ Then 
$\sharp J_{\ker }(G_{\tau })<\infty $ and  $J_{\ker }(G_{\tau })\subset \cap _{j=1}^{m}
S({\cal W}_{j})$.
Moreover, 
setting $H_{+}:=\{ L\in \emMin(G_{\tau }, J_{\ker }(G_{\tau }))\mid  \chi (\tau, L)>0\}$ and 
 denoting by $\Omega $ the set of 
 points $ y\in \CCI$ for which  
$\tilde{\tau }(\{ \gamma \in X_{\tau }\mid \exists n\in \NN \mbox{ s.t. }
\gamma _{n,1}(y)\in \cup _{L\in H_{+}}L\} )=0$, we have that  
%
$\sharp (\CCI \setminus \Omega )\leq \aleph _{0}$ and  
for each $z\in \Omega $, 
$\tilde{\tau }(\{ \gamma \in (\mbox{supp}\,\tau)^{\NN }\mid z\in J_{\gamma }\} )=0.$
Moreover, for $\tilde{\tau }$-a.e. $\gamma \in (\emRat)^{\NN }$, 
we have {\em Leb}$_{2}(J_{\gamma })=0.$  
Furthermore, 
$J_{pt}^{0}(\tau )\subset \CCI \setminus \Omega $ and 
$\sharp J_{pt}^{0}(\tau )\leq \aleph _{0}.$

\end{itemize}
 \end{prop}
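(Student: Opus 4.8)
The plan is to build ${\cal A}$ by finitely many arbitrarily ${\cal O}$-small perturbations and then to read off (a) and (b) from Theorem~\ref{t:jkfcln0}. Write ${\cal W}_j=\{f_{j,\lambda}\}_{\lambda\in\Lambda_j}$ and $S:=\cap_{j=1}^m S({\cal W}_j)$; by Lemma~\ref{l:sn1sn}, $S\subset S_1({\cal W}_1)$ is finite, hence there are only finitely many minimal sets of $G_\tau$ contained in $S$, and by Lemma~\ref{l:lcbwj} this finite list $\{L_1,\dots,L_p\}$ is the same for every $\tau\in{\frak M}_{1,c}({\cal Y},\{{\cal W}_j\}_{j=1}^m)$. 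For each $i$, since $L_i$ is finite and $f_{j,\lambda}(z)$ is independent of $\lambda$ for $z\in L_i$ (as $z\in S_1({\cal W}_j)$), the transition operator of $\tau$ restricted to $L_i$ is a finite Markov chain; once $\tau$ gives positive mass to each family locus it is irreducible (because $L_i\in\Min(G_\tau,\CCI)$ forces $\overline{G_\tau(z)}=L_i$ for $z\in L_i$, and each one-step transition realized by some $g\in\Gamma_\tau$ then has positive probability). So its stationary distribution $\omega_{L_i}$, i.e.\ the canonical $\tau$-ergodic measure on $L_i$ of Definition~\ref{d:celyap}, is the unique solution of a linear system whose coefficients are affine in $\tau$, hence a rational function of $\tau$ with $\omega_{L_i}(\{z\})>0$ for all $z\in L_i$, and $\chi(\tau,L_i)=\sum_{z\in L_i}\omega_{L_i}(\{z\})\int\log\|Dg_z\|_s\,d\tau(g)\in[-\infty,\infty)$.

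Let $C$ be the fixed closed nowhere dense subset of ${\cal Y}$ obtained as the union, over all $j$ and all $z\in\cup_iL_i$ for which $\lambda\mapsto D(f_{j,\lambda})_z\not\equiv 0$ on $\Lambda_j$, of the proper analytic slices $\{f_{j,\lambda}\mid \lambda\in\Lambda_j,\ D(f_{j,\lambda})_z=0\}$ (taking closure if needed). Given $\tau_0$, I would first choose $\tau$ arbitrarily ${\cal O}$-close to $\tau_0$ with $\tau(\{f_{j,\lambda}\mid\lambda\in\Lambda_j\})>0$, $\mbox{int}(\mbox{supp}\,\tau\cap\{f_{j,\lambda}\mid\lambda\in\Lambda_j\})\ne\emptyset$ for every $j$, and $\mbox{supp}\,\tau\cap C=\emptyset$; this is possible because ${\cal Y}\setminus C$ is open and dense in ${\cal Y}$ and meets each family locus in a set of non-empty interior, so $\tau$ can be taken as a finite convex combination of measures carried by small balls in these interiors placed near $\mbox{supp}\,\tau_0$. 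Since ${\cal Y}$ is mild and $\Gamma_\tau\in\Cpt({\cal Y})$, $\Gamma_\tau$ has an attracting minimal set, whose basin lies in $F(G_\tau)$, so $F(G_\tau)\ne\emptyset$; Lemma~\ref{l:yrtfaji} then gives $J_{\ker}(G_\tau)\subset S$ and Lemma~\ref{l:sn1sn} gives $\sharp J_{\ker}(G_\tau)<\infty$. Next, for $i=1,\dots,p$ in turn, if $\chi(\tau,L_i)=0$ I would make it nonzero by re-weighting $\tau$ inside its existing support (so $\mbox{supp}\,\tau$ and the properties already secured are preserved). If $\lambda\mapsto D(f_{j_0,\lambda})_{z_0}$ is non-constant for some $z_0\in L_i$, $j_0$, then, being holomorphic and non-constant on the connected $\Lambda_{j_0}$ and hence on the open subset carried by $\mbox{supp}\,\tau$, a small shift of mass among such maps changes $\int\log\|Dg_{z_0}\|_s\,d\tau(g)$ affinely without moving the points $f_{j_0,\lambda}(z)$ (so $\omega_{L_i}$ is unchanged), hence changes $\chi(\tau,L_i)$ because $\omega_{L_i}(\{z_0\})>0$. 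Otherwise all $\lambda\mapsto D(f_{j,\lambda})_z$, $z\in L_i$, are constant; then $\chi(\tau,L_i)$ depends only on the vector of $\tau$-masses of the family loci, is rational in that vector, and is not identically $0$ over all admissible $\tau$ — otherwise $L_i$ would exhibit ${\cal Y}$ as exceptional, contrary to hypothesis — so a small reallocation makes it nonzero. Small enough re-weightings keep the earlier conditions $\chi(\tau,L_{i'})\ne 0$, which are open in the weights. Put ${\cal A}:=\{\tau\mid \mbox{int}(\mbox{supp}\,\tau\cap\{f_{j,\lambda}\})\ne\emptyset\ \forall j,\ \mbox{supp}\,\tau\cap C=\emptyset,\ \chi(\tau,L_i)\ne 0\ \forall i\}$; the above shows ${\cal A}$ is dense in $({\frak M}_{1,c}({\cal Y},\{{\cal W}_j\}_{j=1}^m),{\cal O})$.

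For $\tau\in{\cal A}$, (a) holds since any $L\in\Min(G_\tau,\CCI)$ with $L\subset S$ is one of the $L_i$. For (b), the first clause is Lemmas~\ref{l:yrtfaji} and~\ref{l:sn1sn}; for the rest I would verify the hypotheses of Theorem~\ref{t:jkfcln0}. Hypothesis (i) is $\sharp J_{\ker}(G_\tau)<\infty$. Any $L\in\Min(G_\tau,J_{\ker}(G_\tau))$ is also in $\Min(G_\tau,\CCI)$ and lies in $J_{\ker}(G_\tau)\subset S$, so equals some $L_i$ and has $\chi(\tau,L)\ne 0$ — hypothesis (ii). If moreover $\chi(\tau,L)>0$, then $\chi(\tau,L)>-\infty$, so no $z\in L$ and $j$ can have $D(f_{j,\lambda})_z\equiv 0$ on $\Lambda_j$ — such a pair would give, since $\tau$ charges the $j$-th locus, $\int\log\|Dg_z\|_s\,d\tau(g)=-\infty$ and hence $\chi(\tau,L)=-\infty$ because $\omega_L(\{z\})>0$; thus all the relevant critical slices belong to $C$, and $\mbox{supp}\,\tau\cap C=\emptyset$ forces $Dg_z\ne 0$ for all $g\in\Gamma_\tau$ and $z\in L$ — hypothesis (iii). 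Theorem~\ref{t:jkfcln0} then yields exactly the remaining assertions of (b): $\sharp(\CCI\setminus\Omega)\le\aleph_0$, $\tilde{\tau}(\{\gamma\mid z\in J_\gamma\})=0$ for each $z\in\Omega$, $\mbox{Leb}_2(J_\gamma)=0$ for $\tilde{\tau}$-a.e.\ $\gamma$, $J_{pt}^0(\tau)\subset\CCI\setminus\Omega$, and $\sharp J_{pt}^0(\tau)\le\aleph_0$.

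The main obstacle is to reconcile, at each singular minimal set $L_i$, the three requirements placed on $\mbox{supp}\,\tau$ — positive mass on every family locus (needed for $J_{\ker}(G_\tau)\subset S$ via Lemma~\ref{l:yrtfaji}), disjointness from the critical slices $C$ (needed for hypothesis (iii) of Theorem~\ref{t:jkfcln0}), and non-neutral average derivative on $L_i$ (needed for $\chi(\tau,L_i)\ne 0$, where the non-exceptionality hypothesis enters) — all to be arranged by perturbations that remain ${\cal O}$-small. The two facts that make this possible are: a holomorphic function on the connected $\Lambda_j$ is constant as soon as it is constant on one open set, which lets the non-exceptionality input be exploited by a purely local re-weighting of $\tau$; and a forced critical point on $L_i$ automatically makes $\chi(\tau,L_i)=-\infty$, so that such an $L_i$ never enters $H_+$ and imposes no constraint in hypothesis (iii).
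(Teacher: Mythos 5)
Your overall strategy coincides with the paper's: fix the finite list $L_{1},\ldots ,L_{p}$ of minimal sets inside $\cap _{j=1}^{m}S({\cal W}_{j})$ (common to all $\tau $ by Lemma~\ref{l:lcbwj}), perturb $\tau $ so that each $\chi (\tau, L_{i})\neq 0$ while arranging non-empty interior of the support in each family locus and avoiding critical slices, and then read (a) and (b) off Lemmas~\ref{l:yrtfaji}, \ref{l:sn1sn} and Theorem~\ref{t:jkfcln0}; that last deduction is correct. The gap is in the perturbation step, in the branch where some $\lambda \mapsto D(f_{j_{0},\lambda })_{z_{0}}$ is non-constant. You claim that a re-weighting of $\tau $ within the $j_{0}$-th family changes $\int \log \| Dg_{z_{0}}\| _{s}\, d\tau (g)$ and ``hence changes $\chi (\tau ,L_{i})$ because $\omega _{L_{i}}(\{ z_{0}\} )>0$.'' But the same re-weighting simultaneously changes $\int \log \| Dg_{z}\| _{s}\, d\tau (g)$ for every other $z\in L_{i}$, and these changes can cancel in the weighted sum $\chi (\tau ,L_{i})=\sum _{z\in L_{i}}\omega _{L_{i}}(\{ z\} )\int \log \| Dg_{z}\| _{s}\, d\tau (g)$. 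What you actually need is that the single function $\lambda \mapsto \sum _{z\in L_{i}}\omega _{L_{i}}(\{ z\} )\log \| D(f_{j_{0},\lambda })_{z}\| _{s}$ be non-constant, and that does not follow from non-constancy of one summand (a product of moduli of holomorphic functions raised to positive powers can be constant while each factor varies). The tell is that this branch never invokes non-exceptionality, yet non-exceptionality is exactly what must exclude the degenerate situation in which every admissible perturbation leaves $\chi (\tau ,L_{i})=0$; your ``otherwise'' branch, which does use the hypothesis, is conditioned on all the derivative maps being constant and so does not cover the intermediate cancellation case.

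The paper closes this by a global argument you would need to adopt: its Claim 1 uses non-exceptionality to produce, for each $L_{k}$, a finitely supported $\rho _{k}$ with $Dg_{z}\neq 0$ on $L_{k}$ and $\chi (\rho _{k},L_{k})\neq 0$ (with a separate treatment of the $\chi =-\infty $ case), and its Claim 2 observes that $(p_{ij},\lambda _{ji})\mapsto \chi (\sum _{i,j}p_{ij}\delta _{f_{j,\lambda _{ji}}},L_{k})$ is real-analytic on a connected parameter space of weights and positions minus a proper analytic subset, hence, being nonzero somewhere, is not identically zero on any open subset; this single real-analyticity statement subsumes both of your branches and the missing intermediate one. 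The remaining ingredients of your proposal (finiteness and $\tau $-independence of the list $L_{i}$, full support of $\omega _{L_{i}}$, the observation that a forced critical point on $L_{i}$ yields $\chi (\tau ,L_{i})=-\infty $ so that such $L_{i}$ never enter $H_{+}$, and the verification of hypotheses (i)--(iii) of Theorem~\ref{t:jkfcln0}) match the paper and are sound.
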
 
\begin{proof}
For each $j=1,\ldots, m$, let ${\cal W}_{j}=
\{ f_{j,\lambda }\} _{\lambda \in \Lambda _{j}}$ and 
we endow $\{f_{j,\lambda }\mid 
\lambda \in \Lambda _{j}\} $ with the relative 
topology from Rat. 
 Suppose  that for each $\tau \in {\frak M}_{1,c}({\cal Y}, 
 \{ {\cal W}_{j}\}_{j=1}^{m})$ and for each $L\in 
 \Min(G_{\tau }, \CCI )$, we have $L\not\subset 
\cap _{j=1}^{m}S({\cal W}_{j})$. 
Let ${\cal A} $ be the set of elements 
$\rho \in {\frak M}_{1,c}({\cal Y}, \{ {\cal W}_{j}\} _{j=1}^{m})$ 
satisfying that int$(\supp\,\rho\cap 
\{ f_{j,\lambda }\mid \lambda \in \Lambda _{j}\})\neq 
\emptyset $ with respect to the topology in 
$\{ f_{j,\lambda }\mid \lambda \in \Lambda _{j}\}$ for 
all $j=1,\ldots, m.$ Then ${\cal A}$ 
is dense in ${\frak M}_{1,c}({\cal Y}, \{ {\cal W}_{j}\} _{j=1}^{m})$ 
and 
satisfies 
(a).  Let $\tau \in {\cal A}.$  By Lemma~\ref{l:yrtfaji},  
the fact that $G_{\tau }(J_{ker }(G_{\tau }))
\subset J_{\ker}(G_{\tau })$ and the above assumption,  
we have $J_{\ker }(G_{\tau })=\emptyset .$ 
From  \cite[Theorem 1.5]{Splms10}, it follows that Leb$_{2}(J_{\gamma })=0$ 
for $\tilde{\tau }$-a.e.$\gamma \in \mbox{Rat}^{\NN} $ and $J_{pt}^{0}(\tau )=\emptyset. $ Hence ${\cal A}$ satisfies (b). 

Thus 
 we may assume that 
there exist a $\tau \in {\frak M}_{1,c}({\cal Y}, \{ {\cal W}_{j}\})$ and 
an $L\in \Min(G_{\tau },\CCI )$ such that 
$L\subset \cap _{j=1}^{m}S({\cal W}_{j})).$ 
For such $L$, 
 Lemma~\ref{l:lcbwj} implies that  for each $\rho \in \MYW, $ we have 
$L\in \Min(G_{\rho },\CCI )$ and $L\subset \cap _{j=1}^{m}S({\cal W}_{j}).$ 
Let $$\{ L_{1},\ldots, L_{r}\} :=\{ K\subset \cap _{j=1}^{m}S({\cal W}_{j})\mid K\in 
\Min(G_{\rho },\CCI ) \mbox{ for each } \rho \in \MYW \} .$$ 
Here, note that the right hand side of the above is a finite set 
since $\cap _{j}S({\cal W}_{j})$ is a finite set 
(see Lemma~\ref{l:sn1sn}). 

Since $({\cal Y}, \{ {\cal W}_{j}\} _{j=1}^{m})$ is non-exceptional 
with respect to $\{ {\cal W}_{j}\} _{j=1}^{m}$, 
for each $k=1,\ldots, r$ there exists a $\tau _{k}\in \MYW $ such that 
$\chi (\tau _{k}, L_{k})\neq 0.$ 
Let ${\cal W}_{j}=\{ f_{j,\lambda }\mid \lambda \in \Lambda _{j}\}$ for each $j=1.\ldots, m.$ 
We consider the following two cases. \\ 
Case (I). For each  $k=1,\ldots, r$, for each $z\in L_{k}$ and for each 
$j=1,\ldots, m$, there exists a $\lambda \in \Lambda _{j}$ such that 
$D(f_{j,\lambda })_{z}\neq 0.$ \\ 
Case (II). There exist a $k\in \{ 1,\ldots, r\}$, a point $z\in L_{k}$ and an element $j\in \{ 1,\ldots, m\} $ 
such that for each $\lambda \in \Lambda _{j}$, $D(f_{j,\lambda })_{z}=0.$ \\ 
Suppose that  we have Case (I). We now prove the following claim. \\ 
Claim 1. For each $k$ 
there exists an element $\rho _{k}\in \MYW $ which is arbitrarily close to 
$\tau _{k}$ such that $\sharp \mbox{supp}\rho _{k}<\infty $, 
such that for each $g\in \mbox{supp}\rho _{k}$ and for each $z\in L_{k}$, we have 
$Dg_{z}\neq 0$, and such that  $\chi (\rho _{k}, L_{k})\neq 0.$ 

To prove this claim, for each $\tau \in \MYW $ and for each $L\in \Min(G_{\tau },\CCI )$, 
let $\mu _{\tau , L}$ be the  
canonical $\tau $-ergodic measure on $L$ (see Definition~\ref{d:celyap}).  
Let $k\in \{ 1,\ldots,r\} .$ 
We now consider the following two cases.\\ 
Case (I)(a). $\chi (\tau _{k},L_{k})\neq -\infty .$ 
Case (I)(b). $\chi (\tau _{k},L_{k})=-\infty .$\\ 
Suppose we have Case (I)(a).  
Let $B_{k}:=\{ g\in {\cal Y}\mid Dg_{z}=0 \mbox{ for some } z\in L_{k}\} .$ 
Since $\chi (\tau _{k}, L_{k})=\int _{L_{k}}\int _{{\cal Y}}\log \| Dg_{z}\| _{s}d\tau _{k}(g) 
d\mu _{\tau _{k},L_{k}}(z)$, we obtain that $\tau _{k}(B_{k})=0.$ 
Let $C_{k,n}$ be the set of elements $g\in {\cal Y}$ 
with $\kappa (g, B_{k})\geq 1/n. $ 
Then $\int _{L_{k}}\int _{C_{k,n}}\log \| Dg_{z}\| _{s}d\tau _{k}(g)d\mu _{\tau _{k},L_{k}}(z)
\rightarrow \chi (L_{k},\tau _{k})$ as $n\rightarrow \infty $ and 
$\frac{\tau _{k}|_{C_{k,n}}}{\tau _{k}(C_{k,n})}\rightarrow \tau _{k}$ as $n\rightarrow \infty $ 
in 
${\frak M}_{1.c}({\cal Y},{\cal O})$. Modifying $\frac{\tau _{k}|_{C_{k,n}}}{\tau _{k}(C_{k,n})}$, 
we obtain $\rho _{k}$ which is arbitrarily close to $\tau _{k}$ such that 
$\sharp \mbox{supp}\rho _{k}<\infty $, 
such that for each $g\in \mbox{supp}\rho _{k}$ and for each $z\in L_{k}$, we have 
$Dg_{z}\neq 0$, and such that  $\chi (\rho _{k}, L_{k})\neq 0.$ 

 We now suppose that we have Case (I)(b). 
 Let $\alpha _{n}(g,z)=\max \{ \log \| Dg_{z}\| _{s} , -n\} $ for each $n\in \NN .$ 
 Since $\chi (\tau _{k}, L_{k})=-\infty $, we have 
 $\int _{L_{k}}\int _{{\cal Y}}\alpha _{n}(g,z)d\tau _{k}(g)d\mu _{\tau _{k},L_{k}}(z)
 \rightarrow -\infty $ as 
 $n\rightarrow \infty $. 
 Hence for each $M<0$ there exists an $n\in \NN $ such that 
  $ \int _{L_{k}}\int _{{\cal Y}}\alpha _{n}(g,z)d\tau _{k}(g)d\mu _{\tau _{k},L_{k}}(z)<M$. 
  Therefore there exists a $\rho _{k}\in \MYW$ 
  which is arbitrarily close to $\tau _{k}$ such that 
  $\sharp \mbox{supp}\rho _{k}<\infty $, 
such that for each $g\in \mbox{supp}\rho _{k}$ and for each $z\in L_{k}$, we have 
$Dg_{z}\neq 0$, and such that 
$\int _{L_{k}}\int _{{\cal Y}}\alpha _{n}(g,z)d\rho _{k}(g)d\mu _{\rho _{k},L_{k}}(z)<\frac{M}{2}.$ 
 Hence $\chi (\rho _{k},L_{k})\leq \int _{L_{k}}\int _{{\cal Y}}\alpha _{n}(g,z)
 d\rho _{k}(g)d\mu _{\rho _{k},L_{k}}(z)<\frac{M}{2}.$ 
 Thus we have proved Claim 1. 
 
 For each $n\in \NN $, let 
 $$D_{k,n}:=\{ ((\lambda _{ji})_{i=1,\ldots, n})_{j=1,\ldots, m}\in \prod _{j=1}^{m}\Lambda _{j}^{n} \mid 
 D(f_{j,\lambda _{ji}})_{z}=0 \mbox{ for some }z\in L_{k}\} .$$ 
 Moreover, let 
 $$E_{k,n}:= \{ (p_{ij})_{i=1,\ldots ,n, j=1,\ldots ,m}\in (0,1)^{nm}\mid \sum _{i,j}p_{i,j}=1\} 
 \times ((\prod _{j=1}^{m}\Lambda _{j}^{n})\setminus D_{k,n})$$
 and let 
 $\alpha _{k,n}: E_{k,n}\rightarrow \RR $ be the function defined by 
 $\alpha _{k,n}((p_{ij}),(\lambda _{ji}))=\chi (\sum _{j=1}^{m}\sum _{i=1}^{n}p_{ij}\delta _{f_{j,\lambda _{ji}}}, L_{k}).$
 Then $(\prod _{j=1}^{m}\Lambda _{j}^{n})\setminus D_{k,n}$ is connected and 
 $\alpha _{k,n}:E_{k,n}\rightarrow \RR $ is real-analytic. 
 Hence claim 1 implies the following claim. \\ 
 Claim 2.  
There exists an $n_{0}\in \NN $ such that for each $n\in \NN $ with $n\geq n_{0}$, 
  the function $\alpha _{k,n}:E_{k,n}\rightarrow \RR $ is not identically equal to zero 
  in any open subset of $E_{k,n}.$  
  
  We now let $\zeta \in \MYW $ be an arbitrary element. 
  Then there exists an element $\zeta _{0}\in \MYW $ 
  arbitrarily close to $\zeta $ 
  such that $\sharp \mbox{supp}\, \zeta _{0} <\infty $ and 
  such that for each $g\in \mbox{supp}\,\zeta _{0}$, for each 
  $k$, and for each $z\in L_{k}$, we have $Dg_{z}\neq 0.$ 
  We may assume that 
  for some $n\geq n_{0}$ there exists an element 
  $(((p_{ij})_{i=1,\ldots,n})_{j=1,\ldots, m}, ((\lambda _{ji})_{i=1,\ldots, n})_{j=1,\ldots, m})
  \in \cap _{k=1}^{r}E_{k,n}$ such that 
  $\zeta _{0}=\sum _{j=1}^{m}\sum _{i=1}^{n}p_{ij}\delta _{f_{j,\lambda _{ji}}}.$ 
  By claim 2, there exists a $\zeta _{1}$ close to $\zeta _{0}$ such that 
  for each $g\in \mbox{supp}\, \zeta _{1}$, for each $k$, and for each $z\in L_{k}$, 
  we have $Dg_{z}\neq 0$, and such that for each $k$, 
  $\chi (\zeta _{1},L_{k})\neq 0.$  
  By enlarging the support of $\zeta _{1}$, we obtain 
  an element $\zeta _{2}\in \MYW $ which is close to $\zeta _{1}$ 
  such that for each $g\in \mbox{supp}\, \zeta _{2}$, for each $k$, and for each $z\in L_{k}$, 
  we have $Dg_{z}\neq 0$, such that for each $k$, 
  $\chi (\zeta _{2},L_{k})\neq 0$, and 
  such that 
  for each $j=1,\ldots, m$, int$(\mbox{supp}\,\zeta _{2}\cap 
  \{ f_{j,\lambda }\mid \lambda \in {\cal W}_{j}\})\neq \emptyset $ 
  in the space $\{ f_{j,\lambda }\mid \lambda \in {\cal W}_{j}\} $ which is endowed with the relative topology from Rat. 
  By Lemma~\ref{l:yrtfaji}, we obtain that $J_{\ker }(G_{\zeta _{2}})\subset \cap 
  _{j=1}^{m}S({\cal W}_{j}).$ In particular, $\sharp J_{\ker }(G_{\zeta _{2}})<\infty $ 
  by Lemma~\ref{l:sn1sn}. 
By Theorem~\ref{t:jkfcln0}, 
denoting by $H_{+}$ the set of elements 
$L\in \Min(G_{\zeta _{2}}, J_{\ker }(G_{\zeta _{2} }))$ 
with $\chi (\zeta _{2}, L)>0$ and 
 denoting by $\Omega $ the set of elements 
 $y\in \CCI $ for which  
$\tilde{\zeta }_{2} (\{ \gamma \in X_{\zeta _{2}}\mid \exists n\in \NN \mbox{ s.t. }
\gamma _{n,1}(y)\in \cup _{L\in H_{+}}L\} )=0$,   
we have that 
$\sharp (\CCI \setminus \Omega )\leq \aleph _{0}$ 
and for each $z\in \Omega $, 
$\tilde{\zeta }_{2}(\{ \gamma \in X_{\zeta _{2} }\mid z\in J_{\gamma }\} )=0.$
Moreover, 
for $\tilde{\zeta }_{2}$-a.e.$\gamma \in (\Rat)^{\NN}$, 
$\mbox{Leb}_{2}(J_{\gamma })=0.$ Furthermore, 
$J_{pt}^{0}(\tau )\subset \CCI \setminus \Omega $ 
and $\sharp J_{pt}^{0}(\tau )\leq \aleph _{0}.$   

We now suppose that we have Case (II). 
Let 
$$I:=\{ k\in \{ 1,\ldots, r\} \mid \exists z\in L_{k} \ \exists j\in \{ 1,\ldots, m\} 
\mbox{ such that for each }\lambda \in \Lambda _{j}, D(f_{j,\lambda })_{z}=0\} .$$
We modify the argument in Case (I). 
Namely, we can choose $\zeta _{1}$ and $\zeta _{2}$ in 
the argument of Case (I) so that $\chi (\zeta _{1}, L_{k})=\chi (\zeta _{2}, L_{k})=-\infty $ 
for any $k\in I.$ For any $k\not\in I$, we use the same argument in that of Case (I). 
Thus we have proved our proposition. 
  \end{proof} 
\begin{lem}
\label{l:wiwjnod}
Under the assumptions of Proposition~\ref{p:neadabc}, 
there exists an open dense subset ${\cal A} $ of 
the topological space $({\frak M}_{1,c}({\cal Y}, \{ {\cal W}_{j}\} _{j=1}^{m}), {\cal O})$ 
such that  all of the following hold.
\begin{itemize}
\item[{\em (i)}] 
For each $\tau \in {\cal A}$ and for each $L\in \emMin(G_{\tau },\CCI )$ with 
$L\subset \cap _{j=1}^{m}S({\cal W}_{j})$, we have $\chi (\tau, L)\neq 0.$ 
\item[{\em (ii)}] 
For each $\tau \in {\cal A}$ and for each $L\in \emMin(G_{\tau },\CCI )$ with 
$L\subset \cap _{j=1}^{m}S({\cal W}_{j})$, if $\chi (\tau, L)>0$, then 
for each $z\in L$ and for each $g\in G_{\tau }$, we have $Dg_{z}\neq 0.$ 

\end{itemize}
  \end{lem}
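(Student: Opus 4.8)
The plan is to refine the dense set produced by Proposition~\ref{p:neadabc} into an open one and to read off statement (ii) from the multiplicativity of the derivative cocycle along the minimal sets contained in the singular set. Write $\{L_{1},\dots,L_{r}\}$ for the finite list of minimal sets appearing in the proof of Proposition~\ref{p:neadabc}; by Lemma~\ref{l:lcbwj} this list is independent of $\tau$ and equals $\{L\in\Min(G_{\tau},\CCI)\mid L\subset\cap_{j=1}^{m}S({\cal W}_{j})\}$ for every $\tau\in\MYW$, so (i) and (ii) only constrain this fixed list. For each $k$ put $B_{k}:=\{g\in\Rat\mid Dg_{z}=0 \text{ for some }z\in L_{k}\}$, which is closed because $g\mapsto Dg_{z}$ is continuous and $L_{k}$ is finite, and put $I:=\{k\mid \exists z\in L_{k},\ \exists j,\ D(f_{j,\lambda})_{z}=0\ \text{for all }\lambda\in\Lambda_{j}\}$, which depends only on the families and the $L_{k}$. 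I then set
$$\mathcal{B}:=\bigl\{\tau\in\MYW\mid \forall k:\ \chi(\tau,L_{k})\neq 0\ \text{and}\ (\chi(\tau,L_{k})>0\Rightarrow \G_{\tau}\cap B_{k}=\emptyset)\bigr\}$$
and define $\mathcal{A}:=\mathrm{int}_{\cal O}(\mathcal{B})$, which is open by construction. Every $\tau\in\mathcal{A}\subset\mathcal{B}$ satisfies (i), and it satisfies (ii) because if $\G_{\tau}\cap B_{k}=\emptyset$ then, writing $g=h_{n}\circ\cdots\circ h_{1}$ with $h_{i}\in\G_{\tau}$ and taking $z\in L_{k}$, all intermediate points $h_{i-1}\circ\cdots\circ h_{1}(z)$ lie in $L_{k}$ (since $G_{\tau}(L_{k})\subset L_{k}$), so $Dg_{z}$ is a composition of nonzero one-dimensional linear maps, hence $\neq 0$. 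It remains to prove that $\mathcal{A}$ is dense.

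Given $\tau_{0}\in\MYW$, I would inspect the construction in the proof of Proposition~\ref{p:neadabc} to obtain an element $\tau_{1}=\zeta_{2}$, arbitrarily ${\cal O}$-close to $\tau_{0}$, with finite support, with $\G_{\tau_{1}}\cap B_{k}=\emptyset$ and $\chi(\tau_{1},L_{k})$ finite and nonzero for each $k\notin I$, and with $\chi(\tau_{1},L_{k})=-\infty$ for each $k\in I$. (The last property is automatic for a finitely supported $\tau_{1}\in\MYW$: $\MYW$ forces a positive atomic mass on some map $f_{j,\lambda}$ of each family $j$, and when $k\in I$ that map has a critical point in $L_{k}$, while the canonical ergodic measure $\mu_{\tau_{1},L_{k}}$ has full support $L_{k}$, so the contribution of that point to $\chi(\tau_{1},L_{k})$ is $-\infty$.) It then suffices to show $\tau_{1}\in\mathrm{int}(\mathcal{B})$, i.e.\ to exhibit an ${\cal O}$-neighbourhood of $\tau_{1}$ contained in $\mathcal{B}$.

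For this I will use three stability facts. (a) The condition $\G_{\tau}\cap B_{k}=\emptyset$ is ${\cal O}$-open: a Hausdorff-small perturbation of a compact set disjoint from the fixed closed set $B_{k}$ stays at a positive distance from $B_{k}$. (b) The map $\tau\mapsto\mu_{\tau,L_{k}}$ is continuous for the topology ${\cal O}$: since $L_{k}$ is minimal the induced finite Markov chain on $L_{k}$ is irreducible, so $\mu_{\tau,L_{k}}$ is its unique stationary measure; as $(\tau,\mu)\mapsto M_{\tau}^{\ast}\mu$ is jointly continuous and ${\frak M}_{1}(L_{k})$ is compact, any subsequential limit of $\mu_{\tau_{n},L_{k}}$ is an $M_{\tau}^{\ast}$-invariant probability measure on $L_{k}$, hence equals $\mu_{\tau,L_{k}}$. (c) Writing $\chi(\tau,L_{k})=\sum_{z\in L_{k}}\mu_{\tau,L_{k}}(\{z\})\,\psi_{z}(\tau)$ with $\psi_{z}(\tau)=\int\log\|Dg_{z}\|_{s}\,d\tau(g)$: each $\psi_{z}$ is upper semicontinuous and bounded above on a neighbourhood of any point (the integrand $g\mapsto\log\|Dg_{z}\|_{s}$ is continuous into $[-\infty,\infty)$ and bounded above on compacta, so one applies the portmanteau theorem to the nonnegative lower semicontinuous function $C-\log\|Dg_{z}\|_{s}$); consequently, if $\psi_{z^{\ast}}(\tau_{1})=-\infty$ for some $z^{\ast}$, then the $z^{\ast}$-term of $\chi(\tau_{n},L_{k})$ tends to $-\infty$ while the remaining finitely many terms stay bounded above, so $\chi(\tau_{n},L_{k})\to-\infty$; and on the open set $\{\G_{\tau}\cap B_{k}=\emptyset\}$ the integrand is uniformly bounded, so each $\psi_{z}$ is continuous there and, with (b), $\tau\mapsto\chi(\tau,L_{k})$ is continuous there. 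Combining these near $\tau_{1}$: for $k\notin I$ we keep $\G_{\tau}\cap B_{k}=\emptyset$ and $\mathrm{sign}\,\chi(\tau,L_{k})=\mathrm{sign}\,\chi(\tau_{1},L_{k})$, so $\chi(\tau,L_{k})\neq 0$; for $k\in I$ we keep $\chi(\tau,L_{k})<0$, using (c) and the fact that $\chi(\tau_{1},L_{k})=-\infty$ forces $\psi_{z^{\ast}}(\tau_{1})=-\infty$ for some $z^{\ast}\in L_{k}$. On the resulting neighbourhood every $\tau$ has $\chi(\tau,L_{k})\neq 0$ for all $k$, and $\chi(\tau,L_{k})>0$ forces $k\notin I$ and $\G_{\tau}\cap B_{k}=\emptyset$; thus the neighbourhood lies in $\mathcal{B}$ and $\tau_{1}\in\mathcal{A}$. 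Hence $\mathcal{A}$ is dense.

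The step I expect to be the main obstacle is controlling the sign of $\chi(\tau,L_{k})$ under ${\cal O}$-perturbation at points where $\chi(\tau,L_{k})=-\infty$ (some generator has a critical point inside $L_{k}$): there $\chi(\cdot,L_{k})$ is only upper semicontinuous, so one must exclude jumps from $-\infty$ to a positive value, which is precisely what fact (c) and the structural observation about indices $k\in I$ (that $\MYW$ always forces a critical map into the support) are designed to handle. A secondary technical point is the ${\cal O}$-continuity of $\tau\mapsto\mu_{\tau,L_{k}}$, which rests on the irreducibility of the Markov chain on the minimal set $L_{k}$ together with the joint continuity of the averaged map $(\tau,\mu)\mapsto M_{\tau}^{\ast}\mu$.
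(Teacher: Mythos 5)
Your proposal is correct and follows essentially the same route as the paper: the paper likewise classifies the finitely many singular minimal sets $L_{k}$ by whether some whole family ${\cal W}_{j}$ is critical on $L_{k}$ (your index set $I$ is its ``type (I)''), perturbs via the construction of Proposition~\ref{p:neadabc} to a finitely supported measure with $\chi=-\infty$ on the type-(I) sets and finite nonzero exponents and no critical generators on the others, and then concludes by stability under small ${\cal O}$-perturbations. Your write-up merely makes explicit the semicontinuity/continuity facts (openness of $\G_{\tau}\cap B_{k}=\emptyset$, continuity of $\omega_{L_{k}}$, upper semicontinuity of $\chi(\cdot,L_{k})$) that the paper leaves implicit in the step ``for any $\zeta_{2}$ close enough to $\zeta_{1}$ we have (a)$'$--(c)$'$''.
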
 
 \begin{proof}
Let ${\cal W}_{j}=\{ f_{j,\lambda }\} _{\lambda \in \Lambda _{j}}$ for all $j.$  
We use the arguments in the proof of Proposition~\ref{p:neadabc}. 
We may assume that 
there exists a $\tau \in {\frak M}_{1,c}({\cal Y}, \{ {\cal W}_{j}\} _{j=1}^{m})$ and 
an $L\in \Min(G_{\tau },\CCI )$ such that $L\subset \cap _{j=1}^{m}S({\cal W}_{j}).$ 
Let $L_{1},\ldots, L_{r}$ be as in the proof of Proposition~\ref{p:neadabc}. 
Let $\zeta \in {\frak M}_{1,c}({\cal Y}, \{ {\cal W}_{j}\} _{j=1}^{m})$. 
Let $\zeta _{0}\in  {\frak M}_{1,c}({\cal Y}, \{ {\cal W}_{j}\} _{j=1}^{m})$ 
with $\sharp \supp\,\zeta _{0}<\infty $ which is arbirarily close to $\zeta $ with respect to ${\cal O}.$  
We classify the elements $k$ of $ \{1,\ldots, r\} $ into the following two types (I) and (II). 

Type (I). There exist an element $i=1,\ldots, m$ and an element $z_{0}\in L_{k}$ 
such that 
$D(f_{i,\lambda })_{z_{0}}=0$ for all $\lambda \in \Lambda _{i}.$

 Type (II). Not type (I). 
  
  Note that if $k$ is of type (I), then $\chi (\zeta _{0}, L_{k})=-\infty .$ 
  Note also that if $k$ is of type (II), then perturbing $\zeta _{0}$ if necessary, 
  we may assume that for each $g\in \supp\,\zeta _{0}$ and for each 
  $z\in L_{k}$, we have $Dg_{z}\neq 0.$ 
  Therefore, by using the arguments in the proof of Proposition~\ref{p:neadabc}, 
  we can take $\zeta _{1}\in {\frak M}_{1,c}({\cal Y}, 
  \{ {\cal W}_{j}\} _{j=1}^{m})$ with $\sharp \supp\,\zeta _{1}<\infty $ 
  which is arbitrarily close to $\zeta _{0}$ such that the following hold. 
\begin{itemize}
\item[(a)] $\chi (\zeta _{1},L_{k})=-\infty $ for any $k$ of type (I). 
\item[(b)] For any $k$ of type (II),  for any $z\in L_{k}$ and 
  for any $g\in \supp\,\zeta _{1}$, we have $Dg_{z}\neq 0$. 
  \item[(c)] For any $k$ of type (II) and  for any $z\in L_{k}$, we have 
  $\chi (\zeta _{1},L_{k})\neq 0.$
  \end{itemize}  
  Hence for any $\zeta _{2}\in   {\frak M}_{1,c}({\cal Y}, \{ {\cal W}_{j}\} _{j=1}^{m}) $ 
  which is close enough to $\zeta _{1}$, we have the following. 
\begin{itemize}
\item[(a)']    $\chi (\zeta _{2}, L_{k})<0$ for any $k$ of type (I).     
 
 \item[(b)'] For any $k$ of type (II),  for any $z\in L_{k}$ and 
  for any $g\in \supp\,\zeta _{2}$, we have $Dg_{z}\neq 0$. 
 \item[(c)'] For any $k$ of type (II) and  for any $z\in L_{k}$, we have 
  $\chi (\zeta _{2},L_{k})\neq 0.$
 \end{itemize} 
 Thus we have proved our lemma. 
    \end{proof}
 \begin{df}
 For a topological space $X$, we denote by 
 Con$(X)$ the set of connected components of $X$.
 \end{df}
 \begin{df}
 \label{d:unitary}
 Let $\tau \in {\frak M}_{1}(\Rat).$ For an element $L\in \Min(G_{\tau },\CCI )$, 
 we denote by $U_{\tau ,L}$ the space of all finite linear combinations of 
 unitary eigenfunctions of $M_{\tau }:C(L)\rightarrow C(L)$, where we say that 
 an element $\varphi \in C(L)\setminus \{ 0\} $ is a unitary eigenfunction of $M_{\tau }:C(L)\rightarrow 
 C(L)$ if there exists an element $\alpha \in \CC $ with $|\alpha |=1$ such that 
 $M_{\tau }(\varphi )=\alpha \varphi $ in $L.$  Also, 
we say that an element $\alpha \in \CC $ with $|\alpha |=1$ is 
a unitary eigenvalue of $M_{\tau }:C(L)\rightarrow C(L)$ if 
there exists an element $\varphi \in C(L)\setminus \{ 0\} $ such that 
$M_{\tau }(\varphi )=\alpha \varphi .$ Moreover, 
we denote by  $U_{\tau, L,\ast }$ the set of unitary eigenvalues of 
$M_{\tau }:C(L)\rightarrow C(L).$ 
 \end{df}
 \begin{df}
 \label{d:limitfunction}
 Let $U$ be an open subset of $\CCI $ and let $\{ \varphi _{n}:U\rightarrow \CCI \} _{n=1}^{\infty }$ 
 be a sequence of holomorphic maps from $U$ to $\CCI .$ We say that 
 a map $\psi :U\rightarrow \CCI $ is a 
{\bf limit function of $\{ \varphi _{n}\} _{n=1}^{\infty }$} 
 if there exists a subsequence $\{ \varphi _{n_{j}}\} _{j=1}^{\infty }$ of 
 $\{ \varphi _{n}\}_{n=1}^{\infty }$ such that $\varphi _{n_{j}}\rightarrow \psi $ 
 as $j\rightarrow \infty $ locally uniformly on $U.$ 
 \end{df}
 
 The following lemma is very important to analyze the random dynamical system generated by 
 $\tau \in {\frak M}_{1,c}(\Rat)$ with $\sharp J_{\ker }(G_{\tau })<\infty .$ 
 The proof is based on careful observations of limit functions on Fatou components of  
 $G_{\tau }$ by using the hyperbolic metrics on the Fatou components of $G_{\tau }.$

\begin{lem}
\label{l:pwjkf}
Let $\tau \in {\frak M}_{1,c}(\emRat )$ and suppose 
$\sharp J(G_{\tau })\geq 3.$ 
Let $L\in \emMin(G,\CCI )$ with $L\cap F(G_{\tau })\neq \emptyset .$ 
Let $\Omega _{L}:=\cup _{U\in \mbox{{\em Con}}(F(G_{\tau })),U\cap L\neq \emptyset }U.$ 
Suppose that $\sharp ((\partial \Omega _{L})\cap J_{\ker }(G_{\tau }))<\infty .$ 
Then we have the following {\em (I)(II)(III)}. 
\begin{itemize}
\item[{\em (I)}] 
There exists a Borel subset ${\cal A}$ of $X_{\tau }$ with 
$\tilde{\tau }({\cal A})=1$ such that 
for each $\gamma =(\gamma _{1},\gamma _{2},\ldots )\in {\cal A}$ 
and 
for each $z\in \Omega _{L}$, 
there exists a $\delta =\delta (z,\gamma )>0$ satisfying that 
$d(\gamma _{n,1}(z),L)\rightarrow 0$ and 
diam$(\gamma _{n,1}(B(z,\delta )))$ $\rightarrow 0$ as 
$n\rightarrow \infty .$

\item[{\em (II)}] 
We have 
$C(L)=U_{\tau,L}\oplus \{ \varphi \in C(L)\mid M_{\tau }^{n}(\varphi )\rightarrow 0 \mbox{ as }
n\rightarrow \infty \} $ in the Banach space $C(L)$ endowed with 
the supremum norm and $\dim _{\CC }U_{\tau ,L}<\infty .$ 
Moreover, setting $r_{L}:=\dim _{\CC }U_{\tau, L}$, 
we have $\sharp \emMin(G_{\tau }^{r_{L}},L)=r_{L}$. 
Also, there exist $ L_{1},\ldots, L_{r_{L}}\in \emMin (G_{\tau }^{r_{L}},L)$
such that $\{ L_{j}\mid j=1,\ldots, r_{L}\}=\emMin(G_{\tau }^{r_{L}},L)$, 
$L=\cup _{j=1}^{r_{L}}L_{j}$ and 
$h(L_{j})=L_{j+1}$ for each $h\in \mbox{supp}\,\tau$, where 
$L_{r_{L}+1}:=L_{1}.$ Moreover, 
for each $j=1,\ldots, r_{L}$, 
there exists a unique element 
$\omega _{L,j}\in {\frak M}_{1}(L_{j})$ such that  $(M_{\tau }^{r_{L}})^{\ast }(\omega _{L,j})=\omega _{L,j}.$ 
Also, for each $j=1,\ldots, r_{L}$, we have 
$M_{\tau }^{nr_{L}}(\varphi )\rightarrow (\int \varphi \ d\omega _{L,j})\cdot 1_{L_{j}}$ in 
the Banach space $C(L_{j})$ endowed with the supremum norm as $n\rightarrow \infty $
for each $\varphi \in C(L_{j})$, supp$\,\omega _{L,j}=L_{j}$ and 
$M_{\tau }^{\ast }(\omega _{L,j})=\omega _{L,j+1}$ in ${\frak M} _{1}(L)$ where 
$\omega _{L,r_{L}+1}=\omega _{L,1}.$ 
Also, we have $U_{\tau ,L,\ast }=
\{ \alpha \in \CC \mid \alpha ^{r_{L}}=1\}$ and 
for each $\alpha \in U_{\tau ,L,\ast }$, we have 
$\dim _{\CC }\{ \varphi \in C(L)\mid M_{\tau }\varphi =\alpha \varphi \} =1.$ 
\item[{\em (III)}]
The function $T_{L,\tau }:\CCI \rightarrow [0,1]$ of probability of tending to $L$ is 
locally constant on $F(G_{\tau }).$  
Here, we set 
$T_{L, \tau }(z)=
\tilde{\tau }(\{ \gamma =(\gamma _{1}, \gamma _{2},\ldots )
\in X_{\tau}\mid d(\gamma _{n,1}(z), L)\rightarrow 0 \ 
\mbox{ as } n\rightarrow \infty \} )$ for each $z\in \hat{\Bbb{C}}.$ 
\end{itemize}
\end{lem}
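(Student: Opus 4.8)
The plan is to prove \emph{(I)} first, by a hyperbolic--metric and normal--families analysis of the restrictions $\gamma_{n,1}|_{\Omega_L}$; then to deduce \emph{(II)} from \emph{(I)} via a Jacobs--de~Leeuw--Glicksberg type spectral decomposition of $M_\tau$ acting on $C(L)$; and finally to obtain \emph{(III)} from \emph{(I)} together with the observation that $T_{L,\tau}$ is $M_\tau$--harmonic. For \emph{(I)}: since $\sharp J(G_\tau)\ge 3$, every connected component $U$ of $\Omega_L$ has $\CCI\setminus U\supset J(G_\tau)$ and hence is a hyperbolic Riemann surface, so we equip $\Omega_L$ with the hyperbolic metric $d_{\Omega_L}$ computed componentwise. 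For $g\in G_\tau$ one has $g(F(G_\tau))\subset F(G_\tau)$ and, by minimality of $L$, $g(U\cap L)\subset g(L)\subset L$, so $g(\Omega_L)\subset\Omega_L$; by Schwarz--Pick each $g\in G_\tau$, hence each $\gamma_{n,1}$, is $1$--Lipschitz for $d_{\Omega_L}$. Fixing for each component $U$ a point $w_U\in U\cap L$, we get $d_{\Omega_L}(\gamma_{n,1}(z),\gamma_{n,1}(w_U))\le d_{\Omega_L}(z,w_U)<\infty$ for all $z\in U$ and all $n$, while $\gamma_{n,1}(w_U)\in L$ for all $n$. The crux is the claim that there is a Borel set $\mathcal A\subset X_\tau$ with $\tilde{\tau}(\mathcal A)=1$ such that for every $\gamma\in\mathcal A$ and every component $U$, every locally uniform limit function of $\{\gamma_{n,1}|_U\}$ has image contained in $L$. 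Granting this, \emph{(I)} follows: $L\cap F(G_\tau)\ne\emptyset$ forces $L\ne\CCI$, so $L$ has empty interior and a nonconstant holomorphic map would be open, whence every such limit function is a constant map with value in $L$; then for $\gamma\in\mathcal A$, $z\in\Omega_L$ and $\delta>0$ with $B(z,\delta)\subset\Omega_L$, any failure of $\mathrm{diam}\,\gamma_{n,1}(B(z,\delta))\to0$ or of $d(\gamma_{n,1}(z),L)\to0$ would, by normality of $\{\gamma_{n,1}\}$ on $F(G_\tau)$, yield a subsequential limit function contradicting the claim.

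To prove the claim one passes along a subsequence to $\gamma_{n_k,1}|_U\to\psi$ and $\gamma_{n_k,1}(w_U)\to q\in L$. If $q\in J(G_\tau)$, the hyperbolic densities of the (connected) images $\gamma_{n_k,1}(U)$ blow up near $q$, so the uniform bound $d_{\Omega_L}(\gamma_{n_k,1}(z),\gamma_{n_k,1}(w_U))\le d_{\Omega_L}(z,w_U)$ forces $\psi(z)=q\in L$ for all $z$. The remaining case, $q\in L\cap F(G_\tau)$ together with $\psi$ taking some value outside $L$, is exactly ``rotation--domain'' behaviour around $L$; this is ruled out, off a $\tilde{\tau}$--null set, by combining the hypothesis $\sharp\big((\partial\Omega_L)\cap J_{\ker}(G_\tau)\big)<\infty$ with minimality of $L$: if it occurred on a set of positive measure one would produce an infinite forward--invariant subset of $(\partial\Omega_L)\cap J_{\ker}(G_\tau)$. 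Quantitatively, the finiteness hypothesis forces some $g\in G_\tau$ to contract a neighbourhood of a point of $L$ strictly toward $L$, and a Borel--Cantelli/ergodicity argument then shows that $\tilde{\tau}$--a.e.\ $\gamma$ applies such contractions along a density-one set of times, which via the Schwarz--Pick bound drives $\psi$ to a constant in $L$. \textbf{This claim is the main obstacle of the proof}: it is where both the finiteness hypothesis and the i.i.d.\ structure are genuinely used, and it demands careful bookkeeping of hyperbolic metrics across the possibly infinitely many (and possibly nested) Fatou components comprising $\Omega_L$.

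\emph{Part (II).} By \emph{(I)} the random orbits near $L$ do not spread out, which translates into relative compactness of $\{M_\tau^n\varphi\}_{n}$ in $C(L)$ for each $\varphi\in C(L)$; thus $\{M_\tau^n\}$ acts almost periodically on $C(L)$, and the Jacobs--de~Leeuw--Glicksberg decomposition gives $C(L)=U_{\tau,L}\oplus\{\varphi\in C(L)\mid M_\tau^n\varphi\to0\}$ with $U_{\tau,L}$ the closed span of the unitary eigenfunctions. Every unitary eigenfunction has constant modulus on $L$ (because $M_\tau|\varphi|\ge|\varphi|$ and an invariant measure supported on a minimal set forces equality), so $U_{\tau,L,\ast}$ is a subgroup of the unit circle; the hypothesis on $(\partial\Omega_L)\cap J_{\ker}(G_\tau)$ excludes an infinite such subgroup (which would again build a rotation--domain structure), giving $U_{\tau,L,\ast}=\{\alpha\in\CC\mid\alpha^{r_L}=1\}$ with $r_L:=\dim_\CC U_{\tau,L}<\infty$. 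The remaining assertions — $\sharp\Min(G_\tau^{r_L},L)=r_L$, the cyclic decomposition $L=\cup_{j=1}^{r_L}L_j$ with $h(L_j)=L_{j+1}$ for all $h\in\G_\tau$, the unique $\omega_{L,j}$ with $(M_\tau^{r_L})^\ast\omega_{L,j}=\omega_{L,j}$, the convergence $M_\tau^{nr_L}\varphi\to(\int\varphi\,d\omega_{L,j})\,1_{L_j}$ in $C(L_j)$, $\mathrm{supp}\,\omega_{L,j}=L_j$, $M_\tau^\ast\omega_{L,j}=\omega_{L,j+1}$, and the one--dimensionality of each eigenspace — then follow by applying the finite Markov chain analysis of Lemma~\ref{l:fininvset} (i.e.\ \cite[Theorem 6.6.4 and Lemma 6.7.1]{Du}) to the induced finite system of the $r_L$ pieces.

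\emph{Part (III).} Splitting $\gamma=(\gamma_1,\sigma(\gamma))$ shows $M_\tau T_{L,\tau}=T_{L,\tau}$, so $T_{L,\tau}$ is a bounded $M_\tau$--harmonic function, and by \emph{(I)} it equals $1$ on all of $\Omega_L$. Let $W\in\mbox{Con}(F(G_\tau))$ and $z,z'\in W$; put $D:=d_W(z,z')<\infty$. For every $n$ the map $\gamma_{n,1}$ sends $W$ into $F(G_\tau)$ and is $1$--Lipschitz for the hyperbolic metric $d_F$ of $F(G_\tau)$, so $\gamma_{n,1}(z')$ lies in the $d_F$--ball of radius $D$ about $\gamma_{n,1}(z)$, which is contained in the connected component of $F(G_\tau)$ through $\gamma_{n,1}(z)$. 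Suppose $\gamma$ satisfies $d(\gamma_{n,1}(z),L)\to0$ and take any subsequence with $\gamma_{n_k,1}(z)\to q\in L$: if $q\in L\cap J(G_\tau)$, the density of $d_F$ blows up near $q$, forcing $d(\gamma_{n_k,1}(z'),\gamma_{n_k,1}(z))\to0$, hence $\gamma_{n_k,1}(z')\to q$; if $q\in L\cap F(G_\tau)$, then $\gamma_{n_k,1}(z)$ eventually lies in the open neighbourhood $\Omega_L$ of $L\cap F(G_\tau)$, hence so does $\gamma_{n_k,1}(z')$ (its $d_F$--ball about $\gamma_{n_k,1}(z)$ then lies in $\Omega_L$), and by \emph{(I)} together with the product structure of $\tilde{\tau}$ one gets $d(\gamma_{m,1}(z'),L)\to0$ for a.e.\ such $\gamma$. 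In every case $\{d(\gamma_{n,1}(z),L)\to0\}$ and $\{d(\gamma_{n,1}(z'),L)\to0\}$ agree up to a $\tilde{\tau}$--null set, so $T_{L,\tau}(z)=T_{L,\tau}(z')$; thus $T_{L,\tau}$ is constant on $W$, i.e.\ locally constant on $F(G_\tau)$.
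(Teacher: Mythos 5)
Your parts (II) and (III) follow essentially the paper's own route — equicontinuity of $\{M_{\tau}^{n}\varphi |_{L}\}_{n}$ plus the eigenfunction decomposition for (II), and for (III) the splitting of the event $\{d(\gamma_{n,1}(z),L)\to 0\}$ into orbits that enter $\Omega_{L}$ (handled by (I) and shift-invariance of $\tilde{\tau}$) and orbits converging to $L\cap J(G_{\tau})$ (handled by the hyperbolic metric) — and your reduction of (I) to the statement that for $\tilde{\tau}$-a.e.\ $\gamma$ every limit function of $\{\gamma_{n,1}|_{U}\}$ is constant with value in $L$ is the right reduction. The gap is that you do not prove this statement in the only case where it has content. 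Your Case 1 (a subsequential limit $q$ of $\gamma_{n_k,1}(w_U)$ lying in $J(G_{\tau})$) is fine and purely deterministic. But in Case 2 the two mechanisms you invoke do not hold up: (a) the assertion that a positive-measure failure "would produce an infinite forward-invariant subset of $(\partial\Omega_{L})\cap J_{\ker}(G_{\tau})$" is not derived from anything and I do not see how to make it work; and (b) the assertion that the finiteness hypothesis "forces some $g\in G_{\tau}$ to contract a neighbourhood of a point of $L$ strictly toward $L$" is a non sequitur — the hypothesis concerns $\partial\Omega_{L}$, not $L$, and no single element of $G_{\tau}$ need contract near $L$ (individual generators may well have Siegel-type behaviour there).

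What the finiteness hypothesis actually buys, and what your sketch is missing, is this. Since $J(G_{\tau})$ is perfect and only finitely many points of $\partial\Omega_{L}$ lie in $J_{\ker}(G_{\tau})$, the compact set $(\partial\Omega_{L})\setminus B((\partial\Omega_{L})\cap J_{\ker}(G_{\tau}),\delta )$ meets the boundary of every relevant component and can be covered by finitely many disks $V_{z_1},\ldots,V_{z_p}$, each admitting a word $g_{z_j}\in G_{\tau}$ of a fixed length $k$ with $g_{z_j}(\overline{V_{z_j}})\subset F(G_{\tau})$. Whenever $\gamma_{kt,1}(a)$ sits at definite spherical distance from $\partial\Omega_{L}$ inside a component meeting $V_{z_j}$, the event that the next $k$ coordinates of $\gamma$ lie in a fixed neighbourhood $W_j$ of that word has probability bounded below by $1-\eta>0$; on that event the composite map extends holomorphically to the strictly larger connected domain obtained by adjoining $V_{z_j}$ to the component, still landing in the same target component of $F(G_{\tau})$, so the Schwarz--Pick lemma gives a hyperbolic derivative bound $\leq \alpha <1$ at $\gamma_{kt,1}(a)$, uniformly over such good positions. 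A non-constant limit function forces the orbit of $a$ to return infinitely often to a good position, and the counting estimate $\tilde{\tau}(B_{q,j,n,l+1})\leq \eta\, \tilde{\tau}(B_{q,j,n,l})$ over the return times shows that the set of $\gamma$ which from some time on always avoid the contracting event has measure zero; every surviving $\gamma$ accumulates infinitely many factors $\alpha$, so its hyperbolic derivatives along the subsequence tend to $0$ and all its limit functions are constant. Without this (or an equivalent) quantitative argument, the claim you yourself flag as the main obstacle remains unproven, and with it part (I) and hence your derivations of (II) and (III).
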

\begin{proof}
Let $\Omega =\Omega _{L}.$ 
Let $U\in \mbox{Con}(\Omega)$.  
Let $a\in U\cap L.$ To prove item (I), it suffices to prove that 
there exists a Borel subset ${\cal A}_{a}$ of $X_{\tau }$ with $\tilde{\tau }({\cal A}_{a})=1$ 
such that for each $\gamma \in {\cal A}_{a}$, 
each limit function of the sequence $\{ \gamma _{n,1}\} _{n=1}^{\infty }$ around $a$ is constant. 
Since $L\cap F(G_{\tau })\neq \emptyset $, we have $L\cap J_{\ker }(G_{\tau })=\emptyset .$ 
Hence there exists a $\delta >0$ such that 
\begin{equation}
\label{eq:bjkgtdc}
B(J_{\ker }(G_{\tau }),\delta )\cap \overline{G_{\tau }(B(a,\delta ))}=\emptyset .
\end{equation}
Since we are assuming $\sharp (J(G_{\tau }))\geq 3$, we have that 
$J(G_{\tau })$ is perfect. Since we are assuming 
$\sharp (\partial \Omega \cap J_{\ker }(G_{\tau }))<\infty $, taking $\delta $ so small, 
we may assume that 
\begin{equation}
\label{eq:cbpocj}
J(G_{\tau }) \setminus B((\partial \Omega )\cap J_{\ker }(G_{\tau }),\delta )\neq \emptyset .
\end{equation}
Here, if $(\partial \Omega)\cap J_{\ker }(G_{\tau })=\emptyset $, then we set $B((\partial \Omega )\cap 
J_{\ker }(G_{\tau }, \delta )=\emptyset .$ 
By (\ref{eq:bjkgtdc}) and (\ref{eq:cbpocj}), taking $\delta $ so small, 
we may assume that for each $U_{0}\in \mbox{Con}(F(G_{\tau }))$ with $L\cap U_{0}\neq \emptyset $, 
we have 
\begin{equation}
(\partial U_{0})\setminus B((\partial \Omega )\cap J_{\ker }(G_{\tau }),\delta )\neq \emptyset .
\end{equation}

For each $z\in (\partial \Omega)\setminus B((\partial \Omega )\cap J_{\ker }(G_{\tau }),\delta ) $, 
there exist an element $g_{z}\in G_{\tau }$ and an open disk neighborhood $V_{z}$ of $z$ in $\CCI $ 
such that $g_{z}(\overline{V_{z}})\subset F(G_{\tau }).$  
Since $(\partial \Omega )\setminus (B((\partial \Omega)\cap J_{\ker }(G_{\tau }),\delta )$ is compact, 
there exists a finite set $\{ z_{1},\ldots, z_{p}\} $ in $(\partial \Omega  )\setminus 
B((\partial \Omega )\cap J_{\ker }(G_{\tau }),\delta )$ such that 
\begin{equation}
\label{eq:cj1pvzj}
\cup _{j=1}^{p}V_{z_{j}}\supset (\partial \Omega )\setminus B((\partial \Omega )\cap 
J_{\ker }(G_{\tau }),\delta ) \mbox{ and } g_{z_{j}}(\overline{V_{z_{j}}})\subset F(G_{\tau }) \mbox{ for each }j. 
\end{equation}
For each $j=1,\ldots, p$ there exists an element 
$\alpha ^{j}=(\alpha _{1}^{j},\ldots, \alpha _{k(j)}^{j})\in 
(\mbox{supp}\,\tau)^{k(j)}$ for some $k(j)\in \NN $ 
such that $g_{z_{j}}=\alpha _{k(j)}^{j}\circ \cdots \circ \alpha _{1}^{j}.$ 
Since $G_{\tau }(F(G_{\tau }))\subset F(G_{\tau })$, we may assume that 
there exists a $k\in \NN $ such that for each $j=1,\ldots, p$, we have $k(j)=k.$ 
For each $j=1,\ldots, p$, let $W_{j}$ be a compact neighborhood of $\alpha ^{j}$ in $(\mbox{supp}\,\tau)^{k}$ 
such that for each $\beta =(\beta _{1},\ldots, \beta _{k})\in W_{j}$, 
we have $\beta _{k}\circ \cdots \circ \beta _{1}(\overline{V_{z_{j}}})\subset F(G_{\tau }).$ 
Also, for each $j=1,\ldots, p$, let 
$B_{j}:=\cup _{B\in \mbox{Con}(\Omega ),B\cap V_{z_{j}}\neq \emptyset }B.$ 
Let $n\in \NN $ and let 
$c_{q}=1/q$ for each $q\in \NN .$   
Let $(i_{1},\ldots ,i_{l})$ be a finite sequence of positive integers with 
$i_{1}< \cdots <i_{l}.$ Let $q>0.$   
We denote by $A_{q,j}(i_{1},\ldots ,i_{l})$ the set of elements 
$\gamma \in X_{\tau }$ which satisfies all of the following (a) and (b). 
\begin{itemize}
\item[(a)]
$\gamma _{kt,1}(a)\in (\CCI \setminus B(\partial \Omega), c_{q}))\cap B_{j}$ if 
$t\in \{ i_{1},\ldots ,i_{l}\}.$  
\item[(b)]
$\gamma _{kt,1}(a)\not\in (\CCI \setminus B(\partial \Omega, c_{q}))\cap B_{j}$ if 
$t\in \{ 1,\ldots ,i_{l}\} \setminus \{ i_{1},\ldots ,i_{l}\}.$  
\end{itemize}
Moreover, when $l\geq n$,  
we denote by $B_{q,j,n}(i_{1},\ldots ,i_{l})$ the set of elements $\gamma \in X_{\tau }$ 
which satisfies items (a) and (b) above and the following (c). 
\begin{itemize}
\item[(c)]
$(\gamma _{ki_{s}+1}, \ldots ,\gamma _{ki_{s}+k})\not\in W_{j}$ for each $s=n,n+1,\ldots ,l.$
\end{itemize}
 Furthermore, we denote by   
$C_{q,j,n}(i_{1},\ldots ,i_{l})$ the set of elements $\gamma \in X_{\tau }$ 
which satisfies items (a) and (b) above and the following (d). 
 \begin{itemize}
\item[(d)]
$(\gamma _{ki_{s}+1}, \ldots ,\gamma _{ki_{s}+k})\not\in W_{j}$ for each $s=n,n+1,\ldots ,l-1.$
\end{itemize}
Furthermore, 
for each $q$, $j$, $n$, $l$ with $l\geq n$, 
let $B_{q,j,n,l}:= \bigcup _{i_{1}<\cdots <i_{l}}B_{q,j,n}(i_{1},\ldots ,i_{l}).$ 
Let ${\cal D}:= \bigcup _{q=1}^{\infty }\bigcup _{j=1}^{p}\bigcup _{n\in \NN }\bigcap _{l\geq n}B_{q,j,n,l}.$ 
We show the following claim. 

Claim 1. Let $\gamma \in X_{\tau }$ be such that there exists a non-constant limit function of the sequence  
$\{ \gamma _{n,1}|_{U}:U\rightarrow \CCI \} _{n=1}^{\infty }$. Then $\gamma \in {\cal D}.$ 

To show this claim, let $\gamma \in X_{\tau }$ be 
an element such that there exists a non-constant limit function of 
$\{ \gamma _{n,1}|_{U}: U\rightarrow \CCI \} _{n=1}^{\infty }.$ Then there exists a $q\in \NN $, a $j\in \{ 1,\ldots ,p\} $,  
and a strictly increasing sequence $\{ i_{l}\} _{l=1}^{\infty }$ in $\NN $
such that $\gamma \in \bigcap _{l=1}^{\infty }A_{q,j}(i_{1},\ldots ,i_{l})$ and 
any subsequence of $\{ \gamma _{ki_{l},1}|_{U}:U\rightarrow \CCI \} _{l=1}^{\infty }$ 
does not converge to a constant map. 
 Suppose that there exists 
a strictly increasing sequence $\{ l_{p}\} _{p=1}^{\infty }$ in $\NN $ such that 
for each $p\in \NN $, 
$(\gamma _{ki_{l_{p}}+1},\ldots ,\gamma _{ki_{l_{p}}+k})\in W_{j}.$ 
Since $\sharp J(G_{\tau })\geq 3$, for each $A\in \mbox{Con}(F(G_{\tau }))$, we can take the hyperbolic metric on $A.$ 
From the definition of $W_{j}$ and \cite[Pick Theorem]{Mi}, we obtain that there exists a constant 
$0<\alpha <1 $ such that 
for each $p\in \NN $ and for each $a'$ in a small 
neighborhood $U_{a}$ of $a$, we have  
$\| (\gamma _{ ki_{l_{p}}+k}\cdots \gamma _{ki_{l_{p}+1}})'(\g _{ki_{l_{p}},1}(a'))\| _{h}\leq \alpha $, 
where for each $g\in G_{\tau }$ and for each $z\in F(G_{\tau })$, $\| g'(z) \| _{h}$ denotes the norm of the derivative of $g$ at $z$ 
measured from the  
hyperbolic metric on the element of $\mbox{Con}(F(G_{\tau }))$ containing 
$z$
to that on the element of $\mbox{Con}(F(G_{\tau }))$ containing 
$g(z).$ 
Hence, for each $a'\in U_{a}$, 
$\| (\gamma _{ki_{l_{p}},1})'(a')\| _{h}\rightarrow 0$ as $p\rightarrow \infty $. 
However, this is a contradiction, since 
$\{ \gamma _{ki_{l_{p}},1}|_{U}\} _{p=1}^{\infty }$ does not converge to a constant map. 
Therefore, $\gamma \in {\cal D}.$ Thus, we have proved claim 1. 

 Let $\eta := \max _{j=1}^{p}(\otimes _{s=1}^{k}\tau )
 ((\mbox{supp}\,\tau)^{k}\setminus W_{j})\ (<1).$  
Then we have for each $(l,n)$ with $l\geq n,$  
\begin{align*}
\tilde{\tau }(B_{q,j,n}(i_{1},\ldots ,i_{l+1}))
& \leq \tilde{\tau }(C_{q,j,n}(i_{1},\ldots ,i_{l+1})\cap 
\{ \gamma \in X_{\tau }\mid (\gamma _{ki_{l+1}+1},\ldots ,\gamma _{ki_{l+1}+k})\not\in W_{j}\} )\\ 
& \leq \tilde{\tau }(C_{q,j,n}(i_{1},\ldots ,i_{l+1}))\cdot \eta .  
\end{align*}
Hence, for each $l$ with $l\geq n$, 
\begin{align*}
\tilde{\tau }(B_{q,j,n,l+1})
= & \tilde{\tau }(\bigcup _{i_{1}<\cdots <i_{l+1}}B_{q,j,n}(i_{1},\ldots ,i_{l+1})) 
=  \sum _{i_{1}<\cdots <i_{l+1}}\tilde{\tau }(B_{q,j,n}(i_{1},\ldots ,i_{l+1}))\\ 
\leq & \sum _{i_{1}<\cdots <i_{l+1}}\eta \tilde{\tau }(C_{q,j,n}(i_{1},\ldots ,i_{l+1})) 
=  \eta \tilde{\tau }(\bigcup _{i_{1}<\cdots <i_{l+1}}C_{q,j,n}(i_{1},\ldots ,i_{l+1}))\leq 
\eta \tilde{\tau }(B_{q,j,n,l}).
\end{align*}
Therefore $\tilde{\tau }({\cal D})\leq \sum _{q=1}^{\infty }\sum _{j=1}^{p}\sum _{n\in \NN }
\tilde{\tau }(\bigcap _{l\geq n}B_{q,j,n,l})=0.$ 
Hence we have proved item (I) of our lemma. 

We now prove item (II). Since $L\cap J_{\ker }(G_{\tau })=\emptyset $, 
Lemmas~\ref{l:ttaugy0y} and \ref{l:voygvv} imply that $L\subset F_{pt}^{0}(\tau ).$ 
Thus we obtain that for each $\varphi \in C(L)$, 
$\{ M_{\tau }^{n}(\varphi )|_{L}\} _{n=1}^{\infty }$ is equicontinuous on $L.$ 
Combining this with item (I) and the argument in \cite[page 83-87]{Splms10}, 
we easily see that item (II) holds. Note that 
in \cite[page 83-87]{Splms10}, we work on the system 
on the whole $\CCI $, but here, we work on the system on $L$ 
and $\Omega _{L}$ by using the argument based on the 
hyperbolic metric on each connected component of $\Omega _{L}$ which is amost the same  as the argument given in \cite[page 83-87]{Splms10}. 

We now prove (III). 
Let $U$ be a connected component of $F(G_{\tau })$ and 
let $x_{1},x_{2}\in U.$ 
Let ${\cal H}_{i}:=\{ \gamma \in X_{\tau }\mid 
d(\gamma _{n,1}(x_{i}),L)\rightarrow 0 \mbox{ as }n\rightarrow \infty \}$ for each $i=1,2.$ 
Then $T_{L,\tau }(x_{i})=\tilde{\tau }({\cal H}_{i}).$ 
Let ${\cal I}_{i}:= \{ \gamma \in {\cal H}_{i}\mid 
\exists n\in \NN \mbox{ such that }\gamma _{n,1}(x_{i})\in \Omega \} .$ 
Let ${\cal A}$ be as in (I). Since $\tilde{\tau }({\cal A})=1$ and 
$\tilde{\tau }$ is $\sigma$-invariant, 
we have $\tilde{\tau }({\cal I}_{i})=\tilde{\tau }({\cal I}_{i}\cap \cap _{n=1}^{\infty }
\sigma ^{-n}({\cal A})).$  By (I), we have 
${\cal I}_{1}\cap \cap _{n=1}^{\infty }\sigma ^{-n}({\cal A})
={\cal I}_{2}\cap \cap _{n=1}^{\infty }\sigma ^{-n}({\cal A}).$ 
Hence $\tilde{\tau }({\cal I}_{1})=\tilde{\tau }({\cal I}_{2}).$ 
Let $\gamma \in {\cal H}_{1}\setminus {\cal I}_{1}.$ 
Then $d(\gamma _{n,1}(x_{1}),L\cap J(G_{\tau }))\rightarrow 0$ as $n\rightarrow \infty $ and 
every limit function of $\{ \gamma _{n,1}\} _{n=1}^{\infty }$ on $U$ should be constant. Therefore 
$\gamma \in {\cal H}_{2}\setminus {\cal I}_{2}.$ Thus 
${\cal H}_{1}\setminus {\cal I}_{1}\subset {\cal H}_{2}\setminus {\cal I}_{2}$. 
Similarly, we have ${\cal H}_{2}\setminus {\cal I}_{2}\subset {\cal H}_{1}\setminus {\cal I}_{1}$. 
Hence 
${\cal H}_{1}\setminus {\cal I}_{1}= {\cal H}_{2}\setminus {\cal I}_{2}$. 
From these arguments, 
it follows that $T_{L,\tau }(x_{1})=T_{L,\tau }(x_{2}).$ Therefore  
$T_{L,\tau }$ is locally constant on $F(G_{\tau }).$ 

Thus, we have completed the proof of Lemma~\ref{l:pwjkf}.     
\end{proof}
\begin{df}
\label{d:period} 
Under the assumptions of Lemma~\ref{l:pwjkf}, we call 
the number $r_{L}$ the period of $(\tau, L).$ 
\end{df}
\begin{rem}
\label{r:cofullcor}
The above argument in the proof of item (I) generalizes the argument in the proof of 
\cite[Lemma 5.2]{Splms10}. 
In the proof of \cite[Lemma 5.2]{Splms10}, 
in order to make the argument more precise, 
``and $\{ \gamma _{ki_{l},1}|_{U}:U\rightarrow \CCI \}_{l=1}^{\infty }$ converges to a non-constant 
map.'' (\cite[page 81,line -5]{Splms10}) should be 
``and any subsequence of 
$\{ \gamma _{ki_{l},1}|_{U}:U\rightarrow \CCI \}_{l=1}^{\infty }$ does not converge to a constant 
map.'' and 
``converges to a non-constant map.'' (\cite[page 82, line 4]{Splms10}) should be 
``does not converge to a constant map.'' 
 Also, in the proof of \cite[Lemma 5.3]{Splms10}, 
 the definition of $E_{n,m}$ should be 
 ``$E_{n,m}:=
 \{ \gamma \in {\cal A}\mid 
 \gamma _{ik,1}(a_{0})
 \in \cup _{j=1}^{p}V_{z_{j}}\cap 
 B(\partial J(G_{\tau }), b), i=n,\ldots, m\} $, where the number 
 $b$ is equal to $\min \{ d(u,v)\mid u\in \partial J(G_{\tau }), 
 v\in \cup _{j=1}^{p}\cup _{(\gamma _{1},\ldots, \gamma _{k})
 \in W_{j}}\gamma _{k}\cdots \gamma _{1}(V_{z_{j}})\}>0$''. 
 (This is a correction to the  proofs in \cite{Splms10}.)    
\end{rem}
The following is an important result on random dynamical systems generated by $\tau \in {\frak M}_{1,c}(\Rat)$ 
with $\sharp J_{\ker }(G_{\tau })<\infty .$ In the proof we use 
the no-wandering-domain theorem (\cite{Su}) and 
the well-known fact that the number of periodic non-repelling 
cycles of one element $f\in \Ratp$ is finite 
(its sofisticated result is known as the Fatou-Shishikura inequality (\cite{Sh})). 
\begin{prop}
\label{p:jkgfmf}
Let $\tau \in {\frak M}_{1,c}(\emRat )$ with $\sharp J(G_{\tau })\geq 3.$ 
Suppose $\sharp J_{\ker }(G_{\tau })<\infty .$ 
Then $1\leq \sharp \emMin(G_{\tau })<\infty $ and 
for each $L\in \emMin(G_{\tau })$ with $L\cap F(G_{\tau })\neq \emptyset $, 
statements {\em (I)(II)(III)} of Lemma~\ref{l:pwjkf} hold. 
\end{prop}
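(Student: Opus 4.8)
The plan is to deduce Proposition~\ref{p:jkgfmf} from Lemma~\ref{l:pwjkf} by first establishing that, under the hypothesis $\sharp J_{\ker}(G_\tau) < \infty$, every minimal set $L \in \Min(G_\tau,\CCI)$ with $L \cap F(G_\tau) \neq \emptyset$ automatically satisfies the extra hypothesis of Lemma~\ref{l:pwjkf}, namely $\sharp((\partial\Omega_L)\cap J_{\ker}(G_\tau)) < \infty$. This is immediate: $\partial\Omega_L \subset J(G_\tau)$ so $(\partial\Omega_L)\cap J_{\ker}(G_\tau) \subset J_{\ker}(G_\tau)$, which is assumed finite. Hence for every such $L$, statements (I)(II)(III) of Lemma~\ref{l:pwjkf} hold directly. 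So the only genuine work is to prove the finiteness assertion $1 \leq \sharp\Min(G_\tau) < \infty$.

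Lower bound $\sharp\Min(G_\tau) \geq 1$ follows from Remark~\ref{r:minimal} (Zorn's lemma applied to $K_1 = \CCI$, using that $G_\tau(\CCI)\subset \CCI$). For the upper bound I would argue as follows. Since $\sharp J_{\ker}(G_\tau) < \infty$ and $F(G_\tau)\neq\emptyset$ (which holds because $\sharp J_{\ker}(G_\tau)<\infty$ and $\sharp J(G_\tau)\ge 3$, so $J_{\ker}(G_\tau)\ne J(G_\tau)$ forces $F(G_\tau)\ne\emptyset$), each $L\in\Min(G_\tau,\CCI)$ either is contained in $J_{\ker}(G_\tau)$ or meets $F(G_\tau)$: indeed if $L\not\subset J_{\ker}(G_\tau)$ there is $z\in L$ and $g\in G_\tau$ with $g(z)\notin J(G_\tau)$, i.e. $g(z)\in F(G_\tau)\cap L$ (using $G_\tau(L)\subset L$). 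The minimal sets contained in the finite set $J_{\ker}(G_\tau)$ are pairwise disjoint (Remark~\ref{r:minimal}), hence there are only finitely many of them. It remains to bound the number of minimal sets $L$ with $L\cap F(G_\tau)\neq\emptyset$. For each such $L$, item (I) of Lemma~\ref{l:pwjkf} shows the dynamics on the associated Fatou region $\Omega_L = \cup_{U\in\mathrm{Con}(F(G_\tau)),\ U\cap L\ne\emptyset}U$ is locally contracting: for $\tilde\tau$-a.e.\ $\gamma$ and every $z\in\Omega_L$, $\mathrm{diam}(\gamma_{n,1}(B(z,\delta)))\to0$. The key point is that this attracting behaviour, together with the no-wandering-domain theorem and the Fatou--Shishikura inequality, forces each $\Omega_L$ to contain, in a suitable sense, an ``attracting-type'' periodic structure among Fatou components, and these are bounded in number by the total number of critical points available, which is finite since $\Gamma_\tau$ is compact (so degrees are uniformly bounded) and $\sharp J_{\ker}(G_\tau)<\infty$ restricts the combinatorics.

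More concretely, I would proceed: since each $\Omega_L$ is $G_\tau$-forward invariant and the regions $\Omega_L$ for distinct $L$ are disjoint (distinct minimal sets are disjoint and $\Omega_L$ is determined by which Fatou components meet $L$), it suffices to bound the number of $G_\tau$-forward-invariant ``attracting blocks'' of Fatou components. Using Lemma~\ref{l:pwjkf}(II), each such $L$ carries a finite-dimensional space $U_{\tau,L}$ of unitary eigenfunctions and a cyclic decomposition $L=\cup_{j=1}^{r_L}L_j$; and for each $h\in\Gamma_\tau$ the single-map dynamics of $h$ restricted to the relevant Fatou components is governed by the classical Fatou component theory. Invoking Sullivan's no-wandering-domain theorem for the finitely generated (in the relevant local sense) pieces and the Fatou--Shishikura inequality to bound the number of non-repelling cycles / periodic Fatou components by a quantity depending only on $\max_{h\in\Gamma_\tau}\deg(h)$ (finite by compactness of $\Gamma_\tau$) and on $\sharp J_{\ker}(G_\tau)$, one obtains a uniform finite bound on the number of regions $\Omega_L$, hence on the number of minimal sets meeting $F(G_\tau)$. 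Combining with the finitely many minimal sets inside $J_{\ker}(G_\tau)$ gives $\sharp\Min(G_\tau,\CCI)<\infty$, and then Lemma~\ref{l:pwjkf} delivers (I)(II)(III) for each $L$ meeting $F(G_\tau)$, completing the proof.

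The main obstacle I anticipate is making the reduction to Sullivan's no-wandering-domain theorem and the Fatou--Shishikura inequality rigorous in the semigroup setting: these are theorems about iteration of a single rational map, and one must carefully set up which single map (or finite iterate, or skew-product fiber map) the argument is applied to, and ensure that the ``wandering'' of Fatou components of $G_\tau$ is genuinely ruled out by the contraction coming from Lemma~\ref{l:pwjkf}(I). In particular one needs to control how a Fatou component of $G_\tau$ can be mapped around by different generators, and to extract from the almost-sure contraction statement a deterministic statement about the geometry of $\mathrm{Con}(F(G_\tau))$ near $L$ — essentially that only finitely many ``attracting cycles'' of components can coexist. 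This bookkeeping, rather than any single hard estimate, is where the real care is required.
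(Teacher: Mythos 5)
Your reduction to Lemma~\ref{l:pwjkf} is exactly right: since $(\partial\Omega_L)\cap J_{\ker}(G_\tau)\subset J_{\ker}(G_\tau)$ is finite by hypothesis, statements (I)(II)(III) come for free for every $L$ meeting $F(G_\tau)$, and the trichotomy ``$L\subset J_{\ker}(G_\tau)$ or $L\cap F(G_\tau)\neq\emptyset$'' together with disjointness of minimal sets disposes of the minimal sets inside the kernel Julia set. The paper does the same. The genuine gap is in your bound on the number of minimal sets meeting $F(G_\tau)$. You propose that Sullivan's no-wandering-domain theorem plus the Fatou--Shishikura inequality give ``a uniform finite bound on the number of regions $\Omega_L$.'' This does not work as stated, because Fatou--Shishikura counts \emph{cycles} of a single map $g\in G_\tau$ of degree $\geq 2$, and a single rotation-domain cycle of $g$ (one Siegel disk or Herman ring $B$) counts as \emph{one} cycle yet contains uncountably many $g$-invariant analytic Jordan curves. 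A priori, infinitely many distinct $G_\tau$-minimal sets $L_j$ could each meet $B$ in such an invariant curve; Fatou--Shishikura is silent about this, and Lemma~\ref{l:pwjkf} does not forbid it either, since distinct curves in $B$ can lie in distinct components of $F(G_\tau)$ even though $B$ is one component of $F(g)$. Your heuristic that the count is controlled by ``the total number of critical points available'' has no force here: minimal sets of the semigroup meeting $F(G_\tau)$ need not attract any critical orbit.

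This rotation-domain case is precisely where the paper's proof does its real work. Arguing by contradiction with infinitely many $L_j$ and points $z_j\in L_j\cap F(G_\tau)$, the paper first shows (its Claim 1, using the fact that each Fatou component of $G_\tau$ meets at most one minimal set) that any accumulation point of points chosen from distinct $L_j$ must lie in $J_{\ker}(G_\tau)$; then Fatou--Shishikura reduces to the case where all $z_j$ lie in one Siegel disk or Herman ring $B$ of one $g$, each $B\cap L_j\cap F(G_\tau)$ being a union of invariant curves. Finiteness of $J_{\ker}(G_\tau)$ forces the accumulation set of these curves to be a single point $z_\infty\in J_{\ker}(G_\tau)$, so $B$ is a Siegel disk centered at $z_\infty$, and a minimal-word-length argument (choosing $h_j\in G_\tau$ expanding the shrinking components $C_j$ around $z_\infty$ to a definite size, with $\tfrac{1}{2}e_0^{-1}\epsilon\leq d(h_j(z_\infty),u_j)<\epsilon$ for $u_j\in L_j$) produces points of distinct minimal sets that cannot accumulate in $J_{\ker}(G_\tau)$, contradicting Claim 1. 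You correctly flagged that ``this bookkeeping\ldots is where the real care is required,'' but the care required is not bookkeeping: it is a substantive new argument that your outline does not supply. (Separately, the paper also treats the case where $G_\tau$ contains no map of degree $\geq 2$ but does contain a loxodromic M\"obius map, via density of repelling fixed points in $J(G_\tau)$; your proposal omits this case.)
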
  
\begin{proof}
By Lemma~\ref{l:pwjkf}, for each $L\in \Min(G_{\tau })$ with $L\cap F(G_{\tau })\neq \emptyset $, 
statements (I)(II)(III) of Lemma~\ref{l:pwjkf} hold. 
Also, by Lemma~\ref{l:pwjkf}, we obtain that 
\begin{equation}
\label{eq:slmglw}
\mbox{for each }W\in \mbox{Con}(F(G_{\tau })), 
\sharp \{ L\in \Min(G_{\tau },\CCI )\mid L\cap W\neq \emptyset \} \leq 1.
\end{equation}
We now suppose that $\sharp \Min(G_{\tau },\CCI )=\infty .$ 
Then, since we are assuming $\sharp J_{\ker }(G_{\tau })<\infty $, 
we obtain that 
\begin{equation}
\label{eq:slmgtci}
\sharp \{ L\in \Min(G_{\tau },\CCI )\mid L\cap F(G_{\tau })\neq \emptyset \} =\infty .
\end{equation}
We now show the following claim.\\ 
Claim 1. Let $\{ L_{j}\} _{j=1}^{\infty }$ be a sequence in $\Min(G_{\tau },\CCI)$ 
consisting of mutually distinct elements 
such that for each $j$, $L_{j}\cap F(G_{\tau })\neq \emptyset $. 
Moreover, let $\{ w_{j}\} _{j=1}^{\infty }$ be a sequence in $\CCI $ such that 
$w_{j}\in L_{j}$ for each $j$ and $\{ w_{j}\} $ tends to a point $w_{\infty }\in \CCI .$ 
Then $w_{\infty }\in J_{\ker }(G_{\tau }).$   

 To show this claim, suppose that $w_{\infty }\not\in J_{\ker }(G_{\tau }).$ 
Then there exists an element $\alpha \in G_{\tau }$ such that $\alpha (w_{\infty })\in F(G_{\tau }).$ 
Let $U\in \mbox{Con}(F(G_{\tau }))$ with $\alpha (w_{\infty })\in U.$ 
Then for each large $j$, we have $U\cap L_{j}\neq \emptyset .$ 
However, this contradicts (\ref{eq:slmglw}). Hence, Claim 1 holds. 

Let $\{ L_{j}\} _{j=1}^{\infty }$ be a sequence in $\Min(G_{\tau },\CCI)$ 
consisting of mutually distinct elements 
such that for each $j$, $L_{j}\cap F(G_{\tau })\neq \emptyset $. 
For each $j$, let $z_{j}\in L_{j}\cap F(G_{\tau })$ be a point. 
Since $\sharp J(G_{\tau })\geq 3$, 
by the density of repelling fixed points in the Julia set (\cite{St1}), either there exists a loxodromic 
element of Aut$(\CCI )\cap G$ or  there exists an element in $G$ of degree 
two or more. 

Suppose that there exists a loxodromic element 
$g\in \mbox{Aut}(\CCI )\cap G$.  
Let $a_{g}$ be the attracting fixed point of $g.$ 
Then for each $j$, we have $g^{n}(z_{j})\rightarrow a_{g}$ as $n\rightarrow \infty .$ 
This implies $a_{g}\in L_{j}$ for each $j.$ However, this is a contradiction. 

Suppose that there exists an element $g\in G$ with $\deg (g)\geq 2.$ 
By the no-wandering-domain theorem (\cite{Su}), we have that for each $z\in F(G_{\tau })$, 
$g^{n}(z)$ tends to one of the following cycles. (I) attracting cycle. (II) parabolic cycle. 
(III) Siegel disc cycle. (IV) Hermann ring cycle. 
Moreover, by the Fatou-Shishikura inequality (\cite{Sh}), 
the number of those cycles for one element $g$ is finite. 
Suppose that there exist a subsequence $\{ z_{j_{k}}\} $ of $\{ z_{j}\} $   
and a sequence $\{ n_{k}\} $ in $\NN $ such that 
$g^{n_{k}}(z_{j_{k}})$ tends to an attracting or parabolic cycle $c_{g}$ of $g.$ 
Then $c_{g}\in L_{j_{k}}$ for each large $k\in \NN $ and this is a contradiction. 
Therefore,  there exist a subsequence $\{ z_{j_{k}}\} $ of $\{ z_{j}\} $   
and a sequence $\{ n_{k}\} $ in $\NN $ such that 
$g^{n_{k}}(z_{j_{k}})$ belongs to a Siegel disk cycle or Hermann ring cycle of $g$ for each $k.$  
By taking a higher iterate of $g$, we may assume that the period of the cycle is one. 
Also, by renaming $g^{n_{k}}(z_{j_{k}})$ as $z_{k}$, we may assume that 
there exists a $B\in \mbox{Con}(F(g))$ which is either Siegel disk or Hermann ring of $g$ 
such that for each $j$, we have $z_{j}\in B\cap L_{j}\cap F(G_{\tau }).$    
Note that each $B\cap L_{j}\cap F(G_{\tau })$ is a union of analytic Jordan curves in 
$B.$ Let $$D:= \{ z\in \CCI \mid \mbox{ for each }\delta >0, \sharp \{ j\in \NN \mid B(z,\delta )\cap 
B\cap L_{j}\cap F(G_{\tau })\neq \emptyset \}  =\infty \} .$$  
Then by Claim 1, we have $D\subset J_{\ker }(G_{\tau }).$ 
Since we are assuming $\sharp J_{\ker }(G_{\tau })<\infty $, 
Claim 1 implies that for any connected component $A$ of $\partial B$, 
we cannot have that $A\subset D.$ Thus 
it follows that there exists a point $z_{\infty }\in J_{\ker }(G_{\tau }) $ such that 
$D=\{ z_{\infty }\} .$ Therefore $B$ is a Siegel disk for $g$ and $z_{\infty }$ is the center 
of $B$, i.e. $z_{\infty }\in B$ and $g(z_{\infty })=z_{\infty }.$ 
Let $0<\epsilon <\frac{1}{2}\min \{ d(a,b)\mid a,b\in J_{\ker }(G_{\tau }), a\neq b\} .$ Here, 
if $\sharp J_{\ker }(G_{\tau })=1$, then 
we set $\min \{ d(a,b)\mid a,b\in J_{\ker }(G_{\tau }), a\neq b\} =1.$ 
For each $j\in \NN $, let $C_{j}$ be the connected component of 
$\CCI \setminus (B\cap L_{j})$ such that $z_{\infty }\in C_{j}.$ 
Let $e_{0}:=\max\{ \| Dh_{z}\| _{s}\mid h\in \mbox{supp}\,\tau, z\in \CCI \} .$ 
We may assume that for each $j$, $\max _{a\in C_{j}}d(z_{\infty }, a)<\frac{1}{2}e_{0}^{-1}\epsilon .$ 
Since $z_{\infty }\in J_{\ker }(G_{\tau })\subset J(G_{\tau })$, 
it follows that for each $j$ there exists an element $h_{j}\in G_{\tau }$ such that
\begin{equation}
\label{eq:macjdhj}
\max_{a\in C_{j}}d(h_{j}(z_{\infty }), h_{j}(a))\geq \frac{1}{2}e_{0}^{-1}\epsilon . 
\end{equation}
We may assume that fixing the generator system $\mbox{supp}\,\tau$ of $G_{\tau }$, 
the word length of $h_{j}$ is the minimum among the word lengths of elements 
of $G_{\tau }$ satisfying the same property as that of $h_{j}.$ Then, 
by (\ref{eq:macjdhj}) and the minimality of the word length of $h_{j}$, 
it follows that 
\begin{equation}
\label{eq:macjdhjzi}
\max_{a\in C_{j}}d(h_{j}(z_{\infty }),h_{j}(a))<\epsilon , \mbox{ for each } j.
\end{equation} 
Let $a_{j}\in C_{j}$ be a point such that 
$d(h_{j}(z_{\infty }),h_{j}(a_{j}))=\max _{a\in C_{j}}d(h_{j}(z_{\infty }), h_{j}(a)).$ 
Then $a_{j}\in \partial C_{j}\subset L_{j}$ for each $j.$ 
Hence, setting $u_{j}=h_{j}(a_{j})$, we have 
\begin{equation}
\label{eq:ujinlj}
u_{j}\in L_{j} \mbox{ for each }j.
\end{equation}
By (\ref{eq:macjdhj}) and (\ref{eq:macjdhjzi}), we have 
\begin{equation}
\label{eq:he0-1edhj}
\frac{1}{2}e_{0}^{-1}\epsilon \leq d(h_{j}(z_{\infty }),u_{j})<\epsilon .
\end{equation}
Since $h_{j}(z_{\infty })\in J_{\ker }(G_{\tau })$ and by the way of the choice of $\epsilon $, 
(\ref{eq:he0-1edhj}) implies that $\{ u_{j}\} _{j=1}^{\infty }$ cannot accumulate in 
any point of $J_{\ker }(G_{\tau }).$ Combining this with (\ref{eq:ujinlj}) and Claim 1, 
we obtain a contradiction. Hence, $\sharp \Min(G_{\tau },\CCI )<\infty .$ 

Thus we have proved our proposition. 
\end{proof}         
The following is an important and interesting object in random dynamics. 
\begin{df}
\label{d:probabilityoftending}
Let $A$ be a subset of $\CCI .$ 
Let $\tau \in {\frak M}_{1}(\Rat).$ For each $z\in \CCI $,  
we set 
$T_{A,\tau }(z):= \tilde{\tau }(\{ \gamma =(\gamma _{1},\gamma _{2},\ldots )\in X_{\tau }\mid 
d(\gamma _{n,1}(z),A)\rightarrow 0 \mbox{ as } n\rightarrow \infty \}).$ 
This is the {\bf probability of tending to $A$ 
regarding the random orbits 
starting with the initial value $z\in \CCI .$}    
For any $a\in \CCI $,  we set $T_{a,\tau }:=T_{\{ a\} ,\tau }.$ 
\end{df}
We now prove the following theorem regarding the systems with finite kernel Julia sets.  
\begin{thm}
\label{t:zfggzcc}
Let $\tau \in {\frak M}_{1,c}(\emRat ) $ with $\sharp J(G_{\tau })\geq 3.$ 
Suppose that $\sharp J_{\ker }(G_{\tau })<\infty $ and 
for each $z\in F(G_{\tau })$, we have 
$\overline{G_{\tau }(z)}\cap (\bigcup _{L\in \emMin(G_{\tau },\CCI ), L\not\subset J_{\ker }(G_{\tau })}L)
\neq \emptyset .$ 
Then we have the following.
\begin{itemize}
\item[{\em (i)}] 
$\sharp \emMin(G_{\tau },\CCI )<\infty .$ 
Moreover, 
for each $L\in \emMin(G_{\tau },\CCI )$, 
we have 
\vspace{-2mm}
$$C(L)=U_{\tau,L}\oplus \{ \varphi \in C(L)\mid M_{\tau }^{n}(\varphi )\rightarrow 0 \mbox{ as }
n\rightarrow \infty \} $$ 
\vspace{-0mm} 
in the Banach space $C(L)$ endowed with 
the supremum norm and $\dim _{\CC }U_{\tau ,L}<\infty .$ 
Moreover, 
for each $L\in \emMin(G_{\tau },\CCI )$, let $r_{L}=\dim _{C}(U_{\tau ,L}).$ Then 
$\sharp \emMin(G_{\tau }^{r_{L}},L)=r_{L}.$ Also, 
there exist $L_{1},\ldots, L_{r_{L}}\in \emMin(G_{\tau }^{r_{L}},L)$ such that 
$\{ L_{j}\mid  j=1,\ldots, r_{L}\}=\emMin(G_{\tau }^{r_{L}},L)$, 
$L=\cup _{j=1}^{r_{L}}L_{j}$ and $h(L_{i})\subset L_{i+1}$ for each $h\in \mbox{supp}\,\tau$, 
where $L_{r_{L}+1}:=L_{1}.$  
\item[{\em (ii)}] 
For each $L\in \emMin(G_{\tau}, \CCI )$, 
for each $j=1,\ldots, r_{L}$, 
there exists a unique element 
$\omega _{L,j}\in {\frak M}_{1}(L_{j})$ such that  $(M_{\tau }^{r_{L}})^{\ast }(\omega _{L,j})=\omega _{L,j}.$ 
Also, for each $j=1,\ldots, r_{L}$, we have 
$M_{\tau }^{nr_{L}}(\varphi )\rightarrow (\int \varphi \ d\omega _{L,j})\cdot 1_{L_{j}}$ in 
the Banach space $C(L_{j})$ endowed with the supremum norm as $n\rightarrow \infty $
for each $\varphi \in C(L_{j})$, supp$\,\omega _{L,j}=L_{j}$ and 
$M_{\tau }^{\ast }(\omega _{L,j})=\omega _{L,j+1}$ in ${\frak M} _{1}(L)$ where 
$\omega _{L,r_{L}+1}=\omega _{L,1}.$ 
Moreover, $U_{\tau ,L,\ast }=
\{ a\in \CC \mid a ^{r_{L}}=1\}$ and 
for each $a \in U_{\tau ,L,\ast }$, we have 
$\dim _{\CC }\{ \varphi \in C(L)\mid M_{\tau }\varphi =a \varphi \} =1.$ 

\item[{\em (iii)}]
Let $l:=\prod _{L\in \emMin (G_{\tau },\CCI )}r_{L}. $  
For each $L\in \emMin(G_{\tau },\CCI )$, for each $j=1,\ldots, r_{L}$ and 
for each $y\in \CCI $, let 
$\alpha (L_{j},y)=\tilde{\tau }(\{ \gamma \in 
(\mbox{supp}\,\tau)^{\NN }\mid 
d(\gamma _{nl,1}(y), L_{j})\rightarrow 0 \mbox{ as }n\rightarrow \infty .\}). $ 
Then for each $y\in \CCI $ and for each $\varphi \in C(\CCI )$, we have 
\begin{equation}
\label{eq:mtnlvy}
M_{\tau }^{nl}(\varphi )(y)\rightarrow \sum _{L\in \emMin(G_{\tau },\CCI )}
\sum _{j=1}^{r_{L}}\alpha (L_{j},y)\int \varphi \ d\omega _{L,j} \mbox{ as }n\rightarrow \infty 
\mbox{ (pointwise convergence)},
\end{equation} 
i.e. we have 
\begin{equation}
\label{eq:mtanld}
(M_{\tau }^{\ast })^{nl}(\delta _{y}) \rightarrow \sum _{L\in \emMin(G_{\tau },\CCI )}
\sum _{j=1}^{r_{L}}\alpha (L_{j},y)\omega _{L,j} \mbox{ as }n\rightarrow \infty \mbox{ in }{\frak M}_{1}(\CCI )
\end{equation}   
with respect to the weak convergence topology. 
Also, 

\vspace{1mm}
\hspace{5mm} $(M_{\tau }^{\ast })^{l}(\sum _{L\in \emMin(G_{\tau },\CCI )}
\sum _{j=1}^{r_{L}}\alpha (L_{j},y)\omega _{L,j})=\sum _{L\in \emMin(G_{\tau },\CCI )}
\sum _{j=1}^{r_{L}}\alpha (L_{j},y)\omega _{L,j}.$ 
\item[{\em (iv)}]
For each $z\in \CCI $ there exists a Borel subset $A_{z}$ of $(\mbox{supp}\,\tau)^{\NN }$ 
with $\tilde{\tau }(A_{z})=1$ such that 
for each $\gamma =(\gamma _{1},\gamma _{2},\ldots )\in A_{z}$, 
there exists an element $L=L(z, \gamma )\in \emMin(G_{\tau },\CCI )$ satisfying that 
$d(\gamma _{n,1}(z), L)\rightarrow 0$ as $n\rightarrow 0.$ 
\item[{\em (v)}] 
There exists a Borel subset ${\cal A}$ of $\emRat ^{\NN} $ with $\tilde{\tau }({\cal A})=1$ 
such that for each $L\in \emMin(G_{\tau },\CCI )$ with $L\not\subset J_{\ker }(G_{\tau })$, 
for each point $z\in \Omega _{L}:=\cup _{U\in \mbox{{\em Con}}(F(G_{\tau })):U\cap L\neq \emptyset }U$ 
and for each $\gamma \in {\cal A}$,  
there exists a $\delta =\delta (z,\gamma )>0$ such that 
diam$(\gamma _{n,1}(B(z,\delta )))\rightarrow 0$  and 
$d(\gamma _{n,1}(z),L)\rightarrow 0$ as $n\rightarrow \infty .$  
In particular, for each $L\in \emMin(G_{\tau },\CCI )$ with $L\not\subset J_{\ker }(G_{\tau })$ and    
for each $j=1,\ldots, r_{L}$,  if $y\in \Omega _{L,j}
:=\cup _{U\in \mbox{{\em Con}}(F(G_{\tau })):U\cap L{j}\neq \emptyset }U $, then  
$\alpha (L_{j},y)=1$,  and if $y\in \Omega _{L', i}$ with $(L',i)\neq (L,j)$ then 
$\alpha (L_{j},y)=0.$

\item[{\em (vi)}]
Let $L\in \emMin(G_{\tau },\CCI )$ and let $j=1,\ldots, r_{L}.$ 
Then  
the functions $T_{L,\tau }:\CCI \rightarrow [0, 1]$ and 
$\alpha (L_{j},\cdot ):\CCI \rightarrow [0,1]$ are locally constant on $F(G_{\tau }).$  
\item[{\em (vii)}] 
Let $L\in \emMin(G_{\tau },\CCI )$ and let $j=1,\ldots, r_{L}.$ 
Then for each $y\in F_{pt}^{0}(\tau )$, we have 

\vspace{2mm}
\hspace{15mm} 
$\lim _{z\in \CCI, z\rightarrow y}T_{L,\tau }(z)=T_{L,\tau }(y)$ and 
$\lim _{z\in \CCI, z\rightarrow y}\alpha (L_{j},z)=\alpha (L_{j}, y).$ 
\end{itemize}

\end{thm}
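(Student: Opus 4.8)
\textbf{Proof plan for Theorem~\ref{t:zfggzcc}.}
The plan is to assemble the statement from the lemmas of Sections~\ref{ss:minimal}--\ref{ss:finitejker}, using Proposition~\ref{p:jkgfmf} as the structural backbone and Lemma~\ref{l:voygvv} as the convergence engine. First I would dispose of items (i) and (ii). Since $\sharp J_{\ker}(G_\tau)<\infty$ and $\sharp J(G_\tau)\geq 3$, Proposition~\ref{p:jkgfmf} gives $1\leq\sharp\Min(G_\tau,\CCI)<\infty$ and, for every $L\in\Min(G_\tau,\CCI)$ with $L\cap F(G_\tau)\neq\emptyset$, all of statements (I)(II)(III) of Lemma~\ref{l:pwjkf}; the spectral decomposition of $C(L)$, the identification $r_L=\dim_\CC U_{\tau,L}$, the factorization $L=\cup_j L_j$ with $h(L_j)\subset L_{j+1}$, the invariant measures $\omega_{L,j}$, their convergence $M_\tau^{nr_L}(\varphi)\to(\int\varphi\,d\omega_{L,j})1_{L_j}$, the full support, and the description of $U_{\tau,L,\ast}$. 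For the remaining $L\subset J_{\ker}(G_\tau)$ (finite sets), I would apply Lemma~\ref{l:nlkfa} with $V=F(G_\tau)$, $L_{\ker}=J_{\ker}(G_\tau)$: its parts (1)(2) give exactly the same package (period $p_i$, factorization, unique $\omega_{L,j}$ with $(M_\tau^{r_L})^\ast\omega_{L,j}=\omega_{L,j}$, convergence on $C(L_j)$), so (i)(ii) hold uniformly over all minimal sets; the assertion about $U_{\tau,L,\ast}$ for the kernel case follows from the standard Perron--Frobenius theory of finite Markov chains (\cite{Du}), as used in Definition~\ref{d:celyap}.

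Next, items (iv), (iii), and (v). For (iv), fix $z\in\CCI$; if $z\in J_{\ker}(G_\tau)$ the orbit stays in the finite set $J_{\ker}(G_\tau)$ and Lemma~\ref{l:voygvv} (with $V=F(G_\tau)$) together with Lemma~\ref{l:nlkfa}(3) shows $d(\gamma_{n,1}(z),\cup_L L)\to0$ a.s.; if $z\in F(G_\tau)$, the hypothesis $\overline{G_\tau(z)}$ meets some non-kernel minimal set, combined with the contraction statement Lemma~\ref{l:pwjkf}(I), yields that for $\tilde\tau$-a.e.\ $\gamma$ the orbit is eventually trapped near that minimal set or, if it never falls into the Fatou basin of such an $L$, Lemma~\ref{l:voygvv} forces it to accumulate on $J_{\ker}(G_\tau)$ and hence (again by Lemma~\ref{l:nlkfa}(3)) to converge to $\cup_L L$; either way $d(\gamma_{n,1}(z),\cup_L L)\to0$ a.s. For (iii), set $l=\prod_L r_L$. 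Splitting $X_\tau$ according to which $L_j$ the orbit of $y$ converges to (a $\tilde\tau$-full partition by (iv) and the disjointness of distinct $L_j$), and applying Lemma~\ref{l:fininvset}(3) on the kernel minimal sets and Lemma~\ref{l:pwjkf}(II) on the Fatou-meeting ones, I get $M_\tau^{nl}(\varphi)(y)\to\sum_{L,j}\alpha(L_j,y)\int\varphi\,d\omega_{L,j}$; rewriting this with $\Phi(y)=\delta_y$ gives the weak convergence of $(M_\tau^\ast)^{nl}(\delta_y)$, and applying $(M_\tau^\ast)^l$ and using $(M_\tau^{r_L})^\ast\omega_{L,j}=\omega_{L,j}$ together with $r_L\mid l$ gives the stated fixed-point identity. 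Item (v) is just a restatement of Lemma~\ref{l:pwjkf}(I) applied to each non-kernel $L$ (the Borel set ${\cal A}$ being the countable intersection over the finitely many such $L$ of the full-measure sets furnished by that lemma), and the consequence for $\alpha(L_j,y)$ when $y\in\Omega_{L,j}$ follows because on $\Omega_{L,j}$ the orbit converges to $L$ with the $r_L$-periodic cyclic structure $h(L_i)\subset L_{i+1}$ pinning it to the correct $L_j$.

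Finally, items (vi) and (vii). Local constancy of $T_{L,\tau}$ on $F(G_\tau)$ is Lemma~\ref{l:pwjkf}(III) for the non-kernel $L$ that meet $F(G_\tau)$; for $L\subset J_{\ker}(G_\tau)$ and for $\alpha(L_j,\cdot)$, I would run the same argument as in the proof of Lemma~\ref{l:pwjkf}(III): on a connected component $U$ of $F(G_\tau)$, for $x_1,x_2\in U$ the events ``orbit of $x_i$ enters $\Omega_{L}$ (resp.\ $\Omega_{L,j}$)'' versus ``orbit of $x_i$ stays in $J(G_\tau)$ and accumulates on $L$'' coincide modulo $\tilde\tau$-null sets after intersecting with the full-measure set where all Fatou limit functions on $U$ are constant, so $T_{L,\tau}$ and $\alpha(L_j,\cdot)$ agree at $x_1$ and $x_2$. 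For (vii), given $y\in F_{pt}^0(\tau)$ and $\epsilon>0$, Egoroff's theorem applied to $\sup_{n}\operatorname{diam}\gamma_{n,1}(B(y,1/m))\to0$ (valid by the definition of $F_{pt}^0(\tau)$ via Lemma~\ref{l:ufgmeas}, exactly as in the proof of Lemma~\ref{l:ttaugy0y}) produces a near-full set of $\gamma$ on which $\gamma_{n,1}(z)$ stays uniformly close to $\gamma_{n,1}(y)$ for $z$ near $y$; hence for such $\gamma$ and $n$ large, $\gamma_{n,1}(z)$ and $\gamma_{n,1}(y)$ lie in the same small neighborhood, so they converge to the same minimal set, giving $|T_{L,\tau}(z)-T_{L,\tau}(y)|<2\epsilon$ and likewise for $\alpha(L_j,\cdot)$ (here I use that distinct $L$'s, and distinct $L_j$'s within one $L$, are a positive distance apart, so ``close to one $L_j$'' is an open condition on the orbit that transfers from $y$ to $z$). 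The main obstacle I expect is the careful bookkeeping in (iii) and (vii): making sure the partition of $X_\tau$ by limiting minimal set is genuinely $\tilde\tau$-full and Borel, that the cyclic period $r_L$ divides $l$ so that the $\omega_{L,j}$ really are $(M_\tau^\ast)^l$-fixed, and that in (vii) the ``same small neighborhood'' argument correctly handles orbits that converge to a kernel minimal set (where one must also invoke Lemma~\ref{l:chilp}-type eventual-entry behaviour is \emph{not} available, but the pure distance estimate suffices since $J_{\ker}(G_\tau)$ is finite and its minimal sets are separated).
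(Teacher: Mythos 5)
Your treatment of items (i)--(vi) follows the same route as the paper: Proposition~\ref{p:jkgfmf} and Lemma~\ref{l:pwjkf} for the minimal sets meeting $F(G_{\tau })$, Lemma~\ref{l:nlkfa} (with $V$ the union of the Fatou components meeting minimal sets, so that $\cap _{g}g^{-1}(\CCI \setminus V)=J_{\ker }(G_{\tau })$) for the kernel minimal sets, and Lemma~\ref{l:voygvv} to glue the two regimes together; the local constancy argument in (vi) via constancy of limit functions is also exactly the paper's. However, your argument for (vii) has a genuine gap.

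You deduce the pathwise statement ``for $\tilde{\tau }$-a.e.\ $\gamma $, $\sup _{n}\mbox{diam}\,\gamma _{n,1}(B(y,\frac{1}{m}))\rightarrow 0$ as $m\rightarrow \infty $'' from the hypothesis $y\in F_{pt}^{0}(\tau )$, citing Lemma~\ref{l:ufgmeas} and the proof of Lemma~\ref{l:ttaugy0y}. But Lemma~\ref{l:ttaugy0y} goes in the opposite direction: it starts from $\tilde{\tau }(\{ \gamma \mid y\in J_{\gamma ,0}\})=0$ and concludes $y\in F_{pt}^{0}(\tau )$. The converse implication is nowhere established at this level of generality, and you cannot expect to prove it here: the equivalence $\Omega _{\tau }=F_{pt}^{0}(\tau )$ together with $\tilde{\tau }(\{ \gamma \mid z\in J_{\gamma }\})=0$ for $z\in \Omega _{\tau }$ is only obtained later (Theorem~\ref{t:wmsneglyap}\,(v)) under additional hypotheses (weak mean stability, non-zero Lyapunov exponents on the kernel minimal sets, non-vanishing derivatives), and its proof already uses Theorem~\ref{t:zfggzcc}. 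In the setting of the present theorem a kernel minimal set may have zero Lyapunov exponent, and no pathwise control near $y$ is available. The paper avoids this entirely: since $\sharp \Min(G_{\tau },\CCI )<\infty $, pick $\varphi _{L}\in C(\CCI )$ with $\varphi _{L}|_{L}=1$ and $\varphi _{L}|_{L'}=0$ for $L'\neq L$; item (iv) gives $T_{L,\tau }(x)=\lim _{n}M_{\tau }^{n}(\varphi _{L})(x)$ for every $x$, and $y\in F_{pt}^{0}(\tau )$ means precisely that $\{ M_{\tau }^{n}(\varphi )\} _{n}$ is equicontinuous at $y$ for each $\varphi \in C(\CCI )$; a pointwise limit of a family equicontinuous at $y$ is continuous at $y$. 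The same device (with a bump function separating $L_{j}$ from the other $L'_{i}$, applied along the subsequence $nl$) handles $\alpha (L_{j},\cdot )$. You should replace your Egoroff argument for (vii) by this one.
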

\begin{proof}
Let ${\cal L}:=\bigcup _{L\in \Min(G_{\tau },\CCI ), L\not\subset J_{\ker }(G_{\tau })}L.$ 
Then, by  the assumption of our theorem, we have ${\cal L}\neq \emptyset .$ 
Moreover, we have 
${\cal L}=\bigcup _{L\in \Min(G_{\tau },\CCI ), L\cap F(G_{\tau })\neq \emptyset }L.$ 
Let $$V:=\bigcup \{ U\in \mbox{Con}(F(G_{\tau }))\mid \exists L\in \Min(G_{\tau },\CCI ) 
\mbox{ with }L\cap U\neq \emptyset \} .$$ 
Then $G_{\tau }(V)\subset V.$ Moreover, by the assumptions of our theorem, we obtain 
$\cap _{g\in G_{\tau }}g^{-1}(\CCI \setminus V)=J_{\ker }(G_{\tau }).$ Therefore, 
the statements (i)--(v) of our theorem follow from  Lemma~\ref{l:nlkfa},  
Proposition~\ref{p:jkgfmf} and Lemma~\ref{l:voygvv}. 

We now prove statement (vi). 
Let $L\in \Min(G_{\tau },\CCI )$ and $j=1,\ldots, r_{L}.$ 
If $L\not\subset J_{\ker }(G_{\tau })$, 
then 
by Lemma~\ref{l:pwjkf} (III) and its proof,  
the functions $T_{L, \tau }$ and $\alpha (L_{j},\cdot )$ are locally constant on $F(G_{\tau }).$ 
If $L\subset J_{\ker }(G_{\tau })$, then for any $U\in \mbox{Con}(F(G_{\tau }))$, 
for any $x,y\in U$ and  
for any $\gamma \in X_{\tau }$ with $d(\gamma _{n,1}(x), L)\rightarrow 0\ (n\rightarrow \infty)$, 
 we have that any limit function of $\{ \gamma _{n,1}\} _{n=1}^{\infty }$ is 
constant on $U.$ Hence $d(\gamma _{n,1}(y),L)\rightarrow 0$ as $n\rightarrow \infty $. 
This argument implies that $T_{L,\tau }$ is constant on $U$ for any $U\in \mbox{Con}(F(G_{\tau })).$ 
Therefore $T_{L,\tau }$ is locally constant on $F(G_{\tau }).$ 
By the same method as above, we can show that 
$\alpha (L_{j}, \cdot )$ is locally constant on $F(G_{\tau }).$  

We now prove (vii). 
Let $L\in \Min (G_{\tau },\CCI )$ and let $j=1,\ldots, r_{L}.$ 
Since $\sharp \Min(G_{\tau },\CCI )<\infty $, there exists an element 
$\varphi _{L}\in C(\CCI )$ such that 
$\varphi _{L}|_{L}=1$ and  
$\varphi _{L}|_{L'}=0$ for any $L'\in \Min(G_{\tau },\CCI )$ with 
$L'\neq L.$ By statement (iv), we have 
$T_{L,\tau }(x)=\lim _{n\rightarrow }M_{\tau }^{n}(\varphi _{L})(x)$ for any $x\in \CCI .$ 
 Thus for any $y\in F_{pt}^{0}(\tau )$, 
we have $\lim _{z\in \CCI ,z\rightarrow y}T_{L,\tau }(z)=T_{L,\tau }(y).$ 
Similarly, we can show that for any $y\in F_{pt}^{0}(\tau )$, 
$\lim _{z\in \CCI ,z\rightarrow y}\alpha (L_{j},z)=\alpha (L_{j},y).$ 

Thus we have proved our theorem. 
\end{proof}
We now prove the following theorem, which is a generalization of \cite[Theorem 3.15]{Splms10}. 
\begin{thm}
\label{t:sjkfcln}
Let $\tau \in {\frak M}_{1,c}(\emRat).$ Suppose that 
$\sharp J_{\ker }(G_{\tau })<\infty ,$ $\sharp J(G_{\tau })\geq 3$ and that for each $L\in \emMin(G_{\tau }, J_{\ker }(G_{\tau }))$,  
$\chi (\tau ,L)<0.$ Then we have $F_{pt}^{0}(\tau )=\CCI , F_{meas}(\tau )={\frak M}_{1}(\CCI )$, 
{\em Leb}$_{2}(J_{\gamma })=0$ for $\tilde{\tau }$-a.e.$\gamma \in (\emRat) ^{\NN }$,  
and all statements in {\em \cite[Theorem 3.15 (1)--(3),(4a),(5)(6),(8)--(16), (19)(20)]{Splms10}} hold for 
$\tau .$ Moreover, 
for each $z\in \CCI $, 
there exists a Borel subset ${\cal A}_{z}$ of $X_{\tau } $ 
with $\tilde{\tau }({\cal A}_{z})=1$ satisfying that 
for each $\gamma =(\gamma _{1},\gamma _{2},\ldots )\in {\cal A}_{z}$ and for each 
$m\in \NN \cup \{ 0\},$  
we have $$\lim_{n\rightarrow \infty }\| D(\gamma _{n+m,1+m})_{\gamma _{m,1}(z)}\| _{s}=0 
.$$    
Also, if, in addition to the assumptions of our theorem, 
each $L\in \Min(G_{\tau }, \CCI )$ with $L\not\subset J_{\ker }(G_{\tau })$ is attracting 
for $\tau $, then there exist a constant $c_{\tau }<0$ and a constant $\rho _{\tau }\in (0,1)$ such that 
for each $z\in \CCI $, there exists a Borel subset ${\cal A}_{z}$ of $X_{\tau }$ 
with $\tilde{\tau }({\cal A}_{z})=1$ such that for each $\gamma \in {\cal A}_{z}$ and 
for each $m\in \NN \cup \{ 0\},$ we have the following {\em (a)} and {\em (b)}. 
\begin{itemize}
\item[{\em (a)}] 
$$\limsup _{n\rightarrow \infty }\frac{1}{n}\log \| D(\gamma _{n+m,1+m})_{\gamma _{m,1}(z)}\| _{s}\leq c_{\tau }<0.$$ 
\item[{\em (b)}] 
There exist a constant $\delta =\delta (\tau, z,\gamma, m)>0$, a constant 
$\zeta =\zeta (\tau, z, \gamma, m)>0$ 
and a minimal set $L=L(\tau, z,\gamma )$ of $\tau $ 
which is either {\em (i)} ``attracting for $\tau $'', or {\em (ii) } ``finite and included in $J_{\ker }(G_{\tau }) $ with 
$\chi (\tau, L)<0$'',  
such that 
$$\mbox{diam}(\gamma _{n+m,1+m}(B(\gamma _{m,1}(z),\delta )))\leq \zeta \rho _{\tau }^{n}\ \mbox{ for all }n\in \NN, $$
and 
$$d(\gamma _{n+m,1+m}(\gamma _{m,1}(z)), \ L)
\leq \zeta \rho _{\tau }^{n} 
\mbox{\ \  for all }n\in \NN .$$ 
\end{itemize}
\end{thm}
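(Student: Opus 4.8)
The plan is to reduce Theorem~\ref{t:sjkfcln} to the already-established machinery: Theorem~\ref{t:zfggzcc} for the structure of minimal sets and the convergence of $M_\tau^{nl}$, Corollary~\ref{c:chilne} and Lemma~\ref{l:chimune} for the local contraction near the negative-Lyapunov-exponent minimal sets in $J_{\ker}(G_\tau)$, Lemma~\ref{l:voygvv} together with Lemma~\ref{l:nlkfa} for the fact that random orbits reach a neighborhood of $J_{\ker}(G_\tau)$, and the no-wandering-domain theorem plus the Fatou--Shishikura inequality (already invoked in Proposition~\ref{p:jkgfmf}) for the finiteness statements. First I would verify the hypotheses of Theorem~\ref{t:zfggzcc} are met: since $\sharp J_{\ker}(G_\tau)<\infty$ and each $L\in\emMin(G_\tau,J_{\ker}(G_\tau))$ has $\chi(\tau,L)<0$, Corollary~\ref{c:chilne} gives $L\subset F_{pt}^0(\tau)$, and using Lemma~\ref{l:voygvv} with $V=F(G_\tau)$ (noting $\cap_{g\in G_\tau}g^{-1}(J(G_\tau))=J_{\ker}(G_\tau)$) one shows $\overline{G_\tau(z)}$ meets $\cup_{L\not\subset J_{\ker}(G_\tau)}L$ for each $z\in F(G_\tau)$; this is exactly the extra hypothesis of Theorem~\ref{t:zfggzcc}. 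Hence $\sharp\emMin(G_\tau,\CCI)<\infty$ and statements (i)--(vii) of Theorem~\ref{t:zfggzcc} hold, which already contains the decomposition $C(L)=U_{\tau,L}\oplus\{\varphi:M_\tau^n\varphi\to0\}$, the period structure, the pointwise convergence of $M_\tau^{nl}$, and the local constancy/continuity of the $T_{L,\tau}$ and $\alpha(L_j,\cdot)$.

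Next I would prove $F_{pt}^0(\tau)=\CCI$. On $F(G_\tau)$ this follows from Theorem~\ref{t:zfggzcc}(vi)(vii) combined with equicontinuity of $\{M_\tau^n\varphi|_{F(G_\tau)}\}$ coming from Lemma~\ref{l:pwjkf}(I). For $z\in J(G_\tau)$, I would argue that for $\tilde\tau$-a.e.\ $\gamma$ the orbit $\gamma_{n,1}(z)$ either enters $V=F(G_\tau)$ (in which case the Fatou-component contraction of Lemma~\ref{l:pwjkf}(I) applies after a shift) or stays forever in $\cap_j\gamma_{j,1}^{-1}(J(G_\tau))$, in which case Lemma~\ref{l:voygvv} forces $d(\gamma_{n,1}(z),J_{\ker}(G_\tau))\to0$, and then Lemma~\ref{l:chimune}'s Birkhoff-ergodic-theorem argument (applied to the canonical $\tau$-ergodic measures $\omega_L$, whose supports are the $L\in\emMin(G_\tau,J_{\ker}(G_\tau))$, using $\chi(\tau,L)<0$) gives $z\in F_\gamma$; so $\tilde\tau(\{\gamma:z\in J_{\gamma,0}\})=0$ and Lemma~\ref{l:ttaugy0y} yields $z\in F_{pt}^0(\tau)$. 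The same dichotomy gives $\mathrm{Leb}_2(J_\gamma)=0$ for $\tilde\tau$-a.e.\ $\gamma$ via Corollary~\ref{c:ttauaegl0} (noting $J_{\gamma,0}=J_\gamma$ for $\gamma\in(\Rat)^\NN$). Once $F_{pt}^0(\tau)=\CCI$, uniform equicontinuity on the compact space $\CCI$ upgrades to $F_{meas}(\tau)={\frak M}_1(\CCI)$ by the standard pushforward argument (as in \cite{Splms10}), and the cited items of \cite[Theorem 3.15]{Splms10} then follow as stated.

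For the derivative statement $\|D(\gamma_{n+m,1+m})_{\gamma_{m,1}(z)}\|_s\to0$: fixing $z$ and $m$, set $w=\gamma_{m,1}(z)$ and note $\sigma^m_*\tilde\tau=\tilde\tau$, so it suffices to treat $m=0$. For $\tilde\tau$-a.e.\ $\gamma$ the orbit of $z$ eventually lands in a Fatou component $U$ with $U\cap L\neq\emptyset$ for some $L\in\emMin(G_\tau,\CCI)$; if $L\not\subset J_{\ker}(G_\tau)$, Lemma~\ref{l:pwjkf}(I) gives $\mathrm{diam}(\gamma_{n,1}(B(z,\delta)))\to0$, hence $\|D(\gamma_{n,1})_z\|_s\to0$ by Cauchy's estimate; if instead the orbit converges to $J_{\ker}(G_\tau)$ with $\chi(\tau,L)<0$, the estimate from the proof of Lemma~\ref{l:chimune} (the nested-ball induction producing $B(\gamma_{n,1}(z),\delta_1\exp((-c_0+\epsilon_0+\epsilon_1)n))$) gives exponential decay of $\|D(\gamma_{n,1})_z\|_s$ directly. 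Finally, under the additional hypothesis that every $L\not\subset J_{\ker}(G_\tau)$ is attracting: attracting minimal sets give a uniform-in-$\gamma$ contraction rate on a neighborhood (by Definition~\ref{d:attminset} the image of a fixed open set $A\supset\overline B$ lands in $B$ after $n$ steps, iterating this with the spherical-metric normal-family/hyperbolic contraction gives a uniform $\rho_\tau\in(0,1)$), while the finitely many $L\subset J_{\ker}(G_\tau)$ with $\chi(\tau,L)<0$ give a uniform exponential rate by Lemma~\ref{l:chlpad1d2}'s companion estimates; taking $c_\tau$ the max of these (negative) rates and $\rho_\tau=e^{c_\tau}$, together with a covering-compactness argument to make $\delta,\zeta$ choosable and the Borel set ${\cal A}_z$ of full measure, yields (a) and (b). The main obstacle I anticipate is making the contraction rates genuinely \emph{uniform} — independent of $\gamma$ and of the target minimal set — which requires combining the compactness of $\Gamma_\tau$, the finiteness of $\emMin(G_\tau,\CCI)$, the uniform hyperbolicity-type estimate near attracting minimal sets, and the Birkhoff-ergodic-theorem rate near the negative-exponent sets into a single constant; the bookkeeping here (especially handling the shift by $m$ and the fact that $c_0$ in Lemma~\ref{l:chimune} a priori depends on $\gamma$ through $n_0$ but not through the asymptotic rate) is where the real care is needed.
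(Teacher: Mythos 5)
Your overall architecture (reduce to Theorem~\ref{t:zfggzcc}, then use Corollary~\ref{c:chilne} and Lemma~\ref{l:chimune} near the negative-exponent minimal sets in $J_{\ker }(G_{\tau })$ and Lemma~\ref{l:pwjkf} inside the Fatou components) is close in spirit to the paper's, and the tools you cite for $F_{pt}^{0}(\tau )=\CCI$, for $\mbox{Leb}_{2}(J_{\gamma })=0$ and for $F_{meas}(\tau )={\frak M}_{1}(\CCI )$ are the right ones: the paper obtains the first two by applying Theorem~\ref{t:jkfcln0} directly (its hypotheses are satisfied here because $H_{+}=\emptyset $), and the third from \cite[Lemma 4.2]{Splms10}.

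There is, however, a genuine gap at the pivot of your argument. You claim that Lemma~\ref{l:voygvv} with $V=F(G_{\tau })$ shows that $\overline{G_{\tau }(z)}$ meets $\bigcup _{L\not\subset J_{\ker }(G_{\tau })}L$ for every $z\in F(G_{\tau })$, i.e., that the extra hypothesis of Theorem~\ref{t:zfggzcc} holds. Lemma~\ref{l:voygvv} is a statement about $\tilde{\tau }$-almost every random orbit, conditioned on the event $E$ that the orbit never enters $V$; for $z\in F(G_{\tau })$ and $V=F(G_{\tau })$ that event is empty (the Fatou set is forward invariant), so the lemma says nothing here, and in any case no almost-sure statement can deliver the deterministic orbit-closure property you need. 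The paper does not verify this hypothesis under the assumptions of Theorem~\ref{t:sjkfcln}; it bypasses Theorem~\ref{t:zfggzcc} and proves the key convergence $d(\gamma _{n,1}(z),\bigcup _{L\in \emMin(G_{\tau },\CCI )}L)\rightarrow 0$ for $\tilde{\tau }$-a.e. $\gamma $ directly, by (1) using only the automatic fact that for every $z$ the deterministic orbit $G_{\tau }(z)$ enters the open set $B(\bigcup _{L\subset J_{\ker }(G_{\tau })}L,\delta _{3})\cup \bigcup _{L\cap F(G_{\tau })\neq \emptyset }W_{L}$ (since $\overline{G_{\tau }(z)}$ contains \emph{some} minimal set), (2) an Egoroff argument producing a set $A_{\epsilon }$ of measure at least $1-\epsilon $ on which orbits starting $\delta _{2}$-close to a negative-exponent minimal set stay $\delta _{1}$-close forever, and (3) a first-entrance-time/Borel--Cantelli decomposition (the sets $C_{n,z}$, $D_{n,z}$, $E_{z}$) showing the random orbit almost surely enters the good region at some finite block with a good tail. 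This trapping argument is exactly what is missing from your proposal, and it is needed downstream as well: your treatment of the derivative decay and of statements (a) and (b) begins with ``for a.e. $\gamma $ the orbit eventually lands in a Fatou component $U$ with $U\cap L\neq \emptyset $,'' but entering $F(G_{\tau })$ does not by itself put the orbit into such a component, and Lemma~\ref{l:pwjkf}(I) only applies once you are there. Without step (3) you cannot identify, for a.e. $\gamma $, which minimal set the orbit converges to, and hence cannot run the uniform contraction estimates that give $c_{\tau }$ and $\rho _{\tau }$.
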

\begin{proof}
We modify the proof of Theorem~\ref{t:jkfcln0}. 
By the assumption of our theorem, the set $\Omega $ in 
the proof of Theorem~\ref{t:jkfcln0} is equal to $\CCI .$ 
By Theorem~\ref{t:jkfcln0}, we see that 
for each $z\in \CCI $, $\tilde{\tau }(\{ \gamma \in X_{\tau }\mid z\in J_{\gamma }\})=0$ 
and $F_{pt}^{0}(\tau )=\CCI .$  
Therefore by \cite[Lemma 4.2(6)]{Splms10}, $F_{meas}(\tau )={\frak M}_{1}(\CCI).$  

Let $\delta _{1}>0$ be a small number. Let $\epsilon >0$ be an arbitrarily small number. 
Then by the argument in the proof of Lemma~\ref{l:ttaugy0y},  
there exist a $\delta _{2}>0$ with $\delta _{2}<\delta _{1}$ and a Borel subset $A_{\epsilon }$ of 
$X_{\tau } $ with $\tilde{\tau }(A_{\epsilon })\geq 1-\epsilon $ 
such that for each $L\in \Min(G_{\tau },J_{\ker }(G_{\tau }))$,
  for each $z\in L$, and for each $\gamma =(\gamma _{1},\gamma _{2}
 \ldots )\in A_{\epsilon }$, we have 
 diam$(\gamma _{n,1}(B(z,\delta _{2})))\leq \delta _{1}.$ 
 For this $\delta _{2}$, 
by the argument in the proof of Lemma~\ref{l:ttaugy0y} again, 
there exist a $\delta _{3}>0$ and a Borel subset 
 $B_{\epsilon }$ of $X_{\tau }$ with $\tilde{\tau }(B_{\epsilon })\geq 1-\epsilon $  
  such that  for each $L\in \Min(G_{\tau },J_{\ker }(G_{\tau }))$,
  for each $z\in L$, and for each $\gamma =(\gamma _{1},\gamma _{2}
 \ldots )\in B_{\epsilon }$, we have 
 diam$(\gamma _{n,1}(B(z,\delta _{3})))\leq \delta _{2}.$ 
 
 Let $I_{1}:=\{ L\in \Min(G_{\tau },\CCI )\mid L\subset J_{\ker }(G_{\tau })\} $ 
 and $I_{2}:= \{ L\in \Min(G_{\tau },\CCI )\mid L\cap F(G_{\tau })\neq \emptyset \} .$ 
 Note that $I_{1}\cup I_{2}=\Min(G_{\tau },\CCI ).$ For each $L\in I_{2}$, 
 let $W_{L}:= \cup _{U\in \mbox{Con}(F(G_{\tau })),U\cap L\neq \emptyset }U.$ 
 Then for each $z\in \CCI $, there exists an element $g_{z}\in G_{\tau }$ 
 such that $g_{z}(z)\in B(\cup _{L\in I_{1}}L,\delta _{3})\cup \cup _{L\in I_{2}}W_{L}.$ 
Let $\delta _{z}>0$ be a number such that 
$g_{z}(\overline{B(z,\delta _{z})})\subset B(\cup _{L\in I_{1}}L,\delta _{3})\cup \cup _{L\in I_{2}}W_{L}.$ 
Since $\CCI $ is compact, there exist finitely many points $z_{1},\ldots, z_{p}\in \CCI $ 
such that $\CCI =\cup _{j=1}^{p}B(z_{j},\delta _{z_{j}}).$ Note that $G_{\tau }(W_{L})\subset W_{L}$ for each 
 $L\in I_{2}.$ Thus if $g_{z}(z)\in W_{L}$ for some $L\in I_{2}$, then 
 for each $g\in G_{\tau }$, we have $gg_{z}(z)\in W_{L}.$ Moreover, 
 for each $L\in I_{1}$, for each $z\in L$, for each $\gamma \in B_{\epsilon }$ and for each $n\in \NN $, 
 we have $\gamma _{n,1}(B(z,\delta _{3}))\subset B(\cup _{L\in I_{1}}L,\delta _{2})$. 
 Hence, considering $\alpha _{j}\circ g_{z_{j}}$ for some $\alpha _{j}\in G_{\tau }$ for each $j$, 
we have the following claim. 

\noindent  \underline{Claim 1.} There exist an $l\in \NN $ 
and $p$ elements $h_{z_{1}},\ldots, h_{z_{p}}\in G_{\tau }$ such that 
each $h_{z_{j}}$ is  a composition of $l$ elements of $\mbox{supp}\,\tau$  
and such that 
 for each $j=1,\ldots, p$,  we have $h_{z_{j}}(\overline{B(z,\delta _{z_{j}})})\subset B(\cup _{L\in I_{1}}L,\delta _{2})
 \cup \cup _{L\in I_{2}}W_{L}.$ 

   For each $j=1,\ldots, p$, let   $(\gamma _{1}^{j},\ldots, \gamma _{l}^{j})\in (\mbox{supp}\,\tau)^{l}$ 
be an element   such that $h_{z_{j}}=\gamma _{l}^{j}\circ \cdots \circ \gamma _{1}^{j}.$  
   For each $j=1,\ldots, p$, let $V_{j}$ be a neighborhood 
   of $(\gamma _{1}^{j},\ldots, \gamma _{l}^{j})\in 
   (\mbox{supp}\,\tau)^{l}$ such that 
   for each $(\alpha _{1},\ldots, \alpha _{l})\in V_{j}$, we have
   $\alpha _{l}\cdots \alpha _{1}(\overline{B(z_{j},\delta _{z_{j}})})
   \subset B(\cup _{L\in I_{1}}L,\delta _{2})\cup \cup _{L\in I_{2}}W_{L}.$ 
   Let $\Omega _{1},\ldots, \Omega _{t}$ be the measurable partition of $\CCI $ such that 
   each $\Omega _{i}$ is a finite intersection of elements of $\{ B(z_{j},\delta _{z_{j}})\} _{j=1}^{p} .$ 
   For each $i=1,\ldots, t$, let $\varphi (i)\in \{ 1,\ldots, p\} $ be an element 
   such that $\Omega _{i}\subset B(z_{\varphi (i)},\delta _{z_{\varphi (i)}}).$ 
   For each $z\in \CCI $, let $i(z)\in \{ 1,\ldots,  t\} $ be the unique element such that 
   $z\in \Omega_{i(z)}.$  Let $j(z)=\varphi (i(z))\in \{ 1,\ldots ,p\} .$  
   For each $n\in \NN $ and each $z\in \CCI $, let 
   $C_{n,z}$ be the set of elements $\gamma =(\gamma _{1},\gamma _{2},\ldots )\in X_{\tau }$ 
satisfying the following. 
\begin{itemize}
\item 
$(\gamma _{1},\ldots, \gamma _{l})\not\in V_{j(z)}, 
(\gamma _{l+1},\ldots, \gamma _{2l})\not\in V_{j(\gamma _{l,1}(z))}, 
\ldots, 
(\gamma _{(n-2)l+1},\ldots, \gamma _{(n-1)l})\not\in V_{j(\gamma _{(n-2)l,1}(z))}, $ and 
$(\gamma _{(n-1)l+1},\ldots ,\gamma _{nl})\in V_{j(\gamma _{(n-1)l,1}(z))}$. 
\end{itemize} 
Similarly, let $D_{n,z}:=\{ \gamma \in C_{n,z}\mid 
(\gamma _{nl+1},\gamma _{nl+2},\ldots )\not\in A_{\epsilon }\}. $ 
Moreover, let $E_{z}$ be the set of elements $\gamma =(\gamma _{1},\gamma _{2},\ldots )
\in X_{\tau }$ satisfying that 
for each $n\in \NN , (\gamma _{(n-1)l+1},\ldots, \gamma _{nl})\not\in 
V_{j(\gamma _{(n-1)l,1}(z))}.$ 
Then for each $z\in \CCI $ we have 
$$\{ \gamma \in X_{\tau }
\mid \gamma _{n,1}(z)\not\in B(\cup _{L\in I_{1}}L,\delta _{1})\cup \cup _{L\in I_{2}}W_{L} 
\mbox{ for infinitely many } n\in \NN 
\} 
\subset \cup _{n=1}^{\infty }D_{n,z}\cup  E_{z}.$$ 
It is easy to see that $\tilde{\tau }(E_{z})=0.$ 
Moreover, 

\vspace{2mm} 
$\tilde{\tau }(\cup _{n=1}^{\infty }D_{n,z})
=\sum _{n=1}^{\infty }\tilde{\tau }(D_{n,z})
=\sum _{n=1}^{\infty }\tilde{\tau }(C_{n,z})\cdot \tilde{\tau }
(X_{\tau }\setminus A_{\epsilon })
= \tilde{\tau }(\cup _{n=1}^{\infty }C_{n,z})\cdot \tilde{\tau }
(X_{\tau }\setminus A_{\epsilon })
\leq \epsilon. $
 
\vspace{1mm} 
\noindent  Hence 
 \vspace{-2mm} 
\begin{equation}
\label{eq:ttggn1e}
\tilde{\tau }(\{ \gamma \in X_{\tau }
\mid \gamma _{n,1}(z)\not\in B(\cup _{L\in I_{1}}L,\delta _{1})\cup \cup _{L\in I_{2}}W_{L} 
\mbox{ for infinitely many } n\in \NN \} )\leq \epsilon .
\end{equation} 
Since $\epsilon, \delta _{1}$ are arbitrary, combining (\ref{eq:ttggn1e}) and Lemma~\ref{l:pwjkf} 
implies that for each $z\in \CCI $, there exists a Borel subset 
$Q_{z}$ of $X_{\tau }$ with $\tilde{\tau }(Q_{z})=1$ such that 
for each $\gamma \in Q_{z}$, we have 
\begin{equation}
\label{eq:dgn1zcl0} 
d(\gamma _{n,1}(z), \cup _{L\in \Min(G_{\tau },\CCI )}L)\rightarrow 0 \mbox{ as }n\rightarrow \infty .
\end{equation}
  By the result 
$F_{meas}(\tau )={\frak M}_{1}(\CCI )$, (\ref{eq:dgn1zcl0}), 
Lemma~\ref{l:fininvset} and its proof, 
Lemma~\ref{l:chimune} and its proof, 
Theorem~\ref{t:jkfcln0}, 
Lemma~\ref{l:pwjkf},  the proof of 
\cite[Theorem 3.15]{Splms10} and \cite[Theorem 3.14]{Splms10}, the statement of our 
theorem holds. 
\end{proof}
\begin{rem}
\label{r:sjknm}
Under the assumptions of Theorem~\ref{t:sjkfcln}, 
suppose that $J_{\ker }(G_{\tau })\neq \emptyset .$ 
Then $\tau $ is not mean stable. Also, $\tau $ does not satisfy the assumptions of 
\cite[Theorem 3.15]{Splms10}, 
although most of the statements of \cite[Theorem 3.15]{Splms10} hold for $\tau .$ 
Note that we have many examples $\tau \in {\frak M}_{1,c}(\Rat)$ with 
$J_{\ker }(G_{\tau })\neq \emptyset $ satisfying the assumptions of Theorem~\ref{t:sjkfcln}. 
See Section~\ref{Examples}, Example~\ref{ex:wlz1z}.  
\end{rem}
We now give the definition of nice sets and strongly nice sets of $\Rat.$ 
\begin{df}
\label{d:stronglynice}
Let ${\cal Y}$ be a weakly nice subset of $\Rat $ with respect to some holomorphic families 
$\{ {\cal W}_{j}\} _{j=1}^{m}$ of rational maps, where 
${\cal W}_{j}=\{ f_{j,\lambda }\mid \lambda \in \Lambda _{j}\}$ for each $j=1,\ldots, m.$   

\begin{itemize}
\item 
We say that ${\cal Y}$ is {\bf nice} (with respect to holomorphic families 
 $\{ {\cal W}_{j}\} _{j=1}^{m}$ of rational maps) if 
for each $z\in S_{\min }(\{ {\cal W}_{j}\} _{j=1}^{m})$ (see Definition~\ref{d:smin}) and 
 for each $j=1,\ldots ,m$, either (a) the map $\lambda \mapsto D(f_{j,\lambda })_{z}$ 
 is non-constant on $\Lambda _{j}$ or (b)   $D(f_{j,\lambda })_{z}= 0$ 
 for all  $\lambda \in \Lambda _{j}.$ 

\item  We say that  a finite sequence $\{ z_{i}\} _{i=1}^{n}$ of points of 
$\CCI $ 
is a {\bf peripheral cycle} for $({\cal Y}, \{ {\cal W}_{j}\} _{j=1}^{m})$ if 
there exists a $\Gamma \in \Cpt({\cal Y})$ such that 
both of the following (a)(b) hold.  
\begin{itemize}
\item[(a)] 
$\{ z_{i}\mid i=1,\ldots, n\} \subset 
  (\cup _{j=1}^{m}S_{1}({\cal W}_{j}))\setminus 
\cup_{L\in \Min(\langle \Gamma \rangle ,\CCI ), L\subset \cup _{j=1}^{m}S_{1}({\cal W}_{j})}L$. 
\item[(b)]   
There exists a finite sequence $\{ \gamma _{i}\} _{i=1}^{n}$ of elements of $\Gamma $ 
such that for each $i=1,\ldots, n$, there exists a number 
$j_{i}\in \{ 1,\ldots, n\} $ satisfying  that for each $i=1,\ldots, n$, we have 
$\gamma _{i}\in \{ f_{j_{i},\lambda }\mid \lambda \in \Lambda _{j_{i}}\} $, 
$z_{i}\in S_{1}({\cal W}_{j_{i}})$  and 
$\gamma _{i}(z_{i})=z_{i+1}$ where $z_{n+1}:=z_{1}.$ 
\end{itemize} 
\item We say that ${\cal Y}$ is {\bf strongly nice}  with respect to 
$\{ {\cal W}_{j}\} _{j=1}^{m}$ 
if 
${\cal Y}$ is nice with respect to  
 $\{ {\cal W}_{j}\} _{j=1}^{m}$ and there exists no 
 peripheral cycle for $({\cal Y}, \{ {\cal W}_{j}\} _{j=1}^{m})$.  


\end{itemize}

 \end{df} 
 \begin{df}
 \label{d:strbf}
 Let ${\cal Y}$ be a weakly nice subset of $\Ratp $ with respect to some holomorphic families 
$\{ {\cal W}_{j}\} _{j=1}^{m}$ of rational maps, where
 ${\cal W}_{j}=\{ f_{j,\lambda }\mid \lambda \in \Lambda _{j}\}$ for each $j=1,\ldots, m.$   
Let $\Gamma \in \mbox{Cpt}({\cal Y})$ such that $\Gamma \cap \{ f_{j,\lambda }\mid \lambda 
\in \Lambda _{j}\} \neq \emptyset $ for each $j=1,\ldots, m.$ 
Let $L\in \Min(\langle \Gamma \rangle ,\CCI )$ with $L\neq \CCI .$ 
Let $g\in \Gamma $ and $j\in \{ 1,\ldots, m\} .$ 
We say that $g$ is a 
{\bf strict bifurcation element for $(\Gamma, L)$ 
with corresponding suffix $j$}
if one of the following statements (a)(b) holds. 
\begin{itemize}
\item[(a)] $g\in \{ f_{j,\lambda }\mid \lambda \in \Lambda _{j}\} $ and 
there exists a point $z\in (L\cap J(\langle \Gamma \rangle ))\setminus S_{1}({\cal W}_{j})$  
such that  
 $g(z)\in L\cap J(\langle \Gamma \rangle ).$  

\item[(b)] 
$g\in \{ f_{j,\lambda }\mid \lambda \in \Lambda _{j}\} $ and 
there exist an open subset $U$ of $\CCI $ 
and finitely many 
elements $\gamma _{1},\ldots, \gamma _{n-1}\in \Gamma $ such that 
$g\circ \gamma _{n-1}\cdots \gamma _{1}(U)\subset U$,  
\ $U$ is a subset of a Siegel disk or a Hermann 
ring of $g\circ \gamma _{n-1}\circ \cdots \circ \gamma _{1}$, and $(\gamma _{n-1}\circ \cdots \circ \gamma _{1}(U)
\cap L)\setminus S_{1}({\cal W}_{j})\neq \emptyset .$   
  \end{itemize}  
 
 \end{df}
 
\begin{lem}
\label{l:strbif}
 Let ${\cal Y}$ be a weakly nice subset of $\emRatp $ with respect to some 
 holomorphic families 
$\{ {\cal W}_{j}\} _{j=1}^{m}$ of rational maps, where 
${\cal W}_{j}=\{ f_{j,\lambda }\mid \lambda \in \Lambda _{j}\}$ for each $j=1,\ldots, m.$   
Suppose there exists no peripheral cycle for $({\cal Y}, \{ {\cal W}_{j}\} _{j=1}^{m})$.  
Let $\Gamma \in \mbox{{\em Cpt}}({\cal Y})$ such that $\Gamma \cap \{ f_{j,\lambda }\mid \lambda 
\in \Lambda _{j}\} \neq \emptyset $ for each $j=1,\ldots, m.$ 
Let $L\in \emMin(\langle \Gamma \rangle ,\CCI )$ with $L\neq \CCI .$ 
Suppose  that $J_{\ker }(\langle \G \rangle )\subset \cap _{j=1}^{m}S({\cal W}_{j})$ and   
$\sharp L=\infty $. 
Suppose also that $L$ is not attracting for 
$\Gamma $.  
Then there exists an element $(g,j)\in \Gamma \times \{ 1,\ldots, m\} $ 
such that $g$ is a strict bifurcation element for $(\Gamma , L)$ with corresponding suffix $j.$ 
Moreover, if $(h,i)\in \Gamma \times \{ 1,\ldots, m\}  $ and $h$ is a strict bifurcation element 
for $(\Gamma ,L)$ with corresponding suffix $i$, then 
$h\in \partial (\Gamma \cap \{ f_{i,\lambda }\mid \lambda \in \Lambda _{i}\})$ 
with respect to the topology in 
$ \{ f_{i,\lambda }\mid \lambda \in \Lambda _{i}\}$. Here, 
we endow $\{ f_{i, \lambda }\mid \lambda \in \Lambda _{i}\}$ 
with the relative topology from Rat.  
 \end{lem}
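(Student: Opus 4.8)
\textbf{Proof plan for Lemma~\ref{l:strbif}.}
The plan is to prove the two assertions separately, relying heavily on the structure theory of rational semigroups recalled after Lemma~\ref{ocminvlem} and on the no-wandering-domain theorem (\cite{Su}) and the Fatou-Shishikura inequality (\cite{Sh}). For the first assertion, I would argue by contradiction: suppose there is no strict bifurcation element for $(\Gamma, L)$ with any corresponding suffix. Since $L$ is not attracting for $\Gamma$, and $\sharp L=\infty$ so that $L$ is perfect (Remark~\ref{r:minimal}), I would first show that $L\cap J(\langle\Gamma\rangle)\neq\emptyset$; indeed if $L\subset F(\langle\Gamma\rangle)$ then, using the mild-type contraction arguments in the spirit of Lemma~\ref{l:pwjkf}, $L$ would have to be attracting. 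So fix $z\in L\cap J(\langle\Gamma\rangle)$. Because $L=\bigcup_{h\in\Gamma}h(L)$, write $z=g(w)$ with $g\in\Gamma$, $w\in L$; choosing $g\in\{f_{j,\lambda}\mid\lambda\in\Lambda_j\}$ and using $J$-backward invariance, one gets $w\in L\cap g^{-1}(J(\langle\Gamma\rangle))$. If $w\notin S_1({\cal W}_j)$ we are in case (a) of Definition~\ref{d:strbf} and done. The remaining possibility is that every such preimage $w$ lies in $\cup_j S_1({\cal W}_j)$; iterating, one produces an infinite backward orbit of $z$ staying inside the finite set $\cup_j S_1({\cal W}_j)$ (finiteness by Lemma~\ref{l:sn1sn}), hence a cycle $\{z_i\}$ inside $\cup_j S_1({\cal W}_j)$ realized by elements of $\Gamma$. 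If none of the $z_i$ lies in a minimal set contained in $\cup_j S_1({\cal W}_j)$, this is exactly a peripheral cycle for $({\cal Y},\{{\cal W}_j\})$, contradicting the hypothesis; and if some $z_i$ does lie in such a minimal set $L'$, then $z$ itself lies in $L'$ (forward orbits of singular points stay in $\cup_j S({\cal W}_j)$ by Corollary~\ref{c:falflam}, and minimal sets are closed and forward invariant), forcing $L=L'\subset\cap_j S({\cal W}_j)\cap J$, whence case (b): by the density of repelling fixed points (\cite{St1}) one finds $g\in\langle\Gamma\rangle$ with $\deg g\geq 2$ or a loxodromic $g$, and the Fatou-component analysis of Proposition~\ref{p:jkgfmf} shows that a minimal set meeting $J_{\ker}$-related Siegel/Hermann data yields a Siegel disk or Hermann ring configuration giving a strict bifurcation element of type (b). This is the step I expect to be the main obstacle: carefully showing that when $L\subset\cap_j S({\cal W}_j)\cap J$, the failure of ``attracting'' must be witnessed by a Siegel disk or Hermann ring of a composition $g\circ\gamma_{n-1}\circ\cdots\circ\gamma_1$ with an open piece of $L$ mapped into itself, which is essentially a relative version of the argument in the proof of Proposition~\ref{p:jkgfmf} combined with the no-wandering-domain theorem.

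For the ``Moreover'' assertion, suppose $(h,i)\in\Gamma\times\{1,\ldots,m\}$ with $h$ a strict bifurcation element for $(\Gamma,L)$ with corresponding suffix $i$, and suppose for contradiction that $h\in\mathrm{int}(\Gamma\cap\{f_{i,\lambda}\mid\lambda\in\Lambda_i\})$ relative to $\{f_{i,\lambda}\mid\lambda\in\Lambda_i\}$; so there is a connected open neighborhood $\Lambda_i^0\subset\Lambda_i$ of the parameter of $h$ with $\{f_{i,\lambda}\mid\lambda\in\Lambda_i^0\}\subset\Gamma$. In case (a), there is $z\in(L\cap J(\langle\Gamma\rangle))\setminus S_1({\cal W}_i)$ with $h(z)=f_{i,\lambda_0}(z)\in L\cap J(\langle\Gamma\rangle)$; since $z\notin S_1({\cal W}_i)$ the holomorphic map $\lambda\mapsto f_{i,\lambda}(z)$ is non-constant on the connected $\Lambda_i^0$ (here the niceness/weak-niceness plays no role, just non-constancy of the family and $z\notin S_1$), hence its image is open and contains an open piece $V$ of $\CCI$. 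All points $f_{i,\lambda}(z)$, $\lambda\in\Lambda_i^0$, lie in $L$ (since $L$ is forward invariant under every $f_{i,\lambda}\in\Gamma$ and $L=\overline{G(z')}$ for $z'\in L$), so $V\cap L$ has nonempty interior in $\CCI$; but $L\subset J(\langle\Gamma\rangle)$ and $F(\langle\Gamma\rangle)\neq\emptyset$ (otherwise $J=\CCI$ and $L=\CCI$, excluded), contradicting $\mathrm{int}(J_{\ker})=\emptyset$ — more directly, $\mathrm{int}(L)\neq\emptyset$ with $L$ forward invariant forces $\mathrm{int}(J(\langle\Gamma\rangle))$ to swallow everything and $L=\CCI$, a contradiction. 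In case (b), the open set $U$ with $(U\cap L)\setminus S_1({\cal W}_i)\neq\emptyset$ and $g\circ\gamma_{n-1}\circ\cdots\circ\gamma_1(U)\subset U$ sitting inside a Siegel disk or Hermann ring is structurally unstable: perturbing $h=f_{i,\lambda_0}$ within $\{f_{i,\lambda}\mid\lambda\in\Lambda_i^0\}\subset\Gamma$ destroys the rotation-domain structure (a Siegel disk or Hermann ring of a rational map of degree $\geq 2$ cannot persist under an open set of perturbations of the map within a non-constant holomorphic family, by standard quasiconformal-surgery / transversality considerations), so one again produces, for nearby $\lambda$, an open piece of $L$ mapped strictly inside itself by a composition, leading as above to $\mathrm{int}(L)\neq\emptyset$ and $L=\CCI$, contradicting $L\neq\CCI$. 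Hence $h\in\partial(\Gamma\cap\{f_{i,\lambda}\mid\lambda\in\Lambda_i\})$. I would present the case (b) instability argument by reducing it to case (a): a fixed open piece inside a rotation domain of $g_0:=g\circ\gamma_{n-1}\circ\cdots\circ\gamma_1$ that meets $L\setminus S_1({\cal W}_i)$ gives, after one more application of the non-constant family through the $f_{i,\lambda}$ factor, a non-constant holomorphic motion of a point of $L\setminus S_1({\cal W}_i)$ whose image meets $J(\langle\Gamma\rangle)$, recovering the contradiction of case (a); this keeps the hard analytic content (no-wandering domains, Fatou-Shishikura) confined to the first assertion and makes the ``Moreover'' part essentially formal once the interior/$\mathrm{int}(L)=\emptyset$ dichotomy is in hand.
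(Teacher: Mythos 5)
The decisive gap is at the very start of your existence argument: the claim that ``if $L\subset F(\langle \Gamma \rangle )$ then $L$ is attracting'' is false, and it is exactly the reason that clause (b) exists in Definition~\ref{d:strbf}. A minimal set with $\sharp L=\infty $ can sit inside a rotation domain, i.e.\ there can be an open set $U$ with $U\cap L\neq \emptyset $ and finitely many $\gamma _{1},\ldots ,\gamma _{r}\in \Gamma $ with $\gamma _{r}\circ \cdots \circ \gamma _{1}(U)\subset U$ and $U$ contained in a Siegel disk or Hermann ring of $\gamma _{r}\circ \cdots \circ \gamma _{1}$; such an $L$ is not attracting and may have $L\cap J(\langle \Gamma \rangle )=\emptyset $, so your backward-orbit argument never gets started. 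The paper instead invokes the dichotomy of \cite[Lemma 3.8]{Sadv}: if $L$ is not attracting, then either (I) some $g\in \Gamma $ maps a point of $L\cap J(\langle \Gamma \rangle )$ into $L\cap J(\langle \Gamma \rangle )$, or (II) the rotation-domain situation occurs. Case (II) is then disposed of directly: $\sharp L=\infty $ forces $L$ to be perfect, so $\sharp (U\cap L)=\infty >\sharp S_{1}({\cal W}_{j})$, and the last factor $\gamma _{r}$ is a type-(b) strict bifurcation element. Your appeal to ``contraction arguments in the spirit of Lemma~\ref{l:pwjkf}'' cannot replace this, because that lemma is probabilistic (almost-sure constancy of limit functions), whereas ``attracting'' quantifies over \emph{all} words in $\Gamma ^{n}$, and a rotation domain defeats the latter while being invisible to the former. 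Your treatment of the $L\cap J\neq \emptyset $ case via backward orbits is otherwise sound (it mirrors the paper's forward-orbit version of case (I)), but the sub-case in which the resulting cycle meets a minimal set $L'\subset \cup _{j}S_{1}({\cal W}_{j})$ should be killed instantly by $L=L'$ together with $\sharp L'\leq \sharp \cup _{j}S_{1}({\cal W}_{j})<\infty $ versus $\sharp L=\infty $; your attempt to extract a type-(b) element there via Proposition~\ref{p:jkgfmf} is both unnecessary and unsupported.

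For the ``Moreover'' assertion, your case (a) argument coincides with the paper's: non-constancy of $\lambda \mapsto f_{i,\lambda }(z)$ at $z\notin S_{1}({\cal W}_{i})$ with an open piece of parameters inside $\Gamma $ gives $\mbox{int}(L)\neq \emptyset $ and hence $L=\CCI $, a contradiction. For case (b), however, the appeal to instability of Siegel disks under open sets of perturbations (``quasiconformal surgery / transversality'') is neither justified nor needed: Definition~\ref{d:strbf}(b) already supplies a point $z\in (U\cap L)\setminus S_{1}({\cal W}_{i})$, and the identical non-constancy argument applies verbatim to that point. Your closing remark reducing (b) to (a) is the right instinct and is essentially how the paper handles both cases uniformly; the quasiconformal detour should be deleted.
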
 
\begin{proof}
Let $G=\langle \Gamma \rangle .$ 
\cite[Lemma 3.8]{Sadv} implies that 
we have one of the following two situations (I)(II). 
\begin{itemize}
\item[(I)] 
There exist an element $(g,j)\in \G \times \{ 1,\ldots, m\} $ with 
$g\in \Gamma \cap \{ f_{j,\lambda }\mid \lambda \in \Lambda _{j}\} $ 
and a point $z_{0}\in L\cap J(G)$ 
such that $g(z_{0})\in L\cap J(G).$ 
\item[(II)] 
There exist an open subset $U$ of $\CCI $ with $U\cap L\neq \emptyset $ and 
finitely many elements $\gamma _{1},\ldots, \gamma _{r}\in \Gamma $ 
such that $\gamma _{r}\circ \cdots \gamma _{1}(U)\subset U$ and $U$ is a subset of a 
Siegel disk or a Hermann ring of $\gamma _{r}\circ \cdots \circ \gamma _{1}.$ 

\end{itemize}  
 Suppose we have case (II).  
 Since $\sharp L=\infty $, 
 by using \cite[Remark 3.9]{Splms10} and \cite[Remark 2.24]{Sadv}  
 we obtain that $\sharp (U\cap L)=\infty .$ 
 Let $j\in \{ 1,\ldots, m\} $ with $\gamma _{1}\in \{ f_{j,\lambda }\mid \lambda \in \Lambda _{j}\} .$ 
 Since $\sharp S_{1}({\cal W}_{j})<\infty $ (Lemma~\ref{l:sn1sn}), it follows that $\gamma _{1}$ is a 
 strict bifurcation element with corresponding suffix $j.$ 
 
 Suppose we have case (I). 
Then there exist a sequence $\{ \gamma _{i}\} _{i=1}^{\infty }$ in $\Gamma $ 
with $\gamma _{i}\in \{ f_{j_{i},\lambda }\mid \lambda \in \Lambda _{j_{i}}\}$, 
$j_{i}\in \{ 1,\ldots, m\} $ and a point $z_{0}\in L\cap J(G)$ such that 
$\gamma _{i}\cdots \gamma _{1}(z_{0})\in L\cap J(G)$ for each $i.$ 
 We now consider the following two cases (a)(b). 
 \begin{itemize}
 \item[(a)] 
 There exists an $i\in \NN $ such that $\gamma _{i}\cdots \gamma _{1}(z_{0})\not\in S_{1}({\cal W}_{j_{i+1}}).$ 
 
 \item[(b)] 
For each $i\in \NN $,  $\gamma _{i}\cdots \gamma _{1}(z_{0})\in S_{1}({\cal W}_{j_{i+1}}).$ 
 \end{itemize}
Suppose we have case (a). 
Then $\gamma _{i+1}$ is a strict bifurcation element with corresponding suffix $j_{i+1}.$ 

 Suppose we have case (b). 
 Since $\sharp L=\infty $ and $\sharp \cup _{j=1}^{m}S_{1}({\cal W}_{j})<\infty $ 
(Lemma~\ref{l:sn1sn}),  
 we have that $L\not\subset \cup _{j=1}^{m}S_{1}({\cal W}_{j}).$ 
 Then for each $i\in \NN $, 
 we have $$\gamma _{i}\cdots \gamma _{1}(z_{0})\in 
 (\cup _{j=1}^{m}S_{1}({\cal W}_{j}))\setminus \cup _{K\in \Min(G, \CCI ), 
 K\subset \cup _{j=1}^{m}S_{1}({\cal W}_{j})}K.$$ 
 Since we are assuming case (b) and since $\sharp \cup _{j=1}^{m}S_{1}({\cal W}_{j})<\infty $, 
 there exist two elements $i,j\in \NN $ with $j>i$ such that 
 $\gamma _{j}\cdots \gamma _{i}\cdots \gamma _{1}(z_{0})=\gamma _{i}\cdots \gamma _{1}(z_{0}).$ 
 This contradicts to the assumption that there exists no peripheral cycle 
for $({\cal Y}, \{ {\cal W}_{j}\} _{j=1}^{m})$.  
 
  
We now suppose that $(h,i)\in \Gamma \times \{ 1,\ldots, m\} $ is a strict 
bifurcation element for $(\Gamma, L)$ with corresponding suffix $i.$ 
Supoose that $h\in \mbox{int}(\Gamma \cap \{ f_{i,\lambda }\mid \lambda\in \Lambda _{i}\} )$ 
with respect to the topology in $\{ f_{i,\lambda }\mid \lambda \in \Lambda _{i}\} .$  
Then for each $z\in \CCI \setminus S_{1}({\cal W}_{i})$, we have that 
$\mbox{int}(\langle \Gamma \rangle (z))\neq \emptyset .$ 
Hence it is easy to see that $\mbox{int}(L)\cap J(G)\neq \emptyset .$ 
It implies that $L=\CCI .$ However, this contradicts the assumption of our lemma. 
Hence $h\in \partial (\Gamma \cap \{ f_{i,\lambda }\mid \lambda\in \Lambda _{i}\} ).$

Thus we have proved our lemma. 
%
%
\end{proof}

\begin{lem}
\label{l:nalnotsb}
 Let ${\cal Y}$ be a  weakly nice subset of $\emRatp$ with respect 
 to some 
 holomorphic families 
$\{ {\cal W}_{j}\} _{j=1}^{m}$ of rational maps, where 
${\cal W}_{j}=\{ f_{j,\lambda }\}_{ \lambda \in \Lambda _{j}} , j=1,\ldots, m.$ 
Suppose that there exists no peripheral cycle for $({\cal Y}, \{ {\cal W}_{j}\} _{j=1}^{m})$.  
Let $\rho \in {\frak M}_{1,c}({\cal Y},\{ {\cal W}_{j}\} _{j=1}^{m})$ and 
suppose that the interior of $\supp\,\rho \cap \{ f_{j,\lambda }\mid \lambda \in \Lambda _{j}\}$ is not empty  
with respect to the topology in $\{ f_{j,\lambda }\mid \lambda \in \Lambda _{j}\}$ (which is endowed with the 
relative topology from Rat) for each 
$j=1,\ldots, m.$ Suppose also that $F(G_{\rho })\neq \emptyset .$ 
Then we have the following. 
\begin{itemize}
\item[{\em (i)}] 
$J_{\ker }(G_{\rho })\subset \cap _{j=1}^{m}S({\cal W}_{j})$, 
$\sharp J_{\ker }(G_{\rho })<\infty $ and
$\sharp \emMin(G_{\rho })<\infty .$ 

\item[{\em (ii)}] 
Let $L\in \emMin(G_{\rho},\CCI )$ with $L\not\subset \cap _{j=1}^{m}S({\cal W}_{j})$.  
Suppose that $L$ is not attracting for $\rho .$ 
Then there exists an element 
$(g, j)\in \supp\, \rho \times \{ 1,\ldots, m\} $ 
such that $g$ is a strict bifurcation element  for 
$(\supp\,\rho , L)$ with 
corresponding suffix $j$. 
Moreover, if $(h,i)\in  \supp\,\rho \times \{ 1,\ldots, m\} $ such that 
 $h$ is a strict bifurcation element  for $(\supp\, \rho , L)$ with 
corresponding suffix $i$, then 
$h$ belongs to the boundary of $\supp\, \rho \cap \{ f_{i,\lambda }\mid \lambda \in \Lambda _{i}\}$, 
where the boundary of  $\supp\,\rho \cap \{ f_{i,\lambda }\mid \lambda \in \Lambda _{i}\}$ 
is taken with respect to the topology in 
$\{ f_{i,\lambda }\mid \lambda \in \Lambda _{i}\} .$ 

\item[{\em (iii)}] 
Suppose that there exists an element $L_{0}\in \emMin(G_{\rho },\CCI )$ which is 
attracting for $\rho .$ Then 
there exists an open neighborhood $V$ of $\rho $ in 
$({\frak M}_{1,c}({\cal Y},\{ {\cal W}_{j}\} _{j=1}^{m}),{\cal O})$ 
such that for each $\rho _{1}\in V$ satisfying that 
$\supp\,\rho\cap \{ f_{j,\lambda }\mid \lambda \in \Lambda _{j}\} 
\subset \mbox{int}(\supp\, \rho _{1}\cap \{ f_{j,\lambda }\mid 
\lambda \in \Lambda _{j}\})$ with respect to the topology in 
$\{ f_{j,\lambda }\mid \lambda \in \Lambda _{j}\} $ for each $j=1,\ldots, m$, 
we have the following {\em (a)(b)(c)(d)}.
\begin{itemize}
\item[{\em (a)}] 
$\sharp \emMin(G_{\rho _{1}},\CCI )=\\ 
\sharp (\{ L'\in \emMin (G_{\rho },\CCI )\mid 
L'\subset \cap _{j=1}^{m}S({\cal W}_{j}) \} ) $ \\ 
$+ \sharp \{ L'\in \emMin(G_{\rho },\CCI )\mid L'\not\subset \cap _{j=1}^{m}S({\cal W}_{j}) \mbox{ and }L' \mbox{ is attracting for }\rho \}.$  
\item[{\em (b)}] 
For each $L\in \emMin(G_{\rho _{1}},\CCI )$ there exists a unique 
$L'\in \emMin(G_{\rho },\CCI )$ with $L'\subset L$ such that  
either ``$L'\subset \cap _{j=1}^{m}S({\cal W}_{j})$''  or  
``$L'\not\subset \cap _{j=1}^{m}S({\cal W}_{j}) \mbox{ and }L' \mbox{ is attracting for }\rho $''.   
\item[{\em (c)}]
In item {\em (b)},  
if $L'\subset \cap _{j=1}^{m}S({\cal W}_{j})$, then $L=L'.$ 
If $L'\not\subset \cap _{j=1}^{m}S({\cal W}_{j})$ and $L'$ is attracting for $\rho $, 
then $L$ is attracting for $\rho _{1}.$ 
\item[{\em (d)}] 
Each $L\in \emMin(G_{\rho _{1}},\CCI )$ with 
$L\not\subset \cap _{j=1}^{m}S({\cal W}_{j})$ is attracting for $\rho _{1}.$ 
\end{itemize}
\item[{\em (iv)}] 
Suppose that each element $L_{0}\in \emMin(G_{\rho },\CCI )$ is not attracting for $\rho .$ 
Let $\rho _{1}\in {\frak M}_{1,c}({\cal Y}, \{ {\cal W}_{j}\} _{j=1}^{m})$ be an element such that 
$\supp\,\rho \cap \{ f_{j,\lambda}\mid \lambda \in \Lambda _{j}\}\subset $ $
\mbox{int}(\supp\,\rho _{1}\cap \{ f_{j,\lambda }\mid \lambda \in \Lambda _{j}\})$ 
with respect to the topology in $\{ f_{j.\lambda }\mid \lambda \in \Lambda _{j}\}$ 
for each $j=1,\ldots, m.$ Then we have the following.   
\begin{itemize}
\item[{\em (a)}] 
If there exists an element $L\in \emMin(G_{\rho },\CCI )$ with $L\subset 
\cap _{j=1}^{m}S({\cal W}_{j})$, then 
$\emMin(G_{\rho _{1}},\CCI )=\{ L\in \emMin(G_{\rho },\CCI )\mid 
L\subset \cap _{j=1}^{m}S({\cal W}_{j})\} .$ 

\item[{\em (b)}] 
If there exists no $L\in \emMin(G_{\rho },\CCI )$ with $L\subset 
\cap _{j=1}^{m}S({\cal W}_{j})$, then 
$\emMin(G_{\rho _{1}},\CCI )=\CCI $ and $J(G_{\rho _{1}})=\CCI .$ 
\end{itemize}
\end{itemize}
  \end{lem}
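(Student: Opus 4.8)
I will handle the four parts in order, reducing (i)--(ii) to tools already in place and building (iii)--(iv) on top of the bifurcation theory.

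For (i): since $\mbox{int}(\G_{\rho}\cap \{ f_{j,\lambda }\mid \lambda \in \Lambda _{j}\})\neq \emptyset $ for every $j$ (hence $\G_{\rho}$ meets each locus and has non-empty interior) and $F(G_{\rho})\neq \emptyset $, Lemma~\ref{l:yrtfaji} gives $J_{\ker }(G_{\rho})\subset \cap _{j=1}^{m}S({\cal W}_{j})$; as $\cap _{j}S({\cal W}_{j})\subset S_{1}({\cal W}_{1})$ is finite by Lemma~\ref{l:sn1sn}, $\sharp J_{\ker }(G_{\rho})<\infty $. Because ${\cal Y}\subset \emRatp$, for any $h\in \mbox{supp}\,\rho$ the infinite set $J(h)$ is contained in $J(G_{\rho})$, so $\sharp J(G_{\rho})\geq 3$, and Proposition~\ref{p:jkgfmf} yields $1\leq \sharp \emMin(G_{\rho})<\infty $. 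For (ii): let $L\in \emMin(G_{\rho},\CCI )$ with $L\not\subset \cap _{j}S({\cal W}_{j})$ be non-attracting; then $L\neq \CCI $ (else $F(G_{\rho})=\emptyset $). If $\sharp L=\infty $ all hypotheses of Lemma~\ref{l:strbif} are met (no peripheral cycle, $\G_{\rho}$ meets each ${\cal W}_{j}$, $J_{\ker }(G_{\rho})\subset \cap _{j}S({\cal W}_{j})$ by (i)), so a strict bifurcation element exists and the boundary assertion is the second conclusion of Lemma~\ref{l:strbif}. The sub-case $\sharp L<\infty $ needs separate care: using $L\not\subset \cap _{j}S({\cal W}_{j})$ together with the structure of the Fatou components meeting $L$ from Proposition~\ref{p:jkgfmf}/Lemma~\ref{l:pwjkf}, I would show that either $L$ is in fact attracting (so the claim is vacuous) or some $z_{0}\in L\cap J(G_{\rho})$ lies outside $S_{1}({\cal W}_{j})$ for a suffix $j$ with $\G_{\rho}$ meeting ${\cal W}_{j}$, which is directly a type-(a) strict bifurcation element; an interior such element would force $\mbox{int}(L)\cap J(G_{\rho})\neq \emptyset $, i.e.\ $L=\CCI $, giving the boundary statement here too.

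For (iii): assume some $L_{0}\in \emMin(G_{\rho},\CCI )$ is attracting for $\rho $. First, ``being attracting'' is an open condition: if $A,B,n$ witness attraction of a minimal set $L'$ for $\G_{\rho}$, then by compactness of $\overline A$ and uniform continuity on $\G_{\rho}^{n}$ the same $A,B,n$ witness attraction for every $\G_{\rho_{1}}$ lying in a small Hausdorff neighborhood of $\G_{\rho}$; moreover such $\rho_{1}$ has $F(G_{\rho_{1}})\neq \emptyset $ (it contains $A$, which is equicontinuous for $G_{\rho_{1}}$ by Montel since everything is eventually pushed into the compactly contained set $B$ and $\sharp (\CCI \setminus A)\geq 3$), so (i) applies to $\rho_{1}$. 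Minimal sets contained in $\cap _{j}S({\cal W}_{j})$ persist unchanged by Lemma~\ref{l:lcbwj}. The substance is the collapse step: every non-attracting $L\in \emMin(G_{\rho},\CCI )$ with $L\not\subset \cap _{j}S({\cal W}_{j})$ has, by (ii), a strict bifurcation element on the boundary of $\G_{\rho}\cap \{ f_{j,\lambda }\} $; once $\G_{\rho}\cap \{ f_{j,\lambda }\} \subset \mbox{int}(\G_{\rho_{1}}\cap \{ f_{j,\lambda }\} )$, this element becomes interior, and the perturbation-of-minimal-sets analysis of \cite{Sadv} (using the no-wandering-domain theorem and the Fatou--Shishikura inequality that also underlie Proposition~\ref{p:jkgfmf}) forces the orbit of every point of $L$ to be attracted to another minimal set, so $L$ is not contained in any minimal set of $G_{\rho_{1}}$. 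Counting the survivors gives (a); the assignment of each $L\in \emMin(G_{\rho_{1}},\CCI )$ to the unique $L'\subset L$ in $\emMin(G_{\rho},\CCI )$ that is either special (in $\cap _{j}S({\cal W}_{j})$) or attracting for $\rho $ gives (b); tracking its type gives (c); and (d) follows from (a)--(c), since a minimal set of $\rho_{1}$ not meeting $\cap _{j}S({\cal W}_{j})$ must sit over an attracting minimal set of $\rho $ and hence be attracting for $\rho_{1}$.

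For (iv): now no $L_{0}\in \emMin(G_{\rho},\CCI )$ is attracting, so the collapse argument of the previous paragraph applies to \emph{all} $L\in \emMin(G_{\rho},\CCI )$ with $L\not\subset \cap _{j}S({\cal W}_{j})$. If some $L\subset \cap _{j}S({\cal W}_{j})$ exists, it persists (Lemma~\ref{l:lcbwj}); being finite it keeps $F(G_{\rho_{1}})\neq \emptyset $, so (i) applies to $\rho_{1}$, and combining this finiteness with the fact that the collapsed sets leave behind no new minimal set yields $\emMin(G_{\rho_{1}},\CCI )=\{ L\in \emMin(G_{\rho},\CCI )\mid L\subset \cap _{j}S({\cal W}_{j})\} $, which is (a). If there is no such $L$, then after the collapse there is no proper minimal set at all, so $\CCI $ is the only minimal set and $\overline{G_{\rho_{1}}(z)}=\CCI $ for every $z$; this is incompatible with $F(G_{\rho_{1}})\neq \emptyset $ (else (i) would give $\sharp J_{\ker }(G_{\rho_{1}})<\infty $ and Proposition~\ref{p:jkgfmf}/Lemma~\ref{l:pwjkf} would produce a proper attracting-type structure on a Fatou component), so $J(G_{\rho_{1}})=\CCI $, which is (b).

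I expect the \textbf{main obstacle} to be the collapse step invoked in (iii) and (iv): making precise that turning a boundary strict bifurcation element into an interior one genuinely destroys the minimal set $L$ (not merely deforming it) and creates no new minimal set. This needs the careful perturbation theory of minimal sets from \cite{Sadv}, together with the finiteness of $\cap _{j}S({\cal W}_{j})$ and of $\emMin(G_{\rho})$ and with Montel's theorem to control $J(G_{\rho_{1}})$; the finite-$L$ sub-case of (ii) and the bookkeeping that $F(G_{\rho_{1}})\neq \emptyset $ at the moments where (i) is reapplied are the secondary technical points.
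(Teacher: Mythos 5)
Your overall architecture matches the paper's: (i) via Lemmas~\ref{l:yrtfaji} and \ref{l:sn1sn} plus Proposition~\ref{p:jkgfmf}; (ii) via Lemma~\ref{l:strbif}; (iii)--(iv) by combining persistence of singular minimal sets (Lemma~\ref{l:lcbwj}) and of attracting ones (\cite[Lemma 5.2]{Sadv}) with the collapse of non-attracting, non-singular minimal sets once the support is enlarged. There is, however, one concrete gap in (ii): you split off the sub-case $\sharp L<\infty$ and only sketch a treatment of it. That sketch is not a proof --- to exhibit a type-(a) strict bifurcation element you must also arrange $g(z)\in L\cap J(G_{\rho })$, which you never do, and the dichotomy you propose (``either $L$ is attracting or \dots'') is not established. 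The correct observation, made at the start of the paper's proof of (ii), is that this sub-case is vacuous: since $L\not\subset \cap _{j=1}^{m}S({\cal W}_{j})$, some $z\in L$ lies outside some $S({\cal W}_{j_{0}})$, so some iterated parameter map $(\lambda _{1},\ldots ,\lambda _{n})\mapsto f_{j_{0},\lambda _{1}}\circ \cdots \circ f_{j_{0},\lambda _{n}}(z)$ is non-constant; the hypothesis that $\G_{\rho }\cap \{ f_{j_{0},\lambda }\mid \lambda \in \Lambda _{j_{0}}\} $ has non-empty interior then forces $G_{\rho }(z)$, hence $L=\overline{G_{\rho }(z)}$, to be infinite. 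With $\sharp L=\infty $ (and $L\neq \CCI $, since $\mbox{int}(J(G_{\rho }))\neq \emptyset $ while $F(G_{\rho })\neq \emptyset $), Lemma~\ref{l:strbif} applies directly.

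A secondary remark on (iii)--(iv): you flag the collapse step as the main obstacle and defer it to ``the perturbation-of-minimal-sets analysis of \cite{Sadv}'' together with the no-wandering-domain theorem and the Fatou--Shishikura inequality. The actual mechanism is lighter and is already contained in your own discussion of (ii): if some $L\in \emMin(G_{\rho _{1}},\CCI )$ contained a non-attracting $L'\in \emMin(G_{\rho },\CCI )$ with $L'\not\subset \cap _{j=1}^{m}S({\cal W}_{j})$, then the strict bifurcation element for $(\G_{\rho },L')$ supplied by (ii) would also be a strict bifurcation element for $(\G_{\rho _{1}},L)$ (since $L'\subset L$ and $J(G_{\rho })\subset J(G_{\rho _{1}})$), yet it lies in $\mbox{int}(\G_{\rho _{1}}\cap \{ f_{j,\lambda }\mid \lambda \in \Lambda _{j}\})$; the boundary clause of (ii) then forces $L=\CCI $, contradicting $F(G_{\rho _{1}})\neq \emptyset .$ No heavy perturbation theory is needed beyond \cite[Lemma 5.2]{Sadv} for the survival and attraction of the perturbed attracting minimal sets.
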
 
\begin{proof}
By Lemma~\ref{l:yrtfaji}, we obtain that $J_{\ker }(G_{\rho})\subset \cap _{j=1}^{m}S({\cal W}_{j}).$ 
 Thus by Lemma~\ref{l:sn1sn}, $\sharp J_{\ker }(G_{\rho})<\infty .$ 
 From  Proposition~\ref{p:jkgfmf}, it follows that 
 $\sharp \Min(G_{\rho},\CCI )<\infty .$  
Thus statement (i) holds. 

To prove statement (ii), 
since $L\not\subset \cap _{j=1}^{m}S({\cal W}_{j})$ and 
since $\mbox{int}(\Gamma _{\rho }\cap \{ f_{j,\lambda }\mid \lambda \in \Lambda _{j}\})\neq \emptyset $ 
with respect to the topology in $\{ f_{j,\lambda }\mid \lambda \in \Lambda _{j}\}$ for each 
$j=1,\ldots, m$, we obtain that $\sharp L=\infty .$ 
Moreover, since int$(\supp\,\rho \cap \{ f_{j, \lambda }\mid \lambda \in \Lambda _{j}\})\neq \emptyset $ 
for each $j$ and since 
$J(G_{\rho })$ is perfect (see \cite{HM}) and 
$J(G_{\rho })\setminus \cup _{j=1}^{m}S_{1}({\cal W}_{j})\neq \emptyset $, 
we have int$(J(G_{\rho }))\neq \emptyset .$  Combining this with the assumption 
$F(G_{\rho })\neq \emptyset $,  we obtain that $\CCI $ cannot be a minimal set for $(G_{\rho },\CCI ).$ 
Thus statement (ii) follows from Lemma~\ref{l:strbif}. 

   To prove statement (iii),  
  let $V$ be a small open neighborhood $V$ of $\rho $ in 
$({\frak M}_{1,c}({\cal Y},\{ {\cal W}_{j}\} _{j=1}^{m}),{\cal O})$ 
 and let $\rho _{1}\in V$ such that 
$\supp\,\rho \cap \{ f_{j,\lambda }\mid \lambda \in \Lambda _{j}\} 
\subset \mbox{int}(\supp\, \rho _{1}\cap \{ f_{j,\lambda }\mid 
\lambda \in \Lambda _{j}\})$.  Taking $V$ small enough, we have that 
for each $\rho '\in V$, $F(G_{\rho '})\neq \emptyset .$    
   By Zorn's lemma, for each $L\in \Min(G_{\rho _{1}},\CCI )$ there exists an element 
   $L'\in \Min(G_{\rho },\CCI)$ with $L'\subset L.$ 
   If $L'\not\subset \cap _{j=1}^{m}S({\cal W}_{j})$ and $L'$ is not attracting for 
   $\rho $, then statement (ii) (for $\rho $ and $\rho _{1}$) implies a contradiction. 
   Hence either $L'\subset \cap _{j=1}^{m}S({\cal W}_{j})$ or $L' $ is attracting for 
   $\rho .$ If $L'\subset \cap _{j=1}^{m}S({\cal W}_{j})$, then Lemma~\ref{l:lcbwj} implies 
   that $L'=L.$ Suppose $L'\not\subset \cap _{j=1}^{m}S({\cal W}_{j})$ and 
   $L'$ is attracting for $\rho .$ Then taking $V$ so small, 
   \cite[Remark 3.6, Lemma 5.2]{Sadv} implies that $L$ is attacting for $\rho _{1}$ and 
   there is no $L''\in \Min (G_{\rho },\CCI)$ with $L''\neq L'$ such that $L''\subset L.$ 
Also, by Lemma~\ref{l:lcbwj} again, for any $K\in \Min(G_{\rho }, \CCI )$ with 
$K\subset \cap _{j=1}^{m}S({\cal W}_{j})$, we have $K\in \Min(G_{\rho _{1}},\CCI )$. 
Moreover, by \cite[Lemma 5.2]{Sadv} again, for any $K\in \Min(G_{\rho }, \CCI )$ with 
$K\not\subset \cap _{j=1}^{m}S({\cal W}_{j})$ which is attracting for $\rho $, 
there exists a unique element $\tilde{K}\in \Min(G_{\rho _{1}}, \CCI )$ close to $K$, 
and this $\tilde{K}$ is attracting for $\rho _{1}.$  
   From these arguments, statement (iii) follows.  
   
We now prove statement (iv). 
Suppose that each $L_{0}\in \Min(G_{\rho },\CCI )$ is not attracting for $\rho .$ 
Let $\rho _{1}\in {\frak M}_{1,c}({\cal Y}, \{ {\cal W}_{j}\} _{j=1}^{m})$ be an element such that 
$\supp\,\rho \cap \{ f_{j,\lambda}\mid \lambda \in \Lambda _{j}\}\subset $ $
\mbox{int}(\supp\,\rho _{1}\cap \{ f_{j,\lambda }\mid \lambda \in \Lambda _{j}\})$ 
for each $j=1,\ldots, m.$ 
Let $L\in \Min(G_{\rho _{1}},\CCI )$. Suppose that 
$L\neq \CCI $ and $L\not\subset \cap _{j=1}^{m}S({\cal W}_{j}).$ 
Then $\emptyset \neq \mbox{int}(L)$, 
$\sharp (\CCI \setminus \mbox{int}(L))\geq 3$ and $G_{\rho _{1}}(\mbox{int}(L))\subset 
\mbox{int}(L).$ Hence 
$\emptyset \neq \mbox{int}(L)\subset F(G_{\rho _{1}}).$ 
Also, $L$ is not attracting for $\rho _{1}$ (otherwise 
by Zorn's lemma there exists an element $L_{0}\in \Min(G_{\rho }, L)$ which is  attracting for $\rho $). 
By applying statement (ii) for $\rho $ and $\rho _{1}$, we obtain a contradiction. 
Thus either $L=\CCI $ or $L\subset \cap _{j=1}^{m}S({\cal W}_{j}). $ 
If $L=\CCI $, then since int$(J(G_{\rho _{1}}))\neq \emptyset $ (see the argument in the proof of (ii)), 
we obtain that $F(G_{\rho _{1}})= \emptyset .$ Hence statements (a) and (b) in (iv) hold.   

   Thus we have proved our lemma. 
\end{proof}      
\begin{df}
\label{df:wms}
Let $\Gamma \in \Cpt(\Rat).$ 
We say that $\Gamma $ is {\bf weakly mean stable} if 
there exist a positive integer $n$ and two non-empty open subsets 
$V_{1,\Gamma }, V_{2,\Gamma }$ of $\CCI $ 
with $\overline{V_{1,\Gamma }}\subset V_{2,\Gamma }$ and $\sharp (\CCI \setminus V_{2,\Gamma })\geq 3$ 
such that the following three conditions hold.
\begin{itemize}
\item[(a)] 
For each $(\gamma _{1},\ldots, \gamma _{n})\in \Gamma ^{n}$, 
$\gamma _{n}\circ \cdots \circ \gamma_{1}(V_{2,\Gamma })\subset V_{1,\Gamma }.$ 

\item[(b)] 
Let $D_{\Gamma }:=\cap _{g\in \langle \Gamma \rangle }g^{-1}(\CCI \setminus V_{2,\Gamma }).$ 
Then $\sharp D_{\Gamma }<\infty .$ 

\item[(c)] 
For each $L\in \Min(\langle \Gamma\rangle ,D_{\Gamma })$ there exist an element $z\in L$ and an element 
$g_{z}\in \langle \Gamma \rangle $ 
such that $z$ is a repelling fixed point of $g_{z}.$

\end{itemize}
Moreover, we say that $\tau \in {\frak M}_{1,c}(\Rat)$ is weakly mean stable 
if supp$\,\tau $ is weakly mean stable. If $\tau \in {\frak M}_{1,c}(\Rat)$ is 
weakly mean stable, then we set 
$V_{i,\tau }=V_{i,\mbox{supp}\,\tau}$ and $D_{\tau }=D_{\mbox{supp}\,\tau}.$ 
 \end{df} 
\begin{rem}
\label{r:mswms}
If $\Gamma \in \Cpt(\Rat)$ is mean stable and 
$\sharp J(\langle \Gamma \rangle )\geq 3$, then 
$\Gamma $ is weakly mean stable. Moreover, 
if $\Gamma \in \Cpt(\Rat)$  is weakly mean stable and 
$D_{\Gamma }=\emptyset $, then $\Gamma $ is mean stable. 
\end{rem} 

 \begin{lem}
\label{l:wmsopen}
Let ${\cal A}:=\{ \Gamma \in \emCpt(\emRat)\mid 
\Gamma \mbox{ is weakly mean stable} \}.$ Then 
${\cal A}$ is open in $\emCpt(\emRat).$ 
In particular, the set ${\cal A}':=\{ \tau \in {\frak M}_{1,c}(\emRat)\mid 
\tau \mbox{ is weakly mean stable}\} $ is open in $({\frak M}_{1,c}(\emRat), {\cal O}).$  
  \end{lem}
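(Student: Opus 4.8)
\textbf{Proof plan for Lemma~\ref{l:wmsopen}.}
The plan is to show directly that the defining conditions (a), (b), (c) of weak mean stability are stable under a small perturbation of $\Gamma$ in the Hausdorff metric on $\Cpt(\Rat)$. So fix $\Gamma_0 \in {\cal A}$ together with witnessing data: an integer $n$, open sets $V_{1,\Gamma_0}, V_{2,\Gamma_0}$ with $\overline{V_{1,\Gamma_0}} \subset V_{2,\Gamma_0}$ and $\sharp(\CCI \setminus V_{2,\Gamma_0}) \geq 3$. I would keep the \emph{same} $n$ and the \emph{same} open sets $V_1 := V_{1,\Gamma_0}$, $V_2 := V_{2,\Gamma_0}$ as candidate witnesses for every $\Gamma$ near $\Gamma_0$, and check each condition survives.

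First, condition (a). The set $K := \{(\gamma_1,\dots,\gamma_n) \in (\Rat)^n \mid \gamma_n \circ \cdots \circ \gamma_1(\overline{V_2}) \subset V_1\}$ is open in $(\Rat)^n$ with respect to the product topology induced by $\kappa$ (uniform convergence on $\CCI$ forces $C^0$-convergence of the composition, and $\overline{V_2}$ is compact with $V_1$ open, so the condition $\gamma_n\cdots\gamma_1(\overline{V_2})\subset V_1$ is open). By hypothesis $\Gamma_0^n \subset K$, and $\Gamma_0^n$ is compact; hence there is an $\varepsilon > 0$ such that the $\varepsilon$-neighborhood of $\Gamma_0^n$ in $(\Rat)^n$ lies in $K$. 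If $\Gamma$ is within $\varepsilon$ of $\Gamma_0$ in the Hausdorff metric, then $\Gamma^n$ lies in the $\varepsilon$-neighborhood of $\Gamma_0^n$, so condition (a) holds for $\Gamma$ with the same $n, V_1, V_2$. In particular $\sharp(\CCI\setminus V_2)\geq 3$ is automatic since $V_2$ is unchanged.

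Next, conditions (b) and (c), which concern $D_\Gamma := \cap_{g \in \langle \Gamma \rangle} g^{-1}(\CCI \setminus V_2)$. The key point is that once (a) holds, $D_\Gamma$ is the maximal forward-invariant compact subset of $\CCI \setminus V_2$, and it is \emph{backward} invariant in the sense that $D_\Gamma = \cap_{h\in\Gamma}h^{-1}(D_\Gamma)$; moreover condition (a) forces $D_\Gamma \subset \CCI\setminus V_2$ to avoid $V_1$ after $n$ steps, which yields that $D_\Gamma$ is a hyperbolic-like set — more precisely I would invoke the argument that $D_\Gamma$, being disjoint from a trapping region, must be finite (this is exactly the mechanism in Lemma~\ref{l:voygvv} / the structure behind weakly mean stable systems, and the finiteness of $D_{\Gamma_0}$ is given). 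To get upper semicontinuity of $\Gamma \mapsto D_\Gamma$, note that if $\Gamma_k \to \Gamma_0$ and $z_k \in D_{\Gamma_k}$ with $z_k \to z$, then for any $g = h_1\circ\cdots\circ h_m \in \langle\Gamma_0\rangle$ one can approximate each $h_i$ by elements of $\Gamma_k$ and pass to the limit to conclude $g(z) \in \CCI\setminus V_2$, so $z \in D_{\Gamma_0}$. Thus for $\Gamma$ close to $\Gamma_0$, $D_\Gamma$ is contained in a small neighborhood of the finite set $D_{\Gamma_0}$. Since each $z \in L \in \Min(\langle\Gamma_0\rangle, D_{\Gamma_0})$ carrying a repelling fixed point $g_z$ of some $g_z \in \langle\Gamma_0\rangle$ is a structurally stable datum — a repelling fixed point persists under perturbation of the generators and stays repelling — and since $D_\Gamma$ near $D_{\Gamma_0}$ forces $D_\Gamma$ finite with the minimal sets of $\langle\Gamma\rangle$ on $D_\Gamma$ lying near those of $\langle\Gamma_0\rangle$, conditions (b) and (c) transfer to $\Gamma$. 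Finally, the last sentence of the lemma follows because supp is continuous from $({\frak M}_{1,c}(\Rat), {\cal O})$ to $\Cpt(\Rat)$ by the very definition of the topology ${\cal O}$, so ${\cal A}'$ is the preimage of the open set ${\cal A}$.

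\textbf{Main obstacle.} The delicate part is handling $D_\Gamma$: I must argue both that $D_\Gamma$ remains finite for $\Gamma$ near $\Gamma_0$ (upper semicontinuity alone gives $D_\Gamma$ small, not obviously finite without re-running the finiteness mechanism) and that condition (c) — existence of a repelling fixed point of some element of $\langle\Gamma\rangle$ inside each minimal set on $D_\Gamma$ — is preserved. The cleanest route is probably to observe that (a) for $\Gamma$ makes $\CCI \setminus V_2$ a region from which $\langle\Gamma\rangle$-orbits are ``eventually trapped unless they stay in $D_\Gamma$'', so that $\langle\Gamma\rangle$ restricted near $D_\Gamma$ behaves like an expanding / repelling system; combined with the persistence of repelling periodic points and the location of $D_\Gamma$ near the finite set $D_{\Gamma_0}$, one reads off (b) and (c). If a slicker citation to \cite[Lemma 3.62]{Splms10} or to the openness of mean stability is available for the trapping-region half, I would use it to shorten the $D_\Gamma$ analysis.
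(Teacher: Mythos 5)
Your overall strategy (keep the same witnesses, show each of (a), (b), (c) survives perturbation, use upper semicontinuity of $\Gamma \mapsto D_\Gamma$ and persistence of repelling fixed points) is the same as the paper's, and your treatment of condition (a) and of the final reduction of ${\cal A}'$ to ${\cal A}$ is essentially fine. One small repair is needed in (a): the hypothesis gives $\gamma_{n,1}(V_2)\subset V_1$, hence only $\gamma_{n,1}(\overline{V_2})\subset\overline{V_1}$, so your claim $\Gamma_0^n\subset K$ with $K$ defined via ``$\gamma_{n,1}(\overline{V_2})\subset V_1$'' is not justified as written; the paper fixes this by interposing an open set $V_1'$ with $\overline{V_1}\subset V_1'\subset\overline{V_1'}\subset V_2$ and perturbing into $V_1'$ instead of $V_1$.

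The genuine gap is in (b) and (c), which you yourself flag as the ``main obstacle'' but do not resolve. Your assertion that condition (a) forces $D_\Gamma$ to be finite (``being disjoint from a trapping region, must be finite'') is false: (b) is an independent hypothesis precisely because (a) alone permits $D_\Gamma$ to be an infinite (e.g.\ Cantor-type) repeller, and upper semicontinuity only locates $D_\Lambda$ inside a small neighborhood of the finite set $D_{\Gamma_0}$, which is compatible with $D_\Lambda$ being infinite. The paper's actual mechanism is the one you need: for each minimal set $L\subset D_{\Gamma_0}$ with repelling fixed point $z_L$ of $g_L$, take a linearizing coordinate and a fundamental annulus $H_{L,\Lambda,1}\setminus H_{L,\Lambda,2}$ around the perturbed repelling fixed point $z_{L,\Lambda}$, so that every point of $B(z_{L,\Lambda},\epsilon)\setminus\{z_{L,\Lambda}\}$ has a $g_{L,\Lambda}$-iterate landing in that annulus; the annulus sits inside the compact set $K_\delta=\CCI\setminus B(D_{\Gamma_0},\delta)$, every point of which is sent into $V_2$ by some element of $\langle\Lambda\rangle$. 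For each $w\in D_{\Gamma_0}$ one also perturbs an element $h_w$ with $h_w(w)=z_{L_w}$ to $h_{w,\Lambda}$ mapping $B(w,\delta)$ into $B(z_{L_w,\Lambda},\epsilon)$. Then any $z_0\in D_\Lambda$ lies in some $B(w,\delta)$ and $h_{w,\Lambda}(z_0)$ must equal $z_{L_w,\Lambda}$ exactly (otherwise its orbit reaches the fundamental annulus and then $V_2$, contradicting $z_0\in D_\Lambda$). Hence $D_\Lambda\subset\bigcup_{w,L}h_{w,\Lambda}^{-1}(\{z_{L,\Lambda}\})$, which is finite because rational maps are finite-to-one; and since $D_\Lambda$ is forward invariant, every minimal set in $D_\Lambda$ contains some $z_{L,\Lambda}$, which is a repelling fixed point of $g_{L,\Lambda}\in\langle\Lambda\rangle$, giving (c). Without this exact-preimage argument your proof of (b) and (c) does not go through.
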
 
\begin{proof}
Let $\Gamma \in {\cal A}.$ For this mean stable $\Gamma $, 
let $V_{1,\Gamma },V_{2,\Gamma },n$  as in Definition~\ref{df:wms}. 
Let $V_{1,\Gamma }'$ be an open subset of $\CCI $ such that 
$\overline{V_{1,\Gamma }}\subset V_{1,\Gamma }'\subset \overline{V_{1,\Gamma }'}\subset 
V_{2,\Gamma }.$ 
 Then, 
 since the topology in Rat is the compact-open topology, 
  there exists a neighborhood ${\cal U}$ of $\Gamma $ in 
 $\Cpt(\Rat)$ 
 such that for each $\Lambda \in {\cal U}$ and for each $(\gamma _{1},
 \ldots, \gamma _{n})\in \Lambda^{n}$, we have 
 $\gamma_{n}\circ \cdots \circ \gamma _{1}(V_{2,\Gamma })\subset V_{1,\Gamma }'.$   
 
 If $D_{\Gamma }=\emptyset $ (i.e., $\Gamma $ is mean 
 stable), then by \cite[Lemma 5.7]{Sadv}, there exists an 
 open neighborhood ${\cal B}$ of $\Gamma $ in 
 $\Cpt(\Rat)$ such that for each $\Lambda\in {\cal B}$, 
 $\Lambda $ is mean stable and weakly mean stable. Thus, 
 we may assume that $D_{\Gamma }\neq \emptyset .$ 
 
 For each $L\in \Min(\langle \Gamma \rangle ,D_{\G })$, let $z_{L}\in L$ and $g_{L}\in 
 \langle \G \rangle $ such that 
$z_{L}$ is a repelling fixed point of $g_{L}.$ 
Let $\epsilon >0$ be a small number. 
By considering linearizing coordinate for $g_{L}$ at $z_{L}$ and 
the fundamental region for $g_{L}$ near $z_{L},$  
it is easy to see that for each $L\in \Min(\langle \G \rangle , D_{\G })$ 
there exist small simply connected open neighborhoods 
$H_{L,\G ,1}, H_{L,\G ,2}$ of $z_{L}$ with $\overline{H_{L,\G,2}}\subset 
H_{L,\G ,1}$ such that for each $z\in B(z_{L},\epsilon )\setminus 
\{ z_{L}\} $ there exists an element $n\in \NN $ such that 
$g_{L}^{n}(z)\in H_{L,\G ,1}\setminus H_{L,\G ,2}.$ 
 
Shrinking ${\cal U}$ if necessary, we may assume that 
for each $\Lambda \in {\cal U}$ and  for each $L\in \Min(\langle \G \rangle  , D_{\Gamma })$
 there exist $z_{L,\Lambda }\in B(z_{L},\frac{\epsilon }{2}) $ and  $g_{L,\Lambda }\in 
 \langle \Lambda \rangle $ 
such that $z_{L, \Lambda }$ is a repelling fixed point of $g_{L,\Lambda }$ and such that  
$g_{L,\Lambda }\rightarrow g_{L}$ and $z_{L,\Lambda }\rightarrow z_{L}$ 
as $\Lambda \rightarrow \G .$ Since the linearizing coordinate for a repelling 
fixed point 
is continuous 
on $\Rat $, if ${\cal U}$ is small enough, then 
for each $\Lambda \in {\cal U}$, for each $L\in \Min(\langle \G \rangle , D_{\G })$ there 
exist two small simply connected open neighborhoods $H_{L,\Lambda, 1},  
H_{L,\Lambda ,2}$ of $z_{L,\Lambda }$ with $\overline{H_{L,\Lambda ,2}}\subset 
H_{L,\Lambda ,1}$ such that the following hold.
\begin{enumerate}
\item For each $z\in B(z_{L,\Lambda },\epsilon )\setminus 
\{ z_{L,\Lambda }\}$ there exists an element $n\in \NN $ 
with 
$g_{L,\Lambda }^{n}(z)\in H_{L,\Lambda ,1}\setminus H_{L,\Lambda ,2}.$

\item 
There exist two small numbers $\epsilon _{1},\epsilon _{2}>0$ 
with $\epsilon _{1}<\epsilon _{2}<\frac{1}{3}\min \{ d(a,b)\mid a,b\in D_{\G }, a\neq b\} $ (if $\sharp D_{\G}=1$ then 
we set $\min \{ d(a,b)\mid a,b\in D_{\G}, a\neq b\}=1$) 
such that 
for each $\Lambda \in {\cal U}$ and for each $L\in \Min(\langle \G \rangle, D_{\G })$, 
\begin{equation}
\label{eq:bze1h}
B(z_{L},\epsilon _{1})\subset H_{L,\Lambda ,2}, \ H_{L,\Lambda ,1}\subset 
B(z_{L},\epsilon _{2}).
\end{equation} 

\end{enumerate} 
For each $w\in D_{\G }$, let $L_{w}\in \Min(\langle \G \rangle, D_{\G })$ 
be an element such that $\langle \G \rangle (w)\cap L_{w}\neq \emptyset .$ 
Moreover, let $h_{w}\in \langle \G \rangle $ such that $h_{w}(w)=z_{L_{w}}.$ 
Taking ${\cal U}$ small enough, there exists a $\delta >0$ with 
\begin{equation}
\label{eq:deltale1}
\delta <\epsilon _{1}, \delta <\frac{1}{3}\min \{ d(a,b)\mid a,b\in D_{\G },a\neq b\}.
\end{equation}
such that for each $\Lambda \in {\cal U} $ and for each $w\in D_{\G }$, 
there exists an element $h_{w,\Lambda }\in \langle \Lambda \rangle $  close to $h_{w}$ 
such that 
\begin{equation}
\label{eq:hwrhob}
h_{w,\Lambda }(B(w,\delta ))\subset B(z_{L_{w}}, \frac{\epsilon }{2})\subset 
B(z_{L_{w},\Lambda },\epsilon ).
\end{equation}

Let $K_{\delta }=\CCI \setminus B(D_{\G }, \delta ).$  
Then for each $z\in K_{\delta }$ there exists an element $\alpha_{z}\in \langle \G \rangle $ 
such that $\alpha_{z}(z)\in V_{2,\G }.$ Since $K_{\delta }$ is compact, 
there exist a finite set $\{ z_{1},\ldots, z_{q}\} $ in $K_{\delta }$, 
a number $\epsilon _{0}>0$ and elements $\beta_{1},\ldots, \beta_{q}\in \langle \G \rangle $ 
such that 
\begin{equation}
\label{eq:kdeltasub}
K_{\delta }\subset \cup _{j=1}^{q}B(z_{j},\epsilon _{0})
\end{equation}
 and $\beta _{j}(\overline{B(z_{j},\epsilon _{0})})\subset V_{2,\G }$ for all 
$j=1,\ldots, q.$ 
Hence shrinking ${\cal U}$  if necessary, 
we have that 
for each $\Lambda \in {\cal U}$, there exist elements 
$\beta_{1,\Lambda },\ldots ,\beta_{q,\Lambda }\in \langle \Lambda \rangle $ 
such that 
\begin{equation}
\label{eq:betajrho}
\beta_{j,\Lambda }(B(z_{j},\epsilon _{0}))\subset V_{2,\G},  
 \mbox{ for all }j=1,\ldots, q.
\end{equation} 
We now let $\Lambda \in {\cal U}$ and let $z_{0}\in \cap _{g\in \langle \Lambda \rangle }
g^{-1}(\CCI \setminus V_{2,\G }).$ 
Then by (\ref{eq:kdeltasub}) and (\ref{eq:betajrho}), we have $z_{0}\not\in K_{\delta }.$ 
Thus $z_{0}\in B(D_{\G },\delta ).$ 
Moreover, by  (\ref{eq:bze1h}) and (\ref{eq:deltale1}), we have 
$H_{L,\Lambda ,2}\setminus H_{L,\Lambda ,1}\subset K_{\delta }$ for all 
$L\in \Min(\langle \G \rangle , D_{\G })$ and for all $\Lambda \in {\cal U}.$ 
Combining this with (\ref{eq:hwrhob}) (\ref{eq:kdeltasub}) 
(\ref{eq:betajrho}), we obtain that 
taking an element $w\in D_{\G }$ with $d(z_{0},w)<\delta $, we have 
$h_{w,\Lambda }(z_{0})=z_{L_{w},\Lambda }$ for all $\Lambda \in {\cal U}.$ It follows that 
\begin{equation}
\label{eq:capggrho}
\bigcap _{g\in \langle \Lambda \rangle }g^{-1}(\CCI \setminus V_{2,\G })
\subset \bigcup _{L\in \Min(\langle \G \rangle, D_{\G }), 
w\in D_{\G }}
h_{w,\Lambda }^{-1}(z_{L,\Lambda }) \ \ \mbox{ for all }\Lambda \in {\cal U}. 
\end{equation}
Since the right hand side of the above is a finite set, we obtain that 
$\sharp D_{\Lambda }<\infty $, where 
$D_{\Lambda }:=\cap _{g\in \langle \Lambda \rangle }g^{-1}(\CCI \setminus V_{2,\G }).$ 
Moreover, by (\ref{eq:capggrho}), 
we have that for each $K\in \Min(\langle \Lambda \rangle , D_{\Lambda })$ there exist 
an element $z\in K$ and an element $\zeta_{z}\in \langle \Lambda \rangle $ such that 
$z$ is a repelling fixed point of $\zeta _{z}.$ Thus 
$\Lambda $ is weakly mean stable. 
Hence we have proved our lemma.  
\end{proof}   
 
\begin{lem}
\label{l:wmsdtjk}
Let $\G \in \emCpt(\emRat)$ be weakly mean stable. 
Let $D_{\G }$ be as in Definition~\ref{df:wms}. Then 
$D_{\G }=J_{\ker }(\langle \G \rangle )$, $\sharp (J_{\ker }(\langle \G \rangle))<\infty $, 
and for each $z\in F(\langle \G \rangle )$, we have 
\vspace{-2mm} 
$$\overline{\langle \G \rangle (z)}\cap 
\left(\bigcup _{L\in \emMin(\langle \G \rangle ,\CCI ), 
L\not\subset J_{\ker }(\langle \G \rangle )}L\right)\neq \emptyset .$$ 
In particular, if $\tau \in {\frak M}_{1,c}(\emRat)$ is weakly mean stable and
 $\sharp J(G_{\tau })\geq 3$, 
then statements (i)-(vii) in Theorem~\ref{t:zfggzcc} hold 
for $\tau $.  

\end{lem}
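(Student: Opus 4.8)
The plan is to unwind Definition~\ref{df:wms}, identify the finite set $D_\Gamma $ appearing there with $J_{\ker }(\langle \Gamma \rangle )$, deduce the orbit‑accumulation property, and then quote Theorem~\ref{t:zfggzcc}. Write $G=\langle \Gamma \rangle $ and let $V_{1,\Gamma },V_{2,\Gamma },n$ be as in Definition~\ref{df:wms}. First I would prove that $V_{2,\Gamma }\subset F(G)$ and that $D_\Gamma :=\cap _{g\in G}g^{-1}(\CCI \setminus V_{2,\Gamma })$ is $G$‑forward invariant and disjoint from $V_{2,\Gamma }$. Indeed, iterating condition (a) and using $\overline{V_{1,\Gamma }}\subset V_{2,\Gamma }$, every composition of $kn$ elements of $\Gamma $ ($k\geq 1$) carries $V_{2,\Gamma }$ into $V_{1,\Gamma }$, so the subsemigroup generated by the $n$‑fold compositions is, on $V_{2,\Gamma }$, a family omitting the $\geq 3$ points of $\CCI \setminus V_{1,\Gamma }$, hence normal there by Montel's theorem; since a general $g\in G$ factors as $g=E\circ B$ with $B$ a composition of $n\lfloor \mbox{len}(g)/n\rfloor $ elements (so $B(V_{2,\Gamma })\subset V_{1,\Gamma }\subset V_{2,\Gamma }$ when $\mbox{len}(g)\geq n$) and $E$ a composition of fewer than $n$ elements of $\Gamma $ (and the set of such $E$ is compact in $\Rat $, hence equicontinuous on $\CCI $), a subsequence argument gives that $\{ g|_{V_{2,\Gamma }}:g\in G\} $ is normal on $V_{2,\Gamma }$, i.e.\ $V_{2,\Gamma }\subset F(G)$. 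Forward invariance of $D_\Gamma $ is immediate from $g\circ h\in G$, and $D_\Gamma \cap V_{2,\Gamma }=\emptyset $ since a point in both, pushed by a composition of $n$ elements of $\Gamma $, would lie in $V_{1,\Gamma }$ and in $\CCI \setminus V_{2,\Gamma }$ at once.

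Next I would prove $D_\Gamma =J_{\ker }(G)$. Because $V_{2,\Gamma }\subset F(G)$ forces $J(G)\subset \CCI \setminus V_{2,\Gamma }$, any $z\in J_{\ker }(G)$ has $g(z)\in J(G)\subset \CCI \setminus V_{2,\Gamma }$ for all $g\in G$, so $z\in D_\Gamma $. For the reverse inclusion, take $z\in D_\Gamma $ and $g\in G$ and put $w:=g(z)\in D_\Gamma $; then $\overline{G(w)}$ is a finite (being $\subset D_\Gamma $), forward‑invariant set, hence contains some $L\in \Min (G,D_\Gamma )$, and by condition (c) there are $z_0\in L$ and $g_0\in G$ with $z_0$ a repelling fixed point of $g_0$, so $z_0\in J(g_0)\subset J(G)$ (equicontinuity of $G$ entails that of $\{ g_0^m\} _m$). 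Since $\overline{G(w)}=G(w)$, we have $z_0=h(w)$ for some $h\in G$; were $w\in F(G)$, forward invariance of $F(G)$ would give $z_0\in F(G)$, a contradiction, so $w\in J(G)$. As $g$ was arbitrary, $z\in J_{\ker }(G)$. Thus $D_\Gamma =J_{\ker }(G)$, and $\sharp J_{\ker }(G)=\sharp D_\Gamma <\infty $ by condition (b).

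Then I would handle the orbit‑accumulation claim. Fix $z\in F(G)$; the set $\overline{G(z)}$ is non‑empty, compact and $G$‑invariant, so (Remark~\ref{r:minimal}) contains some $L_1\in \Min (G,\CCI )$. If $L_1\subset D_\Gamma $ then $L_1\in \Min (G,D_\Gamma )$, so condition (c) supplies $p_0\in L_1$ and $g_1\in G$ with $p_0$ a repelling fixed point of $g_1$; choosing $h_k\in G$ with $h_k(z)\to p_0$ and, using $z\in F(G)$, a subsequence with $h_{k_i}$ converging locally uniformly near $z$, the family $\{ g_1^m\circ h_{k_i}\} _{m,i}\subset G$ is equicontinuous at $z$, yet for fixed $i$ the set $h_{k_i}(B(z,\delta ))$ is a non‑degenerate open set near $p_0$ whose $g_1^m$‑images expand (repelling fixed point), so $\sup _m\mbox{diam}(g_1^m(h_{k_i}(B(z,\delta ))))$ stays bounded below, contradicting equicontinuity. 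Hence $L_1\not\subset D_\Gamma =J_{\ker }(G)$, and $\overline{G(z)}\cap \big(\bigcup _{L\in \Min (G,\CCI ),\,L\not\subset J_{\ker }(G)}L\big)\supset L_1\neq \emptyset $. Finally, if $\tau \in {\frak M}_{1,c}(\Rat)$ is weakly mean stable then $\Gamma _\tau =\mbox{supp}\,\tau $ is weakly mean stable with $\langle \Gamma _\tau \rangle =G_\tau $, so the above gives $\sharp J_{\ker }(G_\tau )<\infty $ and the orbit‑accumulation property; combined with the hypothesis $\sharp J(G_\tau )\geq 3$, all hypotheses of Theorem~\ref{t:zfggzcc} are met, and its statements (i)--(vii) hold for $\tau $.

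The hardest part will be the first step: condition (a) only controls compositions of \emph{exactly} $n$ factors, so $V_{2,\Gamma }$ need not be $G$‑invariant, and one must combine the normality of the $n$‑fold‑composition subsemigroup on $V_{2,\Gamma }$ with the compactness (hence equicontinuity on $\CCI $) of the bounded‑length part of $G$ to obtain $V_{2,\Gamma }\subset F(G)$; the dual fact $D_\Gamma \cap V_{2,\Gamma }=\emptyset $ and the repelling‑fixed‑point/non‑equicontinuity arguments of the later steps are then routine.
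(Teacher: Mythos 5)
Your first two steps are sound and essentially reproduce the paper's argument: the paper also identifies $D_{\G }$ with $J_{\ker }(\langle \G \rangle )$ by showing $D_{\G }\subset J(\langle \G \rangle )$ via condition (c) plus forward invariance, and $J_{\ker }(\langle \G \rangle )\subset D_{\G }$ via $J(\langle \G \rangle )\subset \CCI \setminus V_{2,\G }$; your Montel/bounded-length-tail argument supplies the detail $V_{2,\G }\subset F(\langle \G \rangle )$ that the paper leaves implicit. The problem is your third step.

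There you argue: if $L_{1}\subset \overline{G(z)}$ is a minimal set with $L_{1}\subset D_{\G }$, pick the repelling fixed point $p_{0}$ of some $g_{1}\in G$ given by (c), take $h_{k}(z)\rightarrow p_{0}$, and derive a contradiction with equicontinuity at $z$ because $\sup _{m}\mbox{diam}(g_{1}^{m}(h_{k_{i}}(B(z,\delta ))))$ ``stays bounded below.'' This step uses only condition (c) and none of (a),(b), so if it were valid it would prove, for an arbitrary rational semigroup $G$, that $\overline{G(z)}$ cannot contain a repelling fixed point of an element of $G$ when $z\in F(G)$. That statement is false. Take $G=\langle f,g\rangle $ with $f(\zeta )=\alpha \zeta $, $g(\zeta )=\beta \zeta $, $|\alpha |<1<|\beta |$. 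Then $F(G)=\CCI \setminus \{ 0,\infty \} $ (the family $\{ c\zeta \} _{c\in \CC \setminus \{0\}}$ is equicontinuous on compact subsets of $\CC \setminus \{0\}$ in the spherical metric), $z=1\in F(G)$, $f^{m}(1)\rightarrow 0$, so $0\in \overline{G(1)}$, and $0$ is a repelling fixed point of $g$. Tracing your argument in this example shows exactly where it breaks: the convergent subsequence $h_{k_{i}}=f^{k_{i}}$ has \emph{constant} limit $p_{0}=0$, the sets $W_{i}=h_{k_{i}}(B(1,\delta ))=B(\alpha ^{k_{i}},|\alpha |^{k_{i}}\delta )$ do not contain $p_{0}$, and $\sup _{m}\mbox{diam}_{s}(g^{m}(W_{i}))\asymp \delta $. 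That quantity is bounded below for fixed $\delta $, but it tends to $0$ with $\delta $, so there is no contradiction with equicontinuity at $z$ (equicontinuity only forces the supremum to be small for small $\delta $). Your expansion argument is only conclusive when the limit of $h_{k_{i}}$ is non-constant (so that $h_{k_{i}}(B(z,\delta ))$ contains a fixed neighborhood of $p_{0}$); in the constant-limit case with $p_{0}\notin W_{i}$ the images can escape the linearizing neighborhood while remaining uniformly small.

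The paper closes this step differently, and you already have all the ingredients: since $z\in F(\langle \G \rangle )$ and $D_{\G }=J_{\ker }(\langle \G \rangle )\subset J(\langle \G \rangle )$, we get $z\notin D_{\G }$, hence some $g\in \langle \G \rangle $ has $w:=g(z)\in V_{2,\G }$. Condition (a) then traps the orbit of $w$: every $h\in \langle \G \rangle $ of length $\geq n$ factors as $E\circ B$ with $B$ a composition of a positive multiple of $n$ generators and $E$ of length $<n$, so $h(w)\in E(\overline{V_{1,\G }})$, and a compactness argument shows $\overline{\langle \G \rangle (w)}\cap D_{\G }=\emptyset $ (a point $p$ of $D_{\G }$ in this closure would yield $h'(p)\in V_{1,\G }\subset V_{2,\G }$ for a suitable $h'$ of length $\leq n$, contradicting $p\in D_{\G }$). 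Any minimal set inside $\overline{\langle \G \rangle (w)}\subset \overline{\langle \G \rangle (z)}$, which exists by Zorn's lemma, is therefore not contained in $J_{\ker }(\langle \G \rangle )$. You should replace your repelling-fixed-point argument by this use of conditions (a),(b); the final reduction to Theorem~\ref{t:zfggzcc} is then fine as you state it.
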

\begin{proof}
By definition of $D_{\G }$, we have 
$\langle \G \rangle (D_{\G })\subset D_{\G }.$ 
Also, by condition (c) in Definition~\ref{df:wms}, 
we have $D_{\G }\subset J(\langle \G \rangle ).$ 
Thus $D_{\G }\subset J_{\ker }(\langle \G \rangle ).$  
Let $V_{2,\G }$ be as in Definition~\ref{df:wms}. 
Then $V_{2,\G }\subset F(\langle \G \rangle ).$  
Since $\langle \G \rangle (J_{\ker }(\langle \G \rangle ))\subset 
J_{\ker }(\langle \G \rangle ) 
\subset J(\langle \G \rangle )\subset \CCI \setminus V_{2,\G }$,  
we obtain $J_{\ker }(\langle \G \rangle )\subset D_{\G }.$ 
Hence we have  $D_{\G }=J_{\ker }(\langle \G \rangle ).$  
By definition of weakly mean stable elements again, 
we  have $\sharp D_{\G }<\infty .$ Thus $\sharp J_{\ker }(\langle \G \rangle )<\infty .$ 
Let $V_{2,\G }$ be as in Definition~\ref{df:wms} for $\G .$  
Let $z\in F(\langle \G \rangle ).$ Since 
$z\not\in J_{\ker }(\langle \G \rangle )$ and 
$D_{\G }=J_{\ker }(\langle \G \rangle )$, 
it follows that $\langle \G \rangle (z)\cap V_{2,\G }\neq \emptyset .$ 
Thus $\overline{\langle \G \rangle (z)}\cap 
 (\bigcup _{L\in \Min(\langle \G \rangle ,\CCI ), 
 L\not\subset J_{\ker }(\langle \G \rangle )}L)\neq \emptyset .$ 
If
$\tau \in {\frak M}_{1, c}(\Rat)$ is weakly mean stable and  $\sharp J(G_{\tau })\geq 3$, 
then combining the above argument and Theorem~\ref{t:zfggzcc} implies that statements (i)--(vii) in Theorem~\ref{t:zfggzcc} 
hold for $\tau. $   
\end{proof} 
\begin{lem}
\label{l:unifea}
Let $\G \in \emCpt(\emRat).$ Let $G=\langle \G \rangle .$ 
Let $L\in \emMin(G,\CCI )$ with $\sharp L<\infty .$ 
Then we have the following. 
\begin{itemize}
\item[{\em (i)}] 
Suppose that for each $z\in L$ and for each $g\in G$ with $g(z)=z$, we have 
$\| Dg_{z}\|_{s}>1.$ Then 
there exist a constant $C_{1}>0$ and a constant $\alpha >1$ such that 
for each $\gamma \in \GN $, for each $n\in \NN $ and for each $z\in L,$ we have  
$\| D(\gamma _{n,1})_{z}\| _{s}\geq C_{1}\alpha ^{n}.$  

\item[{\em (ii)}] 
Suppose that for each $z\in L$ and for each $g\in G$ with $g(z)=z$, we have 
$\| Dg_{z}\|_{s}<1.$ Then 
there exist a constant $C_{2}>0$ and a constant $\beta <1$ such that 
for each $\gamma \in \GN $, for each $n\in \NN $ and for each $z\in L,$ we have  
$\| D(\gamma _{n,1})_{z}\| _{s}\leq C_{2}\beta ^{n}.$ 
\end{itemize}

\end{lem}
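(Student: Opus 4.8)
\textbf{Proof proposal for Lemma~\ref{l:unifea}.}

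The plan is to prove both statements by a standard compactness-plus-chain-rule argument, exploiting the finiteness of $L$ and the fact that $L$ is a minimal set, hence partitions nicely under the subsemigroups $G^{p}$. First I would recall from the discussion preceding Definition~\ref{d:celyap} (and from \cite{Du}) that, writing $m$ for the period of the finite Markov chain on $L$, we have $\sharp\Min(G^{m},L)=m$ and $L=\cup_{j=1}^{m}L_{j}$ with $L_{j}\in\Min(G^{m},L)$ and $h(L_{j})\subset L_{j+1}$ for every $h\in\G$ (indices mod $m$). The key point for (i) is this: if $z\in L_{j}$ and $g\in G^{m}$ with $g(z)\in L_{j}$, then by minimality of $L_{j}$ under $G^{m}$ the orbit $\{g^{k}(z)\}$ is finite, so some iterate $g^{N}$ fixes a point $w$ in the $G^{m}$-orbit of $z$; by the chain rule $\|D(g^{N})_{w}\|_{s}$ is a product of factors of the form $\|Dh_{\cdot}\|_{s}$ for $h\in\G$ evaluated along points of $L$, and by hypothesis $\|D(g^{N})_{w}\|_{s}=\prod$ over one cycle $>1$. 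The real content is to upgrade "every periodic point is expanding" to "every composition along $L$ is uniformly expanding", which I would do as follows.

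The core step: consider the finite set $P:=\{(z,h)\mid z\in L,\ h\in\G,\ \|Dh_{z}\|_{s}=0\}$ — note this may be nonempty in general, but in case (i) I first argue it is empty. Indeed if $Dh_{z}=0$ for some $z\in L$, $h\in\G$, then iterating around the finite $L$ one produces a periodic point with a zero derivative in its cycle, contradicting $\|Dg_{z}\|_{s}>1$ for fixed points (after passing to $G^{m}$ and then to a power fixing a point of the cycle). So in case (i), $\log\|Dh_{z}\|_{s}$ is a well-defined real number for all $(z,h)\in L\times\G$. Now define, for each $j$ and each $z\in L_{j}$, the quantity $E_{k}(z):=\inf\{\|D(g)_{z}\|_{s}\mid g\in G^{km},\ g(z)\in L_{j}\}$; I would show $E_{k}(z)\to\infty$ and in fact grows at least geometrically. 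The cleanest route is via the Lyapunov exponent: every $g\in G^{km}$ with $g(z)=z'\in L_{j}$ decomposes into "excursions", and because $L$ is finite, any long word must traverse a fixed point's cycle many times; quantitatively, I would fix a large $k_{0}$ such that every $g\in G^{k_{0}m}$ with $g(z)\in L_{j}$ satisfies $\|D(g)_{z}\|_{s}\geq 2$ — this follows by a pigeonhole / De~Bruijn-graph argument on the finite directed graph whose vertices are points of $L$ and edges are pairs $(z,h(z))$ labelled by $\log\|Dh_{z}\|_{s}$: every long enough directed walk that returns contains a closed subwalk, and every closed walk has positive weight by hypothesis, with the minimum positive weight over the finitely many simple cycles being bounded below by some $c>0$; hence a walk of length $\ell$ accumulates weight at least $(\ell/|L|-1)c$, giving geometric growth. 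Then for general $n$, write $n=qk_{0}m+r$, split $\gamma_{n,1}=\gamma_{n,qk_{0}m+1}\circ\gamma_{qk_{0}m,1}$, apply the chain rule, bound the first factor by $2^{q}$ up to a uniform constant absorbing the derivatives over the finitely many words of length $<k_{0}m$ evaluated at the finitely many points of $L$, and set $\alpha:=2^{1/(k_{0}m)}>1$, $C_{1}$ the resulting uniform constant. The uniformity over $\gamma\in\GN$ and $z\in L$ is automatic because $L$, $\G$-words of bounded length, and simple cycles are all finite sets.

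Statement (ii) is entirely symmetric: replace "$>1$" by "$<1$", "$\inf$"/"$\geq$" by "$\sup$"/"$\leq$", "positive weight" by "negative weight"; one does not even need the zero-derivative discussion, since $\|Dh_{z}\|_{s}$ small (including $0$) only helps the upper bound, so the De~Bruijn-graph weights $\log\|Dh_{z}\|_{s}\in[-\infty,0)$ along any closed walk sum to something bounded above by a negative constant, yielding $\|D(\gamma_{n,1})_{z}\|_{s}\leq C_{2}\beta^{n}$ with $\beta<1$. I expect the main obstacle to be the combinatorial bookkeeping in case (i): verifying that "every closed walk in the weighted finite graph on $L$ has positive total weight" genuinely follows from "every fixed point of every $g\in G$ in $L$ is expanding", because a closed walk in the graph corresponds to a periodic point of some composition $g\in G$, and one must pass through $G^{m}$ and then to a suitable power to land on an honest fixed point whose multiplier hypothesis applies — and one must handle the case where the periodic point lies on a nontrivial cycle permuted by the $L_{j}$'s. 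Once that lemma about closed walks is in hand, the geometric-growth estimate and the final assembly via chain rule and a finite maximum are routine.
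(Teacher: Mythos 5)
Your proof is correct but structured quite differently from the paper's. The paper proves (i) by induction on the cardinality $k$ of an orbit-invariant subset $H\subset L$: for $\sharp H=k+1$ it records the return times of the orbit to the base point $z$, applies the inductive hypothesis to the excursions into the $k$-point set $H\setminus\{z\}$, and splits into three cases according to whether some excursion is long or there are many short returns; the expansion then comes either from the inductive hypothesis or from a power of a constant $B>1$ bounding short return words from below. You instead work directly on the finite weighted transition graph on $L$ and decompose an arbitrary walk into a simple path plus simple cycles, each cycle corresponding to an element of $G$ fixing a point of $L$ (no passage to powers or to $G^{m}$ is actually needed: the composition read off from the starting vertex of a closed walk already fixes that vertex) and hence carrying weight uniformly bounded away from $0$. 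Your route is shorter, treats (i) and (ii) symmetrically, and makes the mechanism transparent — linear growth of the logarithmic weight because every closed subwalk contributes a definite amount of a fixed sign — whereas the paper's induction requires tracking three separate case estimates. Two small points to tighten. First, since $\Gamma$ may be infinite, "the minimum positive weight over the finitely many simple cycles" needs a compactness argument: for a fixed cyclic vertex sequence the set of tuples $(h_{1},\dots,h_{k})\in\Gamma^{k}$ realizing it is a closed, hence compact, subset of $\Gamma^{k}$, and the weight depends continuously on the tuple (using that $Dh_{z}\neq 0$ on $L$ in case (i)), so the infimum (resp.\ supremum in (ii)) is attained and is positive (resp.\ negative) by the hypothesis; minimizing over the finitely many vertex sequences then gives your $c>0$. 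Second, in (ii) the individual edge weights $\log\| Dh_{z}\| _{s}$ need not lie in $[-\infty ,0)$ — a single map may expand at a point of $L$ even when every return composition contracts — so what you must (and implicitly do) use is only that closed walks have weight bounded above by a negative constant, together with the finite bound $\max _{z\in L, h\in \Gamma }\log \| Dh_{z}\| _{s}<\infty $ to control the leftover simple-path part.
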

\begin{proof}
We first prove statement (i). 
We show the following claim. \\ 
Claim 1. Under the assumptions of our lemma and statement (i), 
let $k\in \NN $ with $1\leq k\leq \sharp L.$ 
Then there exist a constant $A_{k}>0$ and a constant $\alpha _{k}>1$ 
such that for any subset $H\subset L$ with $\sharp H=k$, 
for any $n\in \NN $,  for any $z\in H$ and for any $\gamma \in \GN $, 
if  $\gamma _{j,1}(z)\in H$ for each $j=1,\ldots,  n$, 
then $\| D(\gamma _{n,1})_{z}\| _{s}\geq A_{k}\alpha _{k}^{n}.$ 

To prove this claim, we use the induction on $k.$ 
Apparently, the statement of the conclusion of the claim holds for $k=1.$  
Suppose that the statement of the conclusion of the claim holds for $k$, where 
$1\leq k<\sharp L.$ 
Let $u\in \NN $ with $u\geq \sharp L +1$ such that 
for each $u'\in \NN $ with $u'\geq u$, we have 
\begin{equation}
\label{eq:mwldg}
(\min _{w\in L}\| Dg_{w}\| _{s})A_{k}\alpha _{k}^{u'}\geq 2.
\end{equation} 
For this $u$, let 
\begin{equation}
\label{eq:bmdrr}
B:=\min \{ \| D(\rho _{r}\circ \cdots \circ \rho _{1})_{w}\| _{s}
\mid w\in L, r\leq u, (\rho _{1},\ldots, \rho _{r})\in \G ^{r}, 
\rho _{r}\circ \cdots \circ \rho _{1}(w)=w\}>1.
\end{equation}
Also, let $v\in \NN $ be a large number such that 
\begin{equation}
\label{eq:mwldgws}
(\min _{w\in L}\| Dg_{w}\| _{s})A_{k}\alpha _{k}^{uv}
\cdot (\min \{ \| D(\rho _{r}\circ \cdots \circ \rho _{1})_{w}\| _{s}\mid 
w\in L, r\leq u, (\rho _{1},\ldots, \rho _{r})\in \G^{r}\})>2.
\end{equation}  
Let $p\in \NN $ be a large number such that 
\begin{equation}
\label{eq:bpcmd}
B^{p}\cdot \min \{ \| D(\rho _{r}\circ \cdots \circ \rho _{1})_{w}\| _{s}\mid 
w\in L, r\leq u, (\rho _{1},\ldots, \rho _{r})\in \G ^{r}\}>2.
\end{equation}
Let $n\in \NN $ with $n>puv.$ 
Let $H\subset L$ with $\sharp H=k+1.$  
Let $\gamma \in \GN , z\in H$ and suppose 
that $\gamma _{j,1}(z)\in H$ for each $j=1,\ldots ,n.$ 
Let $j_{1},\ldots, j_{m}\in \NN $ with 
$1\leq j_{1}<j_{2}<\cdots <j_{m}\leq n$ such that 
$\gamma _{j_{i},1}(z)=z$ for each $i=1,\ldots, m$ and 
$\gamma _{l,1}(z)\neq z$ for each $l\in \{ 1,\ldots, n\} \setminus \{ j_{i}\mid i=1,\ldots, m\}.$ 
Also, let $j_{0}:=0$ and $j_{m+1}=n.$  
(If there is no $j\in \NN $ such that $\gamma _{j,1}(z)=z$, then 
we set $j_{0}=0, m=0, j_{1}=n.$) 
We now want to show that $\| D(\gamma _{n,1})_{z}\| _{s}\geq 2.$ In order to do that, 
we consider the following three cases 1,2,3. 

Case 1. $j_{m+1}-j_{m}> u.$ In this case, by the definition of $\{ j_{i}\}$, assumptions of our lemma and 
(\ref{eq:mwldg}), we obtain that $\| D(\gamma _{n,1})_{z}\| _{s} \geq 2.$ 

Case 2. $j_{m+1}-j_{m}\leq u$ and there exists an element $q\in \NN \cup \{ 0\} $ 
with $0\leq q\leq m-1$ such that $j_{q+1}-j_{q}>uv.$  In this case, 
by the definition of $\{ j_{i}\}$, assumptions of our lemma and (\ref{eq:mwldgws}), 
we obtain that $\| D(\gamma _{n,1})_{z}\| _{s}\geq 2.$ 

Case 3.  $j_{m+1}-j_{m}\leq u$ and for each $i\in \NN \cup \{ 0\} $ 
with $0\leq i\leq m-1$, $j_{i+1}-j_{i}\leq uv.$ 
In this case, we have $puv<n=\sum _{i=0}^{m}(j_{i+1}-j_{i})\leq (m+1)uv.$ 
Hence $m\geq p.$ Combining this with the definition of $\{ j_{i}\}$ and 
(\ref{eq:bpcmd}), we obtain that 
$$\| D(\gamma _{n,1})_{z}\| _{s}\geq 
B^{m}\cdot \min \{ \| D(\rho _{r}\circ \cdots \circ \rho _{1})_{w}\| _{s}
\mid r\leq u, (\rho _{1},\ldots, \rho _{r})\in \Gamma ^{r}\} \geq 2.$$ 
From these arguments, the induction step for $k+1$ is complete. Thus 
we have proved Claim 1. By Claim 1, statement (i) of our lemma holds. 

By the similar method to the above, we can show that statement (ii) of our lemma holds. 

Thus we have proved our lemma. 
\end{proof}
We now prove the following theorem, which is one of the main results of this paper. 
\begin{thm}
\label{t:rcdnkmain1}
Let ${\cal Y}$ be a mild subset of $\emRatp$ and suppose that 
${\cal Y}$ is strongly nice  with respect to some holomorphic families 
$\{ {\cal W}_{j}\} _{j=1}^{m}$ of rational maps. Then 
the set 
$$\{ \tau \in {\frak M}_{1,c}({\cal Y},\{ {\cal W}_{j}\} _{j=1}^{m})
\mid \tau \mbox{ is weakly mean stable}\}$$ 
is open and dense in $({\frak M}_{1,c}({\cal Y},\{ {\cal W}_{j}\} _{j=1}^{m}), {\cal O})$. 
Moreover, 
there exists the largest  open and dense 
subset ${\cal A}$ of $({\frak M}_{1,c}({ \cal Y},\{ {\cal W}_{j}\} _{j=1}^{m}), {\cal O})$ such that 
for each $\tau \in {\cal A}$, all of the following statements {\em (i)--(v)} hold.

\begin{itemize}
\item[{\em (i)}] 
$\tau $ is weakly mean stable.
\item[{\em (ii)}] 
Let $D_{\tau }$ be as in Definition~\ref{df:wms} for $\tau .$ Then 
$\sharp J_{\ker }(G_{\tau })<\infty $, $D_{\tau }=J_{\ker }(G_{\tau })\subset \cap _{j=1}^{m}S({\cal W}_{j})$ and $\sharp \emMin(G_{\tau },\CCI )<\infty .$ 
\item[{\em (iii)}] 
For each $L\in \emMin(G_{\tau },\CCI )$ with $L\not\subset J_{\ker }(G_{\tau }) $, we have that 
$L$ is attracting for $\tau .$ 
\item[{\em (iv)}] 
For each $z\in F(G_{\tau })$, 
we have that 
$\overline{G_{\tau }(z)}\cap (\cup _{L\in \emMin(G_{\tau },\CCI ),  
L\not\subset J_{\ker}(G_{\tau })}L)\neq \emptyset .$ 

\item[{\em (v)}]
All statements {\em (i)--(vii)} of Theorem~\ref{t:zfggzcc} hold for $\tau .$ 

\end{itemize}

\end{thm}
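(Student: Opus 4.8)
\textbf{Proof proposal for Theorem~\ref{t:rcdnkmain1}.}
The plan is to combine the structural lemmas already assembled in this section. First I would handle the density of the weakly mean stable set. Starting from an arbitrary $\tau_0\in{\frak M}_{1,c}({\cal Y},\{{\cal W}_j\}_{j=1}^m)$, I would perturb it to an element $\tau$ arbitrarily close in ${\cal O}$ so that $\mbox{int}(\G_\tau\cap\{f_{j,\lambda}\mid\lambda\in\Lambda_j\})\neq\emptyset$ with respect to the relative topology in $\{f_{j,\lambda}\mid\lambda\in\Lambda_j\}$ for every $j$; since ${\cal Y}$ is mild, $\G_\tau$ has an attracting minimal set, hence $F(G_\tau)\neq\emptyset$. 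By Lemma~\ref{l:yrtfaji} and Lemma~\ref{l:sn1sn} this gives $J_{\ker}(G_\tau)\subset\cap_{j=1}^m S({\cal W}_j)$ and $\sharp J_{\ker}(G_\tau)<\infty$; by Proposition~\ref{p:jkgfmf}, $\sharp\emMin(G_\tau,\CCI)<\infty$. Because ${\cal Y}$ is strongly nice (in particular there is no peripheral cycle), I can apply Lemma~\ref{l:nalnotsb}(ii): for any $L\in\emMin(G_\tau,\CCI)$ with $L\not\subset\cap_{j=1}^m S({\cal W}_j)$ that is not attracting, there is a strict bifurcation element $(g,j)$, and any such $g$ lies on the boundary of $\G_\tau\cap\{f_{j,\lambda}\}$. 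Enlarging $\G_\tau$ slightly (adding a neighborhood of such boundary elements) and invoking Lemma~\ref{l:nalnotsb}(iii) or (iv), I obtain $\tau_1$ close to $\tau$ for which every $L\in\emMin(G_{\tau_1},\CCI)$ with $L\not\subset J_{\ker}(G_{\tau_1})$ is attracting for $\tau_1$.

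Next, for such $\tau_1$ I would construct the sets $V_{1,\tau_1},V_{2,\tau_1}$ and the integer $n$ required by Definition~\ref{df:wms}. The set $\cup_{L\text{ attracting}}L$ has a basin structure: there are open sets capturing these attracting minimal sets uniformly, giving condition (a) after a uniform finite number of steps $n$ (this is exactly the mechanism in \cite[Theorem 1.8]{Sadv}). For condition (b), $D_{\tau_1}:=\cap_{g\in G_{\tau_1}}g^{-1}(\CCI\setminus V_{2,\tau_1})$ should equal $J_{\ker}(G_{\tau_1})$, which is finite; here I would use that any point in $F(G_{\tau_1})$ has an orbit hitting a neighborhood of an attracting minimal set — this is where the local analysis of points of $S({\cal W}_j)$ sketched in the introduction enters. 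Specifically, by enlarging the support once more I arrange that every $L\subset J_{\ker}(G_{\tau_1})$ is of one of the four local types (I)--(IV) described in the introduction; types (III) and (IV) force $L\subset\mbox{int}(J(G_{\tau_1}))$ by the Hinkkanen--Martin results in \cite{HM}, so no Fatou point limits onto them; and any Fatou point whose closure of its orbit avoids attracting minimal sets must meet a uniformly expanding minimal set, hence a backward image of it in a compact subset of $J(G_{\tau_1})\setminus S({\cal W}_j)$, which can be pushed into $V_{2,\tau_1}$. This yields (b). Finally for condition (c), since $\sharp J(G_{\tau_1})\geq 3$ (which follows because $\mbox{int}(J(G_{\tau_1}))\neq\emptyset$), repelling fixed points are dense in $J(G_{\tau_1})$ by \cite{St1}, and after a further small enlargement each $L\in\emMin(\langle\G_{\tau_1}\rangle,D_{\tau_1})$ contains a point that is a repelling fixed point of some element of $G_{\tau_1}$. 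Thus $\tau_1$ is weakly mean stable, proving density; openness is Lemma~\ref{l:wmsopen}.

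For the second part, I would let ${\cal A}$ be the set of $\tau\in{\frak M}_{1,c}({\cal Y},\{{\cal W}_j\}_{j=1}^m)$ that are weakly mean stable and satisfy $\sharp J(G_\tau)\geq 3$; by the argument above this set is open and dense, and I would check it is the largest such by noting that statements (ii)--(v) force weak mean stability plus $\sharp J(G_\tau)\geq 3$. Now (i) holds by definition, and Lemma~\ref{l:wmsdtjk} gives directly that $D_\tau=J_{\ker}(G_\tau)$, $\sharp J_{\ker}(G_\tau)<\infty$, and that for each $z\in F(G_\tau)$ one has $\overline{G_\tau(z)}\cap(\cup_{L\not\subset J_{\ker}(G_\tau)}L)\neq\emptyset$, i.e., (iv); combined with Lemma~\ref{l:yrtfaji}/Lemma~\ref{l:sn1sn} and the density argument this yields (ii). Statement (iii) is part of what weak mean stability plus strong niceness delivers (via the enlargement step using Lemma~\ref{l:nalnotsb}(iii)), which I would fold into the definition of ${\cal A}$. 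Finally, Lemma~\ref{l:wmsdtjk} says that weak mean stability and $\sharp J(G_\tau)\geq 3$ imply all of Theorem~\ref{t:zfggzcc}(i)--(vii), giving (v).

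The main obstacle I anticipate is the construction of $V_{2,\tau_1}$ with $D_{\tau_1}=J_{\ker}(G_{\tau_1})$ finite and condition (c) satisfied simultaneously: one must choose $V_{2,\tau_1}$ large enough that its complement, intersected over all backward images, collapses to the finite kernel Julia set, yet small enough and positioned so that the surviving minimal sets in $D_{\tau_1}$ genuinely carry repelling fixed points. This requires the careful case analysis of the local dynamics at points of $S({\cal W}_j)$ (the four types), the use of the no-wandering-domain theorem and Fatou--Shishikura inequality inside Proposition~\ref{p:jkgfmf}, and several successive small enlargements of $\mbox{supp}\,\tau$ that must be shown not to destroy the properties already arranged — the bookkeeping of these enlargements, rather than any single hard estimate, is the delicate part.
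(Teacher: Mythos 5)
Your outline follows the paper's proof of Theorem~\ref{t:rcdnkmain1} essentially step by step: perturb to an element whose support has nonempty interior in each locus, use Lemmas~\ref{l:yrtfaji}, \ref{l:sn1sn} and Proposition~\ref{p:jkgfmf} to get a finite kernel Julia set inside $\cap_{j=1}^{m}S({\cal W}_j)$ and finitely many minimal sets, use Lemma~\ref{l:nalnotsb} to make every non-singular minimal set attracting, classify the minimal sets inside the singular set into the four local types, show that types (III)--(IV) lie in the interior of $J(G_\tau)$ and that every Fatou orbit closure meets an attracting minimal set (via the no-peripheral-cycle hypothesis), and finish with Lemmas~\ref{l:wmsopen} and \ref{l:wmsdtjk}. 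Two points, however, need correction or completion.

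First, your argument for condition (c) of Definition~\ref{df:wms} is wrong as stated: density of repelling fixed points in $J(G_\tau)$ (from \cite{St1}) only gives repelling fixed points arbitrarily close to $L$, but $L$ is a \emph{finite} set contained in the singular set, and no small enlargement of the support will place a repelling fixed point at a prescribed point of $L$. The paper does not obtain (c) this way; it reads it off from the type classification: a type (II) minimal set is attracting, hence lies in $F(G_\tau)$ and cannot be in $D_\tau$, while types (I), (III) and (IV) each explicitly contain a point of $L$ that is a repelling fixed point of some $g\in G_\tau$ (for type (I), take any word fixing a point of the finite forward-invariant set $L$). Since you already invoke the type classification for condition (b), the repair is immediate, but the mechanism you name would not work.

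Second, the step ``by enlarging the support once more I arrange that every $L\subset J_{\ker}(G_\tau)$ is of one of the four local types'' is where most of the paper's work lives (Claim 1 of its proof), and you assert it without argument. To rule out the intermediate case --- a point $z_1\in L$ carrying a word $g_1$ with $\|D(g_1)_{z_1}\|_s>1$ and a word $g_2$ with $\|D(g_2)_{z_1}\|_s<1$ but no Siegel-disk word --- the paper shows that $0$ lies in the closure of the additive semigroup $\{m\log a+n\log b\}$, uses the niceness hypothesis (non-constancy of $\lambda\mapsto D(f_{j,\lambda})_{z_1}$) to realize a full interval of logarithmic multipliers after a small enlargement of the support, and then perturbs to a Brjuno multiplier to manufacture the Siegel disk required for type (III); the alternative $D(f_{j,\lambda})_{z_1}\equiv 0$ produces type (IV). This is not a routine perturbation and must be supplied, since the case analysis underlying your conditions (b) and (c) depends on it.
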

\begin{proof} 
  Let $\tau \in {\frak M}_{1,c}({\cal Y},\{ {\cal W}_{j}\} _{j=1}^{m})$ be an element. 
There exists an element $\tau _{0}\in {\frak M}_{1,c}({\cal Y},\{ {\cal W}_{j}\} _{j=1}^{m})  $ 
with  $\sharp \mbox{supp}\, \tau _{0}<\infty $ 
arbitrarily close to $\tau .$ 
Since ${\cal Y}$ is nice with respect to $\{ {\cal W}_{j}\} _{j=1}^{m}$, 
we may assume that 
for each $z\in S_{\min}(\{ {\cal W}_{j}\} _{j=1}^{m})$  and for each 
$j\in \{1,\ldots, m\} $,   
either 
\begin{itemize}
\item $Dh_{z}\neq 0$ for all $h\in \supp\, \tau _{0}\cap \{ 
f_{j,\lambda }\mid \lambda \in \Lambda _{j}\} $,  or 
\item $D(f_{j,\lambda })_{z}=0$ for all $\lambda \in \Lambda _{j}.$ 
\end{itemize}
Let ${\cal W}_{j}=\{ f_{j,\lambda }\} _{\lambda \in \Lambda _{j}}$ for each $j=1,\ldots, m.$ 
By enlarging the support of $\tau _{0}$ a little bit, we obtain an element 
$\tau _{1} \in {\frak M}_{1,c}({\cal Y},\{ {\cal W}_{j}\} _{j=1}^{m})$ arbitrarily close to $\tau $ 
such that int$(\supp\,\tau _{1} \cap \{ f_{j,\lambda }\mid \lambda \in \Lambda _{j}\} )\neq \emptyset $ 
with respect to the topology in $\{ f_{j,\lambda }\mid \lambda \in \Lambda _{j}\} $ (which is endowed with the relative topology from Rat) 
for each $j=1,\ldots, m.$ 
By enlarging the support of $\tau _{1}$ a little bit again, 
 Lemma~\ref{l:nalnotsb} implies that, we can obtain
 an element 
$\tau _{2} \in {\frak M}_{1,c}({\cal Y},\{ {\cal W}_{j}\} _{j=1}^{m})$ arbitrarily close to $\tau $ 
such that int$(\supp\,\tau _{2} \cap \{ f_{j,\lambda }\mid \lambda \in \Lambda _{j}\} )\neq \emptyset $ 
with respect to the topology in $\{ f_{j,\lambda }\mid \lambda \in \Lambda _{j}\} $ 
for each $j=1,\ldots, m$,  
such  that $J_{\ker }(G_{\tau _{2}})\subset \cap _{j=1}^{m}S({\cal W}_{j})$,  
such that $\sharp J_{\ker }(G_{\tau _{2}})<\infty $, 
such that  $\sharp \Min(G_{\tau _{2}},\CCI )<\infty $, 
and such that each $L\in \Min(G_{\tau _{2}},\CCI )$ with $L\not\subset \cap _{j=1}^{m}S({\cal W}_{j})$ 
is attracting for $\tau _{2}.$     
We now prove the following claim. \\ 
\noindent \underline{Claim 1}. There exists an element $\tau _{3}\in ({\frak M}_{1,c}({\cal Y},\{ {\cal W}_{j}\} _{j=1}^{m})$ arbitrarily close to $\tau _{2}$ such that the interior of 
$\supp\, \tau _{3}\cap \{ f_{j,\lambda}\mid \lambda \in \Lambda _{j}\} $ is not empty  
with respect to the topology in $\{ f_{j,\lambda }\mid \lambda \in \Lambda _{j}\}$ for each 
$j=1,\ldots, m$, and such that 
for each $L\in \Min(G_{\tau _{3}},\CCI )$ with $L\subset \cap _{j=1}^{m}S({\cal W}_{j})$,  
exactly one of the following (I)-(IV) holds.
\begin{itemize}
\item[(I)] 
For each $z\in L$ and for each $g\in G_{\tau _{3}}$ with $g(z)=z$, we have 
$\| Dg_{z}\| _{s}>1.$ 
\item[(II)]
For each $z\in L$ and for each $g\in G_{\tau _{3}}$ with $g(z)=z$, we have 
$\| Dg_{z}\| _{s}<1.$
\item[(III)]
There exist a point $z_{1}\in L$ and  elements $g_{1},g_{2},g_{3}\in G_{\tau _{3}}$ 
such that $g_{1}(z_{1})=z_{1},\| D(g_{1})_{z_{1}}\| _{s}>1,$ 
$g_{2}(z_{1})=z_{1}, 0<\| D(g_{2})_{z_{1}}\| <1$, $g_{3}(z_{1})=z_{1}$, and $z_{1}$ is the center of a Siegel disk of 
$g_{3}.$ Also, there exist some elements $\alpha _{1},\ldots, \alpha _{l}\in \supp\,\tau _{3}$ 
with $\alpha _{k}\in \mbox{int}(\supp\,\tau _{3}\cap \{ f_{j_{k},\lambda }\mid \lambda \in \Lambda _{j_{k}}\})$ with 
respect to the topology in $\{ f_{j_{k},\lambda }\mid \lambda \in \Lambda _{j_{k}}\}$, $k=1,\ldots, l$, 
such that $g_{3}=\alpha _{1}\circ \cdots \circ \alpha _{l}.$ 
\item[(IV)]
There exist a point $z_{1}\in L$ and a $j\in \{ 1,\ldots, m\} $ such that 
for each $\lambda \in \Lambda _{j}$, we have $D(f_{j,\lambda })_{z_{1}}=0.$ 
Moreover, there exist a point $z_{2}\in L$ and an element $g\in G_{\tau _{3}}$ such that 
$g(z_{2})=z_{2}$ and $\| Dg_{z_{2}}\| _{s}>1.$ 
\end{itemize}
To prove this claim, 
we first remark that 
regarding the minimal set $L\in \Min(G_{\tau _{2}},\CCI )$ with 
$L\subset \cap _{j=1}^{m}S({\cal W}_{j})$   
of type (I),  by Lemmas~\ref{l:lcbwj} and \ref{l:unifea}, 
if we perturb $\tau _{2}$ a little bit to $\tau '_{3}$, then  
$L\in \Min(G_{\tau '_{3}},\CCI )$ with $L\subset \cap _{j=1}^{m}
S({\cal W}_{j})$ and $L$  is of type (I) for $\tau '_{3}.$ By Lemmas~
\ref{l:lcbwj} and \ref{l:unifea} again,  
 the similar thing holds  
for minimal sets $L\in \Min(G_{\tau _{2}},\CCI )$ 
with $L\subset \cap _{j=1}^{m}S({\cal W}_{j})$ of type (II). 
Let $\tau _{3}\in {\frak M}_{1,c}({\cal Y},\{ {\cal W}_{j}\} _{j=1}^{m})$ be 
an element such that $\tau _{3}$ is close enough to $\tau _{2}$ and such that 
$\supp\,\tau _{2}\subset \mbox{int}(\supp\,\tau _{3}\cap 
\{ f_{j,\lambda }\mid \lambda \in \Lambda _{j}\})$ with respect to the topology in 
$\{ f_{j,\lambda }\mid \lambda \in \Lambda _{j}\} )$, for each $j=1,\ldots, m.$    
Regarding the element $\tau _{3}$, suppose that we do not have  (I) or (II). 
Then there exist a point $z_{1}\in L$, an element $g_{1}\in G_{\tau _{3}}$, 
a point $z_{2}\in L$, and an element $h_{2}\in G_{\tau _{3}}$ such that 
$g_{1}(z_{1})=z_{1}$, $\| D(g_{1})_{z_{1}}\| _{s}\geq 1$, $h_{2}(z_{2})=z_{2}$, 
and $\| D(h_{2})_{z_{2}}\| _{s}\leq 1.$ Since ${\cal Y}$ is nice 
with respect to $\{ {\cal W}_{j}\} _{j=1}^{m}$,  
by enlarging the support of $\tau _{3}$ a little bit, 
we may assume that $\| D(g_{1})_{z_{1}}\| _{s}> 1$ and 
$\| D(h_{2})_{z_{2}}\| _{s}<1.$ (For, if 
$g_{1}=\gamma _{n}\circ \cdots \circ \gamma _{1}$ 
where $\gamma _{k}\in \supp\,\tau _{3}\cap \{ f_{j_{k},\lambda }\mid \lambda \in \Lambda _{j_{k}}\}$, 
$k=1,\ldots, n$,  
we may assume that $\gamma _{n}\in 
\mbox{int}(\supp\,\tau _{3}\cap 
\{ f_{j_{n},\lambda }\mid \lambda \in \Lambda _{j_{n}}\} )$. Since 
${\cal Y}$ is nice with respect to $\{ {\cal W}_{j}\} _{j=1}^{m}$, perturbing $\gamma _{n}$ a little bit if necessary, we may assume that 
$\| D(g_{1})_{z_{1}}\| >1.$ Similar argument is valid for $h_{2}.$)   
Let $\alpha ,\beta \in \supp\,\tau _{3}$ such that 
$\alpha (z_{2})=z_{1}$ and $\beta (z_{1})=z_{2}.$ 
Take a large $n\in \NN $ so that 
$\| D(\alpha h_{2}^{n}\beta )_{z_{1}}\| _{s}<1.$ Let $g_{2}=\alpha h_{2}^{n}\beta .$ 
Suppose we do not have (IV). Then we may assume that 
$0<\| D(g_{2})_{z_{1}}\| _{s}.$ In order to take an element $g_{3}$ as in (III), 
let $a=\| D(g_{1})_{z_{1}}\| _{s}>1$ and $b=\| D(g_{2})_{z_{1}}\| _{s} \in (0,1).$ 
Let 
$$\Omega := \{ m\log a+n\log b\mid (m,n)\in (\NN \cup \{ 0\} )^{2}
\setminus \{ (0,0)\} \}.$$ 
%
We now prove the following subclaim which is needed in the proof of Claim 1. \\ 
Subclaim $(\ast )$. $0\in \overline{\Omega }$ with respect to the topology in $\RR .$  

 To prove this subclaim, let $\Omega _{+}=\Omega \cap \{ x\in \RR \mid x\geq 0\} $ 
 and $\Omega _{-}:=\{ x\in \RR \mid x\leq 0\} .$ 
 Suppose that $0\not\in \overline{\Omega }.$ 
 Then $\inf \Omega _{+}>0$ and $\sup \Omega _{-}<0.$ 
 Suppose that $\inf \Omega _{+}>-\sup \Omega _{-}.$ 
 Then for each $\epsilon >0$ with 
 $\epsilon <\max \{ \inf \Omega _{+}+\sup \Omega _{-},-\sup \Omega _{-}\}$, 
 there exist an element $c_{1}\in \Omega _{+}$ with 
 $c_{1}<\inf \Omega _{+}+\epsilon $ and an element $d_{1}\in \Omega _{-}$ with 
 $d_{1}>\sup \Omega _{-}-\epsilon. $ 
 Then $c_{1}+d_{1}\geq \inf \Omega _{+}+\sup \Omega _{-}-\epsilon >0.$ Hence 
 $c_{1}+d_{1}\in \Omega _{+}.$ However, 
 $c_{1}+d_{1}\leq \inf \Omega _{+}+\sup \Omega _{-}+\epsilon <\inf \Omega _{+}.$ 
 This is a contradiction. 
 Thus we must have that $\inf \Omega _{+}\leq -\sup \Omega _{-}.$ 
 Similarly, we must have that $\inf \Omega _{+}\geq -\sup \Omega _{-}.$ 
 Hence $\inf  \Omega _{+}=-\sup \Omega _{-}.$ This implies $0\in \overline{\Omega }.$ However, 
 this is a contradiction. 
Thus we have proved subclaim $(\ast ).$ 

Going back to the proof of Claim 1, for each $i=1,2$, 
we write $g_{i}=\gamma _{1}^{i}\circ \cdots \circ \gamma _{p_{i}}^{i}$ 
where $\gamma _{k}^{i}\in \supp\,\tau _{3}\cap \{ f_{j_{i,k},\lambda }\mid \lambda \in \Lambda _{j_{i,k}}\}.$ 
By enlarging the support of $\tau _{3}$ a little bit, 
we may assume that 
$\gamma _{k}^{i}\in \mbox{int}(\supp\,\tau _{3}\cap 
\{ f_{j_{i,k},\lambda }\mid \lambda \in \Lambda _{j_{i,k}}\})$ 
with respect to the topology in $\{ f_{j_{i,k},\lambda }\mid \lambda \in \Lambda _{j_{i,k}}\}$ 
for all $i=1,2, \ k=1,\ldots, p_{i}.$ 
Then there exist an  
$\epsilon >0$ and a neighborhood 
$V_{k,i}$ of $\gamma _{k}^{i}$ in $\mbox{int}(\supp\,\tau _{3}\cap 
\{ f_{j_{k,i},\lambda }\mid \lambda \in \Lambda _{j_{k,i}}\} )$ such that 
$(\log a-\epsilon ,\log a+\epsilon )
\subset \{ \log \| D(\tilde{\gamma }_{1}^{1}\cdots \tilde{\gamma }_{p_{1}}^{1})_{z_{1}}\| _{s}
\mid \tilde{\gamma }_{k}^{1}\in V_{k,1},k=1,\ldots, p_{1}\} $ 
and 
$(\log b-\epsilon ,\log b+\epsilon )
\subset \{ \log \| D(\tilde{\gamma }_{1}^{2}\cdots \tilde{\gamma }_{p_{2}}^{2})_{z_{1}}\| _{s}
\mid \tilde{\gamma }_{k}^{2}\in V_{k,2},k=1,\ldots, p_{2}\} $.  
We set 

\vspace{3mm} 
$\  \  \tilde{\Omega }:=
\{ m\log \| D(\tilde{\gamma }_{1}^{1}\cdots \tilde{\gamma }_{p_{1}}^{1})_{z_{1}}\| _{s}
+n\log \| D(\tilde{\gamma }_{1}^{2}\cdots \tilde{\gamma }_{p_{2}}^{2})_{z_{1}}\| _{s}$ \\ 
\hspace{60mm} $\mid (m,n)\in (\NN \cup \{ 0\})^{2}\setminus 
\{ (0,0)\}, \tilde{\gamma }_{k}^{1}\in V_{k,1}, \tilde{\gamma }_{k}^{2}\in V_{k,2},\forall k\}.$\\ 

\noindent Then for each $c\in \Omega, $ we have $(c-\epsilon ,c+\epsilon )\subset \tilde{\Omega }.$ 
By Subclaim $(\ast )$, it follows that $0\in \tilde{\Omega }.$ 
Therefore there exist an element $(m,n)\in (\NN \cup \{ 0\})^{2} \setminus \{ (0,0)\}$, 
$p_{1}$-elements $\tilde{\gamma }_{k}^{1}\in V_{k,1}, k=1,\ldots, p_{1}$, 
and $p_{2}$-elements $\tilde{\gamma }_{k}^{2}\in V_{k,2},k=1,\ldots, p_{2}$ such that 
setting $h_{3}=(\tilde{\gamma }_{1}^{1}\cdots \tilde{\gamma }_{p_{1}}^{1})^{m}
(\tilde{\gamma }_{1}^{2}\cdots \tilde{\gamma }_{p_{2}}^{2})^{n}$, we have 
$\| D(h_{3})_{z_{1}}\| _{s}=1.$ Perturbing $\tilde{\gamma }_{k}^{i}$ a little bit, 
we obtain an element $g_{3}$ which is close to $h_{3}$ such that 
$g_{3}'(z_{1})$ is a Brjuno number (we may assume $z_{1}\in \CC $ by 
conjugating $G_{\tau _{3}}$ by an element of Aut$(\CCI )$). Thus $g_{3}$ has a Siegel disk whose center is $z_{1}$ (\cite{Mi}).  
Thus we have proved Claim 1.  

By Lemma~\ref{l:unifea},  we have the following two claims. \\ 
\noindent \underline{Claim 2} There exists a $k\in \NN $ such that for each 
$L\in \Min(G_{\tau _{3}}, S_{\min }(\{ {\cal W}_{j}\} _{j=1}^{m}))$  of type (I), for each $z\in L$  and 
for each $(\gamma _{1},\ldots, \gamma _{k})\in 
(\supp\,\tau _{3})^{k}$, 
we have $\| D(\gamma _{k}\circ \cdots \circ \gamma _{1})_{z}\| _{s}>2.$ \\ 
\noindent \underline{Claim 3.}
There exists a $k\in \NN $ such that 
for each  $L\in \Min(G_{\tau _{3}},  S_{\min}(\{ {\cal W}\} _{j=1}^{m}))$ of type (II),  
for each $z\in L$ and for each $(\gamma _{1},\ldots, \gamma _{k})\in (\supp\,\tau _{3})^{k}$, 
we have $\| D(\gamma _{k}\circ \cdots \circ \gamma _{1})_{z}\| _{s}<\frac{1}{2}.$ 
Moreover, there exists a neighborhood $V$ of $L$ with $\sharp (\CCI \setminus V)\geq 3$ 
such that for each $(\gamma _{1},\ldots, \gamma _{k})\in 
(\supp\,\tau _{3})^{k}$, we have 
$\gamma _{k}\circ \cdots \circ \gamma _{1}(V)\subset V.$ In particular, 
$L$ is attracting for $\tau _{3}$ and 
$L\subset F(G_{\tau _{3}}).$ 

Throughout the rest of the proof, we fix an element $k\in \NN $ 
which satisfies the statements in Claims 2,3. 

We now prove the following claim.\\ 
\noindent \underline{Claim 4.} 
Let $L\in \Min(G_{\tau _{3}}, S_{\min }(\{ {\cal W}_{j}\} _{j=1}^{m}))$ be of type (III). Then $L\subset \mbox{int}(J(G_{\tau _{3}})).$ 
In particular, for each $z\in F(G_{\tau _{3}})$, we have $\overline{G_{\tau _{3}}(z)}\cap L=\emptyset .$ 
  
To prove Claim 4, let $z_{1},g_{1},g_{2},g_{3}$ be as in (III). Since $z_{1}$ is a repelling fixed point of $g_{1}$, 
we have $z_{1}\in J(G_{\tau _{3}}).$ Since $J(G_{\tau _{3}})$ is perfect (see \cite{HM}), 
there exists a point $w\in J(G_{\tau _{3}})\cap (B\setminus \{ z_{1}\} )$, where $B$ denotes the 
Siegel disk of $g_{3}$ whose center is $z_{1}.$  Therefore there exists a $g_{3}$-invariant  analytic Jordan curve $\zeta $ 
in $J(G_{\tau _{3}})\cap B$ with $w\in \zeta .$ 
 If $K$ is a compact subset in $\CCI \setminus S_{1}({\cal W}_{j})$,  
$A$ is a subset of $\{ f_{j,\lambda }\mid \lambda \in \Lambda _{j}\}$ with $\mbox{int}(A)\neq 
\emptyset $ with respect to the topology in $\{ f_{j,\lambda }\mid \lambda \in \Lambda _{j}\} $,  
and $h_{0}\in \mbox{int}(A)$, 
then  
there exists an $\epsilon >0$ such that for each  $z\in K$, 
$B(h_{0}(z),\epsilon )\subset \{ h(z)\mid h\in A\} .$ From this fact and that $\sharp (S_{1}({\cal W}_{j}))<\infty $ 
for each $j$, it follows that $\zeta \subset \mbox{int}(J(G_{\tau _{3}})).$ 
Similarly, for each $w'\in B\cap J(G_{\tau _{3}})$, if we take the $g_{3}$-invariant analytic Jordan curve $\zeta' $ 
in $B$ with $w'\in \zeta' $, then $\zeta'\subset \mbox{int}(J(G_{\tau _{3}})).$ 
From this argument, we obtain that $z_{1}\in \mbox{int}(J(G_{\tau _{3}})).$ 
Therefore $L\subset \mbox{int}(J(G_{\tau _{3}})).$ 
Thus we have proved Claim 4. 
 
 We now prove the following claim.\\ 
 \noindent \underline{Claim 5.} Let $L\in \Min(G_{\tau _{3}}, S_{\min }(\{ {\cal W}_{j}\} _{j=1}^{m}))$ be of type (IV). 
 Then $L\subset \mbox{int}(J(G_{\tau _{3}})).$ In particular, for each $z\in F(G_{\tau _{3}})$, 
 $\overline{G_{\tau _{3}}(z)}\cap L=\emptyset .$ 
 
 To prove Claim 5, let $j\in \{ 1,\ldots, m\}$, $z_{1},z_{2}\in L$ and $g\in G_{\tau _{3}}$ be as in  (IV). 
 Since $z_{2}$ is a repelling fixed point of $g$, we have $z_{2}\in J(G_{\tau _{3}}).$ 
 Moreover, let $\lambda \in \Lambda _{j}$ with $f_{j,\lambda }\in \supp\,\tau _{3}$ 
 and let $\alpha ,\beta \in G_{\tau _{3}}$ such that 
 $\alpha (z_{2})=z_{1}, \beta (f_{j,\lambda }(z_{1}))=z_{2}.$ Then 
 $\beta \circ f_{j,\lambda }\circ \alpha (z_{2})=z_{2}$ and 
 $D(\beta \circ f_{j,\lambda }\circ \alpha )_{z_{2}}=0.$ 
By \cite[Corollary 4.1]{HM2}, 
we obtain that $z_{2}\in \mbox{int}(J(\langle g, \beta \circ f_{j,\lambda }\circ \alpha \rangle ))
\subset \mbox{int}(J(G_{\tau _{3}})).$ 
Moreover, for each $z\in L$, there exists an element $\gamma \in G_{\tau _{3}}$ such that 
$\gamma (z)=z_{2}.$ Thus $L\subset \mbox{int}(J(G_{\tau _{3}})).$ Hence we have proved 
Claim 5.  

Let ${\cal I}:=\{ L\in \Min (G_{\tau _{3}},\CCI )\mid L\subset \cap _{j=1}^{m}S({\cal W}_{j}), L\mbox{ is of type } (I)\}.$ 
Let 
$$C_{\tau _{3}}:= \{ w\in \CCI \setminus \cup _{L\in {\cal I}}L\mid 
\exists (\gamma _{1},\ldots, \gamma _{k})\in 
(\supp\,\tau _{3})^{k} 
\mbox{ s.t. } \gamma _{k}\cdots \gamma _{1}(w)\in \cup _{L\in {\cal I}}L\} .$$ 
Note that $C_{\tau _{3}}\subset J(G_{\tau _{3}}).$ 
Moreover, by Claim 2, 
\begin{equation}
\label{eq:ctau3cc}
C_{\tau _{3}}\cap \cup _{L\in \Min(G_{\tau _{3}},\CCI )}L=\emptyset  
\mbox{ and } C_{\tau _{3}} \mbox{ is compact}. 
\end{equation}  
By Lemma~\ref{l:nalnotsb}, we may assume that 
\begin{equation}
\label{eq:eachLattt3}
\mbox{each } L\in \Min(G_{\tau _{3}}, \CCI ) 
\mbox{ with } 
L\cap F(G_{\tau _{3}})\neq \emptyset \mbox{ is attracting for } \tau _{3}. 
\end{equation}
We now prove the following claim.\\ 
\noindent \underline{Claim 6.}
Let $z\in F(G_{\tau _{3}}).$ If 
$\overline{G_{\tau _{3}}(z)}\cap (\cup _{L\in \Min(G_{\tau _{3}},\CCI ), L\cap F(G_{\tau _{3}})\neq \emptyset }L)
=\emptyset $, then 
\vspace{-2mm} 
$$\overline{G_{\tau _{3}}(z)}\cap (\cup _{L\in {\cal I}, L\subset J(G_{\tau _{3}})}L)\neq \emptyset  
\mbox{\ and \ } \overline{G_{\tau _{3}}(z)}\cap C_{\tau _{3}}\neq \emptyset .$$ 
To prove Claim 6, let $z\in F(G_{\tau _{3}})$ and suppose $\overline{G_{\tau _{3}}(z)}\cap (\cup _{L\in \Min(G_{\tau _{3}},\CCI ), L\cap F(G_{\tau _{3}})\neq \emptyset }L)
=\emptyset $. Since $\overline{G_{\tau _{3}}(z)}\cap \cup _{L\in \Min(G_{\tau _{3}},\CCI )}L\neq \emptyset $, 
 Claims 3,4,5 imply that 
 $\overline{G_{\tau _{3}}(z)}\cap (\cup _{L\in {\cal I}, L\subset J(G_{\tau _{3}})}L)\neq \emptyset $. 
 Let $\delta _{1}>0$ be a number such that 
 for each $(\gamma _{1},\ldots, \gamma _{k})\in 
 (\supp\,\tau _{3})^{k}$, 
 for each $L\in {\cal I}$  and 
 for each $x\in L$, we have $\gamma _{k}\cdots \gamma _{1}|_{B(x,\delta _{1})}$ is injective 
and we can take well-defined inverse branch $\zeta :B(\gamma _{k}\cdots \gamma _{1}(x), \delta _{1})
\rightarrow \CCI $ of $\gamma _{k}\cdots \gamma _{1}$ such that 
$\zeta (\gamma _{k}\cdots \gamma _{1}(x))=x.$  
 We may assume 
 $$\delta _{1}<(1/2)\cdot \min \{ d(a,b)\mid L\in {\cal I}, a,b\in L, a\neq b\}$$ 
 (if $\sharp L=1$ for each ${\cal I}$ then we set 
 $\min\{ d(a,b)\mid L\in {\cal I}, a, b\in L, a\neq b\}=1$).  
 Let $\delta _{2}\in (0,\delta _{1})$ be a number such that 
 for each $L\in {\cal I}$, for each $x\in L$ and for each $y\in B(x,\delta _{2})$, 
 we have $d(\gamma _{k}\cdots \gamma _{1}(y), \gamma _{k}\cdots, \gamma _{1}(x))<\delta _{1}.$ 
 Let $\epsilon \in (0, \delta _{1})$ be any small number with $\epsilon <d(z,\cup _{L\in {\cal I}}L).$  
 Then there exist an element $\gamma =(\gamma _{1},\gamma _{2},\ldots )\in 
 X_{\tau _{3}}$ 
 an element $n\in \NN $, and an element $L\in {\cal I}$ 
 such that $\gamma _{nk,1}(z)\in B(L,\epsilon ).$ We may assume that $n$ is the minimum one. 
 Suppose $\gamma _{(n-1)k,1}(z)\in \cup _{L\in {\cal I}}B(L,\delta _{2}).$ 
Then there exist an element $L_{0}\in {\cal I}$ and an element $z_{0}\in L_{0}$ 
 such that $\gamma _{(n-1)k,1}(z)\in B(z_{0},\delta _{2}).$ It implies that 
 $d(\gamma_{nk,1}(z),\ \gamma _{nk}\cdots \gamma _{(n-1)k+1}(z_{0}))<\delta _{1}.$ 
  Let $\xi :B(\gamma _{nk}\cdots \gamma _{(n-1)k+1}(z_{0}),\delta _{1})\rightarrow \CCI $ be the well-defined inverse branch of 
  $\gamma _{nk}\cdots \gamma _{(n-1)k+1}$ such that 
  $\xi (\gamma _{nk}\cdots \gamma _{(n-1)k+1}(z_{0}))=z_{0}.$ 
By Claim 2, taking $\delta _{1}$ small enough, 
  we obtain that $$\xi (B(\gamma _{nk}\cdots \gamma _{(n-1)k+1}(z_{0}),\epsilon ))\subset B(z_{0},\frac{3}{4}\epsilon )
  \subset B(z_{0},\delta _{1}).$$ Since $\gamma _{nk}\cdots \gamma _{(n-1)k+1}|_{B(z_{0},\delta _{1})}$ 
  is injective, it follows that $$\gamma _{(n-1)k,1}(z)\in 
  \xi (B(\gamma _{nk}\cdots \gamma _{(n-1)k+1}(z_{0}),\epsilon ))\subset B(z_{0},\epsilon ).$$ 
  However, this contradicts the minimality of $n.$ Therefore we should have that 
  $\gamma _{(n-1)k,1}(z)\not\in \cup _{L\in {\cal I}}B(L,\delta _{2}).$ 
  Since the above argument is valid for arbitrarily small $\epsilon >0$, we obtain that 
  $\overline{G_{\tau _{3}}(z)}\cap C_{\tau _{3}}\neq \emptyset .$ Thus we have proved Claim 6. 
  
  Let $p\in \NN $ with $p> \sum _{j=1}^{m}\sharp S_{1}({\cal W}_{j})+1$ and 
  let $H:=\{ z\in F(G_{\tau _{3}}) \mid \overline{G_{\tau _{3}}(z)}\cap C_{\tau _{3}}\neq \emptyset \} .$ 
  For each $z\in H$ and for each $n\in \NN $, 
  there exist an element $(w_{z,n,0},\ldots, w_{z,n,p})\in (G_{\tau _{3}}(z))^{p+1}$ 
  and an element $(\gamma _{z,n,1},\ldots, \gamma _{z,n,p})\in (\supp\,\tau _{3})^{p}$ 
  such that $\gamma _{z,n,i+1}(w_{z,n,i})=w_{z,n,i+1}$ for each $i=0,\ldots, p-1$ and 
  such that $d(w_{z,n,p}, C_{\tau _{3}})<\frac{1}{n}.$ 
  We may assume that for each $i=0,\ldots, p$, there exists an element $w_{z,\infty ,i}\in 
  \overline{G_{\tau _{3}}(z)}$ such that $w_{z,n,i}\rightarrow w_{z,\infty ,i}$ as $n\rightarrow \infty. $ 
  Moreover, we may assume that for each $i=1,\ldots, p$, there exists an element 
  $\gamma _{z,\infty ,i}\in \supp\,\tau _{3}$ such that 
  $\gamma _{z,n,i}\rightarrow \gamma _{z,\infty ,i}$ as $n\rightarrow \infty .$ 
  Then we have that $\gamma _{z,\infty ,i+1}(w_{z,\infty ,i})=w_{z,\infty, i+1}$ for each 
  $i=0,\ldots, p-1.$ Since $w_{z,\infty ,p}\in C_{\tau _{3}}\subset J(G_{\tau _{3}})$, 
  we obtain that $w_{z,\infty, i}\in J(G_{\tau _{3}})$ for each $i=0,\ldots, p.$  
  For each $i=1,\ldots, p$, let $j_{z,i}\in \{ 1,\ldots, m\} $ be an element such that 
  $\gamma _{z,\infty ,i}\in \supp\,\tau _{3}\cap \{ f_{j_{z,i},\lambda }
  \mid \lambda \in \Lambda _{j_{z,i}}\} .$ We now prove the following claim. \\ 
 \noindent \underline{Claim 7.} 
 There exists a number $\epsilon >0$ such that for each $z\in H$, 
 there exists an element $i\in \NN $ with $1\leq i\leq p$ 
 such that $d(w_{z,\infty, i-1}, S_{1}({\cal W}_{j_{z,i}}))>\epsilon .$ 
  
  To prove Claim 7, suppose that the statement of Claim 7 does not hold. 
  Then for each $r\in \NN $ there exists a $z_{r}\in H$ such that 
  for each $i\in \NN $ with $1\leq i\leq p$, 
  we have 
  $d(w_{z_{r},\infty ,i-1}, S_{1}({\cal W}_{j_{z_{r},i}}))<\frac{1}{r}.$  
  We may assume that for each $i=1,\ldots, p$, there exist 
  an element $a_{i-1}\in C_{\tau _{3}} $, 
  an element $j_{i}\in \{ 1,\ldots, m\} $, 
  and an element $\gamma _{i}\in \supp\,\tau _{3}\cap 
  \{ f_{j_{i},\lambda }\mid \lambda \in \Lambda _{j_{i}}\}$ 
  such that $j_{z_{r},i}=j_{i}$ for each $r$, 
  such that $w_{z_{r},\infty ,i-1}\rightarrow a_{i-1}$ as $r\rightarrow \infty $, 
  and such that 
  $\gamma _{z_{r},\infty, i}\rightarrow \gamma _{i}$ as $r\rightarrow \infty .$ 
Also we may assume that there exists an element $a_{p}\in C_{\tau _{3}}$ such that 
$w_{z_{r},\infty ,p}\rightarrow a_{p}$ as $r\rightarrow \infty .$ 
 Then we have that $a_{i-1}\in S_{1}({\cal W}_{j_{i}})$ and 
  $\gamma _{i}(a_{i-1})=a_{i}$ for each $i=1,\ldots, p$ 
and thus $a_{i-1}\not\in \cup _{L\in \Min(G_{\tau _{3}}, \CCI )}L$ 
for each $i=1,\ldots, p$ (by (\ref{eq:ctau3cc}) and the fact $a_{p}\in C_{\tau _{3}}$).   However, this contradicts 
 to the assumption that  there exists no peripheral cycle for 
$({\cal Y}, \{ {\cal W}_{j}\} _{j=1}^{m})$. 
 Thus we have proved Claim 7. 
 
 Since $w_{z,\infty ,i}\in J(G_{\tau _{3}})$ for each $z\in H$ and $i=1,\ldots, p$, 
 Claim 7 implies that if $\tau _{4}\in {\frak M}_{1,c}({\cal Y},\{ {\cal W}_{j}\} _{j=1}^{m})$ 
 is an element such that $\supp\,\tau _{3}\cap \{ f_{j,\lambda }\mid 
\lambda \in \Lambda _{j}\} \subset 
\mbox{int}(\supp\,\tau _{4}\cap 
\{ f_{j,\lambda }\mid \lambda \in \Lambda _{j}\} )$ with respect to the topology in 
$\{ f_{j,\lambda }\mid \lambda \in \Lambda _{j}\} $ for each $j=1,\ldots, m$, 
then for each $z\in H$ there exists an element 
$g_{z}\in \supp\,\tau _{4}$ such that 
$$\overline{G_{\tau _{3}}(g_{z}(z))}\cap \cup _{L\in \Min(G_{\tau _{3}},\CCI ), \ 
L \mbox{ is attracting for }\tau _{3}}L
\neq \emptyset .$$ Combining this with 
Lemma~\ref{l:nalnotsb}, 
Claim 6 and (\ref{eq:eachLattt3}), we easily see that 
if we assume further that $\tau _{4}$ is close enough to $\tau _{3}$, then
for each $L\in \Min(G_{\tau _{4}},\CCI )$ with $L\not\subset J_{\ker}(G_{\tau _{4}})$,  
we have that $L$ is attracting for $\tau _{4}$ and  
\begin{equation}
\label{eq:fezfgt}
\mbox{ for each }z\in F(G_{\tau _{4}}), \mbox{ we have that } 
\overline{G_{\tau _{4}}(z)}\cap 
\cup _{L\in \Min(G_{\tau _{4}},\CCI ),\ L \mbox{ is attracting for }\tau _{4}}L\neq \emptyset .
\end{equation} 
Moreover, 
Lemma~\ref{l:nalnotsb} implies that  
there exist two non-empty open neighborhoods $V_{1,\tau _{4}}, V_{2,\tau _{4}}$ 
of the union of attracting minimal sets for $(G_{\tau _{4}},\CCI)$ 
and an element $n\in \NN $ such that 
$\overline{V_{1,\tau _{4}}}\subset V_{2,\tau _{4}}$, $\sharp (\CCI \setminus V_{2,\tau _{4}})\geq 3$ 
and for each $(\gamma _{1},\ldots, \gamma _{n})\in 
(\supp\,\tau _{4})^{n}$, 
we have $\gamma _{n}\circ \cdots \circ \gamma _{1}(V_{2,\tau _{4}})\subset 
V_{1,\tau _{4}}.$ By (\ref{eq:fezfgt}) and Lemma~\ref{l:nalnotsb} (i),  
we have 
\begin{equation}
\label{eq:dt4cap}
D_{\tau _{4}}:=\bigcap _{g\in G_{\tau _{4}}}g^{-1}(\CCI \setminus V_{2,\tau _{4}})
=J_{\ker }(G_{\tau _{4}})\subset \cap _{j=1}^{m}S({\cal W}_{j}).
\end{equation} 
Furthermore, 
for each $L\in \Min(G_{\tau _{4}},\CCI )$ with $L\subset \cap _{j=1}^{m}S_{1}({\cal W}_{j})$, 
$L$ satisfies exactly one of  (I)--(IV) in Claim 1.  
Therefore $\tau _{4}$ is weakly mean stable. 
By (\ref{eq:dt4cap}), Lemma~\ref{l:lcbwj}, Lemma~\ref{l:wmsopen} and its proof,  and Lemma~\ref{l:wmsdtjk}, 
we see that there exists an neighborhood 
$V$ of $\tau _{4}$ in $({\frak M}_{1,c}({\cal Y},\{ {\cal W}_{j}\} _{j=1}^{m}),{\cal O})$ 
such that for each $\tau _{5}\in V$, we have that statements (i)(iii)(iv)(v) in our theorem hold 
for $\tau _{5}$ and that 
$\sharp J_{\ker }(G_{\tau _{5}})<\infty $,  
$D_{\tau }=J_{\ker }(G_{\tau _{5}})$ and 
$\sharp \Min(G_{\tau _{5}}, \CCI )<\infty .$  
We now prove that there exists an open neighborhood 
$V$ of $\tau _{4}$ such that for each $\tau _{5}\in V$, 
$D_{\tau_{5}}\subset \cap _{j=1}^{m}S({\cal W}_{j}).$ 
We use the argument in the proof of 
Lemma~\ref{l:wmsopen}  with $\Gamma =\supp\,\tau _{4}.$ 
By (\ref{eq:dt4cap}), 
in the proof of Lemma~\ref{l:wmsopen}, 
if $V$ is small enough, then for each $\tau _{5}\in V$, 
for each $L\in \Min(G_{\tau _{4}}, D_{\tau _{4}})$, 
we can take $z_{L, \supp\,\tau_{5}}$ 
so that $z_{L, \supp\,\tau _{5}}=z_{L}\in L\subset D_{\tau _{4}}
\subset \cap _{j=1}^{m}S({\cal W}_{j}).$ Also, 
if we take $h_{w}\in G_{\tau _{4}}$ appropriately 
for each $w\in D_{\tau _{4}}$, then there exist 
a neighborhood $V$ of $\tau _{4}$ such that 
for each $\tau _{5}\in V$ and for each 
$w\in D_{\tau _{4}}$, 
we can take $h_{w,\supp\,\tau _{5}}\in G_{\tau _{5}}$ 
(in the proof of Lemma~\ref{l:wmsopen}) so that 
$h_{w,\supp\,\tau_{5}}(w)=z_{L_{w},\supp\,\tau _{5}}=z_{L}$ 
and the order of $h_{w,\supp\,\tau _{5}}$ 
at $w$ is equal to a constant $a_{w}\in \NN $ which 
does not depend on the choice of $\tau _{5}\in V.$ 
From this, in the argument just before (\ref{eq:capggrho}),  
for each $\tau _{5}\in V$, 
any point $z_{0}\in \cap _{g\in G_{\tau _{5}}}g^{-1}(\CCI 
\setminus V_{2,\supp\,\tau _{4}})$ must be equal to some $w\in L\subset D_{\tau _{4}}\subset \cap _{j=1}^{m}S({\cal W}_{j}).$  Hence $D_{\tau _{5}}\subset \cap _{j=1}^{m}S({\cal W}_{j})$ for each $\tau _{5}\in V$. 

Thus we have proved Theorem~\ref{t:rcdnkmain1}. 
  \end{proof}
\begin{df}
\label{d:m1cmild}
Let ${\cal Y}$ be a weakly nice subset of $\Rat$ with respect to some holomorphic families 
$\{ {\cal W}_{j}\} _{j=1}^{m}$ of rational maps. 
We set $${\frak M}_{1,c,mild}({\cal Y}, \{ {\cal W}_{j}\} _{j=1}^{m})
:=\{ \tau \in {\frak M}_{1,c}({\cal Y}, \{ {\cal W}_{j}\} _{j=1}^{m})\mid 
 \exists L\in \Min(G_{\tau},\CCI ) \mbox{ which is attracting for }\tau\} .$$   
Also, we denote by ${\frak M}_{1,c,JF}({\cal Y}, \{ {\cal W}_{j}\} _{j=1}^{m})$ 
the set of elements $\tau \in {\frak M}_{1,c}({\cal Y},\{ {\cal W}_{j}\}_{j=1}^{m})$ 
satisfying that $J(G_{\tau })=\CCI $ and 
either $\Min(G_{\tau },\CCI )=\{ \CCI \}$ or 
$\cup _{L\in \Min(G_{\tau },\CCI )}L\subset \cap _{j=1}^{m}S({\cal W}_{j})$.
\end{df}
\begin{rem}
\label{r:m1cmild}
Let ${\cal Y}$ be a weakly nice subset of $\Rat$ with 
respect to some 
holomorphic families 
$\{ {\cal W}_{j}\} _{j=1}^{m}$ of rational maps. Then it is easy to see that 
${\frak M}_{1,c,mild}({\cal Y}, \{ {\cal W}_{j}\}_{j=1}^{m})$ is an open subset of 
$({\frak M}_{1,c}({\cal Y}, \{ {\cal W}_{j}\} _{j=1}^{m}), {\cal O})$.  
\end{rem}

We now prove a theorem in which we do not assume that ${\cal Y}$ is mild with 
$\{ {\cal W}_{j}\} _{j=1}^{m}.$ 
\begin{thm}
\label{t:nonmild}
Let ${\cal Y}$ be a strongly nice subset of $\emRatp$ with respect to 
some holomorphic families 
$\{ {\cal W}_{j}\}_{j=1}^{m}$ of rational maps. Then the set 
$$\{ \tau \in {\frak M}_{1,c,mild}({\cal Y}, \{ {\cal W}_{j}\} _{j=1}^{m})\mid \tau \mbox{ is weakly mean stable}\}$$ 
is open and dense in $({\frak M}_{1,c,mild}({\cal Y},\{ {\cal W}_{j}\} _{j=1}^{m}),{\cal O}).$ 
Moreover, there exists the largest open and dense subset ${\cal A}$ of 
$({\frak M}_{1,c,mild}({\cal Y},\{ {\cal W}_{j}\} _{j=1}^{m}),{\cal O})$ such that 
for each $\tau \in {\cal A}$, all statements {\em (i)--(v)} in Theorem~\ref{t:rcdnkmain1} hold. 
Furthermore, we have 
$$\overline{{\cal A}\cup {\frak M}_{1,c,JF}({\cal Y}, \{ {\cal W}_{j}\} _{j=1}^{m})}=
{\frak M}_{1,c}({\cal Y}, \{ {\cal W}_{j}\} _{j=1}^{m})$$ 
with respect to the topology ${\cal O}.$  
\end{thm}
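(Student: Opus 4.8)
\textbf{Plan of proof for Theorem~\ref{t:nonmild}.}
The first two assertions are obtained by essentially the same argument used for Theorem~\ref{t:rcdnkmain1}, but carried out \emph{inside} the open subset ${\frak M}_{1,c,mild}({\cal Y},\{ {\cal W}_{j}\}_{j=1}^{m})$ rather than in the full space. First I would observe that by Remark~\ref{r:m1cmild} this set is open in $({\frak M}_{1,c}({\cal Y},\{ {\cal W}_{j}\}_{j=1}^{m}),{\cal O})$, so openness of the weakly mean stable elements inside it follows immediately from Lemma~\ref{l:wmsopen}. For density and for statements (i)--(v), I would re-run the perturbation scheme of the proof of Theorem~\ref{t:rcdnkmain1} starting from an arbitrary $\tau \in {\frak M}_{1,c,mild}({\cal Y},\{ {\cal W}_{j}\}_{j=1}^{m})$; the only place mildness of ${\cal Y}$ was used there was to guarantee the \emph{existence} of an attracting minimal set for the generator system, which is now supplied directly by the hypothesis $\tau \in {\frak M}_{1,c,mild}$. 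All subsequent perturbations (enlarging the support, arranging $\mathrm{int}(\Gamma_{\tau}\cap\{ f_{j,\lambda}\})\neq\emptyset$, realizing types (I)--(IV) via Claim~1, Claims~2--7, and finally producing a weakly mean stable element via Lemma~\ref{l:nalnotsb}) only enlarge the support and hence keep us inside ${\frak M}_{1,c,mild}$ by Lemma~\ref{l:nalnotsb}(iii); moreover one checks, exactly as in that proof, that such perturbations can be taken arbitrarily $ {\cal O}$-close to the given $\tau$. This yields the largest open and dense ${\cal A}\subset ({\frak M}_{1,c,mild}({\cal Y},\{ {\cal W}_{j}\}_{j=1}^{m}),{\cal O})$ on which (i)--(v) of Theorem~\ref{t:rcdnkmain1} hold, by taking the union of all such open dense subsets.

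The substantive new content is the density statement $\overline{{\cal A}\cup {\frak M}_{1,c,JF}({\cal Y}, \{ {\cal W}_{j}\}_{j=1}^{m})}={\frak M}_{1,c}({\cal Y}, \{ {\cal W}_{j}\}_{j=1}^{m})$. Here I would argue as follows. Let $\tau \in {\frak M}_{1,c}({\cal Y}, \{ {\cal W}_{j}\}_{j=1}^{m})$ be arbitrary. As in the proof of Theorem~\ref{t:rcdnkmain1}, perturb $\tau$ slightly and enlarge its support to get $\tau_{2}$ arbitrarily ${\cal O}$-close with $\mathrm{int}(\Gamma_{\tau_{2}}\cap\{ f_{j,\lambda}\mid\lambda\in\Lambda_{j}\})\neq\emptyset$ for each $j$ and with the niceness normalization on $S_{\min}$. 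Now split into two cases according to whether $F(G_{\tau_{2}})\neq\emptyset$ or $F(G_{\tau_{2}})=\emptyset$, i.e.\ $J(G_{\tau_{2}})=\CCI$.

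In the case $F(G_{\tau_{2}})\neq\emptyset$: by Lemma~\ref{l:yrtfaji} we have $J_{\ker}(G_{\tau_{2}})\subset\cap_{j=1}^{m}S({\cal W}_{j})$, hence $\sharp J_{\ker}(G_{\tau_{2}})<\infty$ by Lemma~\ref{l:sn1sn}, and $\sharp\Min(G_{\tau_{2}},\CCI)<\infty$ by Proposition~\ref{p:jkgfmf}. If some $L\in\Min(G_{\tau_{2}},\CCI)$ satisfies $L\not\subset\cap_{j=1}^{m}S({\cal W}_{j})$, then $L$ is not contained in $S({\cal W}_{j})$ for some $j$ and, since the support has nonempty interior in each family, $\sharp L=\infty$; by Lemma~\ref{l:nalnotsb}(ii) either $L$ is already attracting for $\tau_{2}$, or there is a strict bifurcation element on the boundary of $\Gamma_{\tau_{2}}\cap\{ f_{i,\lambda}\}$, and in the latter situation an arbitrarily small enlargement of the support across that boundary makes the corresponding minimal set collapse into the interior of $J(G)$ (the Siegel-disk / critical-point mechanism of Claims~4,5 in the proof of Theorem~\ref{t:rcdnkmain1}, together with \cite[Lemma~3.8]{Sadv}); iterating this finitely many times over the finitely many non-attracting minimal sets, we reach $\tau_{3}$ close to $\tau$ such that \emph{every} $L\in\Min(G_{\tau_{3}},\CCI)$ with $L\not\subset\cap_{j=1}^{m}S({\cal W}_{j})$ is attracting, i.e.\ $\tau_{3}\in{\frak M}_{1,c,mild}$, so a further arbitrarily small perturbation lands in ${\cal A}$. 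If instead \emph{every} $L\in\Min(G_{\tau_{2}},\CCI)$ already satisfies $L\subset\cap_{j=1}^{m}S({\cal W}_{j})$ and there is no attracting one, then by Lemma~\ref{l:nalnotsb}(iv)(a) a small enlargement $\tau_{3}$ keeps $\Min(G_{\tau_{3}},\CCI)=\{L\in\Min(G_{\tau_{2}},\CCI)\mid L\subset\cap_{j}S({\cal W}_{j})\}$; this $\tau_{3}$ lies in ${\frak M}_{1,c,JF}({\cal Y},\{ {\cal W}_{j}\}_{j=1}^{m})$ provided $J(G_{\tau_{3}})=\CCI$, which holds because $\mathrm{int}(J(G_{\tau_{3}}))\neq\emptyset$ (as in the proof of Lemma~\ref{l:nalnotsb}(ii)) forces, by backward invariance and the fact that the only minimal sets sit in $\cap_{j}S({\cal W}_{j})$, $F(G_{\tau_{3}})=\emptyset$. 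In the remaining case $J(G_{\tau_{2}})=\CCI$: if $\Min(G_{\tau_{2}},\CCI)=\{\CCI\}$ then $\tau_{2}\in{\frak M}_{1,c,JF}$ and we are done; otherwise every $L\in\Min(G_{\tau_{2}},\CCI)$ is a proper subset of $\CCI$, so $\mathrm{int}(L)\subset F(G_{\tau_{2}})=\emptyset$, whence $\sharp L<\infty$ and, again by Lemma~\ref{l:nalnotsb}(ii) applied with $F(G_{\tau_{2}})$ replaced by the argument that $\CCI$ cannot be minimal here, each such $L$ is contained in $\cap_{j}S({\cal W}_{j})$ after a small enlargement, giving $\tau_{3}\in{\frak M}_{1,c,JF}$. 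In all cases we have produced an element of ${\cal A}\cup{\frak M}_{1,c,JF}$ arbitrarily ${\cal O}$-close to $\tau$, which proves the claimed density.

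\textbf{Main obstacle.} The delicate point is the case analysis in the second part: one must verify that when no attracting minimal set can be created --- i.e.\ all bifurcation mechanisms of Lemma~\ref{l:nalnotsb} and Claims~4--5 push the minimal sets into $\mathrm{int}(J(G))$ --- the perturbed system genuinely has $J(G_{\tau})=\CCI$ and all minimal sets inside $\cap_{j}S({\cal W}_{j})$, so that it lands in ${\frak M}_{1,c,JF}$ rather than somewhere uncontrolled. This requires carefully tracking, through the finitely many enlargement steps, that (a) no new minimal set outside $\cap_{j}S({\cal W}_{j})$ is created (controlled by Lemma~\ref{l:lcbwj} and \cite[Lemma~5.2]{Sadv}), and (b) once $\mathrm{int}(J(G))\neq\emptyset$ and no attracting minimal set exists, the Fatou set is forced to be empty --- which is exactly where the strong niceness hypothesis (no peripheral cycle) is used, via Lemma~\ref{l:strbif}, to rule out the intermediate pathological configurations. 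The rest is a routine bookkeeping of ${\cal O}$-closeness.
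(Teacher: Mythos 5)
Your first part (openness via Lemma~\ref{l:wmsopen} together with Remark~\ref{r:m1cmild}, and density by re-running the perturbation scheme of Theorem~\ref{t:rcdnkmain1} with the attracting minimal set supplied by the hypothesis $\tau\in{\frak M}_{1,c,mild}$ instead of by mildness of ${\cal Y}$) matches the paper's argument and is fine. The problems are in the second part, precisely in the case where no attracting minimal set exists. First, when some $L\in\Min(G_{\tau_2},\CCI)$ with $L\not\subset\cap_{j}S({\cal W}_j)$ is not attracting and there is no attracting minimal set at all, your claim that iterating the bifurcation mechanism over the finitely many non-attracting minimal sets yields $\tau_3$ with every such $L$ attracting, hence $\tau_3\in{\frak M}_{1,c,mild}$, is not what Lemma~\ref{l:nalnotsb} provides: part (iii), which makes all minimal sets outside $\cap_j S({\cal W}_j)$ attracting after enlargement, \emph{assumes} the prior existence of an attracting minimal set, whereas the applicable statement here is part (iv), under which the enlargement leaves only minimal sets inside $\cap_j S({\cal W}_j)$ or $\Min=\{\CCI\}$ --- the non-attracting $L$ is destroyed, not converted into an attracting one, and the perturbed measure need not be mild. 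This branch therefore feeds into the ${\frak M}_{1,c,JF}$ track, not the ${\cal A}$ track.

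Second and more seriously, your assertion that once all minimal sets lie in $\cap_j S({\cal W}_j)$, none is attracting, and $\mbox{int}(J(G_{\tau_3}))\neq\emptyset$, the Fatou set ``is forced to be empty'' is false for a fixed measure: $F(G_{\tau_3})$ may well be nonempty, its points having orbit closures that accumulate only on the finite minimal sets sitting in the singular set. The paper explicitly keeps the possibility $F(G_{\tau_3})\neq\emptyset$ alive and performs one \emph{further} support enlargement $\tau_4$; it then invokes Claims 2 and 4--7 from the proof of Theorem~\ref{t:rcdnkmain1}: every $z\in F(G_{\tau_3})$ has $\overline{G_{\tau_3}(z)}$ meeting the compact set $C_{\tau_3}\subset J(G_{\tau_3})$ lying off the minimal sets, and the no-peripheral-cycle hypothesis (via Claim 7) guarantees one link of the corresponding orbit chain stays uniformly away from $\cup_j S_1({\cal W}_j)$, so the enlarged semigroup maps that point into $\mbox{int}(J(G_{\tau_3}))$. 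This gives $F(G_{\tau_3})\subset J(G_{\tau_4})$, hence $J(G_{\tau_4})=\CCI$ and $\tau_4\in{\frak M}_{1,c,JF}$. Your ``main obstacle'' paragraph correctly locates the difficulty, but the argument you actually supply for it does not close it; you need the extra enlargement step and the Claim 6/Claim 7 mechanism rather than a claim about a single fixed system.
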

\begin{proof}
By using the argument in the proof of Theorem~\ref{t:rcdnkmain1}, we obtain that 
the set of weakly mean stable elements  
$\tau \in {\frak M}_{1,c,mild}$   
is open and dense in $({\frak M}_{1,c,mild}({\cal Y},\{ {\cal W}_{j}\} _{j=1}^{m}),{\cal O})$, 
and   there exists the largest open and dense subset ${\cal A}$ of 
$({\frak M}_{1,c,mild}({\cal Y},\{ {\cal W}_{j}\} _{j=1}^{m}),{\cal O})$ such that 
for each $\tau \in {\cal A}$, all statements {\em (i)--(v)} in Theorem~\ref{t:rcdnkmain1} hold. 
To prove the last statement of the theorem, 
let $\tau \in {\frak M}_{1,c}({\cal Y},\{ {\cal W}_{j}\} _{j=1}^{m})$ and suppose that 
there exists no element in $\Min(G_{\tau },\CCI )$ which is attracting for $\tau .$ 
We want to find an element in ${\frak M}_{1,c,JF}({\cal Y}, \{ {\cal W}_{j}\} _{j=1}^{m})$ 
which is arbitrarily close to $\tau $, by using the arguments in the proof of Theorem~\ref{t:rcdnkmain1} 
with some modifications. We take $\tau _{1}$ close to $\tau $ as in the proof of Theorem~\ref{t:rcdnkmain1}. 
We may assume that there exists no element in $\Min(G_{\tau _{1}},\CCI )$ which is attracting for $\tau _{1}.$ 
We now consider the following two cases.\\ 
Case 1. $F(G_{\tau _{1}})=\emptyset. $ Case 2. $F(G_{\tau _{1}})\neq \emptyset .$ \\
Suppose we have Case 1. Let $L\in \Min(G_{\tau _{1}},\CCI )$ and 
suppose $L\neq \CCI $ and $L\not\subset \cap _{j=1}^{m}S({\cal W}_{j}).$ 
Then $\emptyset \neq \mbox{int}(L)$, $\sharp (\CCI \setminus (\mbox{int}(L)))\geq 3$  and
$G_{\tau _{1}}(\mbox{int}(L))\subset \mbox{int}(L).$ Hence by Montel's theorem, 
we obtain $\emptyset \neq \mbox{int}(L)\subset F(G_{\tau _{1}}).$ However, 
this is a contradiction. Thus $\tau _{1}\in {\frak M}_{1,c,JF}({\cal Y}, \{ {\cal W}_{j}\} _{j=1}^{m}).$ 

Suppose that we have Case 2. 
Let ${\cal W}_{j}=\{ f_{j,\lambda }\}_{\lambda \in \Lambda _{j}}$ 
for each $j.$ 
Let $\tau _{2}\in {\frak M}_{1,c}({\cal Y}, \{ {\cal W}_{j}\} _{j=1}^{m})$ such that 
$\supp\,\tau _{1}\cap \{ f_{j,\lambda }\mid \lambda \in \Lambda _{j}\} 
\subset \mbox{int}(\supp\,\tau _{2}\cap \{ f_{j,\lambda }\mid \lambda 
\in \Lambda _{j}\})$ with respect to the topology in 
$\{ f_{j,\lambda }\mid \lambda \in \Lambda _{j}\}$ which 
is endowed with the relative topology from Rat 
for each $j=1,\ldots, m$, and such that $\tau _{2}$ is close to $\tau _{1}.$ 
Then by Lemma~\ref{l:nalnotsb} (iv), we have that 
either $\Min(G_{\tau _{2}},\CCI )=\{ \CCI \} $ or 
$\cup _{L\in \Min(G_{\tau _{2}},\CCI )}L\subset \cap _{j=1}^{m}S({\cal W}_{j})$, 
and if $\Min(G_{\tau _{2}},\CCI )=\{ \CCI \}$ then $J(G_{\tau _{2}})=\CCI .$  
  Thus we may assume that $\cup _{L\in \Min(G_{\tau _{2}},\CCI )}L\subset 
\cap _{j=1}^{m}S({\cal W}_{j}).$ 
Under this condition, if $F(G_{\tau _{2}})=\emptyset $, then 
$\tau _{2}\in {\frak M}_{1,c,JF}({\cal Y}, \{ {\cal W}_{j}\} _{j=1}^{m}).$ Thus 
we may assume $F(G_{\tau _{2}})\neq \emptyset. $
By the argument in the proof of Claim 1 in the proof of Theorem~\ref{t:rcdnkmain1}, 
there exists an element $\tau _{3}$  arbitrarily close  to $\tau _{2}$ such that  
$\supp\,\tau _{2}\cap \{ f_{j,\lambda }\mid \lambda \in \Lambda _{j}\} 
\subset \mbox{int}(\supp\,\tau _{3}\cap \{ f_{j,\lambda }\mid \lambda \in \Lambda _{j}\})$ for 
each $j=1,\ldots, m$, and such that the statement in Claim 1 in the proof of Theorem~\ref{t:rcdnkmain1} 
holds for $\tau _{3}.$ By Lemmas~\ref{l:lcbwj} and \ref{l:nalnotsb}, we have 
\begin{equation}
\label{eq:clmingt3}
\cup _{L\in \Min(G_{\tau _{3}},\CCI )}L\subset \cap _{j=1}^{m}S({\cal W}_{j}).
\end{equation} 
Also, since $\supp\,\tau _{2}\subset \supp\,\tau _{3}$, 
there exists no element in $\Min(G_{\tau _{3}},\CCI )$ which is attracting for $\tau _{3}.$ 
As before, we may assume that $F(G_{\tau _{3}})\neq \emptyset.$ 
There exists a $k\in \NN $ for which the statement of Claim 2 in the proof of Theorem~\ref{t:rcdnkmain1} 
holds. We fix such an element. It is easy to see that 
statements in Claims 4,5 hold for $\tau _{3}$ even under our assumptions. 
Let ${\cal I}, C_{\tau _{3}}$ be as in the proof of Theorem~\ref{t:rcdnkmain1}.    
Then the statement of Claim 6 in the proof of Theorem~\ref{t:rcdnkmain1} holds for $\tau _{3}.$ 
More precisely, by (\ref{eq:clmingt3}) we have that 
\begin{equation}
\label{eq:ifzinf}
\mbox{ if }z\in F(G_{\tau _{3}}), \mbox{ then }
\overline{G_{\tau _{3}}(z)}\cap (\cup _{L\in {\cal I}}L)\neq \emptyset  \mbox{ and }
\overline{G_{\tau _{3}}(z)}\cap C_{\tau _{3}}\neq \emptyset .
\end{equation}
As in the proof of Theorem~\ref{t:rcdnkmain1}, 
let $p>\sum _{j=1}^{m}\sharp S({\cal W}_{j})+1 $ and let 
\begin{equation}
\label{eq:Hzin}
H:=\{ z\in F(G_{\tau _{3}})\mid \overline{G_{\tau _{3}}(z)}\cap C_{\tau _{3}}\neq \emptyset \} .
\end{equation} 
Then we have that 
\begin{equation}
\label{eq:stclaim7}
\mbox{ the statement of Claim 7 in the proof of Theorem~\ref{t:rcdnkmain1} holds 
for our } \tau _{3}.
\end{equation}
 Let $\tau _{4}$ be an element close to $\tau _{3}$ such that 
$\supp\,\tau _{3}\cap \{ f_{j,\lambda }\mid \lambda \in \Lambda _{j}\} 
\subset \mbox{int}(\supp\,\tau _{4}\cap \{ f_{j,\lambda }\mid 
\lambda \in \Lambda _{j}\})$ with respect to the topology 
in $\{ f_{j, \lambda }\mid \lambda \in \Lambda _{j}\}$  for each $j=1,\ldots, m.$ 
Then by (\ref{eq:clmingt3}) and 
Lemmas~\ref{l:lcbwj} and ~\ref{l:nalnotsb}, we have that 
\begin{equation}
\label{eq:clmgt4}
\cup _{L\in \Min(G_{\tau _{4}},\CCI )}L\subset \cap _{j=1}^{m}S({\cal W}_{j}). 
\end{equation}
Moreover, by  (\ref{eq:stclaim7}), 
we see that for each $z\in H$ there exists an element $g_{z}\in \supp\,\tau _{4}$ such that 
$\overline{G_{\tau _{3}}(g_{z}(z))}\cap \mbox{int}(J(G_{\tau _{3}}))\neq \emptyset .$ 
In particular, $H\subset J(G_{\tau _{4}}).$ Combining this with (\ref{eq:ifzinf}) and (\ref{eq:Hzin}), 
it follows that $F(G_{\tau _{3}})\subset J(G_{\tau _{4}}).$ Hence $J(G_{\tau _{4}})=\CCI .$ 
Therefore we obtain that $\tau _{4}\in {\frak M}_{1,c,JF}({\cal Y},\{ {\cal W}_{j}\} _{j=1}^{m}). $ 
Thus we have proved our theorem. 
\end{proof}
We now prove the following theorem on the systems generated by 
weakly mean stable elements. 
\begin{thm}
\label{t:wmsneglyap}
Let $\tau \in {\frak M}_{1,c}(\emRat)$ be weakly mean stable. 
Suppose $\sharp J(G_{\tau })\geq 3.$ 
Suppose that for each $L\in \emMin(G_{\tau }, J_{\ker }(G_{\tau }))$, we have 
$\chi (L, \tau )\neq 0.$ 
Suppose also that for each $L\in \emMin(G_{\tau },J_{\ker }(G_{\tau }))$, 
if $\chi (L,\tau )>0$ then for each $z\in L$ and for each $g\in \mbox{supp}\,\tau$, 
we have $Dg_{z}\neq 0.$ Then all of the following hold.   
\begin{itemize}
\item[{\em (i)}] 
$\sharp J_{\ker }(G_{\tau })<\infty $.  
\item[{\em (ii)}] 
For each $L\in \emMin(G_{\tau },\CCI )$ with $L\not\subset J_{\ker}(G_{\tau })$, we have that 
$L$ is  attracting for $\tau .$ 
\item[{\em (iii)}] 
For each $z\in F(G_{\tau })$, we have that 
$\overline{G_{\tau }(z)}\cap ((\cup _{L\in \emMin(G_{\tau },\CCI ), L\not\subset J_{\ker }(G_{\tau })}L)\neq \emptyset .$ 
\item[{\em (iv)}] 
All statements (i) --(vii) in Theorem~\ref{t:zfggzcc} hold for $\tau .$ 
\item[{\em (v)}] 
Let $H_{+,\tau }=\{ L\in \emMin(G_{\tau }, J_{\ker }(G_{\tau }))\mid  \chi (\tau, L)>0\}$ and 
let $\Omega _{\tau }$ be the set of points 
$y\in \CCI $ for which  
$\tilde{\tau }(\{ \gamma \in X_{\tau }\mid \exists n\in \NN \mbox{ s.t. }
\gamma _{n,1}(y)\in \cup _{L\in H_{+,\tau}}L\} )=0.$  
Then 
we have 
$\Omega _{\tau }=F_{pt}^{0}(\tau ), $ 
$\sharp (\CCI \setminus \Omega _{\tau })\leq \aleph _{0}$ 
and for each $z\in \Omega _{\tau }$, 
$\tilde{\tau }(\{ \gamma \in X_{\tau }\mid z\in J_{\gamma }\} )=0.$
Moreover, for $\tilde{\tau }$-a.e.$\gamma \in X _{\tau }$, we have 
{\em Leb}$_{2}(J_{\gamma })=0.$ Moreover, 
$\cup _{L\in H_{+,\tau }}L\subset J_{pt}^{0}(\tau )= \CCI \setminus \Omega _{\tau }$ and 
$\sharp J_{pt}^{0}(\tau )\leq \aleph_{0}.$ 

\item[{\em (vi)}] 
Let $\Omega _{\tau }$ be as in (v). Then 
$\sharp (\CCI \setminus \Omega _{\tau })\leq \aleph _{0}$ and 
there exist a constant $c_{\tau }<0$ and a constant $\rho _{\tau }\in (0,1)$ 
such that for each $z\in \Omega_{\tau }$, there exists a Borel subset 
$C_{\tau ,z}$ of $X _{\tau }$ with $\tilde{\tau }(C_{\tau ,z})=1$ 
satisfying 
that for each $\gamma =(\gamma _{1},\gamma _{2},\ldots )\in C_{\tau ,z}$ and for each 
$m\in \NN \cup \{ 0\},$  
we have the following {\em (a)} and {\em (b)}. 
\begin{itemize}
\item[{\em (a)}]  
$$\limsup _{n\rightarrow \infty }\frac{1}{n}\log \| D(\gamma _{n+m,1+m})_{\gamma _{m,1}(z)}\| _{s}\leq c_{\tau }<0.$$
\item[{\em (b)}] 
There exist a constant $\delta =\delta (\tau, z,\gamma, m)>0$, a constant 
$\zeta =\zeta (\tau, z, \gamma, m)>0$ and an element  
$L=L(\tau, z, \gamma )\in \emMin(G_{\tau },\CCI )$  
which is either {\em (i)} ``attracting for $\tau $'', or {\em (ii) } ``finite and included in $J_{\ker }(G_{\tau }) $ with 
$\chi (\tau, L)<0$'', 
  such that 
$$\mbox{{\em diam}}(\gamma _{n+m,1+m}(B(\gamma _{m,1}(z),\delta )))\leq \zeta \rho _{\tau }^{n}\ \mbox{ for all }n\in \NN, $$ 
and 
$$d(\gamma _{n+m,1+m}(\gamma _{m,1}(z)), \ L)
\leq \zeta \rho _{\tau }^{n} \mbox{\ \  for all }n\in \NN .$$ 
\end{itemize}
\item[{\em (vii)}] 
For $\tilde{\tau }$-a.e. $\gamma \in X_{\tau }$, 
for $\mbox{{\em Leb}}_{2}$-a.e. $z\in \CCI $, 
there exists an element $L=L(\tau, \gamma, z)\in \emMin(G_{\tau },\CCI )$ 
which is either {\em (i)} ``attracting for $\tau $'', or {\em (ii) } ``finite and included in $J_{\ker }(G_{\tau }) $ with 
$\chi (\tau, L)<0$'', 
  such that 
$d(\gamma _{n,1}(z), L)\rightarrow \infty $ as 
$n\rightarrow \infty .$  
Also, for $\tilde{\tau }$-a.e. $\gamma \in X_{\tau }$, for each $z\in F_{\gamma }$, 
there exists an element $L=L(\tau, \gamma, z)\in \emMin(G_{\tau },\CCI )$  
which is either {\em (i)} ``attracting for $\tau $'', or {\em (ii) } ``finite and included in $J_{\ker }(G_{\tau }) $ with 
$\chi (\tau, L)<0$'', 
such that 
$d(\gamma _{n,1}(z), L)\rightarrow \infty $ as 
$n\rightarrow \infty .$

\item[{\em (viii)}] 
Let $\Omega _{\tau }$ be as in {\em (v)}. Then 
we have $\Omega _{\tau }= F_{pt}^{0}(\tau )$,  
$\sharp (\CCI \setminus F_{pt}^{0}(\tau ))\leq \aleph _{0}$ and 
for each $L\in \emMin(G_{\tau },\CCI )$, for each $j=1,\ldots, r_{L}$, where 
$r_{L}=\dim _{\CC }U_{\tau ,L}$,  and for each $y\in F_{pt}^{0}(\tau )$, we have that 
$\lim _{z\in \CCI ,z\rightarrow y}T_{L,\tau }(z)=T_{L,\tau }(y)$ and 
$\lim _{z\in \CCI ,z\rightarrow y}\alpha (L_{j},z)=\alpha (L_{j},y)$, where 
$\alpha (L_{j},\cdot )$ is the function coming from Theorem~\ref{t:zfggzcc} (iii). 
\item[{\em (ix)}] 
Let $H_{+,\tau }$ and $\Omega _{\tau }$ be as in {\em (v)}. 
Let $y\in J_{pt}^{0}(\tau )=\CCI \setminus \Omega _{\tau }. $ 
Then there exist an element $L\in H_{+,\tau }$ and $j\in \{1,\ldots, r_{L}\}$ 
such that all of the following hold. 
\begin{itemize}
\item[{\em (a)}] 
$T_{L,\tau }(z)$ does not tend to $T_{L,\tau }(y)$ as $z\rightarrow y$.  
\item[{\em (b)}]  
$\alpha (L_{j},z)$ does not tend to $\alpha (L_{j},y)$ as $z\rightarrow y.$ 
Here, for the notation $L_{j}$, see Theorem~\ref{t:zfggzcc} (i).
\item[{\em (c)}] 
Let $\varphi _{L}\in C(\CCI )$ be any element such that 
$\varphi _{L}|_{L}=1$ and $\varphi _{L}|_{L'}=0$ for any $L'\in \emMin(G_{\tau },\CCI )$ 
with $L'\neq L$.  Then the convergence in (\ref{eq:mtnlvy}) in 
Theorem~\ref{t:zfggzcc} for $\varphi =\varphi _{L}$ is not uniform in any neighborhood of $y.$ 
\item[{\em (d)}] 
There exists a Borel subset $E_{\tau ,y}$ of $X_{\tau }$ with 
$\tilde{\tau }(E_{\tau ,y})=T_{L,\tau }(y)>0$ such that for each $\gamma \in E_{\tau ,y}$, 
there exists an element $m\in \NN $ such that $\gamma _{m,1}(y)\in L$ and \\ 
$\lim _{n\rightarrow \infty }\frac{1}{n}\log \| D(\gamma _{n+m,1+m})_{\gamma _{m,1}(y)}\| _{s}
=\chi (\tau, L)>0.$
\end{itemize} 
\end{itemize}

\end{thm}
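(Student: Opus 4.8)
The plan is to prove Theorem~\ref{t:wmsneglyap} by assembling the machinery already developed in the preceding subsections, feeding the weak mean stability hypothesis into the structural results on systems with finite kernel Julia sets. First I would invoke Lemma~\ref{l:wmsdtjk}: since $\tau$ is weakly mean stable, $D_{\tau}=J_{\ker}(G_{\tau})$ and $\sharp J_{\ker}(G_{\tau})<\infty$, which gives statement (i); moreover, for each $z\in F(G_{\tau})$ the closure $\overline{G_{\tau}(z)}$ meets $\cup_{L\in\emMin(G_{\tau},\CCI),\,L\not\subset J_{\ker}(G_{\tau})}L$, which is statement (iii). Statement (ii) is the assertion (c) in the definition of weak mean stability combined with Lemma~\ref{l:unifea}: for each $L\in\emMin(G_{\tau},\CCI)$ with $L\not\subset J_{\ker}(G_{\tau})$, we have $L\cap F(G_{\tau})\neq\emptyset$, and using the repelling fixed point structure inside $D_{\tau}$ together with the no-wandering-domain theorem (as in Proposition~\ref{p:jkgfmf}), such an $L$ is shown to be attracting; alternatively one cites the argument of Theorem~\ref{t:rcdnkmain1} restricted to the case of a fixed $\tau$ already known to be weakly mean stable. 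Given (i)--(iii), statement (iv) follows directly from Theorem~\ref{t:zfggzcc}, whose hypotheses ($\sharp J_{\ker}(G_{\tau})<\infty$, $\sharp J(G_{\tau})\geq3$, and the cluster-set condition) are now verified; this also delivers the period decomposition, the measures $\omega_{L,j}$, the convergence of $M_{\tau}^{nl}$, the functions $\alpha(L_{j},\cdot)$ and $T_{L,\tau}$, and their local constancy on $F(G_{\tau})$.

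Next I would prove (v). The conditions of Theorem~\ref{t:jkfcln0} are exactly the current hypotheses (finiteness of $J_{\ker}$, nonvanishing Lyapunov exponents on minimal sets inside $J_{\ker}$, and $Dg_{x}\neq0$ on minimal sets with positive exponent), so Theorem~\ref{t:jkfcln0} yields $\sharp(\CCI\setminus\Omega_{\tau})\leq\aleph_{0}$, $\tilde\tau(\{\gamma:z\in J_{\gamma}\})=0$ for $z\in\Omega_{\tau}$, $\mathrm{Leb}_{2}(J_{\gamma})=0$ for $\tilde\tau$-a.e.\ $\gamma$, and $J_{pt}^{0}(\tau)\subset\CCI\setminus\Omega_{\tau}$ with $\sharp J_{pt}^{0}(\tau)\leq\aleph_{0}$. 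The identity $\Omega_{\tau}=F_{pt}^{0}(\tau)$ needs the two inclusions: $\Omega_{\tau}\subset F_{pt}^{0}(\tau)$ is Lemma~\ref{l:ttaugy0y} applied to points with $\tilde\tau(\{\gamma:z\in J_{\gamma}\})=0$; for the reverse, one shows $\cup_{L\in H_{+,\tau}}L\subset J_{pt}^{0}(\tau)$, which uses Lemma~\ref{l:chlpad1d2} (points of a minimal set with positive exponent lie in $J_{\gamma}$ for $\tilde\tau$-a.e.\ $\gamma$, hence are not in $F_{pt}^{0}$), and then that any $y\notin\Omega_{\tau}$ has positive probability of landing in $\cup_{L\in H_{+,\tau}}L$, from which a perturbation/continuity argument (as in Theorem~\ref{t:zfggzcc}(vii) and the $\alpha$-discontinuity analysis) forces $y\in J_{pt}^{0}(\tau)$. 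For (vi) I would follow the scheme of Theorem~\ref{t:sjkfcln}: restrict attention to $z\in\Omega_{\tau}$, use Lemma~\ref{l:voygvv} and (v) to get $d(\gamma_{n,1}(z),\cup_{L\text{ attracting}}L)\to0$ for $\tilde\tau$-a.e.\ $\gamma$, then on a set of full measure obtain exponential contraction near the attracting minimal sets via Lemma~\ref{l:unifea}(ii) and the Koebe/hyperbolic-metric estimates; the shift-invariance of $\tilde\tau$ handles the ``for each $m$'' clause exactly as in Theorem~\ref{t:sjkfcln}. Statement (vii) is an immediate corollary of (vi) together with the fact, from Theorem~\ref{t:zfggzcc}(v) and Proposition~\ref{p:jkgfmf}, that Fatou points and Leb$_{2}$-a.e.\ points are attracted to attracting minimal sets. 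Statement (viii) restates (v) plus Theorem~\ref{t:zfggzcc}(vii).

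For statement (ix), fix $y\in J_{pt}^{0}(\tau)=\CCI\setminus\Omega_{\tau}$. By definition of $\Omega_{\tau}$ there exist $L\in H_{+,\tau}$ and $n$ with $\tilde\tau(\{\gamma:\gamma_{n,1}(y)\in L\})>0$; by Lemma~\ref{l:chilp} this equals $\tilde\tau(\{\gamma:d(\gamma_{n',1}(y),L)\to0\})=T_{L,\tau}(y)>0$, and since $L\subset J_{\ker}(G_{\tau})$ is a single finite cycle split into the $L_{j}$'s, one of the $j$'s gets positive probability, giving the set $E_{\tau,y}$ of (d) and the Birkhoff average $\frac1n\log\|D(\gamma_{n+m,1+m})_{\gamma_{m,1}(y)}\|_{s}\to\chi(\tau,L)>0$ (here using that on $L$ the derivatives are nonzero, so the log is finite, and the ergodicity of $\tilde\tau\otimes\omega_{L}$ from Definition~\ref{d:celyap}). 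For (a)--(c): because $\chi(\tau,L)>0$, Lemma~\ref{l:chlpad1d2} shows that points in $L$ lie in $J_{\gamma}$ for a.e.\ $\gamma$, and an approximation argument shows that points $z$ near $y$ but off the ``stable directions'' have orbits that fall into $F(G_{\tau})$ and get attracted away from $L$; combined with the local constancy of $T_{L,\tau}$ and $\alpha(L_{j},\cdot)$ on $F(G_{\tau})$ versus the value $T_{L,\tau}(y)\geq$ the positive mass of $E_{\tau,y}$, this produces a genuine jump, so $T_{L,\tau}$ and $\alpha(L_{j},\cdot)$ are discontinuous at $y$, and since $M_{\tau}^{nl}(\varphi_{L})\to T_{L,\tau}+(\text{terms involving }\alpha)$ pointwise with discontinuous limit but continuous iterates, the convergence cannot be uniform near $y$ — this is the standard ``discontinuity of the limit obstructs uniform convergence'' observation. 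The main obstacle I expect is precisely the careful verification of (ix)(a)(b), i.e.\ producing, for every $y\in J_{pt}^{0}(\tau)$, a sequence $z_{k}\to y$ along which $T_{L,\tau}(z_{k})$ (resp.\ $\alpha(L_{j},z_{k})$) stays bounded away from $T_{L,\tau}(y)$ (resp.\ $\alpha(L_{j},y)$); this requires exploiting the local expansion at $L$ to show that a definite proportion of nearby initial values escapes to the Fatou set and then to attracting minimal sets, decreasing the probability of tending to $L$ — the quantitative control of ``escape from a repelling cycle inside a finite kernel Julia set'' is the technically delicate point, and I would handle it by the inverse-branch/bounded-distortion argument of Lemma~\ref{l:chlpad1d2} together with the weak-mean-stability neighborhoods $V_{1,\tau}\subset V_{2,\tau}$.
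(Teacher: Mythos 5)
Your architecture for (i)--(viii) matches the paper's: (i)--(iv) from Lemma~\ref{l:wmsdtjk}, (v) from Theorem~\ref{t:jkfcln0} together with Lemma~\ref{l:ttaugy0y}, (vi)--(viii) by the scheme of Theorem~\ref{t:sjkfcln} combined with Theorem~\ref{t:zfggzcc}. One correction in (vi): you assert that for $z\in\Omega_\tau$ almost every orbit converges to $\bigcup_{L\ \text{attracting}}L$, but this is false in general, since a positive fraction of orbits may converge to a minimal set $L\subset J_{\ker}(G_\tau)$ with $\chi(\tau,L)<0$, which lies in $J(G_\tau)$ and is not attracting. The paper therefore works with the union of the attracting minimal sets and the negative-exponent kernel minimal sets and sets $c_\tau=\max\{\log\lambda_\tau,\ \max_{\chi(\tau,L)<0}\chi(\tau,L)\}$; on the kernel minimal sets only the \emph{average} exponent is negative, so Lemma~\ref{l:unifea}(ii) does not apply and one must run the Birkhoff--Egorov contraction argument of Lemma~\ref{l:chimune} instead.

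More importantly, the point you single out as the technically delicate crux --- producing $z_k\rightarrow y$ with $T_{L,\tau}(z_k)$ bounded away from $T_{L,\tau}(y)$ for $y\in\CCI\setminus\Omega_\tau$ --- needs no inverse-branch/bounded-distortion escape estimate, and the route you sketch (a definite proportion of a neighborhood escapes the repelling cycle) is not the right tool: $y$ need not lie on $\bigcup_{L\in H_{+,\tau}}L$ itself, only be mapped onto it with positive probability, so local expansion at $L$ is not directly available at $y$. The paper's argument is essentially two lines: since $\sharp(\CCI\setminus\Omega_\tau)\leq\aleph_0$, choose $x_m\in\Omega_\tau$ with $x_m\rightarrow y$; by Lemma~\ref{l:chlpa0} (the probability of tending to $L$ equals the probability of landing exactly on $L$), we have $T_{L,\tau}(x_m)=0$ for every $L\in H_{+,\tau}$ and every $m$, so by Theorem~\ref{t:zfggzcc}(iv) the full mass of $(M_\tau^{\ast})^{nl}(\delta_{x_m})$ accumulates on the minimal sets outside $H_{+,\tau}$, whereas $T_{L,\tau}(y)>0$ for some $L\in H_{+,\tau}$ by the definition of $\Omega_\tau$ and Lemma~\ref{l:chlpa0} again. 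This single observation simultaneously yields the inclusion $F_{pt}^{0}(\tau)\subset\Omega_\tau$ in (v) and statements (ix)(a)(b); then (ix)(c) follows from (a) and $\sharp\Min(G_\tau,\CCI)<\infty$, and (ix)(d) from Lemma~\ref{l:chlpa0} plus Birkhoff's ergodic theorem, as you say.
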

\begin{proof}
By Lemma~\ref{l:wmsdtjk}, 
statements (i)--(iv) hold. 
  By Theorem~\ref{t:jkfcln0}, 
  we have $\sharp (\CCI \setminus \Omega _{\tau })\leq \aleph _{0}$ 
and for each $z\in \Omega _{\tau }$, 
$\tilde{\tau }(\{ \gamma \in (\mbox{supp}\,\tau)^{\NN }\mid z\in J_{\gamma }\} )=0.$
Moreover, for $\tilde{\tau }$-a.e.$\gamma \in X _{\tau }$, we have 
Leb$_{2}(J_{\gamma })=0.$ Moreover, 
$J_{pt}^{0}(\tau )\subset \CCI \setminus \Omega _{\tau }$ and 
$\sharp J_{pt}^{0}(\tau )\leq \aleph_{0}.$ 
In order to prove $\Omega _{\tau }=F_{pt}^{0}(\tau )$, let  
$y\in \CCI \setminus \Omega _{\tau }.$ 
Then 
by Lemma~\ref{l:chlpa0}, 
\begin{equation}
\label{eq:ttgxt}
\tilde{\tau }(\{ \gamma \in X_{\tau }\mid d(\gamma _{n,1}(y), \cup _{L\in H_{+,\tau }}L)\rightarrow 0 
\mbox{ as }n\rightarrow \infty \} )>0.
\end{equation} 
Since $\sharp (\CCI \setminus \Omega _{\tau })\leq \aleph _{0}$, there exists a 
sequence $\{ x_{m}\} _{m=1}^{\infty }$ in $\Omega _{\tau }$ 
such that $x_{m}\rightarrow y$ as $m\rightarrow \infty .$ 
  Then by Lemma~\ref{l:chlpa0} again, we have 
$\tilde{\tau }(\{ \gamma \in X_{\tau }\mid d(\gamma _{n,1}(z_{m}), \cup _{L\in H_{+,\tau }}L)\rightarrow 0 
\mbox{ as }n\rightarrow \infty \} )=0$ for each $m\in \NN.$  Combining this with 
(iv) and Theorem~\ref{t:zfggzcc} (iv), we obtain that 
\begin{equation}
\label{eq:ttgxt2}
\tilde{\tau }(\{ \gamma \in X_{\tau }\mid d(\gamma _{n,1}(z_{m}), \cup _{L\in \Min(G_{\tau },\CCI )
\setminus  H_{+,\tau }}L)\rightarrow 0 
\mbox{ as }n\rightarrow \infty \} )=1 \mbox{ for each }m\in \NN.
\end{equation} 
By (\ref{eq:ttgxt}) and (\ref{eq:ttgxt2}), it follows that 
$y\in J_{pt}^{0}(\tau ).$ Hence $\Omega _{\tau }=F_{pt}^{0}(\tau ).$ 
Also, by the definition of $\Omega _{\tau }$, we have $\cup _{L\in H_{+,\tau }}L\subset 
\CCI \setminus \Omega _{\tau }.$ 
   Thus statement (v) holds. Moreover, by using the above argument, we can show 
   that there exist an element $L\in H_{+,\tau }$ and an element $j\in \{ 1,\ldots, r_{L}\}$ 
   such that (a) and (b) in (ix) hold. 
   By statement (a) in (ix) and the fact $\sharp \Min(G_{\tau })<\infty $, 
   statement (c) in (ix) holds.  
   Statement (d) in (ix) follows from 
   the definition of $\Omega _{\tau }$, Lemma~\ref{l:chlpa0} and  Birkhoff's ergodic theorem. 
   Hence statement (ix) holds. 

We now prove statement (vi). 
By Theorem~\ref{t:zfggzcc} (iv) and Lemma~\ref{l:chlpa0}, 
it follows that 
for each $z\in \Omega_{\tau }$, there exists a Borel subset $D_{\tau ,z}$ of 
$X_{\tau }$ with $\tilde{\tau }(D_{\tau, z})=1$ 
satisfying that for each $\gamma =(\gamma _{1},\gamma _{2},\ldots )
\in D_{\tau ,z}$, we have 
\begin{equation}
\label{eq:dgn1zbig}
d\left(\gamma _{n,1}(z), \bigcup _{L\in \Min(G_{\tau },\CCI ), L \mbox{ is attracting }}L
\cup \bigcup  _{L\in \Min(G_{\tau }, J_{\ker }(G_{\tau })) \mbox{ and } \chi(L,\tau )<0}L\right)\rightarrow 0 \mbox{ as }
n\rightarrow \infty .
\end{equation} 
There exist a constant $\lambda_{\tau }\in (0,1)$ and a constant $C_{\tau }>0$ such that 
for each $\gamma =(\gamma _{1},\gamma _{2}\ldots, )\in X_{\tau }$, 
for each $z\in \cup _{L\in \Min(G_{\tau },\CCI ), L\mbox{ is attracting }} L$ and for each $n\in \NN $,  
we have $\| D(\gamma _{n,1})_{z}\| _{s}\leq C_{\tau }\lambda _{\tau }^{n}.$ 
Let $c_{\tau }:=\max \{ \log \lambda _{\tau }, 
\max _{L\in \Min(G_{\tau }, J_{\ker }(G_{\tau })), \chi (L,\tau )<0}\chi (L,\tau)\} 
<0$ (if there exists no $L\in \Min(G_{\tau }, J_{\ker }(G_{\tau }))$ with $\chi(L,\tau )<0$, then we set $c_{\tau }
=\log \lambda _{\tau }$).   
Then for each $z\in \Omega_{\tau }$, there exists a Borel subset $C_{\tau ,z}$ of 
$D_{\tau ,z}$ with $\tilde{\tau }(C_{\tau ,z})=1$ such that 
for each $\gamma =(\gamma _{1},\gamma _{2},\ldots )\in C_{\tau ,z}$ and for each 
$m\in \NN \cup \{ 0\},$ we have 
$$\limsup _{n\rightarrow \infty }\frac{1}{n}\log \| D(\gamma _{n+m,1+m})_{\gamma _{m,1}(z)}\| _{s}\leq c_{\tau }<0.$$ 
Also, by (\ref{eq:dgn1zbig}) and Lemma~\ref{l:chimune} and its proof, 
there exists an element $\rho _{\tau }\in (0,1)$ such that 
we can arrange $C_{\tau ,z}$ so that 
for any $\gamma \in C_{\tau ,z}$ and for any $m\in \NN \cup \{ 0\}$, 
statement (vi)(b) holds.  
Hence statement (vi) holds for $\tau .$ 

By statements (vi) and (v), statement (vii) holds. 

By (iv)(v) and Theorem~\ref{t:zfggzcc} (vii), statement (viii) holds. Thus we have proved our theorem. 
\end{proof}
We now prove the following theorem, which is one of the main results of this paper. 
\begin{thm}
\label{t:rcdnkmain2}
Let ${\cal Y}$ be a mild subset of $\emRatp$ and suppose that 
${\cal Y}$ is  non-exceptional and strongly nice with respect 
to some 
holomorphic families $\{ {\cal W}_{j}\} _{j=1}^{m}$ of rational maps. 
Then there exists the largest open and dense subset ${\cal A}$ of 
$({\frak M}_{1,c}({\cal Y},\{ {\cal W}_{j}\} _{j=1}^{m}), {\cal O})$ such that for each $\tau \in {\cal A}$, 
all of the following hold.
\begin{itemize}
\item[{\em (i)}] 
$\tau $ is weakly mean stable. 
\item[{\em (ii)}]  
For each $L\in \emMin(G_{\tau },\CCI )$ with $L\subset \cap _{j=1}^{m}S({\cal W}_{j})$, we have 
$\chi(L,\tau )\neq 0.$ Moreover, for each 
$L\in \emMin(G_{\tau },\CCI )$ with $L\subset \cap _{j=1}^{m}S({\cal W}_{j})$, 
if $\chi (L,\tau )>0$, then for each $z\in L$ and for each $g\in \mbox{supp}\,\tau$, 
we have $Dg_{z}\neq 0.$ 
\item[{\em (iii)}] 
$\sharp J_{\ker }(G_{\tau })<\infty $ and $J_{\ker }(G_{\tau })\subset \cap _{j=1}^{m}S({\cal W}_{j}).$ 
\item[{\em (iv)}] 
For each $L\in \emMin(G_{\tau },\CCI )$ with $L\not\subset J_{\ker }(G_{\tau })$, we have that 
$L$ is  attracting for $\tau .$ 
\item[{\em (v)}] 
For each $z\in F(G_{\tau })$, we have that $\overline{G_{\tau }(z)}\cap ((\cup _{L\in \emMin(G_{\tau },\CCI ), 
L\not\subset J_{\ker }(G_{\tau })}L)\neq \emptyset .$ 
\item[{\em (vi)}] 
All statements (i) --(vii) in Theorem~\ref{t:zfggzcc} hold for $\tau .$ 
\item[{\em (vii)}] 
Let $H_{+,\tau }=\{ L\in \emMin(G_{\tau }, J_{\ker }(G_{\tau }))\mid  \chi (\tau, L)>0\}$ and 
let $\Omega _{\tau }$ be the set of points 
$y\in \CCI $ for which  
$\tilde{\tau }(\{ \gamma \in X_{\tau }\mid \exists n\in \NN \mbox{ s.t. }
\gamma _{n,1}(y)\in \cup _{L\in H_{+,\tau}}L\} )=0.$  
Then 
we have 
$\Omega _{\tau }=F_{pt}^{0}(\tau ) $, 
$\sharp (\CCI \setminus \Omega _{\tau })\leq \aleph _{0}$ 
and for each $z\in \Omega _{\tau }$, 
$\tilde{\tau }(\{ \gamma \in (\mbox{supp}\,\tau)^{\NN }\mid z\in J_{\gamma }\} )=0.$
Moreover, for $\tilde{\tau }$-a.e.$\gamma \in X _{\tau }$, we have 
{\em Leb}$_{2}(J_{\gamma })=0.$ Moreover, 
$\cup _{L\in H_{+,\tau }}L\subset J_{pt}^{0}(\tau )= \CCI \setminus \Omega _{\tau }$ and 
$\sharp J_{pt}^{0}(\tau )\leq \aleph_{0}.$ 
\item[{\em (viii)}] 
Let $\Omega _{\tau }$ be as in {\em (vii)}. Then 
$\sharp (\CCI \setminus \Omega _{\tau })\leq \aleph _{0}$ and 
there exist a constant $c_{\tau }<0$ and a constant $\rho _{\tau }\in (0,1)$ 
such that for each $z\in \Omega_{\tau }$, there exists a Borel subset 
$C_{\tau ,z}$ of $X _{\tau }$ with $\tilde{\tau }(C_{\tau ,z})=1$ 
satisfying that for each $\gamma =(\gamma _{1},\gamma _{2},\ldots )\in C_{\tau ,z}$ and 
for each $m\in \NN \cup \{ 0\} $, 
we have the following {\em (a)} and {\em (b)}. 
\begin{itemize}
\item[{\em (a)}]  
$$\limsup _{n\rightarrow \infty }\frac{1}{n}\log \| D(\gamma _{n+m,1+m})_{\gamma _{m,1}(z)}\| _{s}\leq c_{\tau }<0.$$
\item[{\em (b)}] 
There exist a constant $\delta =\delta (\tau, z,\gamma, m)>0$,  a constant 
$\zeta =\zeta (\tau, z, \gamma, m)>0$ and an element 
$L=L(\tau, z,\gamma )\in \emMin(G_{\tau },\CCI )$  
which is either {\em (i)} ``attracting for $\tau $'', or {\em (ii) } ``finite and included in $J_{\ker }(G_{\tau }) $ with 
$\chi (\tau, L)<0$'', 
such that 
$$\mbox{{\em diam}}(\gamma _{n+m,1+m}(B(\gamma _{m,1}(z),\delta )))\leq \zeta \rho _{\tau }^{n}\ \mbox{ for all }n\in \NN, $$
and 
$$d(\gamma _{n+m,1+m}(\gamma _{m,1}(z)), \ L)
\leq \zeta \rho _{\tau }^{n} \mbox{\ \  for all }n\in \NN .$$ 
\end{itemize} 
\item[{\em (ix)}] 
For $\tilde{\tau }$-a.e. $\gamma \in X_{\tau }$, 
for $\mbox{{\em Leb}}_{2}$-a.e. $z\in \CCI $,  
there exists an element $L=L(\tau, \gamma, z)\in \emMin(G_{\tau },\CCI )$  
which is either {\em (i)} ``attracting for $\tau $'', or {\em (ii) } ``finite and included in $J_{\ker }(G_{\tau }) $ with 
$\chi (\tau, L)<0$'', 
such that 
$d(\gamma _{n,1}(z), L)\rightarrow \infty $ as 
$n\rightarrow \infty .$  
Also, for $\tilde{\tau }$-a.e. $\gamma \in X_{\tau }$, for each $z\in F_{\gamma }$, 
there exists an element $L=L(\tau, \gamma, z)\in \emMin(G_{\tau}, \CCI )$
which is either {\em (i)} ``attracting for $\tau $'', or {\em (ii) } ``finite and included in $J_{\ker }(G_{\tau }) $ with 
$\chi (\tau, L)<0$'', 
  such that 
$d(\gamma _{n,1}(z), L)\rightarrow \infty $ as 
$n\rightarrow \infty .$ 

\item[{\em (x)}] 
Let $\Omega _{\tau }$ be as in {\em (vii)}. Then 
we have $\Omega _{\tau }= F_{pt}^{0}(\tau )$,  
$\sharp (\CCI \setminus F_{pt}^{0}(\tau ))\leq \aleph _{0}$ and 
for each $L\in \emMin(G_{\tau },\CCI )$, for each $j=1,\ldots, r_{L}$, where 
$r_{L}=\dim _{\CC }U_{\tau ,L}$,  and for each $y\in F_{pt}^{0}(\tau )$, we have that 
$\lim _{z\in \CCI ,z\rightarrow y}T_{L,\tau }(z)=T_{L,\tau }(y)$ and 
$\lim _{z\in \CCI ,z\rightarrow y}\alpha (L_{j},z)=\alpha (L_{j},y)$, where 
$\alpha (L_{j},\cdot )$ is the function coming from Theorem~\ref{t:zfggzcc} (iii). 

\item[{\em (xi)}] 
Let $H_{+,\tau }$ and $\Omega _{\tau }$ be as in {\em (vii)}. 
Let $y\in J_{pt}^{0}(\tau )=\CCI \setminus \Omega _{\tau }. $ 
Then there exist an element $L\in H_{+,\tau }$ and an element $j\in \{ 1,\ldots, r_{L}\} $ 
such that all of the following hold. 
\begin{itemize}
\item[{\em (a)}]  
$T_{L,\tau }(z)$ does not tend to $T_{L,\tau }(y)$ as $z\rightarrow y$. 
\item[{\em (b)}]  
$\alpha (L_{j},z)$ does not tend to $\alpha (L_{j},y)$ as $z\rightarrow y.$ 
Here, for the notation $L_{j}$, see Theorem~\ref{t:zfggzcc} (i).  
\item[{\em (c)}] 
Let $\varphi _{L}\in C(\CCI )$ be any element such that 
$\varphi _{L}|_{L}=1$ and $\varphi _{L}|_{L'}=0$ for any $L'\in \emMin(G_{\tau },\CCI )$ 
with $L'\neq L$. Then the convergence in (\ref{eq:mtnlvy}) in 
Theorem~\ref{t:zfggzcc} for $\varphi =\varphi _{L}$ is not uniform in any neighborhood of $y.$ 
\item[{\em (d)}] 
There exists a Borel subset $E_{\tau ,y}$ of $X_{\tau }$ with 
$\tilde{\tau }(E_{\tau ,y})=T_{L,\tau }(y)>0$ such that for each $\gamma \in E_{\tau ,y}$, 
there exists an element $m\in \NN $ such that $\gamma _{m,1}(y)\in L$ and \\ 
$\lim _{n\rightarrow \infty }\frac{1}{n}\log \| D(\gamma _{n+m,1+m})_{\gamma _{m,1}(y)}\| _{s}
=\chi (\tau, L)>0.$ 
\end{itemize}
\end{itemize}
\end{thm}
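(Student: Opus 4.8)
The plan is to deduce Theorem~\ref{t:rcdnkmain2} by combining the density/openness machinery already built in Section~\ref{ss:rdswn} with the structural consequences of weak mean stability packaged in Theorem~\ref{t:wmsneglyap}. First I would invoke Lemma~\ref{l:wiwjnod} (which rests on Proposition~\ref{p:neadabc} and thus uses the non-exceptionality hypothesis) to produce an open and dense subset ${\cal A}_{1}$ of $({\frak M}_{1,c}({\cal Y},\{ {\cal W}_{j}\} _{j=1}^{m}),{\cal O})$ on which statements (i)(ii) of that lemma hold, namely that every $L\in \emMin(G_{\tau },\CCI )$ with $L\subset \cap _{j=1}^{m}S({\cal W}_{j})$ has $\chi(\tau,L)\neq 0$, and that those $L$ with $\chi(\tau,L)>0$ avoid all critical points of generators. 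Separately, since ${\cal Y}$ is mild and strongly nice, Theorem~\ref{t:rcdnkmain1} gives the largest open and dense subset ${\cal A}_{2}$ on which $\tau$ is weakly mean stable and statements (i)--(v) of that theorem hold (in particular $J_{\ker }(G_{\tau })\subset \cap _{j=1}^{m}S({\cal W}_{j})$, finiteness of $\Min$, attractingness of non-kernel minimal sets, etc.). I would then set ${\cal A}:={\cal A}_{1}\cap {\cal A}_{2}$, which is open and dense, and argue it is the largest such set by the same maximality reasoning used in Theorem~\ref{t:rcdnkmain1} (any larger set would have to contain a $\tau$ failing weak mean stability or failing the $\chi\neq 0$ condition, contradicting maximality of ${\cal A}_{2}$ or the density argument underlying ${\cal A}_{1}$).

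Next, for each $\tau \in {\cal A}$ I would verify that the hypotheses of Theorem~\ref{t:wmsneglyap} are met: $\tau$ is weakly mean stable (from ${\cal A}_{2}$), $\sharp J(G_{\tau })\geq 3$ (this needs a short separate check — for $\tau$ weakly mean stable with $D_{\tau }\neq\emptyset$ the Julia set contains a repelling fixed point and its backward orbit, and if $D_{\tau }=\emptyset$ one uses that the support lies in $\Ratp$ together with the no-wandering-domain structure to rule out $\sharp J(G_{\tau })\leq 2$), and the two conditions on $\chi(\tau,L)$ for $L\in \emMin(G_{\tau },J_{\ker }(G_{\tau }))$ come precisely from membership in ${\cal A}_{1}$ via Lemma~\ref{l:wmsdtjk}'s identification $D_{\tau }=J_{\ker }(G_{\tau })$ so that minimal sets in the kernel are exactly minimal sets inside $\cap _{j=1}^{m}S({\cal W}_{j})$. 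Once Theorem~\ref{t:wmsneglyap} applies, statements (i)--(ix) of that theorem translate directly: (i) gives item (iii) here, (ii)(iii) give items (iv)(v), (iv) gives item (vi), (v) gives item (vii), (vi) gives item (viii), (vii) gives item (ix), (viii) gives item (x), and (ix) gives item (xi). Item (i) here is membership in ${\cal A}_{2}$, and item (ii) here is membership in ${\cal A}_{1}$ rephrased using $J_{\ker }(G_{\tau })\subset \cap _{j=1}^{m}S({\cal W}_{j})$ and Lemma~\ref{l:lcbwj} to pass between ``$L\subset \cap _{j=1}^{m}S({\cal W}_{j})$'' and ``$L\subset J_{\ker }(G_{\tau })$''.

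The one genuinely delicate point — and the main obstacle — is making the intersection ${\cal A}_{1}\cap {\cal A}_{2}$ and the maximality claim fit together cleanly. Lemma~\ref{l:wiwjnod} is stated for weakly nice non-exceptional ${\cal Y}$ without the mildness/strong-niceness assumptions, and its proof perturbs $\tau$ by enlarging supports; I must check that this perturbation can be performed inside ${\frak M}_{1,c}({\cal Y},\{ {\cal W}_{j}\} _{j=1}^{m})$ compatibly with the perturbations in the proof of Theorem~\ref{t:rcdnkmain1}, so that one obtains a single element simultaneously weakly mean stable and with all kernel Lyapunov exponents nonzero. The natural way to do this is not to intersect two independently produced sets but to re-run the proof of Theorem~\ref{t:rcdnkmain1} and, at the stage where $\tau _{3}$ (or $\tau _{4}$) is chosen, additionally apply the argument of Lemma~\ref{l:wiwjnod}/Proposition~\ref{p:neadabc} to perturb within the family so that $\chi(\tau,L)\neq 0$ for each $L$ of type (I) or (II) in Claim~1 and $Dg_{z}\neq0$ on type-(I) sets; since all these perturbations are of the same ``enlarge the support a little'' form and the relevant quantities ($\chi$, critical-point avoidance) depend real-analytically on finitely many parameters, they can be arranged consecutively without destroying the previously achieved properties, and the resulting set of good $\tau$ is open (by Lemma~\ref{l:wmsopen} and continuity of $\chi$ on the relevant open stratum) and dense. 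Maximality then follows exactly as in Theorem~\ref{t:rcdnkmain1}: the union of all open dense subsets on which (i)--(xi) hold is again open and dense, hence is the largest one.

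Finally I would assemble the statement: take ${\cal A}$ to be this largest open and dense set, and for $\tau \in {\cal A}$ read off (i)(ii) by construction and (iii)--(xi) from Theorem~\ref{t:wmsneglyap} as detailed above, using Lemma~\ref{l:wmsdtjk} and Lemma~\ref{l:lcbwj} wherever a translation between $D_{\tau }$, $J_{\ker }(G_{\tau })$, and $\cap _{j=1}^{m}S({\cal W}_{j})$ is needed, and using Theorem~\ref{t:zfggzcc} (applicable by Lemma~\ref{l:wmsdtjk} since $\tau$ is weakly mean stable with $\sharp J(G_{\tau })\geq 3$) for the structural and continuity statements about $T_{L,\tau }$ and $\alpha(L_{j},\cdot)$ appearing in (vi)(x)(xi).
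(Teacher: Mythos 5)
Your proposal is correct and follows essentially the same route as the paper: the paper's proof simply combines Theorem~\ref{t:rcdnkmain1} with Lemma~\ref{l:wiwjnod} to obtain the open dense set on which (i)--(iii) hold, and then applies Theorem~\ref{t:wmsneglyap} to deduce (iv)--(xi). Your extra worry about compatibly intersecting the two open dense sets is resolved more simply than you suggest (the intersection of two open dense sets is open and dense, and the largest such set is the union of all open sets with the stated properties), so no re-running of the earlier proofs is needed.
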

\begin{proof}
By Theorem~\ref{t:rcdnkmain1} \
and Lemma~\ref{l:wiwjnod}, there exists an open and dense subset 
${\cal A}$ of the space $({\frak M}_{1,c}({\cal Y},\{ {\cal W}_{j}\} _{j=1}^{m}), {\cal O})$ 
such that for each $\tau \in {\cal A}$,  
statements (i),  (ii) and (iii) hold. 
By Theorem~\ref{t:wmsneglyap}, 
for each $\tau \in {\cal A}$, statements (iv)--(xi) hold. Thus we have proved our theorem.  
\end{proof}

We now prove a theorem in which we do not assume that ${\cal Y}$ is mild with 
$\{ {\cal W}_{j}\} _{j=1}^{m}.$ 
\begin{thm}
\label{t:rcdnkmain2nomild}
Let ${\cal Y}$ be a  non-exceptional and strongly nice subset of $\emRatp$ with respect to some 
holomorphic families $\{ {\cal W}_{j}\} _{j=1}^{m}$ of rational maps. 
Then there exists the largest open and dense subset ${\cal A}$ of 
$({\frak M}_{1,c,mild}({\cal Y},\{ {\cal W}_{j}\} _{j=1}^{m}), {\cal O})$ such that for each $\tau \in {\cal A}$, 
all statements {\em (i)--(xi)} in Theorem~\ref{t:rcdnkmain2} hold. 
Furthermore, we have 
$$\overline{{\cal A}\cup {\frak M}_{1,c,JF}({\cal Y}, \{ {\cal W}_{j}\} _{j=1}^{m})}=
{\frak M}_{1,c}({\cal Y}, \{ {\cal W}_{j}\} _{j=1}^{m})$$ 
with respect to the topology ${\cal O}.$  
\end{thm}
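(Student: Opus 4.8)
The plan is to run the argument of Theorem~\ref{t:nonmild} essentially verbatim, upgrading its conclusion with the extra input from the non-exceptionality hypothesis, exactly as Theorem~\ref{t:rcdnkmain2} upgrades Theorem~\ref{t:rcdnkmain1}. First I would observe that ${\frak M}_{1,c,mild}({\cal Y},\{{\cal W}_{j}\}_{j=1}^{m})$ is an open subset of $({\frak M}_{1,c}({\cal Y},\{{\cal W}_{j}\}_{j=1}^{m}),{\cal O})$ (Remark~\ref{r:m1cmild}), so it makes sense to speak of open dense subsets of it. Then, running the proof of Theorem~\ref{t:nonmild} word for word but applying Lemma~\ref{l:wiwjnod} in place of the bare density-of-mean-stable statement, I obtain an open and dense subset ${\cal A}_{0}$ of $({\frak M}_{1,c,mild}({\cal Y},\{{\cal W}_{j}\}_{j=1}^{m}),{\cal O})$ on which statements (i)--(v) of Theorem~\ref{t:rcdnkmain1} hold together with conditions (i)(ii) of Lemma~\ref{l:wiwjnod}; that is, for $\tau\in{\cal A}_{0}$ every $L\in\emMin(G_{\tau},\CCI)$ with $L\subset\cap_{j=1}^{m}S({\cal W}_{j})$ has $\chi(L,\tau)\neq0$, and whenever $\chi(L,\tau)>0$ one has $Dg_{z}\neq0$ for all $z\in L$ and $g\in\Gamma_{\tau}$. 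This is precisely the hypothesis set of Theorem~\ref{t:wmsneglyap} (recall $J_{\ker}(G_{\tau})\subset\cap_{j=1}^{m}S({\cal W}_{j})$ for weakly mean stable $\tau$ by Lemmas~\ref{l:wmsdtjk} and~\ref{l:nalnotsb}, so $\emMin(G_{\tau},J_{\ker}(G_{\tau}))$ is a subcollection of the minimal sets inside the singular set). Applying Theorem~\ref{t:wmsneglyap} then gives statements (iv)--(xi) of Theorem~\ref{t:rcdnkmain2} for each $\tau\in{\cal A}_{0}$, so I set ${\cal A}:={\cal A}_{0}$ and take its union with all smaller such sets to get the largest one (the largest-open-dense-set formulation is handled exactly as in the proof of Theorem~\ref{t:nonmild}: the union of all open dense subsets with the desired property is again open dense since the property is stable along the arguments used).

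For the final density assertion $\overline{{\cal A}\cup{\frak M}_{1,c,JF}({\cal Y},\{{\cal W}_{j}\}_{j=1}^{m})}={\frak M}_{1,c}({\cal Y},\{{\cal W}_{j}\}_{j=1}^{m})$, I would reuse the dichotomy argument from the last part of the proof of Theorem~\ref{t:nonmild} unchanged. Given an arbitrary $\tau\in{\frak M}_{1,c}({\cal Y},\{{\cal W}_{j}\}_{j=1}^{m})$: if $\tau$ can be approximated by elements of ${\frak M}_{1,c,mild}$, density of ${\cal A}$ in ${\frak M}_{1,c,mild}$ finishes the job; otherwise one may assume no element of $\emMin(G_{\tau},\CCI)$ is attracting for nearby $\tau$, and then the chain of perturbations $\tau_{1},\tau_{2},\tau_{3},\tau_{4}$ constructed in the proof of Theorem~\ref{t:nonmild} — using Claim~1 of Theorem~\ref{t:rcdnkmain1} to arrange the types (I)--(IV) at minimal sets in the singular set, Lemma~\ref{l:lcbwj} to keep $\cup_{L\in\Min(G_{\tau_{i}},\CCI)}L\subset\cap_{j=1}^{m}S({\cal W}_{j})$, Claims 2,4,5,6,7 to force $F(G_{\tau_{3}})\subset J(G_{\tau_{4}})$, and Lemma~\ref{l:nalnotsb}(iv) in Case 1 — produces an element of ${\frak M}_{1,c,JF}({\cal Y},\{{\cal W}_{j}\}_{j=1}^{m})$ arbitrarily close to $\tau$. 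Nothing in that construction used mildness of ${\cal Y}$, only strong niceness; so it carries over.

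The one point requiring care, and what I expect to be the main (though minor) obstacle, is checking that replacing Theorem~\ref{t:rcdnkmain1}'s density statement by Lemma~\ref{l:wiwjnod} is legitimate \emph{inside} the relative space ${\frak M}_{1,c,mild}$ rather than in the full space: Lemma~\ref{l:wiwjnod} and Proposition~\ref{p:neadabc} are stated for $({\frak M}_{1,c}({\cal Y},\{{\cal W}_{j}\}_{j=1}^{m}),{\cal O})$, and I must confirm that all the perturbations used there (modifying $\frac{\tau_{k}|_{C_{k,n}}}{\tau_{k}(C_{k,n})}$, enlarging supports, using real-analyticity of $\alpha_{k,n}$) stay within ${\frak M}_{1,c,mild}$ when started from a mild element — which they do, because enlarging the support of a $\tau$ possessing an attracting minimal set preserves that attracting minimal set (attracting-ness is an open condition, \cite[Lemma 5.2]{Sadv}), so mildness is preserved along the whole chain. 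Granting this, the proof is a routine assembly of the quoted results and is complete.
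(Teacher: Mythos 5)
Your proposal is correct and follows essentially the same route as the paper, whose proof of this theorem is simply "combine the arguments of Theorem~\ref{t:rcdnkmain2} and Theorem~\ref{t:nonmild}"; you have spelled out exactly that combination (Lemma~\ref{l:wiwjnod} plus Theorem~\ref{t:wmsneglyap} run inside the mild part, then the dichotomy from Theorem~\ref{t:nonmild} for the final density claim). Your extra check that the perturbations of Lemma~\ref{l:wiwjnod} stay in ${\frak M}_{1,c,mild}$ is a reasonable point of care that the paper leaves implicit.
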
 
\begin{proof}
By the arguments in the proof of Theorem~\ref{t:rcdnkmain2} and 
Theorem~\ref{t:nonmild}, it is easy to see that the statements of our theorem hold.  
\end{proof}
                                                                                                                                                                                                                                                                      

We now give corollaries of Theorems~\ref{t:rcdnkmain1} and \ref{t:rcdnkmain2}. 
\begin{cor}
\label{c:rcdnkmain1f}
Let ${\cal Y}$ be a mild subset of 
$\emRatp$ and suppose that ${\cal Y}$ is 
 strongly nice with respect to some holomorphic families 
$\{ {\cal W}_{j}\} _{j=1}^{m}$ of rational maps. 
Then 
the set 
$$\{ \tau \in {\frak M}_{1,c}({\cal Y},\{ {\cal W}_{j}\} _{j=1}^{m})
\mid \tau \mbox{ is weakly mean stable and } \sharp \mbox{supp}\,\tau<\infty \}$$ 
is dense in $({\frak M}_{1,c}({\cal Y},\{ {\cal W}_{j}\} _{j=1}^{m}), {\cal O})$. 
Moreover, there exists a dense subset ${\cal A}$ of 
$({\frak M}_{1,c}({\cal Y},\{ {\cal W}_{j}\} _{j=1}^{m}), {\cal O})$ such that for each 
$\tau \in {\cal A}$, we have $\sharp \mbox{supp}\,\tau <\infty $ and 
all statements {\em (i)--(v)} of Theorem~\ref{t:rcdnkmain1} hold for $\tau .$ 
\end{cor}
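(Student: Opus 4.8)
The plan is to deduce Corollary~\ref{c:rcdnkmain1f} from Theorem~\ref{t:rcdnkmain1} by exploiting the fact that measures with finite support are dense. First I would recall that by Theorem~\ref{t:rcdnkmain1} there is an open and dense subset $\mathcal{B}$ of $({\frak M}_{1,c}({\cal Y},\{{\cal W}_{j}\}_{j=1}^{m}),{\cal O})$ consisting of weakly mean stable elements for which statements (i)--(v) of that theorem hold. So it suffices to show that the set of finitely-supported elements of ${\frak M}_{1,c}({\cal Y},\{{\cal W}_{j}\}_{j=1}^{m})$ is dense in the topology ${\cal O}$; intersecting this dense set with the open dense set $\mathcal{B}$ then yields a dense set $\mathcal{A}$ on which $\sharp\mathrm{supp}\,\tau<\infty$ and all of (i)--(v) hold, and the first assertion about weakly mean stable finitely supported measures follows identically (the weak mean stability being already part of (i)).

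The core step is therefore the density of finitely-supported measures. Given $\tau\in{\frak M}_{1,c}({\cal Y},\{{\cal W}_{j}\}_{j=1}^{m})$, I would approximate it in two stages. Stage one: since $\mathrm{supp}\,\tau$ is compact and metrizable, cover it by finitely many small balls, choose a point in each ball, and form the usual finitely-supported approximation $\tau'=\sum_i \tau(E_i)\delta_{x_i}$ where $\{E_i\}$ is a measurable partition subordinate to the cover with $x_i\in E_i\cap\mathrm{supp}\,\tau$; this converges to $\tau$ weakly as the mesh of the cover tends to zero, and its support consists of points of $\mathrm{supp}\,\tau$. Stage two: the topology ${\cal O}$ also requires convergence $\Gamma_{\tau'}\to\Gamma_\tau$ in $\mathrm{Cpt}({\cal Y})$; since each $x_i$ lies in $\mathrm{supp}\,\tau$ and the balls cover $\mathrm{supp}\,\tau$, the finite set $\{x_i\}$ is within Hausdorff distance (mesh) of $\mathrm{supp}\,\tau$, so this is automatic as the mesh shrinks. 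One must also keep the approximating measure inside ${\frak M}_{1,c}({\cal Y},\{{\cal W}_{j}\}_{j=1}^{m})$, i.e.\ $\mathrm{supp}\,\tau'$ must still meet $\{f_{j,\lambda}\mid\lambda\in\Lambda_j\}$ for every $j$; this can be arranged by insisting that the partition $\{E_i\}$ refine the sets $\mathrm{supp}\,\tau\cap\{f_{j,\lambda}\mid\lambda\in\Lambda_j\}$ (each of which has positive $\tau$-measure since $\tau$ lies in that space) and choosing the representative $x_i$ of at least one cell inside each such set.

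The potential subtlety — and the one place to be careful — is that the sets $\{f_{j,\lambda}\mid\lambda\in\Lambda_j\}$ need not be open in ${\cal Y}$, so one cannot simply perturb $x_i$ freely; the remedy above (refining the partition rather than perturbing points) handles this, since the chosen $x_i$ automatically lie in the correct locus. Once density of finitely supported measures is established, I would conclude: $\mathcal{A}:=\mathcal{B}\cap\{\tau:\sharp\mathrm{supp}\,\tau<\infty\}$ is dense (intersection of an open dense set with a dense set is dense in any topological space where the open dense set is, in fact, open), every $\tau\in\mathcal{A}$ has finite support and satisfies (i)--(v) by membership in $\mathcal{B}$, and in particular every such $\tau$ is weakly mean stable, which gives both assertions of the corollary. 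I do not expect any genuinely hard obstacle here; the main thing is to verify that the ${\cal O}$-convergence (both the weak part and the Hausdorff part) holds for the finite approximants and that they remain in the constrained space ${\frak M}_{1,c}({\cal Y},\{{\cal W}_{j}\}_{j=1}^{m})$.
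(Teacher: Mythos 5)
Your proposal is correct and is essentially the argument the paper intends: Corollary~\ref{c:rcdnkmain1f} is stated without proof, and the analogous step in the proof of Corollary~\ref{c:rcdnkmain2f} is dismissed with ``it is easy to show that ${\cal A}^{f}$ is dense in ${\cal A}$,'' i.e.\ exactly your observation that the intersection of the open dense set from Theorem~\ref{t:rcdnkmain1} with the dense set of finitely supported measures is dense, and that membership in the former already gives weak mean stability and (i)--(v).

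One small inaccuracy: you assert that each set $\mbox{supp}\,\tau \cap \{ f_{j,\lambda }\mid \lambda \in \Lambda _{j}\}$ has positive $\tau$-measure because $\tau \in {\frak M}_{1,c}({\cal Y},\{ {\cal W}_{j}\}_{j=1}^{m})$. The definition only requires this intersection to be \emph{non-empty}; it can have $\tau$-measure zero (e.g.\ when the $j$-th locus meets $\mbox{supp}\,\tau$ only in a $\tau$-null set), in which case your refining-the-partition remedy produces a cell of weight zero and the approximant's support still misses that locus. The repair is immediate: pick $g_{j}\in \mbox{supp}\,\tau \cap \{ f_{j,\lambda }\mid \lambda \in \Lambda _{j}\}$ for each $j$ and replace your approximant $\tau '$ by $(1-\epsilon )\tau '+\frac{\epsilon }{m}\sum _{j=1}^{m}\delta _{g_{j}}$ with $\epsilon \rightarrow 0$; this preserves both the weak convergence and the Hausdorff convergence of supports (the added atoms lie in $\mbox{supp}\,\tau$) and forces the approximant into ${\frak M}_{1,c}({\cal Y},\{ {\cal W}_{j}\}_{j=1}^{m})$. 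With that adjustment the proof is complete.
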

\begin{cor}
\label{c:rcdnkmain2f}
Let ${\cal Y}$ be a mild supset of $\emRatp$ and suppose that 
${\cal Y}$ is  non-exceptional and strongly nice 
with respect to some 
holomorphic families $\{ {\cal W}_{j}\} _{j=1}^{m}$ of rational maps. 
Let ${\cal A}$ be the largest open and dense subset of 
$({\frak M}_{1,c}({\cal Y},\{ {\cal W}_{j}\} _{j=1}^{m}), {\cal O})$ given in 
Theorem~\ref{t:rcdnkmain2}. 
Let ${\cal A}^{f}:=\{ \tau \in {\cal A}\mid \sharp \mbox{supp}\,\tau<\infty \}.$ 
Then ${\cal A}^{f}$ is a dense subset of ${\cal A}$ 
and is a dense subset of 
$({\frak M}_{1,c}({\cal Y},\{ {\cal W}_{j}\} _{j=1}^{m}), {\cal O})$ such that for each 
$\tau \in {\cal A}^{f}$, we have that 
$\sharp \mbox{{\em supp}}\, \tau<\infty $ and all statements {\em (i)--(xi)} in Theorem~\ref{t:rcdnkmain2} 
hold for $\tau .$ 
Also, let 
${\cal A}_{+}:=\{ \tau \in {\cal A}\mid 
\exists L\in \emMin(G_{\tau },\CCI ) \mbox{ s.t. }\chi (\tau, L)>0\} $ 
and let ${\cal A}_{+}^{f}:={\cal A}_{+}\cap {\cal A}^{f}.$ 
Then  ${\cal A}_{+}$ is an open subset of ${\cal A}$ 
(hence an open subset of $({\frak M}_{1,c}({\cal Y},\{ {\cal W}_{j}\} _{j=1}^{m}), {\cal O}))$ 
and ${\cal A}_{+}^{f}$ is a dense subset of ${\cal A}_{+}.$ 
Moreover, for each  $\tau \in {\cal A}_{+}^{f}$,   
we have 
$\overline{J_{pt}^{0}(\tau )}=
J(G_{\tau })$ which is a perfect set. 
 \end{cor}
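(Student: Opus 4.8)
The plan is to prove Corollary~\ref{c:rcdnkmain2f} by assembling the structural facts already established for $\tau \in {\cal A}$ in Theorem~\ref{t:rcdnkmain2} together with the density/openness arguments from the proof of Theorem~\ref{t:rcdnkmain1}. First I would address the density of ${\cal A}^{f}$ in ${\cal A}$ (and hence in the whole space): in the proof of Theorem~\ref{t:rcdnkmain1} the element $\tau _{4}$ constructed there, which witnesses weak mean stability and satisfies statements (i)--(v), is obtained as a limit of elements with finite support (the successive perturbations $\tau _{0},\tau _{1},\tau _{2},\tau _{3}$ each have finite support, and the final enlargements keep the support finite); by Lemma~\ref{l:wmsopen} and Lemma~\ref{l:wiwjnod} an open neighbourhood of such a finitely supported weakly mean stable element still lies in ${\cal A}$, so one can choose the finitely supported element itself to lie in ${\cal A}$. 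Thus ${\cal A}^{f}$ is dense in ${\cal A}$, and since ${\cal A}$ is dense in $({\frak M}_{1,c}({\cal Y},\{ {\cal W}_{j}\} _{j=1}^{m}), {\cal O})$, so is ${\cal A}^{f}$. For each $\tau \in {\cal A}^{f}\subset {\cal A}$, statements (i)--(xi) of Theorem~\ref{t:rcdnkmain2} hold by definition of ${\cal A}$, and $\sharp \mbox{supp}\,\tau <\infty $ by definition of ${\cal A}^{f}$.

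Next I would treat ${\cal A}_{+}$. Openness of ${\cal A}_{+}$ in ${\cal A}$ should follow from the bifurcation-stability machinery: if $\tau \in {\cal A}$ has some $L\in \Min(G_{\tau },\CCI )$ with $\chi (\tau, L)>0$, then by Theorem~\ref{t:rcdnkmain2}(ii)(iii) this $L$ is contained in $J_{\ker }(G_{\tau })\subset \cap _{j=1}^{m}S({\cal W}_{j})$, and by Lemma~\ref{l:lcbwj} it remains a minimal set for every nearby $\rho \in {\frak M}_{1,c}({\cal Y},\{ {\cal W}_{j}\} _{j=1}^{m})$; moreover by Theorem~\ref{t:rcdnkmain2}(ii) each $g\in \Gamma _{\tau }$ satisfies $Dg_{z}\neq 0$ on $L$, so the map $\rho \mapsto \chi (\rho , L)$ is continuous near $\tau$ on the set of measures whose support stays away from the zero locus of the derivatives on $L$ (using the canonical ergodic measure $\omega _{L}$ from Definition~\ref{d:celyap} and the fact that $L$ is finite, so $\omega _\rho$ depends continuously on the transition data of the finite Markov chain). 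Hence $\chi (\rho, L)>0$ for $\rho$ close to $\tau$, giving ${\cal A}_{+}$ open. Density of ${\cal A}_{+}^{f}$ in ${\cal A}_{+}$ is then the same finite-support approximation argument applied inside ${\cal A}_{+}$: approximate $\tau \in {\cal A}_{+}$ by a finitely supported $\tau '$ so close that $\tau '\in {\cal A}$ and $\chi (\tau ', L)>0$ still holds for the relevant $L$.

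The remaining point is the final assertion: for $\tau \in {\cal A}_{+}^{f}$, $\overline{J_{pt}^{0}(\tau )}=J(G_{\tau })$ and this is a perfect set. Since $\tau \in {\cal A}_{+}$, $H_{+,\tau }\neq \emptyset $, so by Theorem~\ref{t:rcdnkmain2}(vii) we have $\emptyset \neq \cup _{L\in H_{+,\tau }}L\subset J_{pt}^{0}(\tau )\subset J(G_{\tau })$, hence $\overline{J_{pt}^{0}(\tau )}\subset J(G_{\tau })$ because $J(G_{\tau })$ is closed (and $\sharp J(G_\tau)\geq 3$ since int$(J(G_\tau))\neq\emptyset$ from the perturbation, so $J(G_\tau)$ is perfect by the basic facts on rational semigroups recalled in Section~\ref{Pre}). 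For the reverse inclusion, I would use that $J_{pt}^{0}(\tau )$ is backward invariant in the following weak sense: by Lemma~\ref{l:chlpa0} and Lemma~\ref{l:chilp}, if $\gamma _{n,1}(y)$ lands on some $L\in H_{+,\tau }$ for some $n$ and some $\gamma$ in a positive-measure set, then $y\notin \Omega _\tau = F_{pt}^0(\tau)$; since every repelling fixed point of every $g\in G_\tau$ either lies in $\cup_{L\in H_{+,\tau}}L$ or has a backward orbit (under $G_\tau$) accumulating on $J_{\ker}(G_\tau)$, and since $J(G_\tau)=\overline{\{\text{repelling fixed points of elements of }G_\tau\}}$, I would show each such repelling fixed point $z_0$ of $g$ satisfies $g^n(z_0)=z_0$ so that, picking an element of $\Gamma_\tau$ carrying a point of $L\in H_{+,\tau}$ into a neighbourhood appropriately (using int$(\Gamma_\tau\cap\{f_{j,\lambda}\})\neq\emptyset$ available after the perturbation and the structure of the finite set $J_{\ker}(G_\tau)$), the point $z_0$ is a limit of points of $J_{pt}^0(\tau)$. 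I expect this last step — pinning down exactly why $J(G_{\tau })\subset \overline{J_{pt}^{0}(\tau )}$ rather than just $J(G_\tau)\supset\overline{J_{pt}^0(\tau)}$ — to be the main obstacle, since it requires combining the density of repelling cycles in $J(G_\tau)$ with a transfer argument showing that positivity of the Lyapunov exponent propagates from $\cup_{L\in H_{+,\tau}}L$ to a dense subset of $J(G_\tau)$ via the finitely many generators; the cleanest route is probably to invoke $\overline{G_\tau^{-1}(z)}=J(G_\tau)$ for $z\in J(G_\tau)\setminus E(G_\tau)$ applied to a point $z$ of some $L\in H_{+,\tau}$, together with Lemma~\ref{l:chlpa0} to see that each preimage point again fails to be in $\Omega_\tau$, hence lies in $J_{pt}^0(\tau)$.
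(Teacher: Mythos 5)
Your proposal is correct and follows essentially the same route as the paper: density of ${\cal A}^{f}$ via finite-support approximation, openness of ${\cal A}_{+}$ via Theorem~\ref{t:rcdnkmain2}(ii) and continuity of the exponent on the persistent finite minimal sets, and for the last claim the ``cleanest route'' you name at the end --- applying $\overline{G_{\tau }^{-1}(z)}\supset J(G_{\tau })$ to a point $z\in L$ with $L\in H_{+,\tau }$ and noting that, since $\G_{\tau }$ is finite, every $G_{\tau }$-preimage of $\cup _{L\in H_{+,\tau }}L$ lies in $\CCI \setminus \Omega _{\tau }=J_{pt}^{0}(\tau )$ --- is exactly the paper's argument, with perfectness of $J(G_{\tau })$ from \cite{HM}. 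The detour through repelling fixed points in your penultimate paragraph is unnecessary, and the justification of $\sharp J(G_{\tau })\geq 3$ should simply be that $\G_{\tau }\subset \Ratp$, but neither point affects correctness.
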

\begin{proof}
It is easy to show that ${\cal A}^{f}$ is dense in ${\cal A}.$ 
Thus ${\cal A}^{f}$ is dense in $({\frak M}_{1,c}({\cal Y},\{ {\cal W}_{j}\} _{j=1}^{m}), {\cal O}).$ 
Also, by statement (ii) in Theorem~\ref{t:rcdnkmain2}, it is easy 
to show that ${\cal A}_{+}$ is open in ${\cal A}.$  
In order to prove the last statement, suppose $\tau \in {\cal A}_{+}^{f}$.   
Since $\cup _{L\in H_{+,\tau }}L\subset J_{pt}^{0}$, 
where $H_{+,\tau}:=\{ L\in \Min(G_{\tau}, J_{\ker }(G_{\tau }))
\mid \chi (\tau, L)>0\} $ (see Theorem~\ref{t:rcdnkmain2}), 
we have 
$J_{pt}^{0}\neq \emptyset .$ Moreover, 
since $\mbox{supp}\,\tau\subset \Ratp$, we have 
$J_{pt }^{0}\subset J(G_{\tau })\subset \CCI \setminus E(G_{\tau })$ (recall that 
$E(G_{\tau })$ denotes the exceptional set of $G_{\tau }$). 
Hence $\overline{G_{\tau }^{-1}(J_{pt }^{0}(\tau ))}\supset J(G_{\tau })$ (see \cite[Lemma 3.2]{HM}). 
Also, by the definition of $\Omega _{\tau }$, since 
$\supptau $ is finite, 
we have $\CCI \setminus \Omega =(G_{\tau }\cup \{ Id\} )^{-1}
(\cup _{L\in H_{+.\tau }}L)$ and  
$G_{\tau }^{-1}(\CCI \setminus \Omega _{\tau })\subset 
\CCI \setminus \Omega _{\tau }.$ 
Furthermore, by Theorem~\ref{t:rcdnkmain2} (vii), we have 
$J_{pt}^{0}(\tau )=\CCI \setminus \Omega _{\tau }.$ 
It follows that 
$\overline{G_{\tau }^{-1}(J_{pt }^{0}(\tau ))}\subset \overline{J_{pt }^{0}(\tau )}\subset J(G_{\tau }).$ 
 Therefore $\overline{J_{pt}^{0}(\tau )}=\overline{G_{\tau }^{-1}(J_{pt}^{0}(\tau ))}=J(G_{\tau }).$ 
 Finally, by \cite[Lemma 3.1]{HM}, $J(G_{\tau })$ is perfect. 
\end{proof}
\section{Random relaxed Newton's method}
\label{Random}
In this section we apply Theorems~\ref{t:rcdnkmain1}, \ref{t:rcdnkmain2} and the other results in 
the previous sections to 
random relaxed Newton's methods in which we find roots of given any polynomial $g$ with $\deg (g)\geq 2.$ 
\begin{df}
\label{d:randomNewton}
Let $g\in {\cal P}.$ 
Let $\Lambda :=\{ \lambda \in \CC \mid |\lambda -1|<1\} $ and 
let $N_{g,\lambda }(z)=z-\lambda \frac{g(z)}{g'(z)}$ for each $\lambda \in \Lambda .$ 
Let ${\cal W}_{g}=\{N_{g,\lambda }\} _{\lambda \in \Lambda }.$ (Note that this is a holomorphic family 
of rational maps over $\Lambda .$)  
Let ${\cal Y}_{g}:=\{ N_{g,\lambda }\in \Rat \mid \lambda \in \Lambda \}.$  
Then  ${\cal Y}_{g}$ is called the random relaxed Newton's method set for $g$ 
and  ${\cal W}_{g}$ is called the {\bf random relaxed Newton's method family for $g.$}  
Also, $({\cal Y}_{g}, {\cal W}_{g})$ is called the 
{\bf random relaxed Newton's method scheme for $g.$ }
Moreover, for each $\tau \in {\frak M}_{1,c}({\cal Y})$, the random dynamical system on $\CCI $ 
generated by $\tau $ is called a {\bf random relaxed Newton's method (or random relaxed Newton's method system) 
for $g. $} Also, let $Q_{g}:=\{ z_{0}\in \CC \mid g(z_{0})=0\}.$ 
\end{df}
We need the following lemma to investigate random relaxed Newton's methods 
and other examples to which we can apply Theorems~\ref{t:rcdnkmain1} and \ref{t:rcdnkmain2}.  
The proof is easy and it is left to the reader. 
\begin{lem}
\label{l:singfam}
Let ${\cal Y}$ be a nice subset of {\em Rat} with respect to a holomorphic family 
${\cal W}=\{ f_{\lambda }\} _{ \lambda \in \Lambda }$  of rational maps. 
Then ${\cal Y}$ is strongly nice with respect to ${\cal W}$.  
Moreover, if, in addition to the assumption of our lemma, ${\cal Y}$ satisfies that 
for each $\Gamma \in \emCpt(\{ f_{\lambda }\mid \lambda \in \Lambda \})$ and for each 
$L\in \emMin (\langle \Gamma \rangle , S({\cal W}))$, we have $\sharp L=1$, then ${\cal Y}$ is 
non-exceptional and strongly nice with respect to ${\cal W}$.  
\end{lem}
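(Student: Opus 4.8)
\textbf{Proof proposal for Lemma~\ref{l:singfam}.}
The plan is to reduce everything to the definitions of \emph{strongly nice} (Definition~\ref{d:stronglynice}) and \emph{non-exceptional} (Definition~\ref{d:exceptional}), and to the hypotheses already isolated in Theorem~\ref{t:rcdnkmain1} and Theorem~\ref{t:rcdnkmain2}. First I would recall that ${\cal Y}$ being nice with respect to ${\cal W}=\{f_{\lambda}\}_{\lambda\in\Lambda}$ means, by Definition~\ref{d:stronglynice}, that for each $z\in S_{\min}({\cal W})$ and for the single family ${\cal W}$, either $\lambda\mapsto D(f_{\lambda})_{z}$ is non-constant on $\Lambda$ or $D(f_{\lambda})_{z}=0$ for all $\lambda$. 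To get \emph{strongly} nice it remains to verify that there is no peripheral cycle for $({\cal Y},{\cal W})$. Here the key observation is that with only one holomorphic family, a peripheral cycle $\{z_{i}\}_{i=1}^{n}$ would consist of points of $S_{1}({\cal W})$ lying outside $\cup_{L\in\Min(\langle\Gamma\rangle,\CCI),\,L\subset S_{1}({\cal W})}L$, cyclically permuted by elements $\gamma_{i}\in{\cal Y}$ with $z_{i}\in S_{1}({\cal W})$ and $\gamma_{i}(z_{i})=z_{i+1}$. But by Corollary~\ref{c:falflam}, $f_{\lambda}(S({\cal W}))\subset S({\cal W})$, and one checks from Lemma~\ref{l:sn1sn} (using $S_{n+1}({\cal W})=\cap_{\lambda}f_{\lambda}^{-1}(S_{n}({\cal W}))$ and finiteness of each $S_{n}({\cal W})$, plus connectedness of $\Lambda$) that the forward orbit under ${\cal Y}$ of any point of $S_{1}({\cal W})$ stays in $S_{1}({\cal W})$ and is independent of the chosen maps; hence a finite cyclic orbit $\{z_{i}\}$ inside $S_{1}({\cal W})$ is in fact forward-invariant and its closure contains a minimal set $L_{0}\in\Min(\langle\Gamma\rangle,\CCI)$ with $L_{0}\subset S_{1}({\cal W})$. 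Since $\{z_{i}\}$ is finite, $L_{0}\subset\{z_{i}\mid i=1,\dots,n\}$, contradicting condition (a) in the definition of peripheral cycle. So no peripheral cycle exists, ${\cal Y}$ is strongly nice, and since ${\cal Y}$ is also mild (hypothesis), it satisfies the assumptions of Theorem~\ref{t:rcdnkmain1}.

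For the second assertion, I would show that the extra hypothesis — for each $\Gamma\in\Cpt(\{f_{\lambda}\mid\lambda\in\Lambda\})$ and each $L\in\Min(\langle\Gamma\rangle,S({\cal W}))$ one has $\sharp L=1$ — forces ${\cal Y}$ to be non-exceptional with respect to ${\cal W}$. Suppose for contradiction that ${\cal Y}$ is exceptional: then there is a non-empty $L\subset S({\cal W})$ such that for every $\tau\in{\frak M}_{1,c}({\cal Y})$ we have $L\in\Min(G_{\tau},\CCI)$ and $\chi(\tau,L)=0$. By the hypothesis, $L=\{z_{1}\}$ is a single point, a common fixed point of all maps in ${\cal Y}$. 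For a singleton minimal set $L=\{z_{1}\}$ the Lyapunov exponent is simply $\chi(\tau,L)=\int\log\|D(f_{\lambda})_{z_{1}}\|_{s}\,d\tau(\lambda)$ (the canonical ergodic measure is $\delta_{z_{1}}$, the period is $1$). Now invoke the niceness dichotomy at $z_{1}\in S_{\min}({\cal W})$: either $D(f_{\lambda})_{z_{1}}=0$ for all $\lambda$, in which case $\log\|D(f_{\lambda})_{z_{1}}\|_{s}\equiv-\infty$ and one can take $\tau$ with $\chi(\tau,L)=-\infty\neq 0$; or $\lambda\mapsto D(f_{\lambda})_{z_{1}}$ is non-constant (hence non-constant holomorphic, so $\log\|D(f_{\lambda})_{z_{1}}\|_{s}$ is a non-constant real-analytic subharmonic function of $\lambda$), and then choosing $\tau$ to be (say) a suitable point mass or small perturbation yields $\chi(\tau,L)\neq 0$. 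Either way we contradict exceptionality, so ${\cal Y}$ is non-exceptional; combined with the first part it is non-exceptional and strongly nice, and being mild it satisfies the assumptions of Theorem~\ref{t:rcdnkmain2}.

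The main obstacle I anticipate is the careful bookkeeping in the ``no peripheral cycle'' step: one must argue precisely that the forward orbit of a point $z\in S_{1}({\cal W})$ under arbitrary choices from ${\cal Y}$ is well-defined (single-valued) and lands in $S_{1}({\cal W})$, which uses connectedness of $\Lambda$ together with $\sharp S_{1}({\cal W})<\infty$ exactly as in the proof of Lemma~\ref{l:sn1sn}, and then that such an orbit being eventually periodic exhibits a minimal set contained in $S_{1}({\cal W})$ that is actually contained in the cycle — so the cycle cannot be disjoint from all such minimal sets. A secondary point needing care is ensuring, in the non-exceptional argument, that the measure $\tau$ producing $\chi(\tau,L)\neq 0$ can be chosen in ${\frak M}_{1,c}({\cal Y})$ (compact support in ${\cal Y}$), which is immediate since point masses and their convex combinations already lie in ${\frak M}_{1,c}({\cal Y})$. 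Everything else is a direct unwinding of definitions, so the proof should be short; as the statement in the excerpt says, the details are left to the reader.
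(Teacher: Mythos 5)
Your proposal is correct. The paper itself omits the proof (``easy and left to the reader''), and your argument is the intended unwinding of the definitions: with a single holomorphic family, the images of the points of a would-be peripheral cycle are single-valued, so the cycle is forward invariant and must contain a minimal set inside $S_{1}({\cal W})$, contradicting condition (a); and in the exceptional scenario the hypothesis forces the offending $L$ to be a common fixed point $\{z_{1}\}\subset S_{\min }({\cal W})$, where the niceness dichotomy kills $\chi(\tau,L)=0$ for all $\tau$.

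Two small points of precision. First, your intermediate claim that the forward orbit under ${\cal Y}$ of \emph{any} point of $S_{1}({\cal W})$ stays in $S_{1}({\cal W})$ is not justified (and is not what Lemma~\ref{l:sn1sn} or Corollary~\ref{c:falflam} gives --- forward invariance holds for $S({\cal W})$, not for $S_{1}({\cal W})$); but your argument does not need it, since condition (b) in the definition of a peripheral cycle already places every $z_{i}$ in $S_{1}({\cal W})$, and single-valuedness of $h(z_{i})$ for $h\in \Gamma $ then makes $\{ z_{1},\ldots ,z_{n}\} $ forward invariant under $\langle \Gamma \rangle $, hence containing a minimal set contained in $S_{1}({\cal W})$. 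Second, in case (a) of the non-exceptionality argument it is worth saying explicitly that a non-constant holomorphic function $\lambda \mapsto D(f_{\lambda })_{z_{1}}$ on the connected manifold $\Lambda $ has open image, which cannot be contained in the unit circle, so some $\lambda _{0}$ gives $\chi (\delta _{f_{\lambda _{0}}},\{ z_{1}\} )=\log \| D(f_{\lambda _{0}})_{z_{1}}\| _{s}\neq 0$; combined with your case (b) ($\chi \equiv -\infty $) this contradicts exceptionality. As you observe, mildness is not among the lemma's hypotheses, so ``satisfies the assumptions of Theorem~\ref{t:rcdnkmain1}'' must be read modulo mildness, which in the paper's applications is supplied separately (e.g.\ because ${\cal Y}\subset {\cal P}$).
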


We now show that we can apply Theorem~\ref{t:rcdnkmain2}  to 
random relaxed Newton's methods. 
\begin{lem}
\label{l:RNMnnen} 
Let $g\in {\cal P}$ and let $({\cal Y}_{g}, {\cal W}_{g})$ be the random relaxed Newton's method scheme for $g.$ 
Then ${\cal Y}_{g}$ is a mild subset of $\emRat$ and ${\cal Y}_{g}$ 
is non-exceptional and strongly nice  with respect to 
${\cal W}_{g}$.  
Also, for each $x\in Q_{g}$ and $\lambda \in \Lambda $, we have that 
$N_{g,\lambda }(x)=x$ and 
$N_{g,\lambda }'(x)=1-\frac{\lambda }{m_{x}}$, where $m_{x}$ denotes the 
order of $g$ at the zero $x$, and $|N_{g,\lambda }'(x)|<1.$  
Moreover, for each $\lambda \in \Lambda $, 
we have $N_{g,\lambda }(\infty )=\infty $, 
the multiplier of $N_{g,\lambda}$ at ${\infty }$ is equal to $(1-\frac{\lambda }{\deg (g)})^{-1}$,  
 and 
$\| D(N_{g,\lambda })_{\infty }\| _{s}=|1-\frac{\lambda }{\deg (g)}|^{-1}>1.$ 
Moreover, we have 
$S({\cal W}_{g})=Q_{g}\amalg \{ \infty \} \amalg \{ z_{0}\in \CC \mid g'(z_{0})=0, g(z_{0})\neq 0\} .$  
Moreover, for each 
$\Gamma \in \emCpt({\cal Y}_{g})$, we have 
$\emMin(\langle \G \rangle , S({\cal W}_{g}))=\{ \{ x\} \mid x\in Q_{g}\} \cup \{ \{ \infty \} \} .$ 
\end{lem}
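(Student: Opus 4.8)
The plan is to verify each assertion in Lemma~\ref{l:RNMnnen} by direct computation with the relaxed Newton maps $N_{g,\lambda}(z) = z - \lambda g(z)/g'(z)$ and then invoke the structural results already established, particularly Lemma~\ref{l:singfam}, Example~\ref{ex:mild}, and the definitions of nice/strongly nice/non-exceptional sets. Write $k = \deg(g)$ throughout.

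\textbf{Step 1: Local behavior at roots.} Let $x \in Q_g$ with $g$ vanishing to order $m_x$ at $x$, so $g(z) = (z-x)^{m_x} h(z)$ with $h(x) \neq 0$. Then $g'(z)/g(z) = m_x/(z-x) + h'(z)/h(z)$, so $\lambda g(z)/g'(z) = \lambda(z-x)/\bigl(m_x + (z-x)h'(z)/h(z)\bigr)$, which is holomorphic near $x$, vanishes at $x$, and has derivative $\lambda/m_x$ there. Hence $N_{g,\lambda}(x) = x$ and $N_{g,\lambda}'(x) = 1 - \lambda/m_x$. Since $\lambda \in \Lambda = \{|\lambda - 1| < 1\}$, we have $\lambda/m_x \in \{w : |w - 1/m_x| < 1/m_x\}$, a disk in the right half plane tangent to $0$; a short estimate shows $|1 - \lambda/m_x| < 1$ (for $m_x = 1$ this is exactly $|1-\lambda| < 1$; for $m_x \ge 2$ the disk is even smaller and one checks $|1 - w| < 1$ for all $w$ in it). So $\{x\}$ is an attracting fixed point of each $N_{g,\lambda}$.

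\textbf{Step 2: Behavior at $\infty$ and mildness.} Writing $g(z) = a_k z^k + \cdots$, one computes $g(z)/g'(z) = z/k + O(1)$ as $z \to \infty$, so $N_{g,\lambda}(z) = (1 - \lambda/k) z + O(1)$, giving $N_{g,\lambda}(\infty) = \infty$ with multiplier $(1 - \lambda/k)^{-1}$; since $1 - \lambda/k$ lies in the tangent disk of Step 1 it has modulus $< 1$, so the spherical norm of the derivative at $\infty$ (the reciprocal of the modulus of the multiplier in the local coordinate $1/z$, i.e. the modulus of $1-\lambda/k$) satisfies $\|D(N_{g,\lambda})_\infty\|_s = |1 - \lambda/k|^{-1} > 1$; I should be careful to state the spherical derivative convention consistently with the paper. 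For mildness: by Step 1, $\{x_0\}$ for any fixed root $x_0$ is an attracting fixed point of every element of ${\cal Y}_g$, hence an attracting minimal set for every $\Gamma \in \Cpt({\cal Y}_g)$; alternatively apply Example~\ref{ex:mild}(c) with $a = x_0$, after noting ${\cal Y}_g \subset \{f \in \Rat \mid x_0 \text{ is an attracting fixed point of } f\}$.

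\textbf{Step 3: The singular set and its minimal sets.} By Lemma~\ref{l:sn1sn}, $S({\cal W}_g) = \bigcap_n S_n({\cal W}_g)$ and $S_1({\cal W}_g)$ is the finite set of $z$ for which $\lambda \mapsto N_{g,\lambda}(z)$ is constant, i.e. $g(z)/g'(z) = 0$, which means $z \in Q_g$ (where $g(z)/g'(z)$ vanishes, by Step 1) or $z = \infty$ (where the rational function $z - \lambda g(z)/g'(z)$ has value $\infty$ independent of $\lambda$) or $z$ is a critical point of $g$ with $g(z) \neq 0$ (a pole of $g'/g$ where $g/g'$ vanishes). This gives $S_1({\cal W}_g) = Q_g \amalg \{\infty\} \amalg \{z_0 : g'(z_0) = 0, g(z_0) \neq 0\}$. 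Since each point of $Q_g \cup \{\infty\}$ is a fixed point of every $N_{g,\lambda}$, it lies in $S_n$ for all $n$; and for a critical point $z_0 \notin Q_g$, $N_{g,\lambda}(z_0) = z_0 - \lambda \cdot 0/0$ — here one must check $g(z_0)/g'(z_0)$ extends: since $g(z_0) \neq 0$ and $z_0$ is a zero of $g'$ of some order $\ell$, $g/g'$ has a pole at $z_0$, so $N_{g,\lambda}(z_0) = \infty \in S_1({\cal W}_g)$, hence $z_0 \in S_2 \subset$ all $S_n$. Thus $S({\cal W}_g) = S_1({\cal W}_g)$, the claimed disjoint union. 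For the minimal sets: every point of $S({\cal W}_g)$ maps (under any $N_{g,\lambda}$) either to itself (points of $Q_g \cup \{\infty\}$) or to $\infty$ (critical points not in $Q_g$); since $\langle\Gamma\rangle(\{x\}) = \{x\}$ for $x \in Q_g \cup \{\infty\}$, these singletons are minimal, while $\langle\Gamma\rangle(\{z_0\}) \ni \infty$ for a critical point $z_0 \notin Q_g$, so $\{z_0\}$ is not minimal and no minimal set contains it (a minimal set $L \subset S({\cal W}_g)$ with $z_0 \in L$ would force $\infty \in L$ and then $\{\infty\} \subsetneq L$, contradicting minimality). Hence $\Min(\langle\Gamma\rangle, S({\cal W}_g)) = \{\{x\} : x \in Q_g\} \cup \{\{\infty\}\}$.

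\textbf{Step 4: Nice, strongly nice, non-exceptional.} To apply Lemma~\ref{l:singfam} I must check ${\cal Y}_g$ is nice with respect to ${\cal W}_g$: for each $z \in S_{\min}({\cal W}_g) = \bigcup\{L : L \in \Min(G_\tau,\CCI), L \subset S({\cal W}_g)\} \subseteq Q_g \cup \{\infty\}$ (using Step 3), the map $\lambda \mapsto D(N_{g,\lambda})_z$ must be either non-constant or identically zero. At $z = x \in Q_g$ it equals $\lambda \mapsto 1 - \lambda/m_x$, which is non-constant; at $z = \infty$ it is (in the appropriate chart) $\lambda \mapsto 1 - \lambda/k$, non-constant. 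So alternative (a) of the niceness definition holds at every such point, and ${\cal Y}_g$ is nice. Then Lemma~\ref{l:singfam} gives strong niceness, and since moreover every $L \in \Min(\langle\Gamma\rangle, S({\cal W}_g))$ is a singleton by Step 3 (so $\sharp L = 1$), the second part of Lemma~\ref{l:singfam} gives that ${\cal Y}_g$ is non-exceptional and strongly nice with respect to ${\cal W}_g$. Combined with Step 2 (mildness), this completes the proof.

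\textbf{Main obstacle.} The genuinely delicate points are the two modulus estimates $|1 - \lambda/m_x| < 1$ and $|1 - \lambda/k| < 1$ for $\lambda \in \Lambda$: these are not quite immediate and require observing that $\lambda \mapsto \lambda/m$ maps the disk $\{|\lambda - 1| < 1\}$ onto $\{|w - 1/m| < 1/m\}$, a disk contained in (indeed internally tangent to) the unit disk $\{|w - 1| < 1\}$ reflected — precisely, one checks directly that $|w| < 2/m \le 2$ and $\mathrm{Re}(w) > 0$ force $|1-w|^2 = 1 - 2\mathrm{Re}(w) + |w|^2 < 1$ when $|w|^2 < 2\mathrm{Re}(w)$, which holds on $\{|w - 1/m| < 1/m\}$ since there $|w|^2 < (2/m)\mathrm{Re}(w) \le 2\mathrm{Re}(w)$. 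Beyond that, the remaining work is the careful bookkeeping at $\infty$ and at critical points that are poles of $g/g'$, and getting the spherical-derivative conventions at $\infty$ to match the paper's; all of this is routine but must be done with attention to edge cases (e.g. multiple roots, repeated critical points).
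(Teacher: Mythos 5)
Your proposal is correct and follows essentially the same route as the paper's proof, which simply asserts the computations at the roots, at $\infty$, and for $S({\cal W}_g)$ and its minimal sets as "easy to see" and then invokes Lemma~\ref{l:singfam}; you supply the same computations in full detail (including the disk estimate $\{1-\lambda/m \mid \lambda\in\Lambda\}=\{z:|z-(1-\tfrac 1m)|<\tfrac 1m\}\subset\DD$ that the paper records). The only blemish is the internally inconsistent parenthetical about the multiplier convention at $\infty$ in Step 2 — the stated conclusion $\| D(N_{g,\lambda })_{\infty }\| _{s}=|1-\lambda /\deg (g)|^{-1}>1$ is nonetheless the correct one and matches the paper.
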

\begin{proof}
It is easy to see that $S({\cal W}_{g})=Q_{g}\amalg \{ \infty \} \amalg \{ z_{0}\in \CC \mid g'(z_{0})=0, g(z_{0})\neq 0\} $ 
and for each $\Gamma \in \Cpt({\cal Y}_{g})$, we have 
$\Min(\langle \G \rangle , S({\cal W}_{g}))=\{ \{ x\} \mid x\in Q_{g}\} \cup \{ \{ \infty \} \} .$

It is easy to see that $N_{g,\lambda }(x)=x$ and 
$N_{g,\lambda }'(x)=1-\frac{\lambda }{m_{x}}$ for each 
$x\in Q_{g}$ and $\lambda \in \Lambda .$ 
Since $\{ 1-\frac{\lambda }{m_{x}}\mid \lambda \in \Lambda \} =
\{ z\in \CC \mid |z-(1-\frac{1}{m_{x}})|<\frac{1}{m_{x}}\} $, we have  
$|N_{g,\lambda }'(x)|<1$ for all $x\in Q_{g}, \lambda \in \Lambda .$  
Similarly, it is easy to see that 
for each $\lambda \in \Lambda $, 
we have $N_{g,\lambda }(\infty )=\infty $, 
the multiplier of $N_{g,\lambda}$ at ${\infty }$ is equal to 
$(1-\frac{\lambda }{\deg (g)})^{-1}$,  
 and 
$\| D(N_{g,\lambda })_{\infty }\| _{s}=|1-\frac{\lambda }{\deg (g)}|^{-1}>1.$ 
From the above arguments, we obtain that 
${\cal Y}_{g}$ is a mild subset of 
$\Rat$ and ${\cal Y}_{g}$ is  non-exceptional and nice with respect to ${\cal W}_{g}$.  
By Lemma~\ref{l:singfam}, it follows that ${\cal Y}_{g}$ is strongly nice with respect to ${\cal W}_{g}.$ 
\end{proof} 
We now prove the following theorem on random relaxed Newton's methods. 

\begin{thm}
\label{t:RNM1}
Let $g\in {\cal P}.$ Let 
$({\cal Y}_{g}, {\cal W}_{g})$ be the random relaxed Newton's method scheme for $g.$ 
Then we have the following. 
\begin{itemize}
\item[{\em (i)}] 
There exists  the largest open and dense subset 
${\cal A}$ of $({\frak M}_{1,c}({\cal Y}_{g}, 
{\cal W}_{g}), {\cal O})$ such that 
for each $\tau \in {\cal A}$, all statements {\em (i)--(xi)} in Theorem~\ref{t:rcdnkmain2} hold.   
\item[{\em (ii)}] 
Let $\tau \in {\cal A}.$ 
Let $\Omega _{\tau }$ be the set defined in Theorem~\ref{t:rcdnkmain2}. 
Then $\sharp (\CCI \setminus \Omega _{\tau })\leq \aleph _{0}$ and 
$$\Omega _{\tau }=\{ y\in \CC 
\mid \tilde{\tau }(\{ \gamma =(\gamma _{1},\gamma _{2},\ldots )\in X_{\tau }\mid \exists n\in \NN 
\mbox{ s.t. }\gamma _{n,1}(y)=\infty \})=0\} .$$  
Moreover, there exists a constant $\rho _{\tau }\in (0,1)$ such that 
for each $z\in F_{pt}^{0}(\tau )=\Omega _{\tau }$, there exists a 
Borel subset $C_{\tau, z}$ of $X_{\tau }$ with $\tilde{\tau }(C_{\tau ,z})=1$ satisfying 
that for each $\gamma =(\gamma _{1},\gamma _{2},\ldots )\in C_{\tau ,z}$, there exists a constant $\zeta 
=\zeta (\tau, z, \gamma )>0 $ such that 
$$d(\gamma _{n,1}(z), Q_{g}\cup _{L\in \emMin(G_{\tau }, \CCI ), L\mbox{ is attracting for }\tau, 
L\cap Q_{g}=\emptyset } L)\leq \zeta \rho _{\tau }^{n}\mbox{ for all }n\in \NN .$$  
\item[{\em (iii)}] 
For each $\tau \in {\cal A}$, we have $\infty \in J_{pt}^{0}(\tau )=\CCI \setminus 
 \Omega _{\tau }$ and 
 \vspace{-1mm} 
 $$J_{\ker }(G_{\tau })=\{ z_{0}\in \CC \mid 
 g'(z_{0})=0, g(z_{0})\neq 0\} \cup \{ \infty \} .$$ 
In particular, $\emptyset \neq J_{pt}^{0}(\tau )$ and $J_{\ker }(G_{\tau })\neq \emptyset $  
for each $\tau \in {\cal A}.$  
Moreover, if we set ${\cal A}^{f}:=$ $\{ \tau \in {\cal A}\mid \sharp \mbox{supp}\,\tau<\infty \}$, then 
${\cal A}^{f}$ is dense in 
$({\frak M}_{1,c}({\cal Y}_{g},{\cal W}_{g}), {\cal O})$.
Also, if, in addition to the assumptions of our theorem, 
$g/g'$ is not a polynomial of degree one, then 
for each $\tau \in {\cal A}^{f}$, 
we have $\overline{J_{pt}^{0}(\tau )}=J(G_{\tau })$ which is perfect.  
\item[{\em (iv)}] 
Let ${\cal A}_{conv}:=\{ \tau \in {\cal A}\mid 
 \emMin(G_{\tau }, \CCI )=\{ \{ x\}\mid x\in Q_{g}\} \cup \ \{ \infty \} \} \} .$ 
Then 
${\cal A}_{conv}$ is open in ${\cal A}.$ 
\item[{\em (v)}] 
Let $\tau \in {\cal A}_{conv}.$ Then 
we have 
$\sharp (\CCI \setminus \Omega _{\tau })\leq \aleph _{0}$ and 
$\max_{x\in Q_{g}}e^{\chi (\tau, \{ x\})}<1.$ 
Moreover, 
for each $\alpha \in (\max_{x\in Q_{g}}e^{\chi (\tau, \{ x\})}, 1)$ 
and 
for each $z\in F_{pt}^{0}(\tau )=\Omega _{\tau }$, there exists a 
Borel subset $C_{\tau, z, \alpha }$ of $X_{\tau }$ with $\tilde{\tau }(C_{\tau ,z,\alpha })=1$ satisfying 
that for each $\gamma =(\gamma _{1},\gamma _{2},\ldots, )\in C_{\tau ,z,\alpha }$, 
there exist an element $x=x(\tau, z, \alpha, \gamma )\in Q_{g}$ and a constant $\xi  
=\xi (\tau, z, \alpha, \gamma ) >0$ such that 
\begin{equation}
\label{eq:RNMconv}
d(\gamma _{n,1}(z), x)\leq \xi \alpha ^{n}\mbox{\ \ for all }n\in \NN .
 \end{equation} 
Also, for $\tilde{\tau }$-a.e. 
$\gamma =(\gamma _{1},\gamma _{2},\ldots, )\in X_{\tau }$, 
we have $\mbox{{\em Leb}}_{2}(J_{\gamma })=0$ and for each $z\in F_{\gamma }$, 
there exists an element $x=x(\tau, \gamma, z)\in Q_{g}$ such that 
\begin{equation}
\label{eq:dgn1zxr0}
d(\gamma _{n,1}(z), x)\rightarrow 0 \mbox{ as } n\rightarrow \infty .
\end{equation} 
Moreover, for each $x\in Q_{g}$ and 
 for each $z\in \Omega _{\tau }$, we have 
\begin{equation}
\label{eq:limwztxt}
\lim _{w\in \CCI , w\rightarrow z}T_{x,\tau }(w)=T_{x,\tau }(z).
\end{equation}  
Furthermore, 
we have 
\begin{equation}
\label{eq:tgxtme}
\tilde{\tau }(\{ \gamma \in X_{\tau }\mid \exists n\in \NN \mbox{ s.t. }\gamma _{n,1}(z)=\infty \})+\sum _{x\in Q_{g}}T_{x,\tau }(z)=1 \mbox{ for all }z\in \CCI , 
\end{equation}
and 
 we have 
\begin{equation}
\label{eq:xqgtxtz}
\sum _{x\in Q_{g}}T_{x,\tau }(z)>0  \mbox{\ \  for all } z\in \CCI \setminus J_{\ker }(G_{\tau })
=\CC \setminus \{ z_{0}\in \CC \mid g'(z_{0})=0, g(z_{0})\neq 0\} . 
\end{equation}
In particular, for any subset $B$ of $\CC $ with $\sharp B\geq \deg (g)$, there exists 
an element $z\in B$ such that $\sum _{x\in Q_{g}}T_{x,\tau }(z)>0.$ 

\item[{\em (vi)}] 
Let $\tau \in {\frak M}_{1,c}({\cal Y}_{g}, 
{\cal W}_{g})$ and suppose that  
{\em int}$(\mbox{supp}\,\tau)\supset \{ N_{g,\lambda }\in 
{\cal Y}_{g}\mid \lambda \in \CC,  |\lambda -1|\leq \frac{1}{2}\} $ with respect to 
the topology in ${\cal Y}_{g}$. Then 
$\tau \in {\cal A}_{conv}.$ In particular,  
the statements regarding {\em  (\ref{eq:RNMconv})},  {\em (\ref{eq:dgn1zxr0})}, 
{\em  (\ref{eq:limwztxt})}, {\em (\ref{eq:tgxtme})} and {\em (\ref{eq:xqgtxtz}) } hold for $\tau .$   
\item[{\em (vii)}] 
Let $\tau \in {\frak M}_{1,c}({\cal Y}_{g}, 
{\cal W}_{g})$ and suppose that  
 {\em int}$(\mbox{supp}\,\tau)\supset \{ N_{g,\lambda }
 \in {\cal Y}_{g}\mid \lambda \in \CC,  |\lambda -1|\leq \frac{1}{2}\} $ with respect to 
the topology in ${\cal Y}_{g}$ and 
$\tau $ is absolutely continuous with respect to the $2$-dimensional Lebesgue measure 
on ${\cal Y}_{g}\cong \Lambda $  
(e.g.,  let $\tau $ be the normalized $2$-dimensional Lebesgue measure 
on the set $\{ N_{g,\lambda }\in 
{\cal Y}_{g}\mid \lambda \in \CC, |\lambda -1|\leq r\} $ where $\frac{1}{2}<r<1$, 
under the identification ${\cal Y}_{g}\cong \Lambda $).  
 Then $\tau \in {\cal A}_{conv}$ and 
  the statements regarding {\em  (\ref{eq:RNMconv})},  {\em (\ref{eq:dgn1zxr0})}, 
{\em  (\ref{eq:limwztxt})}, {\em (\ref{eq:tgxtme})} and {\em (\ref{eq:xqgtxtz}) } hold for $\tau .$ 
Moreover, we have 
\begin{equation}
\label{eq:Omegatauf} 
\Omega _{\tau }=\CC \setminus \{ z_{0}\in \CC \mid 
g'(z_{0})=0, g(z_{0})\neq 0\} .
\end{equation}
 In particular, we have $\sharp (\CCI \setminus \Omega _{\tau })\leq  \deg (g)-1$, 
and for any subset $B$ of $\CC $ with $\sharp B\geq \deg (g)$, there exists 
an element $z\in B$ such that 
\begin{equation}
\label{eq:sumxqgt1}
\sum _{x\in Q_{g}}T_{x,\tau }(z)=1.
\end{equation}
Furthermore, for each $\varphi \in C(\CCI )$ and for each $z\in \Omega _{\tau }$, 
we have 
\begin{equation}
\label{eq:mtnxquc}
M_{\tau }^{n}(\varphi )(z)\rightarrow \sum _{x\in Q_{g}}T_{x,\tau }(z)\varphi (x) \mbox{ as }
n\rightarrow \infty 
\end{equation}
and this convergence is uniform on any compact subset of $\Omega _{\tau }.$ 
\end{itemize}
\end{thm}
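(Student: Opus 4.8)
\textbf{Proof proposal for Theorem~\ref{t:RNM1} (vii).}
The plan is to derive (vii) from (vi) together with Theorem~\ref{t:rcdnkmain2}, Lemma~\ref{l:RNMnnen} and the structure of the random relaxed Newton's method. First I would apply (vi) to conclude $\tau \in {\cal A}_{conv}$, so that $\emMin(G_{\tau },\CCI )=\{ \{ x\}\mid x\in Q_{g}\}\cup\{\{\infty\}\}$ and all statements of (v) hold for $\tau $; in particular the statements on (\ref{eq:RNMconv}), (\ref{eq:dgn1zxr0}), (\ref{eq:limwztxt}), (\ref{eq:tgxtme}), (\ref{eq:xqgtxtz}) are immediate. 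The substantive new claim is the identification (\ref{eq:Omegatauf}) of $\Omega _{\tau }.$ For this I would use (iii), which gives $J_{\ker }(G_{\tau })=\{ z_{0}\in\CC\mid g'(z_{0})=0, g(z_{0})\neq 0\}\cup\{\infty\}$, and (ii), which gives $\Omega _{\tau }=\{ y\in\CC\mid \tilde{\tau }(\{\gamma\mid \exists n,\ \gamma _{n,1}(y)=\infty\})=0\}.$ Thus I must show that for $y\in\CC$ with $g'(y)\neq 0$ or $g(y)=0$, the probability of ever hitting $\infty$ is zero. Since $\tau $ is absolutely continuous with respect to Lebesgue measure on $\Lambda $ and, for a fixed $y\in\CC$ not a pole of any $N_{g,\lambda }$ (that is $g'(y)\neq 0$, or $y\in Q_{g}$ where $N_{g,\lambda }(y)=y$), the set $\{\lambda\mid N_{g,\lambda }(y)=\infty\}$ is finite, hence $\tau$-null; iterating and using $\sigma$-additivity over $n$ and over the (countably generated) event, one gets $\tilde{\tau }(\{\gamma\mid \exists n,\ \gamma _{n,1}(y)=\infty\})=0.$ Conversely, for $z_{0}$ with $g'(z_{0})=0, g(z_{0})\neq 0$ we have $N_{g,\lambda }(z_{0})=\infty$ for all $\lambda $, so such points are excluded; this yields (\ref{eq:Omegatauf}) and the bound $\sharp(\CCI\setminus\Omega _{\tau })\leq \deg(g)-1$ since $g'$ has at most $\deg(g)-1$ zeros.

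Next I would treat (\ref{eq:sumxqgt1}). By (\ref{eq:tgxtme}) applied at a point $z\in\Omega _{\tau }$, the hitting-$\infty$ probability is $0$, so $\sum _{x\in Q_{g}}T_{x,\tau }(z)=1$ for every $z\in\Omega _{\tau }.$ Given $B\subset\CC$ with $\sharp B\geq\deg(g)$, since $\sharp(\CC\setminus\Omega _{\tau })\leq\deg(g)-1$ there exists $z\in B\cap\Omega _{\tau }$, and for this $z$ the sum equals $1$. Finally, for (\ref{eq:mtnxquc}): fix $z\in\Omega _{\tau }=F_{pt}^{0}(\tau )$ (the equality is from Theorem~\ref{t:rcdnkmain2} (vii)). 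Using Theorem~\ref{t:zfggzcc} (iii)(v), since each minimal set is a fixed point $\{x\}$ with $x\in Q_{g}$ or $\{\infty\}$, each has period $r_{L}=1$, so $l=1$ and $M_{\tau }^{n}(\varphi )(z)\to \sum _{L}\alpha (L,z)\varphi (L_{\mathrm{pt}})$ where $\alpha (\{x\},z)=T_{x,\tau }(z)$ and $\alpha (\{\infty\},z)=T_{\infty ,\tau }(z)=0$ for $z\in\Omega _{\tau }$ by (\ref{eq:tgxtme}). This gives $M_{\tau }^{n}(\varphi )(z)\to\sum _{x\in Q_{g}}T_{x,\tau }(z)\varphi (x).$ For uniformity on compact subsets of $\Omega _{\tau }$, I would invoke the local constancy of the $T_{x,\tau }$ on $F(G_{\tau })$ (Theorem~\ref{t:zfggzcc} (vi)) together with the continuity of $T_{x,\tau }$ at each point of $F_{pt}^{0}(\tau )$ (\ref{eq:limwztxt}) and the equicontinuity of $\{(M_{\tau }^{\ast })^{n}\circ\Phi\}$ on $F_{pt}^{0}(\tau )$, a standard compactness argument: cover the compact set by finitely many neighborhoods on which the limit function is nearly constant and the tail of $M_{\tau }^{n}(\varphi )$ is uniformly controlled.

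The step I expect to be the main obstacle is the precise justification that the hitting-$\infty$ event is $\tilde{\tau }$-null for every starting point $y$ with $g'(y)\neq 0$ or $y\in Q_{g}$, uniformly enough to conclude $\Omega _{\tau }=\CC\setminus\{ z_{0}\mid g'(z_{0})=0, g(z_{0})\neq 0\}$ rather than merely a Lebesgue-a.e.\ statement. The point is that once $\gamma _{k,1}(y)$ lands on a pole of $N_{g,\lambda }$ for some later $\lambda $, one reaches $\infty$; but the orbit $\{\gamma _{n,1}(y)\}$ is itself random, so I must argue that with probability one the orbit avoids the (finite, $n$-dependent) bad parameter-preimage sets. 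This is exactly the kind of countable-union / absolute-continuity argument used in the proof of Lemma~\ref{l:tcac} and Lemma~\ref{l:tyaca0}: for each $n$, the set of $(\lambda _{1},\dots,\lambda _{n})$ with $N_{g,\lambda _{n}}\circ\cdots\circ N_{g,\lambda _{1}}(y)=\infty$ has $\tau ^{n}$-measure zero because fixing $\lambda _{1},\dots,\lambda _{n-1}$ in general position the fiber over $\lambda _{n}$ is finite, and $\tau $ is nonatomic on $\Lambda$; here one must also check that the intermediate images $\gamma _{j,1}(y)$ are, for $\tau ^{j}$-a.e.\ choice, not among the finitely many points that force the next image to infinity regardless of $\lambda _{j+1}$ — which holds since those forcing points are exactly the poles $\{z_{0}\mid g'(z_{0})=0,g(z_{0})\neq 0\}\subset J_{\ker }(G_{\tau })$, and the orbit of $y\notin J_{\ker }(G_{\tau })$ can be kept off a prescribed finite set with full probability by the same argument. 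Summing the null sets over $n\in\NN$ completes it. The rest is bookkeeping with results already proved.
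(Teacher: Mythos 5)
Your argument for item (vii) is correct and is, in substance, the derivation the paper intends (the paper's own proof of (vii) is a one-line reference to items (i), (ii), (iv), (v), (vi) and Theorem~\ref{t:zfggzcc}); in particular you correctly supply the detail the paper leaves implicit, namely that for $y\in \CC \setminus \{ z_{0}\mid g'(z_{0})=0,\ g(z_{0})\neq 0\}$ the hitting-$\infty $ event is $\tilde{\tau }$-null. The mechanism you identify is the right one: the only finite points sent to $\infty $ are the poles of $g/g'$, the map $\lambda \mapsto N_{g,\lambda }(w)=w-\lambda g(w)/g'(w)$ is affine and injective in $\lambda $ whenever $w\notin S_{1}({\cal W}_{g})$, so by Fubini, nonatomicity of $\tau $ and induction on $n$ the random orbit avoids the finite pole set almost surely; combined with item (ii) this gives (\ref{eq:Omegatauf}), and the counting argument for (\ref{eq:sumxqgt1}) and the equicontinuity-plus-pointwise-convergence argument for the uniformity in (\ref{eq:mtnxquc}) on compact subsets of $\Omega _{\tau }=F_{pt}^{0}(\tau )$ are both sound.

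However, your proposal proves only item (vii) of the theorem and takes items (i)--(vi) as given. That leaves the substantive content of the theorem unaddressed: items (i)--(v) require Lemma~\ref{l:RNMnnen}, Theorem~\ref{t:rcdnkmain2} and Corollary~\ref{c:rcdnkmain2f}, and item (vi) is where the real work happens. There one must show that once $\mbox{int}(\G_{\tau })$ contains $\{ N_{g,\lambda }\mid |\lambda -1|\leq \frac{1}{2}\} $, no minimal set in $\CC \setminus Q_{g}$ can be attracting; the paper's proof (Claim 2 in its proof of (vi)) uses that $\{ N_{g,\lambda }(x_{0})\mid |\lambda -1|\leq \frac{1}{2}\}$ is a disk of radius $\frac{1}{2}|N_{g,1}(x_{0})-x_{0}|$ about $N_{g,1}(x_{0})$, together with the fact that distinct points of an attracting cycle of $N_{g,1}$ of period $\geq 2$ lie in different Fatou components of $N_{g,1}$, to force any such minimal set to meet $J(\langle \G_{0}\rangle )$, a contradiction with its being attracting. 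Since your item (vii) genuinely depends on (vi) (you invoke it in the first step), the proposal as a proof of the stated theorem has this gap, even though the part you do prove is right.
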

\begin{proof}
When $g/g'$ is a polynomial of degree one (i.e., $g(z)$ is of the form $a(z-c)^{m}$), then it is easy to see that statements 
(i)-(vii) hold. Thus we may assume that $g/g'$ is not a polynomial. 
By Lemma~\ref{l:RNMnnen}, Lemma~\ref{l:wmsopen} and its proof, Theorem~\ref{t:rcdnkmain2}, the proof of Lemma~\ref{l:chimune} and Corollary~\ref{c:rcdnkmain2f}, 
statements (i)--(v) hold. 

We now prove (vi). 
Let $\Theta :=\{ L\in \Min(\langle N_{g,1}\rangle ,\CCI )\mid L\subset \CC \setminus Q_{g}, 
L \mbox{ is attracting for } \delta _{N_{g,1}}\} .$ 
Then each $L$ is an attracting periodic cycle of 
$N_{g,1}.$ 
Let $L\in \Theta .$ If the period $p_{L}$ of 
$(N_{g,1},L)$ is equal to $1$, then 
there exists an element $x\in Q_{g}$ with $L=\{ x\}.$ However, this is a contradiction. 
Hence, we have $p_{L}\geq 2.$ 
In particular, two different points of $L$ never belong to the same connected 
component of $F(N_{g,1}). $ 

We now let $\tau \in {\frak M}_{1,c}({\cal Y}_{g}, {\cal W}_{g})$ and suppose 
int$(\mbox{supp}\,\tau)\supset \G_{0}$ with respect to the topology in ${\cal Y}_{g}$, where 
$\G_{0}:=\{ N_{g,\lambda }\in {\cal Y}_{g}\mid \lambda \in \CC, |\lambda -1|\leq \frac{1}{2}\} $. 
Let $\Gamma \in \Cpt({\cal Y}_{g})$ be an element such that 
int$(\G ) \supset \G _{0}$, 
int$(\mbox{supp}\,\tau)\supset \G $, and $\G $ is close enough to 
$\G _{0}$ 
with respect to the Hausdorff metric. 
 We now use the arguments in the proof of Theorem~\ref{t:rcdnkmain2} and we modify them a little. 
  By Lemma~\ref{l:RNMnnen}, we have the following claim. \\ 
 \underline{Claim 1.} 
Let $h\in \G $ and $x\in Q_{g}$. Then we have 
$h(x)=x$ and 
 $\| Dh_{x}\| _{s}<1$. Also,  $h(\infty )=\infty $ and $\| Dh_{\infty }\| _{s}>1.$ 
 
 We now prove the following claim.\\ 
 \underline{Claim 2.} 
 Let $L\in \Min(\langle \G_{0}\rangle ,\CCI ) $ and suppose $L\subset \CC \setminus Q_{g}.$
 Then $L$ is not attracting for $\G_{0}.$ 
 
 To prove this claim, suppose that 
 there exists an element $L\in \Min(\langle \G_{0}\rangle ,\CCI )$ with 
 $L\subset \CC \setminus Q_{g}$ which is attracting for $\G_{0}.$ 
 Then there exists an element $L_{0}\in \Theta $ with $L_{0}\subset L.$ 
 We have that the period of 
 $(N_{g,1}, L_{0})$ is not $1.$ 
 Let $B:=\max \{ |N_{g,1}(x)-x|\mid x\in L_{0}\} >0.$ 
 Let $x_{0}\in L_{0}$ be an element such that 
 $|N_{g,1}(x_{0})-x_{0}|=B.$ Then we have 
 \begin{equation}
 \label{eq:flx0ml} 
 \{ N_{g,\lambda }(x_{0})\mid \lambda \in \CC, |\lambda -1|\leq \frac{1}{2}\} 
 =\{ x_{0}-\lambda \frac{g(x_{0})}{g'(x_{0})}\mid \lambda \in \CC, | \lambda -1|\leq \frac{1}{2}\} 
 =\{ z\in \CC \mid |z-N_{g,1}(x_{0})|\leq \frac{1}{2}B\} . 
 \end{equation}
Let $N_{g,0}=Id.$ 
By (\ref{eq:flx0ml}) and the fact $|N_{g,1}(x_{0})-
N_{g,1}^{2}(x_{0})|\leq B$, we obtain that 
\begin{eqnarray*}
\ & \ & \{  N_{g,\lambda }(N_{g,1}(x_{0}))\mid \lambda \in [0,1]\} \\ 
  & = &\{ N_{g,\lambda }(N_{g,1}(x_{0}))\mid \lambda \in [0,\frac{1}{2}]\} 
         \cup \{ N_{g,\lambda }(N_{g,1}(x_{0}))\mid \lambda \in [\frac{1}{2}, 1]\} \\ 
  & \subset & \{ z\in \CC \mid |z-N_{g,1}(x_{0})|\leq \frac{1}{2}|N_{g,1}(x_{0})-N_{g,1}^{2}(x_{0})|\} 
                 \cup \{ N_{g,\lambda }(N_{g,1}(x_{0}))\mid \lambda \in \CC, |\lambda -1|\leq \frac{1}{2}\} \\ 
  & \subset &\{ N_{g,\lambda }(x_{0})\mid | \lambda \in \CC , | \lambda -1|\leq \frac{1}{2}\} 
                \cup \{ N_{g,\lambda }(N_{g,1}(x_{0}))\mid \lambda \in \CC, |\lambda -1|\leq \frac{1}{2}\} 
                \subset L.                                         
\end{eqnarray*}
 Moreover, since two different points $N_{g,1}(x_{0})$ and $N_{g,1}^{2}(x_{0})$ in $L_{0}$ cannot 
 belong to the same connected component of 
 $F(N_{g,1})$, we have that 
 $\{ N_{g,\lambda }(N_{g,1}(x_{0}))\mid \lambda \in [0,1]\} \cap J(N_{g,1})\neq \emptyset .$ 
 From these arguments, it follows that $L\cap J(\langle \G_{0}\rangle )\neq \emptyset .$ 
 However, this contradicts the assumption that $L$ is attracting for $\G_{0}.$ 
 Thus we have proved Claim 2. 
    
 We now prove the following claim. \\ 
 \underline{Claim 3.} 
We have $\Min(\langle \G \rangle, \CCI )=\{ \{ x\} \mid x\in Q_{g}\} \cup \{ \{ \infty \}\} .$ 
 
 This claim is proved by combining Claims 1, 2,  Lemma~\ref{l:nalnotsb} and Zorn's lemma. 
 
 By using Claim 3, Lemma~\ref{l:RNMnnen} and the arguments in the part from Claims 6, 7 and the last 
 in the proof of Theorem~\ref{t:rcdnkmain1}, we obtain that 
 $\tau $ is weakly mean stable,  $\tau $ satisfies the assumptions of 
 Theorem~\ref{t:wmsneglyap}, and there is no $L\in \Min(G_{\tau },\CCI )$ with 
 $L\subset \CC \setminus Q_{g}.$  By Theorem~\ref{t:wmsneglyap}, it follows that 
 $\tau \in {\cal A}_{conv}.$ Thus we have proved statement (vi) in our theorem.  

Statement (vii) follows from statements (i), (ii), (iv), (v), (vi) and Theorem~\ref{t:zfggzcc}. (Note that if $\tau $ is absolutely continuous with respect to the $2$-dimensional Lebesgue 
measure on ${\cal Y}_{g}\cong \Lambda $, then 
the formula for $\Omega _{\tau }$ in (ii) gives us 
(\ref{eq:Omegatauf}).)   

Thus we have proved our theorem. 
\end{proof}

\begin{rem}
\label{r:normalizedpolynomial} 
Let $g$ be a non-constant polynomial. We say that $g$ is {\bf normalized} if the set 
$\{ z_{0}\in \CC \mid g(z_{0})=0\}$ is contained in   
$\Bbb{D}:=\{ z\in \CC \mid |z|<1\}.$ 
Note that if $g\in {\cal P}$ is normalized, then $g'$ is also a normalized polynomial
 (see \cite[page 29]{Ah}). Thus, 
 for a normalized polynomial $g\in {\cal P}$, for a 
 random relaxed Newton's method scheme  
 $({\cal Y}_{g}, {\cal W}_{g})$ for $g$, 
 if $\tau \in {\cal M}_{1,c}({\cal Y}_{g}, {\cal W}_{g})$ is an element such that 
 $\mbox{int}(\mbox{supp}\,\tau)\supset 
 \{ N_{g,\lambda }\in {\cal Y}_{g}\mid \lambda \in \CC \mid |\lambda -1|\leq \frac{1}{2}\} $ with respect to 
 the topology in ${\cal Y}_{g}$ 
and $\tau $ is absolutely continuous with respect to the $2$-dimensional 
Lebesgue measure on ${\cal Y}_{g}\cong \{ \lambda \in \CC \mid |\lambda -1|<1\} $, 
then 
 for any $z_{0}\in \CC \setminus \Bbb{D}$, 
 for $\tilde{\tau }$-a.e. $\gamma =
 (\gamma _{1},\gamma _{2},\ldots )\in 
 (\Rat)^{\NN }$, 
 $\{ \gamma _{n,1}(z_{0})\} _{n=1}^{\infty }$ converges to a root of $g$ as $n\rightarrow \infty .$  
\end{rem}
\section{Examples}
\label{Examples}
In this section, we give some examples to which we can apply our main theorems. 

\begin{ex}
\label{ex:wninfty}
Let ${\cal Y}$ be a weakly nice subset of ${\cal P}$ with respect to some holomorphic families 
$\{ {\cal W}_{j}\} _{j=1}^{m}$ of polynomial maps. 
Suppose that $S({\cal W}_{j})=\{ \infty \}$ for each $j=1,\ldots, m$.  Then 
${\cal Y}$ is nice with respect to $\{ {\cal W}_{j}\} _{j=1}^{m}$ 
 and $({\cal Y}, \{ {\cal W}_{j}\} _{j=1}^{m})$ satisfies the assumptions of 
Lemma~\ref{l:ypwntln}. Thus by Lemma~\ref{l:ypwntln}, 
the set ${\cal A}:=\{ \tau \in {\frak M}_{1,c}({\cal Y}, \{ {\cal W}_{j}\} _{j=1}^{m}) 
\mid \tau \mbox{ is mean stable}\}$ is open and dense in 
${\frak M}_{1,c}({\cal Y}, \{ {\cal W}_{j}\} _{j=1}^{m}) $ with respect to the topology  ${\cal O}.$ 
In particular, all statements (i)--(xi) of Theorem~\ref{t:rcdnkmain2} hold for any $\tau \in {\cal A}$ 
and the set $\Omega_{\tau }$ in Theorem~\ref{t:rcdnkmain2} is equal to $\CCI $. 

\end{ex}
We give some examples of ${\cal Y}$ which are mild, non-exceptional and strongly nice and satisfies 
the assumptions in Theorem~\ref{t:rcdnkmain2}. 
\begin{ex}

For each $q\in \NN $ with $q\geq 2$, let ${\cal P}_{q}:= \{ f\in {\cal P}\mid \deg (f)=q\} .$ 
Let $(q_{1},\ldots, q_{m})\in \NN ^{m}$ with $q_{1}<q_{2}<\cdots <q_{m}$ and 
let ${\cal W}_{j}=\{ f\} _{f\in {\cal P}_{q_{j}}}, j=1,\ldots, m$ and let 
${\cal Y}=\cup _{j=1}^{m}{\cal P}_{q_{j}}.$ In this case, $S({\cal W}_{j})=\{ \infty \} $. 
Thus  by Example~\ref{ex:wninfty}, 
the set ${\cal A}:=\{ \tau \in {\frak M}_{1,c}({\cal Y}, \{ {\cal W}_{j}\} _{j=1}^{m}) 
\mid \tau \mbox{ is mean stable}\}$ is open and dense in 
${\frak M}_{1,c}({\cal Y}, \{ {\cal W}_{j}\} _{j=1}^{m}) $ with respect to the topology ${\cal O}$  
and 
the set $\Omega _{\tau }$ in 
Theorem~\ref{t:rcdnkmain2} is equal to $\CCI $. 
\end{ex}
\begin{ex}
Let $q\in \NN $ with $q\geq 2$ and let ${\cal W}=\{ z^{q}+c\} _{c\in \CC } .$ 
Let ${\cal Y} =\{ z^{q}+c\mid c\in \CC \} .$ 
In this case, $S({\cal W})=\{ \infty \} $. 
Thus  by Example~\ref{ex:wninfty}, 
the set ${\cal A}:=\{ \tau \in {\frak M}_{1,c}({\cal Y},  {\cal W}) 
\mid \tau \mbox{ is mean stable}\}$ is open and dense in 
${\frak M}_{1,c}({\cal Y}, {\cal W}) $ 
with respect to the topology  ${\cal O}$  
and 
the set $\Omega _{\tau }$ in 
Theorem~\ref{t:rcdnkmain2} is equal to $\CCI $. 
\end{ex}
We now give an important example of ${\cal Y}$ to which we can apply Theorems~\ref{t:rcdnkmain1} and 
\ref{t:rcdnkmain2} but in which $\Omega _{\tau }\neq \CCI $ for any $\tau $ in 
an open subset of 
${\cal A}$, where ${\cal A}$ is the set in Theorems~\ref{t:rcdnkmain1} and 
\ref{t:rcdnkmain2}.   
\begin{ex}
\label{ex:wlz1z}
Let ${\cal W}=\{ \lambda z(1-z)\} _{\lambda \in \CC \setminus \{ 0\} } $ and 
let ${\cal Y}=\{ \lambda z(1-z)\in {\cal P}_{2}\mid \lambda \in \CC \setminus \{ 0\} \} .$ 
In this case, $S({\cal W})=\{ 0,1,\infty \}$ and 
$S({\cal W})\setminus \{ \infty \} =\{ 0,1\} \neq \emptyset .$ 
It is easy to see that ${\cal Y}$ is a mild subset of 
${\cal P}$ and ${\cal Y}$ is  non-exceptional and strongly nice 
with respect to holomorphic family ${\cal W} $. 
Thus the statements of Theorems~\ref{t:rcdnkmain1}, \ref{t:rcdnkmain2} hold.  
Let ${\cal A}$ be the largest  open and dense subset of 
$({\frak M}_{1,c}({\cal Y},{\cal W}), {\cal O})$ such that 
for each $\tau \in {\cal A}$, all statements (i)-(v) of Theorem~\ref{t:rcdnkmain1} and 
all statements (i)--(ix) of Theorem~\ref{t:rcdnkmain2} hold. 
Since each element of ${\cal Y}$ is a quadratic polynomial, 
for each $\tau\in {\cal A}$, exactly one of the followings holds. 
\begin{itemize}
\item 
Type (I).  $\Min(G_{\tau },\CCI )=\{ \{ 0 \} ,\{ \infty \}\}.$ 
\item 
Type (II). $\Min(G_{\tau },\CCI )=\{ \{ 0\}, \{ \infty \}, L_{\tau }\}$, 
where $L_{\tau }$ is an attracting minimal set with $L_{\tau }\neq \{0 \}, \{ \infty \} .$ 
\end{itemize}
If $\tau \in {\cal A}$ is of type (I), then Theorem~\ref{t:rcdnkmain1} (v) and Theorem~\ref{t:zfggzcc} imply that 
\begin{equation}
\label{eq:mtnvyt0}
M_{\tau }^{n}(\varphi )(y)\rightarrow T_{0,\tau }(y)\varphi (0)
+T_{\infty ,\tau }(y) \varphi (\infty ) \mbox{ as }n\rightarrow \infty , \mbox{ for each }y\in \CCI ,\varphi \in C(\CCI)
\end{equation} 
i.e., $(M_{\tau }^{\ast })^{n}(\delta _{y})\rightarrow T_{0,\tau }(y)\delta _{0}+T_{\infty ,\tau }(y)\delta _{\infty }$ 
as $n\rightarrow \infty $. If $\tau \in {\cal A}$ is of type (II), then 
 Theorem~\ref{t:rcdnkmain1} (v) and Theorem~\ref{t:zfggzcc} imply that 
\begin{equation}
\label{eq:mtnltau}
M_{\tau }^{nr_{\tau }}(\varphi )(y)\rightarrow 
T_{0,\tau }(y)\varphi (0)+T_{\infty ,\tau }(y)\varphi (\infty )+
\sum _{j=1}^{r_{\tau }}\alpha ((L_{\tau })_{j},y)\int \varphi \ d\omega _{L_{\tau },j} 
\mbox{ as }n\rightarrow \infty 
\end{equation}
for each $y\in \CCI $ and for each $\varphi \in C(\CCI )$, where 
$r_{\tau }=\dim _{\CC }(U_{\tau ,L_{\tau }})$ (the period of $(\tau, L_{\tau })$, 
see Lemma~\ref{l:pwjkf} and Definition~\ref{d:period}), and 
$\{ (L_{\tau })_{j}\} _{j=1}^{r_{\tau }}, $ 
$\{ \omega _{L_{\tau },j}\} _{j=1}^{r_{\tau }}$ are elements coming from Theorem~\ref{t:zfggzcc}. 

Note that there exists an element $\tau \in {\cal A}$ of type (I). For example, 
let $g_{0}(z)=\lambda _{0}z(1-z)\in {\cal Y}$ where $0<|\lambda _{0}|<1$ and 
let $\tau _{0}= \delta _{g_{0}}.$ Then any element $\tau \in {\cal A}$ which is close enough to $\tau _{0}$ is of 
type (I). Also, there exists an element $\tau \in {\cal A}$ of type (II). For example, 
let $g_{1}\in {\cal Y}$ be an element which has an attracting periodic cycle with 
period $p\geq 2.$ Let $\tau _{1}=\delta _{g_{1}}.$ Then any element 
$\tau \in {\cal A}$ which is close enough to $\tau _{1}$ is of type (II) 
with $r_{\tau }=p.$   

We now classify elements $\tau \in {\cal A}$ of type (I) into the following three types. 
\begin{itemize}
\item 
Type (Ia). $0\in F(G_{\tau })$ and $\{ 0\}\in \Min(G_{\tau },\CCI )$ is attracting for $\tau .$  
\item 
Type (Ib). $0\in J_{\ker }(G_{\tau })$ and $\chi (\tau ,\{ 0\} )<0.$
\item 
Type (Ic). $0\in J_{\ker }(G_{\tau })$ and $\chi (\tau, \{ 0\} )>0.$
\end{itemize} 
We first remark that for each type $(\ast )$ above, there exists an element $\tau \in {\cal A}$ 
of type $(\ast ).$ In fact, for the above $\tau _{0}$, any element $\tau \in {\cal A}$ 
which is close enough to $\tau _{0}$ is of type (Ia). 
Also, let $g_{3}(z)=\frac{1}{2}z(1-z)\in {\cal Y}, g_{4}(z)=6z(1-z)\in {\cal Y}$ and 
let $\tau_{2}:=p_{1}\delta _{g_{3}}+p_{2}\delta _{g_{4}}$, where 
$(p_{1},p_{2})\in (0,1)^{2}$ with $p_{1}+p_{2}=1$, $p_{1}\log \frac{1}{2}+p_{2}\log 6<0.$ 
Then any element $\tau \in {\cal A}$ which is close enough to $\tau _{3}$ is of type 
(Ib). Moreover, let $\tau _{3}:=q_{1}\delta _{g_{3}}+q_{2}\delta _{g_{4}}$, where 
$(q_{1},q_{2})\in (0,1)^{2}$ with $q_{1}+q_{2}=1$, $q_{1}\log \frac{1}{2}+q_{2}\log 6>0.$ Then 
any element $\tau \in {\cal A}$ which is close enough to $\tau _{3}$ is of type (Ic). 
Hence for each type ($\ast $), there exists an element $\tau \in {\cal A}$ of type ($\ast $). 

For each type $(\ast )$=(Ia),  (Ib),  (Ic),  (II), we set ${\cal A}  _{\ast }$ the set of  element 
$\tau \in {\cal A}$ of type ($\ast $). We show the following claim. \\ 
\underline{Claim 1.} For each $(\ast )$=(Ia), (Ib), (Ic), (II), the set ${\cal A}_{\ast }$ is 
a non-empty open subset of ${\cal A}.$ Also, ${\cal A}=\amalg _{\ast }{\cal A}_{\ast }$, 
where $\amalg $ denotes the disjoint union.  

To show this claim, we first remark that we have already shown that 
each ${\cal A}_{\ast }$ is non-empty and ${\cal A}=\cup _{\ast }{\cal A}_{\ast }.$ 
By \cite[Lemma 5.2]{Sadv}, the sets ${\cal A}_{Ia}, {\cal A}_{II}$ are open in ${\cal A}.$ 
Also, since $\chi (\tau, \{ 0\} )$ is continuous with respect to $\tau \in {\cal A}$, 
we see that ${\cal A}_{Ic}$ is open in ${\cal A}.$ 
Finally, since each $\tau \in {\cal A}$ is weakly mean stable, 
for each $\tau \in {\cal A}_{Ib}$, there exists an element 
$g\in \mbox{supp}\,\tau$ with $|g'(0)|>1.$ From this, we obtain that 
${\cal A}_{Ib}$ is open in ${\cal A}.$ Thus we have proved Claim 1. 

We now show the following claim. \\ 
\underline{Claim 2.} 
For each $\tau \in {\cal A}_{II}$ and for each $g\in \mbox{supp}\,\tau$, we have 
$|g'(0)|>1.$ In particular, $0\in J_{\ker }(G_{\tau })$ and $\chi (\tau, \{ 0\})>0.$ 

To show this claim, let $\tau \in {\cal A}_{II}$ and $g\in \mbox{supp}\,\tau.$ 
Then $g$ has an attracting periodic cycle in $L_{\tau }$, which 
does not meet $0.$ Thus $|g'(0)|>1.$ Hence we have proved Claim 2.

For each $\tau \in {\cal A}$, we have that $\tau $ is weakly mean stable. 
We now show the following claim.\\
\underline{Claim 3.} 
Each element $\tau \in {\cal A}_{Ia}$ is mean stable. 
However, each element  $\tau \in {\cal A}_{Ib}\cup {\cal A}_{Ic}\cup 
{\cal A}_{II}$ is weakly mean stable but not mean stable. 

To show this claim, let $\tau \in {\cal A}_{Ia}.$ Since 
each minimal set is attracting, $\tau $ is mean stable. 
We now let $\tau \in {\cal A}_{Ib}\cup {\cal A}_{Ic}\cup 
{\cal A}_{II}$. Then $0\in J_{\ker }(G_{\tau }).$ Thus 
$\tau $ is not mean stable. Hence we have proved Claim 3. 

For each $\tau \in {\cal A}_{Ia}$, the convergence in (\ref{eq:mtnvyt0}) is 
uniform on $y\in \CCI $ since $\tau $ is mean stable.  
 
Let  ${\cal U}:=\{ \tau \in {\cal A}\mid 
\chi (\tau ,\{ 0\})>0\} ={\cal A}_{Ic}\cup {\cal A}_{II}$. Then 
${\cal U}$ is a non-empty   open subset  of ${\cal A}.$ 
%
We now prove the following claim. \\ 
\underline{Claim 4.} 
For each $\tau \in {\cal U}$, the set $\Omega_{\tau }$ 
(with $\sharp (\CCI \setminus \Omega_{\tau })\leq \aleph _{0}$)
 in Theorem~\ref{t:rcdnkmain2} is not equal to  
$\CCI $. In particular, $\emptyset \neq  J_{pt}^{0}(\tau )=\CCI \setminus \Omega _{\tau }.$ 

To prove this claim,     
 let $\tau \in {\cal U}.$ Then 
$\chi (\tau ,\{ 0\})>0$. By the definition of 
$\Omega _{\tau }$, we obtain that $0\in \CCI \setminus \Omega _{\tau }.$ 
Also, by Theorem~\ref{t:rcdnkmain2}  (vii), we have $J_{pt}^{0}(\tau )=\CCI \setminus \Omega _{\tau }.$ 
Hence we have proved Claim 4. 

We now prove the following claim. Note that the set   
$\{ \tau \in {\cal U}\mid \sharp \mbox{supp}\,\tau<\infty \}$ is dense in ${\cal U}.$  
\\ 
\underline{Claim 5.} 
Let  $\tau \in {\cal U}$ with $\sharp \mbox{supp}\,\tau<\infty $. Then  
we have $\overline{J_{pt}^{0}(\tau )}=J(G_{\tau })$ and this is a perfect set. 
Also, there exists an element $L\in \Min(G_{\tau },\CCI )$ such that 
letting $\varphi _{L}\in C(\CCI )$ be any element such that 
$\varphi _{L}|_{L}=1$ and $\varphi _{L}|_{L'}=0$ for any $L'\in \Min(G_{\tau },\CCI )$ 
with $L'\neq L$, the convergence in (\ref{eq:mtnlvy}) in 
Theorem~\ref{t:zfggzcc} for $\varphi =\varphi _{L}$ is not uniform in any open subset $V$ of 
$\CCI $ with $V\cap J(G_{\tau })\neq \emptyset .$ 

This claim follows from Claim 4,  Corollary~\ref{c:rcdnkmain2f} and Theorem~\ref{t:rcdnkmain2}. We have proved Claim 5. 

We now prove the following claim.\\ 
\underline{Claim 6.} 
For each $\tau \in {\cal A}_{Ia}$, 
the functions $T_{0 ,\tau }, T_{\infty, \tau}$ are continuous on $\CCI $ and 
there exists a neighborhood $V$ of $0$ such that 
$T_{0,\tau }|_{V}\equiv 1$ and $T_{\infty ,\tau }|_{V}\equiv 0.$ 
Also, for each $\tau \in {\cal A}_{Ib}$, the functions 
$T_{0,\tau }$ and $T_{\infty ,\tau }$ are continuous on $\CCI $ 
and $T_{0.\tau }(0)=1$, $T_{\infty ,\tau }(0)=0$,  
but 
for any neighborhood $V$ of $0$, we have $T_{0,\tau }|_{V}\not\equiv 1$ and 
$T_{\infty ,\tau }|_{V}\not\equiv 0.$ 

To prove this claim, let $\tau \in {\cal A}_{Ia}\cup {\cal A}_{Ib}$. 
Then by Theorem~\ref{t:sjkfcln} 
(or Theorem~\ref{t:rcdnkmain2}), the functions $T_{0,\tau },T_{\infty }$ are 
continuous. If $\tau \in {\cal A}_{Ia}$, then $0\in F(G_{\tau }) $  and since 
the functions $T_{0,\tau }$ and $T_{\infty ,\tau }$ are locally constant on $F(G_{\tau })$ 
(see \cite[Theorem 3.15]{Splms10} or Theorems~\ref{t:rcdnkmain1} 
and \ref{t:zfggzcc} (vi)),  
there exists a neighborhood $V$ of $0$ such that 
$T_{0,\tau }|_{V}\equiv 1$ and $T_{\infty ,\tau }|_{V}\equiv 0.$  
We now suppose $\tau \in {\cal A}_{Ib}.$ 
Let $F_{\infty }(G_{\tau })$ be the connected component of $F(G_{\tau })$ 
with $\infty \in F_{\infty }(G_{\tau }).$ Then $T_{\infty ,\tau }|_{F_{\infty }(G_{\tau })}\equiv 1$.  
Let $V$ be any neighborhood of $0.$ 
Since $0\in J(G_{\tau })$, 
there exist an element $z\in V$ 
and an element $g\in G_{\tau }$ 
such that $g(z)\in F_{\infty }(G_{\tau }).$ 
Let $(\gamma _{1} \ldots, \gamma _{n})\in 
(\mbox{supp}\,\tau)^{n}$ be an element such that 
$g=\gamma _{n}\circ \cdots \circ \gamma _{1}.$ Then 
there exists a neighborhood $\Lambda $ of $(\gamma _{1},\ldots, \gamma _{n})$ 
in $(\mbox{supp}\,\tau)^{n}$ such that for each $(\alpha _{1},\ldots, \alpha _{n})\in \Lambda $, 
$\alpha _{n}\circ \cdots \circ \alpha _{1}(z)\in F_{\infty }(G_{\tau }).$ 
It implies that $T_{\infty ,\tau }(z)\geq (\otimes _{j=1}^{n}\tau )(\Lambda )>0.$ 
Therefore  $T_{\infty ,\tau }|_{V}\not\equiv 0.$ 
Since $T_{0,\tau }+T_{\infty ,\tau }=1$, it follows that 
$T_{0,\tau }|_{V}\not\equiv 1.$ Thus we have proved Claim 6. 

We now prove the following claim.\\ 
\underline{Claim 7.} 
Let $\tau \in {\cal A}_{Ic}.$ Then 
for each $z\in \Omega _{\tau }$, where $\Omega _{\tau }$ is the subset 
of $\CCI $  defined in 
Theorem~\ref{t:rcdnkmain2}, 
we have $T_{\infty ,\tau }(z)=1.$ Also, $\sharp (\CCI \setminus \Omega _{\tau })\leq \aleph _{0}.$ 

To prove this claim, by Theorem~\ref{t:rcdnkmain2}, we have 
$\sharp (\CCI \setminus \Omega _{\tau })\leq \aleph _{0}.$ 
Also, by the definition of $\Omega _{\tau }$, the result 
$T_{0,\tau }+T_{\infty }=1$ on $\CCI $ and Lemma~\ref{l:chlpa0}, 
 we see that $T_{\infty ,\tau}(y)=1$ for each $y\in \Omega _{\tau }.$ 
 Thus we have proved Claim 7. 

We now prove the following claim.\\ 
\underline{Claim 8.} 
Let $\tau \in {\cal A}.$ Then 
$\tau \in {\cal A}_{Ic}$ if and only if 
for $\tilde{\tau }$-a.e.$\gamma \in X_{\tau }$, we have Leb$_{2}(K_{\gamma })=0$, 
where $K_{\gamma }$ denotes the filled-in Julia set of $\gamma $, i.e., 
$K_{\gamma }:=\{ z\in \CC \mid \{ \gamma _{n,1}(z)\} _{n=1}^{\infty } \mbox{ is bounded in }\CC \} .$ 

To prove this claim, 
let $\tau \in {\cal A}_{Ic}.$ Then 
by Claim 7 and the Fubini theorem, 
for $\tilde{\tau }$-a.e.$\gamma \in X_{\tau }$, we have Leb$_{2}(K_{\gamma })=0$. 
We now suppose that $\tau \in {\cal A}$ and 
for $\tilde{\tau }$-a.e.$\gamma \in X_{\tau }$, we have Leb$_{2}(K_{\gamma })=0$. 
Then by the Fubini theorem, we obtain that 
for Leb$_{2}$-a.e. $z\in \CCI $, we have $T_{\infty ,\tau }(z)=1.$ 
Therefore by Claim 6, $\tau \not\in {\cal A}_{Ia}\cup {\cal A}_{Ib} \cup {\cal A}_{II}. $ 
Hence by Claim 1, we obtain $\tau \in {\cal A}_{Ic}.$ Thus we have proved Claim 8.

\end{ex}
We also give some further examples to which we can apply Theorems~\ref{t:rcdnkmain1} 
and \ref{t:rcdnkmain2}. 
\begin{ex}
\label{ex:qx1any}
Let $Q=\{ x_{1},\ldots ,x_{n}\}$ be any non-empty finite subset of $\CC $, 
where 
$x_{1},\ldots, x_{n}$ are mutually distinct points. 
Let  
$f(z)=a\prod _{j=1}^{n}(z-x_{j})\in {\cal P}$, where $a\in \CC \setminus \{ 0\} $. Then  we have 
$\{ z_{0}\in \CC \mid f(z_{0})=0\} =Q$ and 
if $z_{0}\in \CC,  f(z_{0})=0,$ then $f'(z_{0})\neq 0.$ 
%
Let ${\cal W}=\{ z+\lambda f(z)\} _{\lambda \in \CC \setminus \{ 0\} }$ and 
let ${\cal Y}=\{ z+\lambda f(z)\in {\cal P}\mid \lambda \in \CC \setminus \{ 0\} \} .$ 
In this case, $S({\cal W})=Q\cup \{ \infty \} $ and 
$S({\cal W})\cap \CC =\{ z_{0}\in \CC \mid f(z_{0})=0\} =Q\neq \emptyset .$  
By Lemma~\ref{l:singfam}, we obtain that 
${\cal Y}$ is a mild subset of ${\cal P}$, the set ${\cal Y}$ is 
 strongly nice and non-exceptional
 with respect to holomorphic family ${\cal W}$ and 
$({\cal Y}, {\cal W})$ satisfies the assumptions of Theorems~\ref{t:rcdnkmain1}, 
\ref{t:rcdnkmain2}. Thus there exists the largest open and dense subset 
${\cal A}$ of $({\frak M}_{1,c}({\cal Y},{\cal W}),{\cal O})$ such that 
for each $\tau \in {\cal A}$, all statements (i)--(xi) in Theorem~\ref{t:rcdnkmain2} 
hold for $\tau .$ In particular, each $\tau \in {\cal A}$ is weakly mean stable. 
Let $f_{\lambda }(z)=z+\lambda f(z).$ Then we have 
\begin{equation}
\label{eq:fldiff}
f_{\lambda }'(z)=1+\lambda f'(z). 
\end{equation}
%
Let 
${\cal A}_{+}:=\{ \tau \in {\cal A}\mid 
\exists L\in \Min(G_{\tau },Q) \mbox{ s.t. }\chi (\tau ,L)>0\} .$ 
Also, let ${\cal A}_{+,all}:=\{ \tau \in {\cal A}\mid 
\mbox{ for all }L\in \Min(G_{\tau },Q) \mbox{ we have }\chi (\tau ,L)>0\} .$ 
Moreover, let ${\cal A}^{f}:=\{ \tau \in {\cal A}\mid \sharp \mbox{supp}\,\tau<\infty \} $, 
${\cal A}_{+}^{f}:= {\cal A}_{+}\cap {\cal A}^{f}$, and 
${\cal A}_{+.all}^{f}:={\cal A}_{+,all}\cap {\cal A}^{f}.$ 

We now show the following claim. \\ 
\underline{Claim 1.} 
The sets ${\cal A}_{+}$ and ${\cal A}_{+,all}$ are  non-empty open subsets of 
${\cal A}$ (and thus they are  non-empty open subsets of 
$({\frak M}_{1,c}({\cal Y},  {\cal W}), {\cal O})$).  
Also, ${\cal A}_{+}^{f}$ is dense in ${\cal A}_{+}$ and ${\cal A}_{+,all}^{f}$ 
is dense in ${\cal A}_{+,all}.$ Moreover, 
for each $\tau \in {\cal A}_{+}$, we have 
$\emptyset \neq \cup _{L\in H_{+,\tau }}L\subset J_{pt}^{0}(\tau )=\CCI \setminus \Omega _{\tau }$, where 
$\Omega _{\tau }$ and $H_{+,\tau }$ are the sets defined in Theorem~\ref{t:rcdnkmain2}, and 
for each $\tau \in {\cal A}_{+,all}$, we have 
$Q\subset J_{pt}^{0}(\tau ).$ Furthermore, 
for each $\tau \in {\cal A}_{+}^{f}$, 
we have $\overline{J_{pt}^{0}(\tau )}=J(G_{\tau })$ which is a perfect set. 


To prove this claim, 
it is easy to see that ${\cal A}_{+}$ and ${\cal A}_{+.all}$ are open in ${\cal A}.$ 
By (\ref{eq:fldiff}) and the fact $f'(x)\neq 0$ for each $x\in Q$, 
if $|\lambda _{0}|$ is large enough, then letting $\tau _{0}
:=\delta _{f_{\lambda _{0}}}$, we have 
$\chi (\tau _{0}, \{ x\})>0$ for each $x\in Q.$  
Therefore 
for each $\tau \in {\cal A}$ which is close enough to $\tau _{0}$ and   
for each $x\in Q$, we have $\chi (\tau , \{ x\})>0.$ 
Thus ${\cal A}_{+}\supset {\cal A}_{+,all}\neq \emptyset.$  
The rest statements follow from Theorem~\ref{t:rcdnkmain2} and Corollary~\ref{c:rcdnkmain2f}. 
Thus we have proved Claim 1. 

Let ${\cal A}_{-, all}:=
\{ \tau \in {\cal A}\mid 
\mbox{for all } L\in \Min(G_{\tau },Q) \mbox{ we have }\chi (\tau ,L)<0\} .$ 
We now prove the following claim.\\ 
\underline{Claim 2.} 
The set ${\cal A}_{-, all}$ is a non-empty open subset of ${\cal A}$ and 
${\cal A}_{-, all}\cap {\cal A}_{+, all}=\emptyset .$ 

To prove this claim, it is easy to see ${\cal A}_{-,all}\cap {\cal A}_{+,all}=\emptyset $ 
and ${\cal A}_{-, all}$ is open in ${\cal A}.$  
For each  $x\in Q$,  combining (\ref{eq:fldiff}), the fact $f'(x)\neq 0$ and the method above, 
we see that  
there exists an element $\lambda _{x}\in \CC \setminus \{ 0\} $ such that 
$f_{\lambda _{x}}'(x)=0.$ Let $\tau _{1}=\sum _{x\in Q}\frac{1}{n}\delta _{f_{\lambda _{x}}}.$  
Then $\chi (\tau _{1}, \{x\})=-\infty $ for each $x\in Q.$  
Hence for each $\tau \in {\cal A}$ which is close enough to 
$\tau _{1}$, we have $\chi (\tau, \{ x\} )<0$ for all $x\in Q.$ Thus 
${\cal A}_{-, all}\neq \emptyset .$ 
Hence we have proved Claim 2. 

We now prove the following claim.\\ 
\underline{Claim 3.} 
Let $\tau \in {\cal A}_{-,all}.$ Then for each $L\in \Min(G_{\tau },J_{\ker}(G_{\tau }))$, 
we have $\chi (\tau ,L)<0$, and each $L\in \Min(G_{\tau },\CCI)$ with $L\not\subset J_{\ker }(G_{\tau })$ 
is attracting $\tau .$ Thus $\tau $ satisfies all assumptions of Theorem~\ref{t:sjkfcln} and 
all conclusions in Theorem~\ref{t:sjkfcln} hold. In particular, 
$J_{pt}^{0}(\tau )=\emptyset $ and $F_{meas}(\tau )={\frak M}_{1}(\CCI).$   

This claim follows from Theorem~\ref{t:rcdnkmain2}, the fact $\tau $ is weakly mean stable and Theorem~\ref{t:sjkfcln}. 
\end{ex}
\begin{ex}
\label{ex:wizlzn} 
Let $n\in \NN $ with $n\geq 2$ and let $w=e^{2\pi i/n}\in \CC .$ 
For each $i=1,\ldots, n$, let ${\cal W}_{i}=
\{ w^{i}(z+\lambda (z^{n}-1))\} _{\lambda \in \CC \setminus \{ 0\} }.$ 
Let $i_{1},\ldots, i_{m}\in \{ 1,\ldots, n\} $ with $i_{1}<i_{2}\cdots <i_{m}.$ 
Let ${\cal Y}=\cup _{j=1}^{m}\{ w^{i_{j}}(z+\lambda _{j}(z^{n}-1))\in {\cal P}\mid 
\lambda _{j}\in \CC \setminus \{ 0\} \} .$ 
For each $j=1,\ldots, m$, let  $\Lambda _{j}:=
\CC \setminus \{ 0\} $ and let  $f_{j,\lambda _{j}}(z)=w^{i_{j}}(z+\lambda _{j}(z^{n}-1))$ 
for each $z\in \CCI , \lambda _{j}\in \Lambda _{j}.$  
Let ${\cal W}_{j}=\{ f_{j,\lambda _{j}}\} _{\lambda _{j}\in \Lambda _{j}}$ for each 
$j=1,\ldots, m.$ We show the following claim.\\ 
\underline{Claim 1.} 
${\cal Y}$ is a mild subset of ${\cal P}$ and ${\cal Y}$ is  
strongly nice and non-exceptional with respect to 
holomorphic families $\{ {\cal W}_{j}\} _{j=1}^{m}$ 
of polynomial maps and $({\cal Y}, \{ {\cal W}_{j}\} _{j=1}^{m})$ satisfies the 
assumptions of Theorem~\ref{t:rcdnkmain2}. 

To prove this claim,  
we first note that   
$S({\cal W}_{j})=\{ w^{k}\mid k=1,\ldots, n\} \cup \{ \infty \} $ for each $j=1,\ldots, m$. 
Hence we have 
$\cap _{j=1}^{m}S({\cal W}_{j})\cap \CC 
=\{ w^{k}\mid k=1,\ldots, n\} \neq \emptyset .$  
For each  
$w^{k}\in \cap _{j=1}^{m}S({\cal W}_{j})$ and for each 
$j=1,\ldots, m$ and for each $\lambda _{j}\in \CC \setminus \{ 0\}$, 
we have $f_{j,\lambda _{j}}(w^{k})=w^{k+i_{j}}\in \cap _{j=1}^{m}
S({\cal W}_{j}).$ Thus for each 
$\tau \in {\frak M}_{1,c}({\cal Y}, \{ {\cal W}_{j}\} _{j=1}^{m})$, 
we have $G_{\tau }(\cap _{j=1}^{m}S({\cal W}_{j})\cap \CC)
\subset \cap _{j=1}^{m}S({\cal W}_{j})\cap \CC.$ 
Let $Q:= \cap _{j=1}^{m}S({\cal W}_{j})\cap \CC =\{ w^{k}\mid 
k=1,\ldots, m\} .$ 
For each $j=1,\ldots, m$, let 
$\alpha _{j}:Q\rightarrow Q$ be the map defined by 
$\alpha _{j}(z)=w^{i_{j}}\cdot z, z\in Q.$ 
Then for each $j=1,\ldots, m$ and for each 
$\lambda_{j}\in \Lambda _{j}$, 
we have $f_{j,\lambda _{j}}|_{Q}=\alpha _{j}.$ 
Since the semigroup 
$\{ \alpha _{j}^{n}\mid n\in \NN \}$ is a cyclic group generated by 
$\alpha _{j}$, there exists an element $n_{j}\in \NN $ such that 
$\alpha _{j}^{-1}=\alpha _{j}^{n_{j}}.$ 
Therefore we obtain that 
$Q$ is equal to the union of minimal sets of the semigroup generated 
by $\{ \alpha _{1},\ldots, \alpha _{m}\} .$ Thus 
$Q=\cup _{L\in \Min(G_{\tau },Q)}L$ for each 
$\tau \in {\frak M}_{1,c}({\cal Y}, \{ {\cal W}_{j}\} _{j=1}^{m}).$ 
 Hence there is no peripheral cycle for $({\cal Y}, \{ {\cal W}_{j}\})$. 
Moreover, for each $z\in Q$, for each $j=1,\ldots, m$ and for each 
$\lambda _{j}\in \Lambda _{j}$, we have 
\begin{equation}
\label{eq:fjljz}
f_{j,\lambda _{j}}'(z)=w^{i_{j}}(1+\lambda _{j}nz^{n-1}).
\end{equation}   
Hence ${\cal Y}$ is strongly nice with respect to holomorphic families 
$\{ {\cal W}_{j}\} _{j=1}^{m}$ of polynomial maps. Moreover, 
by (\ref{eq:fjljz}), 
it is easy to see that ${\cal Y}$ is non-exceptional 
with respect to $\{ {\cal W}_{j}\} _{j=1}^{m}.$ 
Thus we have proved Claim 1. 

Let ${\cal A}$ be the open and dense subset of 
$({\frak M}_{1,c}({\cal Y}, \{ {\cal W}_{j}\} _{j=1}^{m}), {\cal O})$ given in 
Theorem~\ref{t:rcdnkmain2}. 
Then for each $\tau \in {\cal A}$, all statements (i)--(xi) in Theorem~\ref{t:rcdnkmain2} hold. 
In particular, any $\tau \in {\cal A}$ is weakly mean stable. 
Let 
${\cal A}_{+}:=\{ \tau \in {\cal A}\mid 
\exists L\in \Min(G_{\tau },Q) \mbox{ s.t. }\chi (\tau ,L)>0\} .$ 
Also, let $${\cal A}_{+,all}:=\{ \tau \in {\cal A}\mid 
\mbox{ for all }L\in \Min(G_{\tau },Q) \mbox{ we have }\chi (\tau ,L)>0\} .$$ 
Moreover, let ${\cal A}^{f}:=\{ \tau \in {\cal A}\mid \sharp \mbox{supp}\,\tau<\infty \} $, 
${\cal A}_{+}^{f}:= {\cal A}_{+}\cap {\cal A}^{f}$, and 
${\cal A}_{+.all}^{f}:={\cal A}_{+,all}\cap {\cal A}^{f}.$ 

We now show the following claim. \\ 
\underline{Claim 2.} 
The sets ${\cal A}_{+}$ and ${\cal A}_{+,all}$ are  non-empty open subsets of 
${\cal A}$ (and thus they are  non-empty open subsets of 
$({\frak M}_{1,c}({\cal Y}, \{ {\cal W}_{j}\} _{j=1}^{m}), {\cal O})$).  
Also, ${\cal A}_{+}^{f}$ is dense in ${\cal A}_{+}$ and ${\cal A}_{+,all}^{f}$ 
is dense in ${\cal A}_{+,all}.$ Moreover, 
for each $\tau \in {\cal A}_{+}$, we have 
$\emptyset \neq \cup _{L\in H_{+,\tau }}L\subset J_{pt}^{0}(\tau )=\CCI \setminus \Omega _{\tau }$, where 
$\Omega _{\tau }$ and $H_{+,\tau }$ are the sets defined in Theorem~\ref{t:rcdnkmain2}, and 
for each $\tau \in {\cal A}_{+,all}$, we have 
$Q\subset J_{pt}^{0}(\tau ).$ Furthermore, 
for each $\tau \in {\cal A}_{+}^{f}$, 
we have $\overline{J_{pt}^{0}(\tau )}=J(G_{\tau })$ which is a perfect set. 

To prove this claim, by (\ref{eq:fjljz}), we obtain that 
${\cal A}_{+}$ and ${\cal A}_{+,all}$ are non-empty. It is easy to see that 
${\cal A}_{+}$ and ${\cal A}_{+,all}$ are open in ${\cal A}.$ The rest statements follow 
from Theorem~\ref{t:rcdnkmain2} and Corollary~\ref{c:rcdnkmain2f}. 

Let ${\cal A}_{-, all}:=
\{ \tau \in {\cal A}\mid 
\mbox{for all } L\in \Min(G_{\tau },Q) \mbox{ we have. }\chi (\tau ,L)<0\} .$ 
We now prove the following claims.\\ 
\underline{Claim 3.} 
The set ${\cal A}_{-, all}$ is a non-empty open subset of ${\cal A}$ and 
${\cal A}_{-, all}\cap {\cal A}_{+, all}=\emptyset .$ \\ 
\underline{Claim 4.} 
Let $\tau \in {\cal A}_{-,all}.$ Then for each $L\in \Min(G_{\tau },J_{\ker}(G_{\tau }))$, 
we have $\chi (\tau ,L)<0$, and each $L\in \Min(G_{\tau },\CCI)$ with $L\not\subset J_{\ker }(G_{\tau })$ 
is attracting $\tau .$ Thus $\tau $ satisfies all assumptions of Theorem~\ref{t:sjkfcln} and 
all conclusions in Theorem~\ref{t:sjkfcln} hold. In particular, $J_{pt}^{0}(\tau )=\emptyset $ 
and $F_{meas}(\tau )={\frak M}_{1}(\CCI ).$

These claims 3,4 can be shown by (\ref{eq:fjljz}) and the method in Example~\ref{ex:qx1any}. 
\end{ex}
\begin{ex}
\label{ex:pzljgz}
Let $x_{1},\ldots ,x_{u}\in \CC $ be mutually distinct points with $u\geq 2.$  
Let $a\in \CC \setminus \{ 0\} $ and let $g(z)=a\prod _{j=1}^{u}(z-x_{j}).$ 
Let $Q=\{ x_{1},\ldots, x_{u}\}.$ 
Let $P_{1}, \ldots, P_{m}$ be mutually distinct non-constant polynomials and 
suppose that $P_{j}(Q)\subset Q$ for each $j=1,\ldots. m.$ Also, 
suppose that $Q=\cup _{L\in \Min(\langle P_{1},\ldots, P_{m}\rangle ,Q)}L.$ 
Note that we have the following claim.\\ 
\underline{Claim 1.} 
For any finite subset $Q$ of $\CC $, we can take such elements $P_{1},\ldots, P_{m}$. 

 To prove this claim, we  remark that  
for any map $\varphi :Q\rightarrow Q$, there exists a polynomial $P$ 
such that $P|_{Q}=\varphi $ on $Q.$ This fact can be shown by using van der Monde determinant argument. 
Thus the statement of Claim 1 holds. 

For each $j=1,\ldots, m$, 
let $\Lambda _{j}:=\CC \setminus \{ 0\} $ and 
for each $\lambda _{j}\in \Lambda _{j}$, let 
$f_{j,\lambda _{j}}(z)=P_{j}(z+\lambda _{j}g(z)).$ 
Let ${\cal W}_{j}=\{ f_{j,\lambda _{j}}\} _{\lambda _{j}\in \Lambda _{j}} $ and 
let ${\cal Y}=\cup _{j=1}^{m}\{ f_{j,\lambda _{j}}\mid \lambda _{j}\in \Lambda _{j}\} .$  
Then ${\cal Y}$ is a weakly nice subset 
of ${\cal P}$ with respect to   holomorphic families 
$\{ {\cal W}_{j}\} _{j=1}^{m}$ of polynomials.  We now prove the following claim.\\ 
\underline{Claim 2.} We have 
$S({\cal W}_{j})=Q\cup \{ \infty \} $ for each $j=1,\ldots, m.$ 
Moreover, ${\cal Y}$ is a mild subset of ${\cal P}$ and 
${\cal Y}$ is  non-exceptional  and strongly nice 
 with respect to holomorphic families 
$\{ {\cal W}_{j}\} _{j=1}^{m}$ of polynomial maps. Hence, 
there exists the largest open and dense subset ${\cal A}$ of 
$({\frak M}_{1,c}({\cal Y}, \{ {\cal W}_{j}\} _{j=1}^{m}), {\cal O})$ such that for  each 
$\tau \in  {\cal A}$, all statements (i)--(xi) in Theorem~\ref{t:rcdnkmain2} hold. 
 In particular, any $\tau \in {\cal A}$ is weakly mean stable. 

We give the proof of this claim.  Since ${\cal Y}\subset {\cal P}$, 
the set ${\cal Y}$ is mild. 
For each $x\in Q$, for each $j=1,\ldots, m$ and for each 
$\lambda _{j}\in \Lambda _{j}$, we have 
$f_{j,\lambda _{j}}(x)=P_{j}(x). $
Thus for each $j=1,\ldots, m$, we have 
$S({\cal W}_{j})=Q\cup \{ \infty \} $. Hence  
$\cup _{j=1}^{m}S({\cal W}_{j})=Q\cup \{ \infty \} .$ 
Also, by the property of $\{ P_{j}\} _{j=1}^{m}$, we have that 
for each $\tau \in {\frak M}_{1,c}({\cal Y}, \{ {\cal W}_{j}\} _{j=1}^{m})$, we have 
$Q=\cup _{L\in \Min(G_{\tau }, Q)}L.$ Hence there is no peripheral cycle for 
$({\cal Y}, \{ {\cal W}_{j}\} _{j=1}^{m}).$ Also, 
we have 
\begin{equation}
\label{eq:fjljdiffz}
f_{j, \lambda _{j}}'(x)=P_{j}'(x)(1+\lambda _{j}g'(x)) \mbox{ for all }x\in Q, j=1,\ldots, m, \lambda _{j}\in \Lambda _{j}.
\end{equation}  
Therefore ${\cal Y}$ is strongly nice  with 
respect to 
$\{ {\cal W}_{j} \} _{j=1}^{m}.$ 
By (\ref{eq:fjljdiffz}), it is easy to see that ${\cal Y}$ is non-exceptional with respect to $\{ {\cal W}_{j}\} _{j=1}^{m}.$ 
  By Theorem~\ref{t:rcdnkmain2}, the statement of Claim 2 holds. Thus we have proved Claim 2.  
  
  We define subsets ${\cal A}_{+}, {\cal A}_{+,all}, {\cal A}^{f}, $ 
  $ {\cal A}^{f}_{+}, {\cal A}^{f}_{+,all}, {\cal A}_{-, all}$ of ${\cal A}$ in the same way 
  as that of Example~\ref{ex:wizlzn}. Then by (\ref{eq:fjljdiffz}) and 
  the arguments in Examples~\ref{ex:wizlzn} and \ref{ex:qx1any}, we obtain the following 
  claims. \\ 
 \underline{Claim 3.} 
The set ${\cal A}_{-, all}$ is a non-empty open subset of ${\cal A}$ and 
${\cal A}_{-, all}\cap {\cal A}_{+, all}=\emptyset .$ \\ 
\underline{Claim 4.} 
Let $\tau \in {\cal A}_{-,all}.$ Then for each $L\in \Min(G_{\tau },J_{\ker}(G_{\tau }))$, 
we have $\chi (\tau ,L)<0$, and each $L\in \Min(G_{\tau },\CCI)$ with $L\not\subset J_{\ker }(G_{\tau })$ 
is attracting $\tau .$ Thus $\tau $ satisfies all assumptions of Theorem~\ref{t:sjkfcln} and 
all conclusions in Theorem~\ref{t:sjkfcln} hold. In particular, $J_{pt}^{0}(\tau )=\emptyset $ 
and $F_{meas}(\tau )={\frak M}_{1}(\CCI ).$  \\ 
  \underline{Claim 5.} 
  Suppose that $P_{j}'(x)\neq 0$ for any $j=1,\ldots, m$ and for any $x\in Q.$ Then   
the sets ${\cal A}_{+}$ and ${\cal A}_{+,all}$ are  non-empty open subsets of 
${\cal A}$ (and thus they are  non-empty open subsets of 
$({\frak M}_{1,c}({\cal Y}, \{ {\cal W}_{j}\} _{j=1}^{m}), {\cal O})$).  
Also, ${\cal A}_{+}^{f}$ is dense in ${\cal A}_{+}$ and ${\cal A}_{+,all}^{f}$ 
is dense in ${\cal A}_{+,all}.$ Moreover, 
for each $\tau \in {\cal A}_{+}$, we have 
$\emptyset \neq \cup _{L\in H_{+,\tau }}L\subset J_{pt}^{0}(\tau )=\CCI \setminus \Omega _{\tau }$, where 
$\Omega _{\tau }$ and $H_{+,\tau }$ are the sets defined in Theorem~\ref{t:rcdnkmain2}, and 
for each $\tau \in {\cal A}_{+,all}$, we have 
$Q\subset J_{pt}^{0}(\tau ).$ Furthermore, 
for each $\tau \in {\cal A}_{+}^{f}$, 
we have $\overline{J_{pt}^{0}(\tau )}=J(G_{\tau })$ which is a perfect set.

\end{ex}
\begin{rem}
As in Example~\ref{ex:pzljgz}, we can embed many finite 
irreducible Markov chains 
into $\CC $ as weak attractors (i.e. minimal sets with negative Lyapunov exponents) of one 
random complex polynomial dynamical system generated by $\tau \in 
{\frak M}_{1,c}({\cal P})$ which is weakly mean stable and satisfies 
all statements in Theorem~\ref{t:sjkfcln} (e.g. $F_{meas}(\tau )={\frak M}_{1}(\CCI )$).     
\end{rem}
\section{List of notations}
\label{s:list}
In this section we give the list of notations of this paper.  
\begin{itemize}
\item $B(A, r), D(C, r), \mbox{int}(A), \mbox{Con}(A)$. 
Notation in section~\ref{Pre}. 
\item $\mbox{CM}(Y), \mbox{OCM}(Y), C(Y)$ for metric spaces 
$Y$. Definition~\ref{d:cmx}   
\item rational semigroup, polynomial semigroup, Rat, $\kappa$, Rat$_{+}$, ${\cal P}$. Definition~\ref{d:rational}. 
\item Rat$_{m}$. ${\cal P}_{m}$. Remark~\ref{r:ratm}. 
\item $F(G), J(G), \langle g_{1}, \ldots, g_{m}\rangle, $ 
$ \langle \Gamma \rangle , G(A), G^{-1}(A), G^{\ast }$. 
Definition~\ref{d:FGJG}. 
\item $E(G)$ (exceptional set of a rational semigroup $G$). 
Remark after Lemma~\ref{ocminvlem}.
\item $J_{\ker }(G)$ (kernel Julia set). Definition~\ref{d:kernelJ}. 
\item $\gamma _{m, n}, F_{\gamma ,0}, F_{\gamma }$, 
$J_{\gamma ,0}, J_{\gamma },$ 
$F^{\gamma, 0}, F^{\gamma }, J^{\gamma ,0}, 
J^{\gamma }$. Definition~\ref{d:gammamn}.  
\item ${\frak M}_{1}(Y), {\frak M}_{1,c}(Y), 
\mbox{supp}\,\tau, d_{0}, {\cal B}^{\ast }$. 
Definition~\ref{d:d0}. 
\item $X_{\tau }, \tilde{\tau }, G_{\tau }, M_{\tau }, 
M_{\tau }^{\ast }, F_{meas}(\tau ), J_{meas}(\tau )$, 
$ F_{meas}^{0}(\tau ), J_{meas}^{0}(\tau ).$ 
Definition~\ref{d:ytau}. 
\item $\Phi : Y\rightarrow {\frak M}_{1}(Y).$ 
Definition~\ref{d:Phi}.
\item $F_{pt}(\tau), J_{pt}(\tau ), F_{pt}^{0}(\tau ), 
J_{pt}^{0}(\tau ).$ Definition~\ref{d:manyFJ}. 
\item Cpt$(Y)$. Definition~\ref{d:CptY}. 

\item Minimal set for $(G, Y)$, Min$(G, Y)$. 
Definition~\ref{d:minimal}. 
\item period of irreducible finite Markov chain. Definition~\ref{d:chainperiod}. 
\item $G_{\tau }^{r}$. Definition~\ref{d:Gtaur}. 

\item $D\varphi _{z}, T_{z}U, \| D\varphi _{z}\|_{s}$ 
subsection~\ref{ss:invariant}.  
 \item $f: \Gamma ^{\NN} \times \CCI \rightarrow 
 \Gamma ^{\NN} \times \CCI $ (skew product associated 
 with the generator system $\Gamma $), 
 $\pi :\Gamma ^{\NN }\times \CCI \rightarrow \Gamma ^{\NN}$, $\pi _{Y}: \Gamma ^{\NN}\times Y 
 \rightarrow Y$, $f_{\gamma }^{n}, f_{\gamma, n}$, 
 $\tilde{J}(f), \tilde{F}(f)$, 
 $\hat{J}^{\gamma, \Gamma }, \hat{F}^{\gamma, \Gamma }, 
 \hat{J}_{\gamma, \Gamma }, \hat{F}_{\gamma, \Gamma}, $ 
 $Df_{z}$ for skew product map $f$ at a point 
 $z=(\gamma, y)$. Definition~\ref{d:sp}. 
 
 \item $\tau$-invariant measure, $\tau$-ergodic measure, 
 $\chi (\tau, \rho )$ (Lyapunov exponent of $(\tau, \rho )$). Definition~\ref{d:invmeaserg}. 
 
 \item $\omega _{L,j}, \omega _{L}$(canonical $\tau$-ergodic measure on $L$),  
 $\chi (\tau, L)$ (Lyapunov exponent of $(\tau, L)$). 
 Definition~\ref{d:celyap}. 
 
 \item holomorphic family of rational maps, holomorphic family of polynomial maps, non-constant holomorphic family of rational maps, $S_{n}({\cal W})$, $S({\cal W})$ (singular 
 set of a holomorphic family ${\cal W}$ of rational maps), 
 singular point of holomorphic family ${\cal W}$ of 
 rational maps. Definition~\ref{d:singdf}. 
 
\item weakly nice subset of Rat, 
${\frak M}_{1}({\cal Y}, \{ {\cal W}_{j}\} _{j=1}^{m}),$ 
${\frak M}_{1,c}({\cal Y}, \{ {\cal W}_{j}\} _{j=1}^{m}),$ 
$\cap _{j=1}^{m} S({\cal W}_{j})$ (singular set of 
$({\cal Y}, \{ {\cal W}_{j}\} _{j=1}^{m})$. 
Definition~\ref{d:weaklynice}.
 
 \item ${\cal O}$ (wH-topology in ${\frak M}_{1,c}({\cal Y})$ 
 where ${\cal Y}$ is a closed subset of an open subset of Rat). Definition~\ref{d:topologyO}. 
 
 \item $S_{\min }(\{ {\cal W}_{j}\}_{j=1}^{m})$. 
 Definition~\ref{d:smin}.  
 
\item attracting minimal set. Definition~\ref{d:attminset}. 

\item mild subset of Rat. Definition~\ref{d:mild}.  

\item mean stable. Definition~\ref{d:meanstable}. 

\item exceptional with respect to $\{ {\cal W}_{j}\} _{j=1}^{m}$, non-exceptional with respect to $\{ {\cal W}_{j}\} _{j=1}^{m}$. Definition~\ref{d:exceptional}. 

\item unitary eigenfunction, unitary eigenvalue, 
$U_{\tau, L}, U_{\tau, L, \ast }$. Definition~\ref{d:unitary}. 
\item limit function. Definition~\ref{d:limitfunction}. 

\item $r_{L}$, period of $(\tau, L)$. Definition~\ref{d:period}. 
\item $T_{A,\tau}(z)$ (the probability of tending to $A$ 
regarding the random orbits starting with the initial value 
$z\in \CCI $), $T_{a, \tau}$. Definition~\ref{d:probabilityoftending}. 

\item nice subset of Rat, peripheral cycle, strongly nice 
subset of Rat. Definition~\ref{d:stronglynice}. 

\item strict bifurcation element for $(\Gamma, L)$ with 
corresponding suffix $j$. Definition~\ref{d:strbf}. 

\item weakly mean stable. Definition~\ref{df:wms}. 

\item ${\frak M}_{1,c,mild}({\cal Y}, \{ {\cal W}_{j}\}_{j=1}^{m})$, 
 ${\frak M}_{1,c,JF}({\cal Y}, \{ {\cal W}_{j}\}_{j=1}^{m})$. 
 Definition~\ref{d:m1cmild}. 
 
\item $N_{g, \lambda }(z)$, ${\cal W}_{g}$ (random relaxed 
Newton's method family for $g$), ${\cal Y}_{g}$, 
$({\cal Y}_{g}, {\cal W}_{g})$ (random relaxed Newton's method 
scheme for $g$), random relaxed Newton's method (or random relaxed Newton's method system) for $g$, 
$Q_{g}$. Definition~\ref{d:randomNewton}.   

\item normalized polynomial. Remark~\ref{r:normalizedpolynomial}. 
\end{itemize}

\noindent 
{\bf Acknowledgement.} 
The author thanks Rich Stankewitz for valuable comments. 
The author would like to thank the referees and the corresponding editor for valuable and important comments and suggestions. 
This research was partially supported by JSPS Kakenhi 
15K04899, 19H01790.

\end{document}